%% file: commutation_radial.tex
\documentclass[11pt]{article}
\usepackage{authblk}
\usepackage[toc,page]{appendix}
\usepackage[top=2cm, bottom=2cm, left=2cm, right=2cm]{geometry}

\usepackage{color}
\usepackage{helvet}         % selects\textbf{\textbf{•}} Helvetica as sans-serif font
\usepackage{courier}        % selects Courier as typewriter font
\usepackage{type1cm}        % activate if the above 3 fonts are
\usepackage{framed} 
\usepackage{tikz}
\usepackage{makeidx}         % allows index generation
\usepackage{graphicx}        % standard LaTeX graphics tool
\usepackage{multicol}        % used for the two-column index
\usepackage[bottom]{footmisc}% places footnotes at page bottom 

\usepackage{amsmath}
\usepackage{amssymb}
\usepackage{bbold}
\usepackage{amsthm}
\usepackage{subcaption}
\usepackage{sidecap}
\usepackage{floatrow}
\usepackage{pdflscape}
\usepackage{comment}
\usepackage[font=small]{caption}
\usepackage{enumitem}
\usepackage{esint}

\usepackage[hidelinks]{hyperref}

\usepackage{scalerel}
\usepackage{shuffle}
\usepackage{mathrsfs}
\usepackage{mathtools}

\newtheorem{theorem}{Theorem}
\newtheorem{corollary}[theorem]{Corollary}
\newtheorem{lemma}[theorem]{Lemma}

\newtheorem{proposition}[theorem]{Proposition}

\theoremstyle{definition}

\theoremstyle{remark}

\newtheorem{example}[theorem]{\bf Example}

\numberwithin{theorem}{section}
\numberwithin{figure}{section}
\numberwithin{equation}{section}

\begin{document}
\title{Classification of commutation relation for multi-radial SLE}
\bigskip{}
\author[1]{Chongzhi Huang\thanks{huangchzh2001prob@gmail.com. ORCID:0009-0002-7991-6267}}
\author[1]{Hao Wu\thanks{hao.wu.proba@gmail.com. ORCID:0000-0003-4265-7417}}
\affil[1]{Tsinghua University, China}
% Use \authorrunning{Short Title} for an abbreviated version of
% your contribution title if the original one is too long%

%
% Use the package "url.sty" to avoid
% problems with special characters
% used in your e-mail or web address
%
\date{}
\input{tex_radial/macros}
\maketitle
\vspace{-1cm}

\begin{center}
\begin{minipage}{0.95\textwidth}
\abstract{
Locally commuting multiple radial Schramm–Loewner evolutions ($\SLE_{\kappa}$) are encoded by partition functions satisfying the radial Belavin-Polyakov-Zamolodchikov (BPZ) equations and a conformal Ward identity with spectral parameters $\lambda,\nu\in \R$. 
For $\kappa>0$ and $\lambda\in \R$, we show that the solution space of the radial BPZ system has dimension $2^n$ where $n$ is the number of variables. 
We then determine all admissible Ward parameters $\nu$ and the exact dimensions of the subspaces selected by the conformal Ward identity, covering both generic and degenerate cases. 
The classification reveals a parity difference: nonzero rotation-invariant solutions exist for every $\lambda$ when $n$ is even, but only at finitely many exceptional values when $n$ is odd. 

When $0<\kappa\le4$, $\lambda>0$ for odd $n$ or $\lambda>-3/2$ for even $n$, we construct a basis of positive solutions using multiple SLE, providing global realizations of the locally commuting SLEs. 
At $\kappa=4$, we identify a family of explicit solutions as partition functions for level lines of a Gaussian free field with suitable boundary data and interior singularities. 
}

\noindent\textbf{Keywords:} Belavin-Polyakov-Zamolodchikov equation, Schramm-Loewner evolution, commutation relation\\ 
\noindent\textbf{MSC:} 60J67, 81T40, 35N10
\end{minipage}
\end{center}

\tableofcontents

\section{Introduction}
\input{tex_radial/intro}
%%%%%%%%%%%%%%%%%%%%%%%%%%%%%%%%%%%%%%
\section{Preliminaries on multiple SLE}
\label{sec::pre_multiSLE}
\input{tex_radial/pre_multiSLE}

%%%%%%%%%%%%%%%%%%%%%%%%%%%%%%%%%%%%%%%
\section{Positive solutions to radial BPZ system}
\label{sec::solutions_radialBPZ}
\input{tex_radial/solutions_radial}

%%%%%%%%%%%%%%%%%%%%%%%%%%%%%%%%%%%%%%%
\section{Analysis on the solution space}
\label{sec::PDE_analysis}
\input{tex_radial/PDE_analysis}
%%%%%%%%%%%%%%%%%%%%%%%%%
%%%%%%%%%%%%%%%%%%%%%%%%%%%%%%%%%%%%%%
\appendix
\section{Dubedat's commutation relation}
\label{sec::commutation_radial}
\input{tex_radial/commutation_radial}

\section{Chordal BPZ system}
\label{sec::chordalBPZ}
\input{tex_radial/chordalBPZ}
%{\small
%	\bibliographystyle{alpha}
%	\bibliography{bibliography}}

{\small
\newcommand{\etalchar}[1]{$^{#1}$}

}

\end{document}

%% file: tex_radial/macros.tex
%%%%general%%%%%%%%
\newcommand{\Eig}{\mathcal{E}}
\newcommand{\Prad}[1]{\mathsf{P}_{#1\mathrm{\textnormal{-}rad}}}
\newcommand{\Erad}[1]{\mathsf{E}_{#1\mathrm{\textnormal{-}rad}}}
\newcommand{\LZrad}[1]{\mathcal{Z}_{#1\mathrm{\textnormal{-}rad}}}
\newcommand{\LSrad}[1]{\mathsf{S}_{#1\mathrm{\textnormal{-}rad}}}
\newcommand{\LFrad}[1]{\mathcal{F}_{#1\mathrm{\textnormal{-}rad}}}

\newcommand{\Pchord}{\mathsf{P}}
\newcommand{\Pchordtwo}{\Pchord_{\includegraphics[scale=0.15]{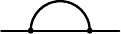}}}
\newcommand{\Echord}{\mathsf{E}}
\newcommand{\Echordtwo}{\mathsf{E}_{\includegraphics[scale=0.15]{figures/link-0}}}
\newcommand{\LSchord}[1]{\mathsf{S}_{#1\mathrm{\textnormal{-}chord}}}
\newcommand{\LFchord}[1]{\mathcal{F}_{#1\mathrm{\textnormal{-}chord}}}

\newcommand{\blm}{\mathfrak{m}}
\newcommand{\LZmix}{\mathcal{Z}_{\bs{s};\alpha}^{(p; \mu)}}
\newcommand{\LZmixp}{\mathcal{Z}_{\bs{s};\alpha}^{(p; \mu_p)}}
\newcommand{\Cmixp}{C_{\bs{s};\alpha}^{(p; \mu_p)}}
\newcommand{\LZmixpminus}{\mathcal{Z}_{\bs{s};\alpha}^{(p; -\mu_p)}}
\newcommand{\Cmixpminus}{C_{\bs{s};\alpha}^{(p; -\mu_p)}}
\newcommand{\LZmixhat}{\mathcal{Z}_{\bs{s};\hat{\alpha}}^{(p; \mu)}}
\newcommand{\LZalphaCR}{\mathcal{Z}_{\alpha{\textnormal{-}}\mathrm{cr}}^{(\ell;\mu)}}
\newcommand{\CalphaCR}{C_{\alpha{\textnormal{-}}\mathrm{cr}}^{(\ell;\mu)}}
\newcommand{\LZalphaCRhat}{\mathcal{Z}_{\hat{\alpha}{\textnormal{-}}\mathrm{cr}}^{(\ell;\mu)}}
\newcommand{\LZalphahcap}{\mathcal{Z}_{\alpha{\textnormal{-}}\hcap}^{(\lambda)}}
\newcommand{\LZalphahcaphat}{\mathcal{Z}_{\hat{\alpha}{\textnormal{-}}\hcap}^{(\lambda)}}

%%%%%mathbb%%%%%%
\global\long\def\U{\mathbb{U}}
\global\long\def\T{\mathbb{T}}
\global\long\def\HH{\mathbb{H}}
\global\long\def\R{\mathbb{R}}
\global\long\def\C{\mathbb{C}}
\global\long\def\N{\mathbb{N}}
\global\long\def\Z{\mathbb{Z}}
\global\long\def\E{\mathbb{E}}
\global\long\def\PP{\mathbb{P}}
\global\long\def\QQ{\mathbb{Q}}
\global\long\def\A{\mathbb{A}}
\global\long\def\one{\mathbb{1}}

%%%%mathrm%%%%%%
\global\long\def\CR{\mathrm{CR}}
\global\long\def\ST{\mathrm{ST}}
\global\long\def\SF{\mathrm{SF}}
\global\long\def\cov{\mathrm{cov}}
\global\long\def\dist{\mathrm{dist}}
\global\long\def\SLE{\mathrm{SLE}}
\global\long\def\hSLE{\mathrm{hSLE}}
\global\long\def\CLE{\mathrm{CLE}}
\global\long\def\GFF{\mathrm{GFF}}
\global\long\def\inte{\mathrm{int}}
\global\long\def\ext{\mathrm{ext}}
\global\long\def\inrad{\mathrm{inrad}}
\global\long\def\outrad{\mathrm{outrad}}
\global\long\def\dimH{\mathrm{dim}}
\global\long\def\capa{\mathrm{cap}}
\global\long\def\diam{\mathrm{diam}}
\global\long\def\sign{\mathrm{sgn}}
\global\long\def\cat{\mathrm{Cat}}
\global\long\def\cst{\mathrm{C}}
\global\long\def\ck{\mathrm{C}_{\kappa}}
\global\long\def\free{\mathrm{free}}
\global\long\def\hF{{}_2\mathrm{F}_1}
\global\long\def\simple{\mathrm{simple}}
\global\long\def\even{\mathrm{even}}
\global\long\def\odd{\mathrm{odd}}
\global\long\def\st{\mathrm{ST}}
%\global\long\def\ust{\mathrm{UST}}
\global\long\def\usf{\mathrm{USF}}
\global\long\def\Leb{\mathrm{Leb}}
\global\long\def\LP{\mathrm{LP}}
\global\long\def\I{\mathrm{I}}
\global\long\def\K{\mathrm{K}}
\global\long\def\J{\mathrm{J}}
\global\long\def\Y{\mathrm{Y}}
\global\long\def\Cat{\mathrm{Cat}}

\global\long\def\II{\mathrm{II}}
\global\long\def\hcap{\mathrm{hcap}}
\global\long\def\const{\mathrm{C}}
\global\long\def\Poisson{\mathrm{H}}
\global\long\def\Green{\mathrm{G}}

%%%%%mathcal%%%%%%%%%%%
\global\long\def\LA{\mathcal{A}}
\global\long\def\LB{\mathcal{B}}
\global\long\def\LC{\mathcal{C}}
\global\long\def\LD{\mathcal{D}}
\global\long\def\LF{\mathcal{F}}
\global\long\def\LK{\mathcal{K}}
\global\long\def\LE{\mathcal{E}}
\global\long\def\LG{\mathcal{G}}
\global\long\def\LI{\mathcal{I}}
\global\long\def\LJ{\mathcal{J}}
\global\long\def\LL{\mathcal{L}}
\global\long\def\LM{\mathcal{M}}
\global\long\def\LN{\mathcal{N}}
\global\long\def\OO{\mathcal{O}}
\global\long\def\LQ{\mathcal{Q}}
\global\long\def\LR{\mathcal{R}}
\global\long\def\LT{\mathcal{T}}
\global\long\def\LS{\mathcal{S}}
\global\long\def\LU{\mathcal{U}}
\global\long\def\LV{\mathcal{V}}
\global\long\def\LW{\mathcal{W}}
\global\long\def\LX{\mathcal{X}}
\global\long\def\LY{\mathcal{Y}}
\global\long\def\PartF{\mathcal{Z}}
\global\long\def\LH{\mathcal{H}}
\global\long\def\LJ{\mathcal{J}}

%%%%%mathfrak%%%%%%%%%%%%
\global\long\def\blm{\mathfrak{m}}

%%%partitionfunctions%%%%%%%%%%%%%%%%%%
\global\long\def\LZ{\mathcal{Z}}
\global\long\def\LZrp{\mathcal{Z}_{\alpha; \bs{s}}^{(p)}}
\global\long\def\LJrp{\mathcal{J}_{\alpha; \bs{s}}^{(p)}}
\global\long\def\chamberrp{\chamber_{\alpha; \bs{s}}^{(p)}}
\global\long\def\LErp{\mathcal{E}_{\alpha; \bs{s}}^{(p)}}
\global\long\def\Greenrp{G_{\alpha; \bs{s}}^{(p)}}
\global\long\def\Prp{P_{\alpha; \bs{s}}^{(p)}}
\global\long\def\norcst{\mathrm{C}_{\kappa}^{(\mathfrak{r})}}
\global\long\def\LZalphar{\mathcal{Z}_{\alpha}^{(\mathfrak{r})}}

%%%%partitionfunctionstwo%%%%%%%%%%%
\newcommand{\LZtwo}{\mathcal{Z}_{\includegraphics[scale=0.15]{figures/link-0}}}
\newcommand{\LEtwo}{\mathcal{E}_{\includegraphics[scale=0.15]{figures/link-0}}}
\newcommand{\LItwo}{\mathcal{I}_{\includegraphics[scale=0.15]{figures/link-0}}}
\newcommand{\LUtwo}{\mathcal{U}_{\includegraphics[scale=0.15]{figures/link-0}}}
\newcommand{\LZtwor}{\LZtwo^{(\mathfrak{r})}}
\newcommand{\LHtwo}{\mathcal{H}_{\includegraphics[scale=0.15]{figures/link-0}}}
\newcommand{\LFtwo}{\mathcal{F}_{\includegraphics[scale=0.15]{figures/link-0}}}

%%%partitionfunctionsfour%%%%%%%%%%%%
%\newcommand{\LZfoura}{\mathcal{Z}_{\includegraphics[scale=0.15]{figures/link-1}}}
%\newcommand{\LZfourb}{\mathcal{Z}_{\includegraphics[scale=0.15]{figures/link-2}}}
%\newcommand{\LHfoura}{\mathcal{H}_{\includegraphics[scale=0.15]{figures/link-1}}}
%\newcommand{\LHfourb}{\mathcal{H}_{\includegraphics[scale=0.15]{figures/link-2}}}
%\newcommand{\LFfoura}{\mathcal{F}_{\includegraphics[scale=0.15]{figures/link-1}}}
%\newcommand{\LFfourb}{\mathcal{F}_{\includegraphics[scale=0.15]{figures/link-2}}}
%\newcommand{\LFfouraRenorm}{\widehat{\mathcal{F}}_{\includegraphics[scale=0.15]{figures/link-1}}}
%\newcommand{\LFfourbRenorm}{\widehat{\mathcal{F}}_{\includegraphics[scale=0.15]{figures/link-2}}}

%%%%%%%%%%%%%%%%%%%%%%%%%%%%%%%
%\newcommand{\QQrainbow}{\mathbb{Q}_{\includegraphics[scale=0.15]{figures/link-2}}}
%\newcommand{\LZrainbow}{\mathcal{Z}_{\includegraphics[scale=0.15]{figures/link-2}}}
%\newcommand{\QQfusion}{\mathbb{Q}_{\includegraphics[scale=0.5]{figures/link4fusion}}}
\newcommand{\coulombGasHRenorm}{\widehat{\coulombGasH}}

\global\long\def\coulomb{\LH}
\global\long\def\auxcoulomb{\hat{\coulomb}}%{\tilde{\coulomb}}
\global\long\def\coulombGas{\LF}
\global\long\def\coulombnew{\LK}%{\LG}
\global\long\def\coulombLine{\LG}%{\LG}
\global\long\def\kfunc{p}

\global\long\def\eps{\epsilon}
\global\long\def\ov{\overline}
\global\long\def\QQrp{\QQ_{\alpha; \bs{s}}^{(p)}}

\global\long\def\bn{\mathbf{n}}
\global\long\def\MR{MR}
\global\long\def\cond{\,|\,}
\global\long\def\la{\langle}
\global\long\def\ra{\rangle}
\global\long\def\tree{\Upsilon}
\global\long\def\prob{\mathbb{P}}
\global\long\def\hm{\mathrm{Hm}}
%
%\global\long\def\sf{\mathrm{SF}}
%\global\long\def\wr{\varrho}

\global\long\def\Im{\operatorname{Im}}
\global\long\def\Re{\operatorname{Re}}

%Eves macros
\global\long\def\ud{\mathrm{d}}
\global\long\def\pder#1{\frac{\partial}{\partial#1}}
\global\long\def\pdder#1{\frac{\partial^{2}}{\partial#1^{2}}}
\global\long\def\pddder#1{\frac{\partial^{3}}{\partial#1^{3}}}
\global\long\def\der#1{\frac{\ud}{\ud#1}}

\global\long\def\bZnn{\mathbb{Z}_{\geq 0}}
\global\long\def\bZpos{\mathbb{Z}_{> 0}}
\global\long\def\bZneg{\mathbb{Z}_{< 0}}

\global\long\def\Vfunc{\LG}%{\LH}%{\LV}
\global\long\def\gfunc{g^{(\rr)}}
\global\long\def\hfunc{h^{(\rr)}}

\global\long\def\SimplexInt{\rho}
\global\long\def\CubeInt{\widetilde{\rho}}

\global\long\def\ii{\mathfrak{i}}
\global\long\def\ee{\mathrm{e}}
\global\long\def\rr{\mathfrak{r}}
\global\long\def\chamber{\mathfrak{X}}
\global\long\def\Wchamber{\mathfrak{W}}

\global\long\def\SimplexIntKappa8{\SimplexInt}

\global\long\def\nested{\boldsymbol{\underline{\Cap}}}
\global\long\def\unnested{\boldsymbol{\underline{\cap\cap}}}
%\global\long\def\unnested{\boldsymbol{\underline{\cap\scaleobj{0.85}{\,\cdots}\cap}}}
%\global\long\def\removeLink{/}
\global\long\def\unnested{\boldsymbol{\underline{\cap\cap}}}

\global\long\def\acycle{\vartheta}
\global\long\def\bcycle{\tilde{\acycle}}
%\global\long\def\Gloop{\Theta}
%\global\long\def\GGloop{\Theta'}

\global\long\def\metric{\mathrm{dist}}

\global\long\def\adj#1{\mathrm{adj}(#1)}

\global\long\def\bs{\boldsymbol}

\global\long\def\edge#1#2{\langle #1,#2 \rangle}
\global\long\def\graph{G}

\newcommand{\conn}{\varsigma}%{\vartheta}%{\LA}
\newcommand{\realacycle}{\smash{\mathring{\acycle}}}
\newcommand{\realpt}{\smash{\mathring{x}}}
\newcommand{\corrind}{\LC}
\newcommand{\bssymb}{\pi}%{\Pi}%{\zeta}
\newcommand{\PRCM}{\mu}%{\QQ}
\newcommand{\coeff}{p}
\newcommand{\MainConst}{C}

\global\long\def\removeLink{/}

\global\long\def\domainofdef{\mathfrak{U}}
\global\long\def\Test_space{C_c^\infty}
\global\long\def\Distr_space{(\Test_space)^*}

\global\long\def\bs{\boldsymbol}
\global\long\def\cst{\mathrm{C}}

\newcommand{\red}{\textcolor{red}}
\newcommand{\blue}{\textcolor{blue}}
\newcommand{\green}{\textcolor{green}}
\newcommand{\magenta}{\textcolor{magenta}}

\newcommand{\coulombGasH}{\mathcal{H}}
\newcommand{\secondbeta}{\intloop}

\newcommand{\cev}[1]{\reflectbox{\ensuremath{\vec{\reflectbox{\ensuremath{#1}}}}}}

\global\long\def\anticonf{\zeta}
\global\long\def\intloop{\varrho}
\global\long\def\Gloop{\smash{\mathring{\intloop}}}

\global\long\def\SLEmeasure{\mathrm{P}}
\global\long\def\SLEmeasureEx{\mathrm{E}}

\global\long\def\fugacity{\nu}
%\global\long\def\clweight{Q}
\global\long\def\meanderMat{\mathcal{M}}
\global\long\def\LM{\mathcal{M}}
\global\long\def\meanderMatrix{\meanderMat_{\fugacity}}
\global\long\def\meanderMatrixPrime{\meanderMat_{\fugacity(\kappa')}}
\global\long\def\meanderRenorm{\widehat{\mathcal{M}}}

\global\long\def\PartFRenorm{\widehat{\PartF}}
\global\long\def\coulombGasRenorm{\widehat{\coulombGas}}

\global\long\def\hexa{\scalebox{1.3}{\hexagon}}

\global\long\def\np{p}

\global\long\def\FKdual{\mathcal{L}}

\global\long\def\fixedindex{\flat}

%% file: tex_radial/intro.tex
Schramm–Loewner evolution (SLE), introduced by Schramm~\cite{SchrammFirstSLE}, provides a probabilistic framework for random planar curves based on conformal invariance and the domain Markov property. The latter requires that, conditional on an initial segment, the remaining curve obey the same probabilistic rule in the remaining domain, with appropriately updated marked points and boundary conditions. In the discrete models motivating SLE, this property reflects a spatial Markov structure already present at the lattice level, whereas conformal invariance emerges in the continuum scaling limit. Established examples include loop-erased random walks and uniform spanning-tree Peano curves, converging respectively to $\mathrm{SLE}_2$ and $\mathrm{SLE}_8$~\cite{LawlerSchrammWernerLERWUST};
critical percolation interfaces on the triangular lattice, converging to $\mathrm{SLE}_6$~\cite{SmirnovPercolationConformalInvariance}; contour lines of the two-dimensional discrete Gaussian free field, converging to $\mathrm{SLE}_4$~\cite{SchrammSheffieldDiscreteGFF}; and critical Ising spin and FK-Ising interfaces, converging to $\mathrm{SLE}_3$ and $\mathrm{SLE}_{16/3}$, respectively~\cite{CDCHKSConvergenceIsingSLE}.

Commutation relations provide the fundamental consistency requirement for systems with multiple macroscopic interfaces. When boundary conditions generate several interfaces, their joint law must ensure that the order in which initial segments are revealed does not affect the resulting distribution. In the framework developed by Dubédat~\cite{DubedatCommutationSLE}, this compatibility is expressed through identities between the infinitesimal generators of the driving processes. The admissible drifts are encoded by logarithmic derivatives of a common positive partition function satisfying a system of linear second-order Belavin–Polyakov–Zamolodchikov (BPZ) equations. A complementary approach by Bauer, Bernard, and Kytölä~\cite{BauerBernardKytolaMultipleSLE} further connects these equations to the martingale structure of statistical-mechanical partition functions, linking the consistency of random interfaces to the analysis of overdetermined PDEs.

In the chordal setting, Flores and Kleban investigated the solution space of the $2N$ BPZ equations supplemented by three conformal Ward identities and a power-law growth bound, for $\kappa\in(0,8)$. In a series of papers~\cite{FloresKlebanPDE1, FloresKlebanPDE2, FloresKlebanPDE3, FloresKlebanPDE4}, they proved that the dimension for the solution space is Catalan number $\frac{1}{N+1}\binom{2N}{N}$. 
%In~\cite{FloresKlebanPDE4}, they established Frobenius-type expansions at boundary collisions, including possible logarithmic terms, and characterized a distinguished basis of connectivity weights indexed by noncrossing pairings. 
%For $\kappa\in(0,4]$, Peltola and Wu~\cite{PeltolaWuGlobalMultipleSLEs} subsequently related pure partition functions to extremal multiple-SLE measures and proved the agreement of the local growth-process and global configurational constructions. 
See~\cite{BPZInfiniteConformalSymmetry2DQuantum, BPZInfiniteConformalSymmetryCritical2D} for the conformal field theory origin from null-vector relations for degenerate Virasoro representations, see~\cite{DubedatEulerIntegralsCommutingSLEs, KytolaPeltolaPurePartitionFunctions, PeltolaWuGlobalMultipleSLEs, WuHyperSLE, PeltolaBasisPDE, KytolaPeltolaConformalCovBoundaryCorrelation, AngHoldenSunYu2023, FengLiuPeltolaWu2024, zhang2025multiplechordalslekappaquantum} for Coulomb-gas, representation theory, and probabilistic constructions of solutions to the chordal BPZ equations, and see~\cite{PeltolaCFTSLE} for an overview of the partition-function/CFT perspective.

Radial geometry introduces an interior marked point and additional freedom associated with winding around it. Indeed, without reflection symmetry, even a single radial evolution satisfying conformal invariance and the domain Markov property may have a constant drift in its driving function. Compared with the chordal setting, radial BPZ equations have been comparatively less explored. 
From the CFT perspective, the interior marked point corresponds to a bulk insertion, and radial BPZ equations also arise in the study of bulk--boundary correlation functions, where their connection with the Calogero--Sutherland system has been investigated in~\cite{CardySLEDysonBM,BauerBernardCFTSLEradial}.
Krusell, Wang, and Wu~\cite{KrusellWangWuCommutationRelation} classified locally commuting pairs for $\kappa\in(0,8)$ under an additional interchangeability assumption, obtaining two one-parameter families: two-sided radial SLE with a constant spiraling rate, and chordal SLE reweighted by a power of the conformal radius. 
Feng and Wu~\cite{FengWuRadialBPZFKIsing} constructed positive solutions and related them to FK-Ising interfaces conditioned on a one-arm event. 
Zhang~\cite{zhang2025multipleradialslekappaquantum} developed the corresponding framework for an arbitrary number of boundary starting points, constructed rotation-covariant Coulomb gas solutions of the radial BPZ equations, and related these solutions, after a similarity transformation, to eigenfunctions of the quantum Calogero–Sutherland Hamiltonian. 

Motivated by these developments, we study the full smooth solution space of the radial BPZ system on the angular configuration chamber at a fixed spectral parameter. 
Let us summarize our conclusions. We first identify the dimension of the solution space of radial BPZ system in Theorem~\ref{thm::solutionspace_dimension}; we further decompose the radial BPZ solution space according to different levels in the conformal Ward identity in Theorem~\ref{thm::solutionspace_decompositon_dimension}. These two theorems provide a complete classification of Dub\'edat's commutation relation in the radial setting. Furthermore, we construct positive solutions with probabilistic interpretations in terms of multiple SLE. These constructions provide a global realization of solutions to Dub\'edat's commutation relation.

\subsection{Radial BPZ system}
Fix $\kappa>0$ and $\lambda\in\R$. 
We consider functions $\LZ$ that are defined on the space
\[\LX_n^{\U}=\{(\theta_1, \ldots, \theta_n)\in\R^n: \theta_1<\cdots<\theta_n<\theta_1+2\pi\}\]
and satisfy the radial BPZ system:
\begin{align}\label{eqn::BPZ_radial}
\frac{\kappa}{2}\partial_j^2\LZ+\sum_{\ell\neq j}\left(\cot\left(\frac{\theta_{\ell}-\theta_j}{2}\right)\partial_{\ell}\LZ-\frac{(6-\kappa)/\kappa}{4\sin^2\left(\frac{\theta_{\ell}-\theta_j}{2}\right)}\LZ\right)=\frac{\lambda}{\kappa}\LZ, \qquad \text{for }j\in\{1, \ldots, n\}. 
\end{align}
In this article, we investigate solutions to~\eqref{eqn::BPZ_radial}. 

\begin{theorem}\label{thm::solutionspace_dimension}
Fix $\kappa>0$ and $\lambda\in\R$ and $n\ge 2$. 
Define
\begin{equation}%\label{eqn::solutionspace_radial_def}
\LSrad{n}^{(\lambda)}:=\{F\in C^{2}(\LX_n^{\U}\to \R): F \text{ satisfies radial BPZ system~\eqref{eqn::BPZ_radial}}\}. 
\end{equation}
The dimension of the solution space $\LSrad{n}^{(\lambda)}$ is $2^n$: 
\begin{equation*}
\dim\LSrad{n}^{(\lambda)}=2^n.
\end{equation*}
Moreover, when $\kappa\in (0,4]$, we construct $2^n$ positive linearly independent solutions when $\lambda>0$ for odd $n$ and when $\lambda>-3/2$ for even $n$. Each basis element admits a global probabilistic realization in terms of multiple SLEs.
\end{theorem}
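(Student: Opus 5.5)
Two inequalities give the dimension count, and a separate construction supplies the positive basis.

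\emph{Upper bound $\dim\LSrad{n}^{(\lambda)}\le 2^n$.} I would treat~\eqref{eqn::BPZ_radial} as an overdetermined holonomic system. Each equation expresses $\partial_j^2\LZ$ through lower-order data with coefficients that are real-analytic on $\LX_n^{\U}$. First, by elliptic regularity applied to the sum of the $n$ equations, any $C^2$ solution is real-analytic. Then all pure second derivatives can be eliminated. Every derivative of $\LZ$ can therefore be reduced, on a connected neighbourhood, to a linear combination of the $2^n$ mixed derivatives
\[
\partial_S\LZ:=\prod_{j\in S}\partial_j\LZ,\qquad S\subset\{1,\ldots,n\}.
\]
The vector $(\partial_S\LZ)_S$ satisfies a first-order linear system $\mathrm{d}\Phi=\Omega\Phi$ on the connected, simply connected set $\LX_n^{\U}$. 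Its values at one point therefore determine $\LZ$, which gives $\dim\le 2^n$.

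\emph{Lower bound $\dim\ge 2^n$.} This is where I need integrability, that is, flatness of $\Omega$. Equivalently, the commutators $[\mathcal{D}_j,\mathcal{D}_k]$ of the BPZ operators must lie in the left ideal they generate. This is Dubédat's commutation computation in the radial setting (Appendix~\ref{sec::commutation_radial}). For the radial operators with the specific coefficient $(6-\kappa)/\kappa$ and a common eigenvalue $\lambda/\kappa$, I expect the commutator to be an explicit combination
\[
[\mathcal{D}_j,\mathcal{D}_k]=a_{jk}\,(\mathcal{D}_j-\mathcal{D}_k)
\]
with analytic coefficients $a_{jk}$. Granting this, Frobenius gives a $2^n$-dimensional space of local analytic solutions with arbitrary initial data $(\partial_S\LZ)(\theta^0)$. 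These extend to all of $\LX_n^{\U}$ because the system is linear with analytic coefficients on a simply connected domain. I expect the main obstacle to be carrying out this commutator identity cleanly, including checking the closure condition that compatibility is preserved modulo the ideal. An alternative for the lower bound is to exhibit $2^n$ independent solutions explicitly. One could try Coulomb-gas integrals à la Zhang with varying screening-charge choices, or a reduction to the chordal system of Appendix~\ref{sec::chordalBPZ}, whose local solution count is also $2^n$ without the Ward identities. Independence would then be checked through their distinct leading exponents $\theta_{j+1}-\theta_j\to 0$, namely $-2/\kappa$ versus $6/\kappa-1$.

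\emph{Positive basis.} For $\kappa\le 4$ I would use the constructions of Section~\ref{sec::solutions_radialBPZ}. Index by a subset $S$, which decides which of the $n$ curves go to the interior point $0$ and which pair up or otherwise behave. Define $\LZ_S$ as the total mass of a multiple radial/chordal $\SLE_\kappa$ configuration with an appropriate conformal-radius weight $\CR^{\lambda\text{-dependent power}}$. The BPZ equations follow from the martingale property of the Radon–Nikodym derivative under each one-curve marginal, combined with the smoothness already established. The thresholds $\lambda>0$ (odd $n$) and $\lambda>-3/2$ (even $n$) should be exactly the conditions for the conformal-radius weight to be integrable, hence for $\LZ_S$ to be finite. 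Linear independence would follow from distinct asymptotics as neighbouring points collide, i.e.\ from different fusion channels, via an induction on $n$ using the upper-bound uniqueness.
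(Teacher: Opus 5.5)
Your argument for $\dim\LSrad{n}^{(\lambda)}=2^n$ is the paper's argument. It uses analyticity from the summed elliptic equation and injectivity of the square-free jet at $\bs{\theta}^0$. It then gets flatness of the induced connection from $[\mathcal{D}_j,\mathcal{D}_k]=a_{jk}(\mathcal{D}_j-\mathcal{D}_k)$, which indeed holds with $a_{jk}=-1/\sin^2\big(\tfrac{\theta_j-\theta_k}{2}\big)$, and finishes by path-independent integration on the convex chamber.

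The genuine gap is the positive basis, which is part of the statement. A family indexed by subsets $S$ of curves ending at $0$, each weighted by a conformal-radius power, cannot reach $2^n$:
\begin{itemize}
\item a subset does not fix the pairing of the remaining points;
\item subsets with $n-|S|$ odd carry no configuration;
\item for each $p\in\mathsf{P}_n$ the admissible configurations are the allowable pairs $(\bs{s},\alpha)$, and there are only $\binom{n}{(n-p)/2}$ of them.
\end{itemize}
For odd $n$ this gives only $\sum_{p\in\mathsf{P}_n}\binom{n}{(n-p)/2}=2^{n-1}$ functions (for $n=3$: $4$ instead of $8$).

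The missing idea is the spiral. The paper couples multi-chordal SLE with $p$-radial $\SLE_\kappa^{\mu}$. On the disc, its partition function contains the factor $\exp\big(\frac{\mu}{\kappa}\sum_j\theta_j\big)$ in addition to $\CR^{\mu^2/(2\kappa)}$. The resulting $\LZmix$ solves the BPZ system with $\lambda=\frac12(\mu^2+1-p^2)$. Hence each allowable pair yields two solutions, $\mu=\pm\mu_p$ with $\mu_p=\sqrt{2\lambda+p^2-1}$, which are rotation covariant with opposite exponents $\pm p\mu_p/\kappa$. For even $n$ the remaining $\binom{n}{n/2}$ solutions are the rotation-invariant functions $\LZalphaCR$ with $\mu=\mu_0$. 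These are indexed by $\alpha\in\LP_{n/2}$ and by the component $\ell$ of the complement of the chordal curves that contains $0$. Only with this bookkeeping do the counts add up to $2^n$.

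The spiral also changes what the thresholds mean.
\begin{itemize}
\item For even $n$, $\lambda>-3/2$ is indeed the integrability condition $\mu_0>\kappa(\kappa-8)/8$ for $\CR^{\mu_0/\kappa}$, and it coincides with $\mu_2>0$.
\item For odd $n$ there is no integrability issue, since $\LZmix\le\LZrad{p}^{(\mu)}\prod_{\{a,b\}\in\alpha}\Poisson(\Omega;x_{v_a},x_{v_b})^{\mathfrak{b}}$ for every real $\mu$. The condition $\lambda>0$ says precisely that $\mu_1=\sqrt{2\lambda}$ is real and nonzero, so that $\pm\mu_1$ give two distinct solutions.
\end{itemize}

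For linear independence, the paper first uses rotation covariance to split a vanishing combination according to the distinct exponents $\pm p\mu_p/\kappa$ (and $0$). Only then does it apply the fusion asymptotics recursively within each family. The fusion rules are the same for $+\mu_p$ and $-\mu_p$, so fusion alone does not separate the two members of a configuration.

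The analyticity from your upper bound applies only to functions already known to be $C^2$ solutions. For functions defined as expectations, the paper uses the martingale characterization: the process is a local martingale if and only if the function is smooth and satisfies BPZ.

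Two minor slips: $\partial_S\mathcal{Z}$ should be $\big(\prod_{j\in S}\partial_j\big)\mathcal{Z}$. The collision exponents in your alternative route are $2/\kappa$ and $1-6/\kappa$, not their negatives.
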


When $n=2$, one may check by splitting of variables that $\dim\LSrad{2}^{(\lambda)}=4$, see~\cite{KrusellWangWuCommutationRelation}. 
We focus on the general case $n\ge 2$ and prove Theorem~\ref{thm::solutionspace_dimension} in Sections~\ref{subsec::dimension_upper}-\ref{subsec::dimension_lower}. 
The main idea of our proof is to show $Z \longmapsto \bigl(\partial^\beta Z(\theta^0)\bigr)_{\beta\in\{0,1\}^n}$
is a linear isomorphism between $\LSrad{n}^{(\lambda)}$ and $\mathbb{R}^{2^n}$ (see Corollary~\ref{cor::iso_LS_R2n}).
Indeed, elliptic regularity implies that every solution is analytic,
and the BPZ equations recursively determine all higher derivatives
from these square-free derivatives, proving injectivity.
For surjectivity, we rewrite the BPZ system as a first-order linear
system for the square-free derivatives. The commutator identity
for the BPZ operators guarantees its compatibility, and integration
along paths in the convex configuration chamber finishes the construction. Our argument only uses finite-dimensional linear algebra, together with standard ODE theory. This simple approach works uniformly for all $\kappa>0$, including $\kappa\geq8$ (where fewer tools from multiple SLE are available). Compared with~\cite{FloresKlebanPDE1,FloresKlebanPDE2}, we do not impose the power law bound (PLB) condition in Theorem~\ref{thm::solutionspace_dimension}.

We give one explicit example of the solutions in $\LSrad{n}^{(\lambda)}$. Fix $\kappa\in (0,4]$ and $\mu\in\R$ and $n\ge 2$.
Define 
\begin{equation}\label{eqn::LZrad_mu_U}
	\LZrad{n}^{(\mu)}(\bs{\theta}) := \prod_{1\le i<\ell\le n}|\ee^{\ii\theta_{\ell}}-\ee^{\ii\theta_i}|^{\frac{2}{\kappa}} \times \exp\left( \frac{\mu}{\kappa}\sum_{j=1}^{n} \theta_j \right), \qquad \text{for }\bs{\theta}=(\theta_1, \ldots, \theta_n) \in \LX_n^{\U}. 
\end{equation}
One may check that $\LZrad{n}^{(\mu)}(\bs{\theta})$ satisfies the radial BPZ system~\eqref{eqn::BPZ_radial} with $\lambda=\frac{1}{2}(\mu^2+1-n^2)$.  
This is the partition function of multi-radial SLE with spiral introduced in~\cite{KrusellWangWuCommutationRelation,HuangPeltolaWuMultiradialSLE}, see Section~\ref{subsec::multiradialSLE}. 
In general, the solutions in $\LSrad{n}^{(\lambda)}$ are not explicit. 
We construct $2^n$ positive linearly independent solutions to the radial BPZ system~\eqref{eqn::BPZ_radial} when $\kappa\in (0,4]$, $\lambda>0$ for odd $n$ and $\lambda>-3/2$ for even $n$ in Section~\ref{sec::solutions_radialBPZ}. For these parameters, the locally commuting SLEs encoded by the BPZ equations admit global realizations.
The building blocks of the construction are multi-chordal SLE~\cite{KozdronLawlerMultipleSLEs, PeltolaWuGlobalMultipleSLEs} and multi-radial SLE with spiral~\cite{HealeyLawlerNSidedRadialSLE,KrusellWangWuCommutationRelation, HuangPeltolaWuMultiradialSLE}. In particular, the construction relies crucially on their boundary perturbation properties which are proved in~\cite{PeltolaWuGlobalMultipleSLEs} for the chordal setting and in~\cite{HuangPeltolaWuMultiradialSLE} for the radial setting. 

\subsection{Conformal Ward identity}

We summarize the axioms in Dub\'edat's commutation relation in Appendix~\ref{sec::commutation_radial}. 
To classify all solutions to Dub\'edat's commutation relation, analysis on the solution space $\LSrad{n}^{(\lambda)}$ alone is not sufficient. 
The BPZ system~\eqref{eqn::BPZ_radial} follows from commutation relation on the infinitesimal generators. Besides the BPZ system, conformal invariance of the probability measures in the axioms requires extra rotation covariance on the partition function (see Proposition~\ref{prop::commutation_radial_classify}). 
Therefore, in order to classify all solutions to Dub\'edat's commutation relation, one needs to understand the decomposition of  $\LSrad{n}^{(\lambda)}$ according to the following conformal Ward identity~\eqref{eqn::ward_radial}.
Fix $\kappa>0$ and $n\ge 2$. For $\lambda\in\R$ and $\nu\in\R$, we consider solutions $\LZ\in \LSrad{n}^{(\lambda)}$ that also satisfy the conformal Ward identity:
\begin{align}\label{eqn::ward_radial}
(\sum_{j=1}^n\partial_j)\LZ =\frac{\nu}{\kappa}\LZ.
\end{align}
Define 
\begin{align*}%\label{eqn::solutionspace_decompositon_radial_def}
	\LSrad{n}^{(\lambda;\nu)}:=\left\{F\in C^{2}(\LX_n^{\U}\to \R):
	\begin{array}{l}
		F \text{ satisfies radial BPZ system~\eqref{eqn::BPZ_radial}}\\
		\text{and the conformal Ward identity~\eqref{eqn::ward_radial}}
	\end{array}\right\}.
\end{align*}
The subspaces $\LSrad{n}^{(\lambda;\nu)}$ form a decomposition of the space $\LSrad{n}^{(\lambda)}$. The precise decomposition is different for odd $n$ and for even $n$.

\begin{theorem} \label{thm::solutionspace_decompositon_dimension}
Fix $\kappa>0$ and $n\ge 2$. We denote 
\begin{align}\label{eqn::parity_n_p}
\mathsf{P}_n=\{p\in\{1, \ldots, n\}: n-p\text{ is even}\}.
\end{align}
Suppose $\lambda\in\R$ and $\nu\in\R$. Define 
\begin{align}\label{eqn::nup_def}
\nu_p=p \sqrt{2\lambda+p^2-1}, \qquad\text{for }p^2\ge 1-2\lambda\text{ and } p\in\mathsf{P}_n. 
\end{align}
\begin{itemize}
\item If $n$ is odd, the dimension of the solution space $\LSrad{n}^{(\lambda;\nu)}$ is
\begin{align*} %\label{eqn::soltuionspace_decomposition_odd_dimension}
	\dim\LSrad{n}^{(\lambda;\nu)}=\begin{cases}
		\binom{n}{(n-p)/2},	&\nu\in\{\pm \nu_p\}, \quad p^2\ge 1-2\lambda,\quad p\in\mathsf{P}_n;\\
		0,&\text{otherwise}.
	\end{cases}
\end{align*}
Moreover, when $\lambda>0$, we have the following decomposition
\begin{align*}%\label{eqn::soltuionspace_decomposition_odd}
	\LSrad{n}^{(\lambda)}=		\bigoplus_{p\in\mathsf{P}_n}
		\left(\LSrad{n}^{\left(\lambda; \nu_p \right)}
		\oplus\LSrad{n}^{\left(\lambda; -\nu_p \right)}\right).
\end{align*}
\item If $n$ is even, the dimension of the solution space $\LSrad{n}^{(\lambda;\nu)}$ is
\begin{align*} %\label{eqn::soltuionspace_decomposition_even_dimension}
	\dim\LSrad{n}^{(\lambda;\nu)}=
	\begin{cases}
		\binom{n}{(n-p)/2},	& \nu\in\{\pm \nu_p\},\quad	p^2>1-2\lambda,\quad p\in\mathsf{P}_n;\\
		\binom{n}{n/2},&\nu=0,\quad \lambda\in\R;\\
		0,&\text{otherwise}.
	\end{cases}
\end{align*}
Moreover, when $\lambda>-\frac{3}{2}$, we have the following decomposition
\begin{align*}%\label{eqn::soltuionspace_decomposition_even}
\LSrad{n}^{(\lambda)}=
		\LSrad{n}^{(\lambda;0)}
		\oplus\bigoplus_{p\in\mathsf{P}_n}
		\left(\LSrad{n}^{\left(\lambda;\nu_p \right)}
		\oplus\LSrad{n}^{\left(\lambda;-\nu_p \right)}\right).
\end{align*}
\end{itemize}
\end{theorem}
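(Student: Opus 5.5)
The Ward operator $L:=\sum_{j=1}^n\partial_j$ commutes with each radial BPZ operator in~\eqref{eqn::BPZ_radial}, because all coefficients depend only on the differences $\theta_\ell-\theta_j$. Hence $L$ maps $\LSrad{n}^{(\lambda)}$ into itself, and by Theorem~\ref{thm::solutionspace_dimension} it is a linear endomorphism of a real $2^n$-dimensional space. The space $\LSrad{n}^{(\lambda;\nu)}$ is exactly the real eigenspace of $L$ for the eigenvalue $\nu/\kappa$. So the plan is:
\begin{enumerate}
\item compute the characteristic polynomial of $L$;
\item determine the geometric multiplicities, including at degenerate parameters.
\end{enumerate}
I would work in the coordinates of Corollary~\ref{cor::iso_LS_R2n}, where $Z$ is identified with $(\partial^\beta Z(\theta^0))_{\beta\in\{0,1\}^n}$. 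In these coordinates $L$ becomes an explicit $2^n\times 2^n$ matrix $M(\lambda)$, whose entries are polynomial in $\lambda$ once the BPZ recursion has been used to reduce the non-square-free derivatives. In particular $\det(x-M(\lambda))$ is a polynomial in $(x,\lambda)$.

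\emph{Characteristic polynomial.} I expect
\[
\det\bigl(x-\kappa^{-1}\cdot\kappa M(\lambda)\bigr)\;\propto\;\prod_{p\in\mathsf{P}_n}\bigl(y^2-p^2(2\lambda+p^2-1)\bigr)^{\binom{n}{(n-p)/2}}\cdot y^{\epsilon_n\binom{n}{n/2}}, \qquad y=\kappa x,
\]
where $\epsilon_n=1$ for even $n$ and $0$ for odd $n$. The degree count matches, since $\sum_{k\neq n/2}\binom nk+\epsilon_n\binom n{n/2}=2^n$. The index $k=(n-p)/2$ behaves like a number of ``down spins'', with $p$ the net spin.

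To prove this identity it suffices, by polynomial continuation, to verify it on an open set of $\lambda$. For $\kappa\in(0,4]$ and large $\lambda$, I would use the $2^n$ linearly independent positive solutions from Section~\ref{sec::solutions_radialBPZ}. These are built from multi-radial SLE with spiral $\mu$, where the explicit $\LZrad{p}^{(\mu)}$ has Ward eigenvalue $p\mu$ and BPZ eigenvalue $\lambda=\frac12(\mu^2+1-p^2)$, glued with chordal pairs. Each of them is an eigenvector of $L$ with eigenvalue $\pm\nu_p/\kappa$, and counting them gives the multiplicities.

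To remove the restriction on $\kappa$, I would note that the entries of $M$ are also rational in $\kappa$, so the same identity extends to all $\kappa>0$. Alternatively, I would redo the eigenvector count with Coulomb-gas or leading-order asymptotic solutions.

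\emph{Real eigenspaces and multiplicities.} First, the reflection $\theta_j\mapsto-\theta_{n+1-j}$ preserves~\eqref{eqn::BPZ_radial} and conjugates $L$ to $-L$, so $\dim\LSrad{n}^{(\lambda;\nu)}=\dim\LSrad{n}^{(\lambda;-\nu)}$. Second, if $p^2<1-2\lambda$ the roots $\pm\nu_p$ are non-real and contribute no real eigenvectors. This gives the ``otherwise $0$'' cases.

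When all real roots are distinct, which holds for $\lambda>0$ ($n$ odd) or $\lambda>-3/2$ ($n$ even), the explicit eigenvectors already fill the algebraic multiplicities. Hence $L$ is diagonalizable and the direct sum decompositions follow. The thresholds come from two coincidences:
\begin{itemize}
\item for odd $n$, $\nu_1=0$ exactly at $\lambda=0$;
\item for even $n$, $\nu_2=0$ exactly at $\lambda=-3/2$, where it merges with the $\nu=0$ block.
\end{itemize}

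\emph{Main obstacle.} The hardest step is the degenerate case $p^2=1-2\lambda$, where $\nu_p=-\nu_p=0$. Here the claim is that the geometric multiplicity is only $\binom{n}{(n-p)/2}$ (for $n$ odd), respectively only $\binom n{n/2}$ (for $n$ even), so $L$ has nontrivial Jordan blocks.

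For this I would show that the $\pm\nu_p$ eigenvector families are given by expressions analytic in $\mu=\pm\sqrt{2\lambda+p^2-1}$ and coincide at $\mu=0$. Their $\mu$-derivative then produces a generalized eigenvector $W$ with $LW=Z$, where $Z\neq0$. To show $Z\neq0$ and that no extra genuine eigenvectors appear, I would check that the $\mu$-derivative is not an eigenvector, by comparing the linear-in-$\Sigma\theta_j$ factor $\frac{1}{\kappa}\sum_j\theta_j$ appearing in $\partial_\mu\LZrad{p}^{(\mu)}$. Concretely, this is a rank computation of $M(\lambda)$ at these $\lambda$, which I expect to be the technical core.
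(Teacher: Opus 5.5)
\textbf{Verdict: genuine gap.} Your reduction ($\LSrad{n}^{(\lambda;\nu)}\cong\ker(B-\nu I)$ with $B=\kappa M$) and your derivation of the characteristic polynomial match the paper. You obtain the latter from explicit SLE eigenvectors at $\kappa\in(0,4]$, $\lambda>0$, followed by polynomial continuation in $(\kappa,\lambda)$.

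\textbf{Generic case.} The gap is the passage from algebraic to geometric multiplicity. Continuation transports $\det(tI-B)$, but it does not transport diagonalizability. Your only source of eigenvectors is the SLE construction. It exists only for $\kappa\le 4$, and for $\nu=0$ ($n$ even) only when $\lambda>-3/2$, since the conformal-radius moment requires $\mu_0>\frac{\kappa}{8}(\kappa-8)$. So for $\kappa>4$, and for $\nu=0$ with non-degenerate $\lambda\le -3/2$, your argument does not determine $\dim\ker(B-\nu I)$. The direct-sum decompositions are also only obtained for $\kappa\le 4$.

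The missing idea is to continue an \emph{annihilating} polynomial as well. At $\kappa\in(0,4]$, $\lambda>0$, the explicit basis shows $Q_\lambda(B)=0$ with $Q_\lambda(t)=t^{\epsilon_n}\prod_{q\in\mathsf{P}_n}\bigl(t^2-q^2(2\lambda+q^2-1)\bigr)$. Every entry of $\kappa^{n_0}Q_\lambda(B)$ is a polynomial in $(\kappa,\lambda)$, so $Q_\lambda(B)=0$ for all $\kappa>0$ and $\lambda\in\R$. Whenever $\nu$ is a simple root of $Q_\lambda$, the Jordan blocks for $\nu$ are trivial. This settles every non-degenerate case and both decompositions, uniformly in $\kappa$.

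\textbf{Degenerate case.} At $\lambda_0=(1-p^2)/2$, your $\mu$-derivative trick is again confined to $\kappa\le 4$. It also needs differentiability in $\mu$ of the mixed partition functions, which you do not justify.

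For odd $n$, there is an algebraic substitute valid for all $\kappa$. Differentiate $Q_\lambda(B(\lambda))=0$ in $\lambda$ at $\lambda_0$ and apply the result to $v\in\ker B$. This produces $\hat v$ with $B\hat v=v$, so every $0$-block has size at least $2$. Since $0$ is a double root of $Q_{\lambda_0}$, every $0$-block has size exactly $2$, which gives $\binom{n}{(n-p)/2}$ blocks.

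For even $n$, your approach fails even when $\kappa\le 4$. Here $0$ has algebraic multiplicity $\binom{n}{n/2}+2\binom{n}{(n-p)/2}$ and is a triple root of $Q_{\lambda_0}$. The $\binom{n}{(n-p)/2}$ chains of length two from the $\mu$-derivative only give $\dim\ker B\le\binom{n}{n/2}+\binom{n}{(n-p)/2}$. Moreover, there are no explicit $\nu=0$ solutions at $\lambda_0\le -3/2$.

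The ``rank computation'' you defer is exactly where a new idea is needed. The entries $B_{\bs{\beta},\bs{\delta}}$ with $|\bs{\delta}|>|\bs{\beta}|$ equal $\kappa$ when $\bs{\delta}=\bs{\beta}+\bs{e}_j$ and vanish otherwise. Hence $\frac{1}{\kappa t}D_t^{-1}BD_t\to U$ as $t\to\infty$, where $D_t=\mathrm{diag}(t^{|\bs{\beta}|})$ and $U$ is the incidence matrix of the Boolean lattice. Lower semicontinuity of rank, together with $UU^T-U^TU=(n-2m)I$ on the degree-$m$ layer, gives $\operatorname{rank}B\ge 2^n-\binom{n}{n/2}$. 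Upper semicontinuity of $\dim\ker$ as $\lambda\downarrow\lambda_0$, combined with the non-degenerate result, gives the matching lower bound.
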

\begin{figure}[ht!]
	\centering
	\includegraphics[width=\textwidth]{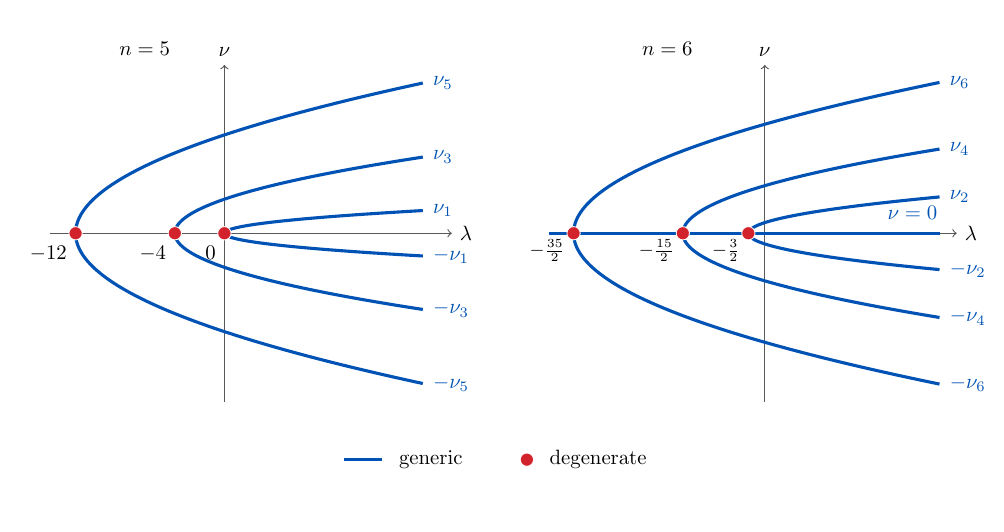}
	\caption{Illustration for the admissible pairs
$(\lambda,\nu)$ in Theorem~\ref{thm::solutionspace_decompositon_dimension}
for $n=5$ and $n=6$. The blue curves correspond to the generic case $\lambda>(1-p^2)/2$ and $\nu_p=\pm p\sqrt{2\lambda+p^2-1}$. The red dots correspond to the degenerate case $\lambda=(1-p^2)/2$ and $\nu=0$. Note that the line $\nu=0$ is admissible for even $n$ and is not admissible for odd $n$ except the degenerate case. }
	\label{fig::ward_spectrum}
\end{figure}

Theorem~\ref{thm::solutionspace_decompositon_dimension} places several previous results on radial BPZ equations into a common framework.
Simmons and Kleban~\cite{SimmonsKlebanCFTsixpointsol} considered the special case $n=4$, $\nu=0$ and $\lambda=(16-\kappa^2)/32$ (corresponding to the bulk field $\Phi_{1/2,0}$ in CFT), after a conformal change of coordinates, they derived the $6$-dimensional solution space $\LSrad{4}^{((16-\kappa^2)/32;0)}$ using Appell hypergeometric functions. For $n=2$, Krusell, Wang, and Wu~\cite{KrusellWangWuCommutationRelation} derive the solution space using some elementary functions and hypergeometric functions. For general $n$ and $\kappa\in (0,8)$, Zhang~\cite[Section~1.3]{zhang2025multipleradialslekappaquantum} constructed families of solutions by Coulomb-gas integrals and left the complete classification as an open problem.  	
Theorem~\ref{thm::solutionspace_decompositon_dimension} gives such a classification for arbitrary $n\ge 2$, $\kappa>0$, and $\lambda\in\mathbb{R}$, including $\kappa\geq8$ (where fewer tools from multiple SLE are available). In CFT language, Theorem~\ref{thm::solutionspace_decompositon_dimension} classifies the solution spaces of the null-vector and rotation Ward equations associated with bulk--boundary correlation functions involving $n$ level-two degenerate boundary fields and one bulk primary field.

The degenerate cases in Theorem~\ref{thm::solutionspace_decompositon_dimension} have two interesting correspondences related to CFT and SLE arm-exponent.
\begin{itemize}
	\item The degenerate values $\lambda=(1-p^2)/2$ correspond to the spinless bulk weights $2\Delta(\mathfrak e_{0,p/2})=\widetilde{ \mathfrak b}-\lambda/\kappa$ in the Kac-type family appearing in multiradial SLE~\cite{HuangPeltolaWuMultiradialSLE}.
	\item For $p=2j$, the same bulk weights become $2\Delta(\mathfrak e_{0,j})=\frac{16j^2-(\kappa-4)^2}{8\kappa}$, which is precisely the interior $2j$-arm exponent for
	$\mathrm{SLE}_\kappa$, see e.g.~\cite{WuAlternatingArmIsing, FengWuYangIsing}. When $\kappa=3$, this specializes to
	$(16j^2-1)/24$, the alternating interior arm exponent of the critical planar Ising model.
\end{itemize}

We prove Theorem~\ref{thm::solutionspace_decompositon_dimension} in Section~\ref{subsec::decomposition}. 
The key idea is to translate the conformal Ward identity~\eqref{eqn::ward_radial} as a finite-dimensional eigenvalue problem. Theorem~\ref{thm::solutionspace_dimension} identifies a solution of the radial BPZ system with its $2^n$ square-free derivatives at one point. Under this identification, the infinitesimal rotation operator $\kappa\sum_j\partial_j$ becomes a $2^n\times2^n$ matrix $B$, so that $\LSrad{n}^{(\lambda;\nu)}$ is naturally identified with the $\nu$-eigenspace of $B$. We first determine the spectrum and its multiplicities for $\kappa\in (0,4]$ and $\lambda>0$ using the positive partition functions constructed in Section~\ref{sec::solutions_radialBPZ}, and then extend the resulting characteristic and annihilating polynomials to all parameters algebraically. The generic case follows from simple roots, while the degenerate cases reduce to an analysis of the Jordan blocks associated with the zero eigenvalue. See Fig.~\ref{fig::ward_spectrum} for the diagram on $(\lambda, \nu)$.

\subsection{GFF level lines}
For $\kappa=4$, the BPZ solution space enjoys a particularly concrete interpretation in terms of level lines of the Gaussian free field (GFF). The coupling between $\mathrm{SLE}_4$ and GFF level lines was established in~\cite{DubedatSLEFreeField,SchrammSheffieldContinuumGFF} and subsequently developed for more general boundary data, see e.g., \cite{WangWuLevellinesGFFI,PeltolaWuGlobalMultipleSLEs,AruSepulvedaWernerBTLStwoDGFF,AruLupuSepulvedaFPSGFF,KarrilaPeltolaSchougGFFdegeneratelevelline}. In this section, we present a concrete example of the relation between solutions to radial BPZ system~\eqref{eqn::BPZ_radial} and GFF level lines. 

Fix $n\ge 2$ and $\kappa=4$. Fix $\mu>0$ and $\sigma=(\sigma_1, \ldots, \sigma_n)\in\{\pm1\}^n$. 
Define 
\begin{align}\label{eqn::LFrad_sigma_def}
\LFrad{n}^{(\sigma; \mu)}(\bs{\theta}):=\prod_{1\le i<\ell\le n}|\ee^{\ii\theta_{\ell}}-\ee^{\ii\theta_i}|^{\frac{1}{2}\sigma_{\ell}\sigma_i}\times \exp\left(\frac{\mu}{4}\sum_{j=1}^n \sigma_j\theta_j\right),\quad \text{for }\bs{\theta}=(\theta_1, \ldots, \theta_n)\in\LX_n^{\U}. 
\end{align}
It can be checked by direct calculation that $\LFrad{n}^{(\sigma;\mu)}$ satisfies the radial BPZ system~\eqref{eqn::BPZ_radial} with 
\begin{align*}
\kappa=4\qquad \text{and}\qquad\lambda=\frac{1}{2}\bigg(\mu^2+1-(\sum_{j=1}^n \sigma_j)^2 \bigg).
\end{align*}
Moreover, these explicit solutions are partition functions for level lines of GFF with properly chosen mean value. 

\begin{proposition}\label{prop::GFF_rad}
Fix $n\ge 2$ and $\kappa=4$. Fix $\mu\in \R$ and $\sigma=(\sigma_1, \ldots, \sigma_n)\in\{\pm1\}^n$. 
Suppose $\Phi$ is Dirichlet GFF in $\U$. For $\bs{\theta}=(\theta_1,\ldots,\theta_n)\in\LX_n^{\U}$, define $\varphi_{\sigma}(\bs{\theta}; \cdot)$ to be the harmonic function on $\U\setminus\{0\}$ by
\begin{equation*}%\label{eqn::sigma_mu_U}
	\varphi_{\sigma}(\bs{\theta};z):=\frac{\pi}{2}\sum_{i=1}^n\sigma_i
	-\frac12\sum_{i=1}^n\sigma_i\arg\left(\frac{\ee^{\ii\theta_i}-z}{1-\overline z\ee^{\ii\theta_i}}\right)
	+\frac12 \sum_{i=1}^n\sigma_i \arg z.
\end{equation*}
The branches of the argument function are chosen continuously on the universal cover of $\U\setminus\{0\}$ so that the boundary data of $\varphi_{\sigma}(\bs{\theta}; \cdot)$ is
\begin{align*}%\label{eqn::data_sigma_rad}
	\pi\sum_{i=1}^j\sigma_i\text{ on }(\ee^{\ii\theta_j}, \ee^{\ii\theta_{j+1}}), \qquad 1\le j\le n-1; \qquad \pi\sum_{i=1}^n\sigma_i\text{ on }(\ee^{\ii\theta_n},\ee^{\ii\theta_1}).
\end{align*}
For $j\in\{1, \ldots, n\}$, the level line of 
\begin{align*}%\label{eqn::GFF_sigma_mu_U}
\Phi(z)+\varphi_{\sigma}(\bs{\theta};z)+\frac{\mu}{2}\log|z|
\end{align*}
starting from $\ee^{\ii\theta_j}$ with height $\pi\sum_{i=1}^{j-1}\sigma_i$ has the same law as the radial Loewner chain with the following driving function, up to the swallowing time of $\{\ee^{\ii\theta_{j-1}}, \ee^{\ii\theta_{j+1}}, 0\}$:
\begin{align}\label{eqn::LFrad_sigma_SDE}
\ud \xi_t=2\ud B_t+4\left(\partial_j\log\LFrad{n}^{(\sigma;\mu)}\right)(\phi_t(\theta_1), \ldots, \phi_t(\theta_{j-1}), \xi_t, \phi_t(\theta_{j+1}), \ldots, \phi_t(\theta_n))\ud t;
\end{align}
where $\xi_0=\theta_j$ and $B_t$ is one-dimensional Brownian motion and $\LFrad{n}^{(\sigma; \mu)}$ is defined in~\eqref{eqn::LFrad_sigma_def}. 
\end{proposition}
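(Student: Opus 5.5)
We reduce the statement to the standard coupling between $\SLE_4$-type processes and GFF level lines via the martingale characterization. By rotation and conformal invariance we work in $\U$ with the radial Loewner chain $(g_t)$ normalized at $0$, with driving function $\ee^{\ii\xi_t}$ and $g_t'(0)=\ee^{t}$. Write $\phi_t(\theta)$ for the evolved boundary points, so that $\ee^{\ii\phi_t(\theta)}=g_t(\ee^{\ii\theta})$. The guiding principle (Schramm--Sheffield, and its extension to general piecewise-constant boundary data with an interior log-singularity as in~\cite{WangWuLevellinesGFFI}) is the following. A continuous Loewner curve $\eta$ from $\ee^{\ii\theta_j}$ is the level line of $\Phi+h_0$ if and only if, for every $z\in\U$, the process
\[h_t(z):=\varphi_{\sigma}\big(\phi_t(\theta_1),\ldots,\xi_t,\ldots,\phi_t(\theta_n);g_t(z)\big)+\tfrac{\mu}{2}\log|g_t(z)|-\tfrac{\mu}{2}t\]
is a continuous local martingale. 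Here $\xi_t$ is inserted in the $j$-th slot. In addition, $\ud\langle h(z),h(w)\rangle_t=-\ud G_{\U\setminus\eta[0,t]}(z,w)$ with the Dirichlet Green's function normalized as $G(z,w)\sim-\log|z-w|$. This holds up to the first time the curve hits a boundary arc where the heights are incompatible, or swallows $0$ or the neighbours $\ee^{\ii\theta_{j\pm1}}$.

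The additive term $-\frac{\mu}{2}t$ compensates $\log|g_t(z)|-\log|z|$ near $0$. It makes the harmonic function $h_t\circ g_t^{-1}$ on $\U\setminus\eta[0,t]$ have the prescribed singularity $\frac{\mu}{2}\log|z|$ at $0$. The winding convention for $\arg z$ on the universal cover is fixed once and is preserved by $g_t$, since $g_t(z)/z$ has a continuous logarithm. The boundary values of $h_t$ are then $\pi\sum_{i\le k}\sigma_i$ on the image arcs. In particular, the two sides of the curve carry heights $\pi\sum_{i<j}\sigma_i\pm\lambda_{\mathrm{GFF}}$ relative to the level $\pi\sum_{i<j}\sigma_i$, with $\lambda_{\mathrm{GFF}}=\pi/2$ in the normalization $\sqrt{\kappa}=2$. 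This matches the jump $\pi\sigma_j$ at $\ee^{\ii\theta_j}$ together with the constant $\frac{\pi}{2}\sum\sigma_i$.

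The main step is an Itô computation. Let $\xi$ solve~\eqref{eqn::LFrad_sigma_SDE}. We differentiate $h_t(z)$ using the radial Loewner equation
\[\partial_t g_t(z)=g_t(z)\frac{\ee^{\ii\xi_t}+g_t(z)}{\ee^{\ii\xi_t}-g_t(z)}\]
and the induced equations
\[\partial_t\phi_t(\theta_\ell)=\cot\big((\phi_t(\theta_\ell)-\xi_t)/2\big).\]
Each summand of $\varphi_\sigma$ is, up to the sign $\sigma_i$, the harmonic measure type function $-\frac12\arg\frac{\ee^{\ii\theta_i}-z}{1-\bar z\ee^{\ii\theta_i}}$. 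Its time derivative under the flow is explicit. The terms with $i\neq j$ contribute only drift. The $j$-th term has martingale part $-\frac{\sigma_j}{2}\,\partial_{\theta_j}\arg(\cdot)\,2\ud B_t$, and its Itô correction (with $\kappa=4$) combines with the Loewner drift of the $j$-th term.

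We then require the total $\ud t$-coefficient to vanish. This reduces to an identity, valid for all $z$, between the drift of $\xi$ and
\[2\cdot\tfrac{\sigma_j}{2}\Big(\sum_{\ell\neq j}\tfrac{\sigma_\ell}{2}\cot\big(\tfrac{\theta_\ell-\theta_j}{2}\big)\cdot(\text{const})+\tfrac{\mu}{2}\Big).\]
We would verify this identity directly and check that it equals
\[4\,\partial_j\log\LFrad{n}^{(\sigma;\mu)}=\sigma_j\sum_{\ell\ne j}\sigma_\ell\cot\big(\tfrac{\theta_j-\theta_\ell}{2}\big)\cdot\tfrac12+\mu\sigma_j,\]
using $\partial_j\log|\ee^{\ii\theta_j}-\ee^{\ii\theta_\ell}|=\frac12\cot\frac{\theta_j-\theta_\ell}{2}$. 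We expect this bookkeeping to be the main obstacle. It requires matching the $z$-dependence, which can be done by noting that both sides are harmonic in $z$ with the same boundary behaviour. It also requires tracking the constants: the $\arg z$ term, the $-\frac{\mu}{2}t$ correction, and the factor $\sigma_j^2=1$. Next, the quadratic variation check is the standard Hadamard formula. Under radial flow we have
\[\ud G_t(z,w)=-\tfrac{1}{2}\,\mathrm{Im}(\cdot)\,\mathrm{Im}(\cdot)\,\ud t,\]
and this matches the product of the martingale coefficients because $\sigma_j^2=1$ and the Brownian part has variance $4$.

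Finally, we conclude by the Schramm--Sheffield and Wang--Wu characterization. The existence and uniqueness of the level line coupling up to the stated stopping time follows from the local martingale property together with the Green's function covariance. This identifies the level line with the Loewner chain driven by~\eqref{eqn::LFrad_sigma_SDE} up to the swallowing time of $\{\ee^{\ii\theta_{j-1}},\ee^{\ii\theta_{j+1}},0\}$. Uniqueness in law comes from the fact that the level line is determined by the field, so the law of the driving function is pinned down.
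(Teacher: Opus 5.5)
Your architecture is the paper's. You drive the chain by~\eqref{eqn::LFrad_sigma_SDE}, show by an explicit It\^o computation that the harmonic mean of the field pulled back by $g_t$ is a local martingale, match its covariation with $-\ud G_{\U}(g_t(z),g_t(w))$, and invoke the martingale characterization of level lines.

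\textbf{The observable is wrong.} The correction $-\frac{\mu}{2}t$ must be dropped. The function $z\mapsto\frac{\mu}{2}\log|g_t(z)|$ already vanishes on $\partial(\U\setminus\eta[0,t])$. It equals $\frac{\mu}{2}\log|z|$ plus a function harmonic near $0$. Only the coefficient of the logarithm is prescribed, so the fact that the regular part takes the value $\frac{\mu}{2}t$ at $0$ needs no compensation. Subtracting $\frac{\mu}{2}t$ shifts every boundary value by $-\frac{\mu}{2}t$, which contradicts your own claim that they stay $\pi\sum_{i\le k}\sigma_i$. It also gives $h_t(z)$ the drift $-\frac{\mu}{2}\,\ud t$, so for $\mu\neq0$ your step ``require the total $\ud t$-coefficient to vanish'' cannot succeed. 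No choice of drift $b_t$ for $\xi$ repairs this. Indeed, $b_t$ enters $\ud h_t(z)$ only through $\sigma_j b_t\bigl(\frac12-\Re\frac{\ee^{\ii\xi_t}}{\ee^{\ii\xi_t}-g_t(z)}\bigr)$, which tends to $0$ as $|g_t(z)|\to1$, whereas the constant $-\frac{\mu}{2}$ does not.

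The constant you were trying to compensate is in fact already cancelled inside the unshifted observable. Write $-\frac{\sigma_j}{2}\arg\frac{\ee^{\ii\xi}-g}{1-\bar g\ee^{\ii\xi}}=-\sigma_j\arg(\ee^{\ii\xi}-g)+\frac{\sigma_j}{2}\xi$. The drift $\mu\sigma_j$ of $\xi$ then contributes $\bigl(-\mu\Re\frac{\ee^{\ii\xi}}{\ee^{\ii\xi}-g}+\frac{\mu}{2}\bigr)\ud t$. Meanwhile $\frac{\mu}{2}\,\ud\log|g_t(z)|=\bigl(\mu\Re\frac{\ee^{\ii\xi}}{\ee^{\ii\xi}-g}-\frac{\mu}{2}\bigr)\ud t$, and the two cancel exactly. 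Without the shift, your $h_t(z)$ is exactly the paper's $M_t(z)$ in~\eqref{eqn::GFF_rad_aux2}. The same term-by-term cancellation (the $\mu$-terms, the remaining $\sigma_j$-terms, and each $\sigma_i$ with $i\neq j$) then gives $\ud M_t(z)=\sigma_j\bigl(1-2\Re\frac{\ee^{\ii\xi_t}}{\ee^{\ii\xi_t}-g_t(z)}\bigr)\ud B_t$.

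\textbf{Smaller slips.} The central drift identity is left with a placeholder ``(const)''. When you carry it out, fix the following.
\begin{enumerate}
\item $4\partial_j\log\LFrad{n}^{(\sigma;\mu)}=\sigma_j\sum_{\ell\neq j}\sigma_\ell\cot\bigl(\frac{\theta_j-\theta_\ell}{2}\bigr)+\mu\sigma_j$. The exponent $\frac12\sigma_j\sigma_\ell$, times $\frac12\cot$, times $4$, gives coefficient $1$, not $\frac12$. The $\mu$-part $\frac{\mu\sigma_j}{2}$ of your intermediate identity is likewise off by a factor $2$.
\item In the normalization $G_{\U}(z,w)=\log\bigl|\frac{1-w\bar z}{z-w}\bigr|$, Hadamard's formula reads $\ud G_{\U}(g_t(z),g_t(w))=-P_t(z)P_t(w)\,\ud t$, where $P_t(z)=1-2\Re\frac{\ee^{\ii\xi_t}}{\ee^{\ii\xi_t}-g_t(z)}$. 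There is no factor $\frac12$. This matches the martingale coefficient $\sigma_jP_t(z)$, which you identified correctly.
\item The arcs adjacent to $\ee^{\ii\theta_j}$ carry $\pi\sum_{i<j}\sigma_i$ and $\pi\sum_{i\le j}\sigma_i$, not $\pi\sum_{i<j}\sigma_i\pm\frac{\pi}{2}$. The coupling only uses that the jump across the curve has absolute value $\pi$, and the observable encodes this automatically.
\end{enumerate}
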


In our previous work~\cite{HuangPeltolaWuMultiradialSLE}, we introduced multi-radial SLE with spiral as a family of multiple radial SLEs characterized by a resampling property and boundary perturbation. A natural question is whether this family arises from an underlying statistical-mechanical model. Ideally, one would like to identify a discrete model whose interfaces converge to these curves. Although such a discrete realization remains open, Proposition~\ref{prop::GFF_rad} provides a continuum counterpart at $\kappa=4$: when $\sigma_1=\cdots=\sigma_n=1$, the function $\LFrad{n}^{(\sigma; \mu)}$ in~\eqref{eqn::LFrad_sigma_def} coincides with the partition function of multi-radial $\mathrm{SLE}_4$ with spiral i.e. the partition function $\LZrad{n}^{(\mu)}$ in~\eqref{eqn::LZrad_mu_U} with $\kappa=4$, and the corresponding SLE dynamics arise naturally as level lines of the Gaussian free field. In this sense, the GFF provides a continuum statistical-mechanical realization of the multi-radial $\SLE_4$ with spiral.

We complete the proof of Proposition~\ref{prop::GFF_rad} in Section~\ref{subsec::GFFrad}. The proof follows from the martingale characterization of the GFF--SLE coupling. Starting from the Loewner evolution whose drift is given by the logarithmic derivative of $\LFrad{n}^{(\sigma; \mu)}$, we track the harmonic mean of the field under the Loewner maps and show that it is a local martingale. We then compute its quadratic covariation and verify that it exactly compensates the decrease of the Green's function in the evolving domain.

\paragraph*{Outline.}
We collect preliminaries on multi-chordal SLE and multi-radial SLE in Section~\ref{sec::pre_multiSLE}. 
We construct positive solutions to radial BPZ system in Section~\ref{sec::solutions_radialBPZ}. 
We analyze the dimensions of solution spaces $\LSrad{n}^{(\lambda)}$ and $\LSrad{n}^{(\lambda; \nu)}$ in Section~\ref{sec::PDE_analysis}. We briefly summarize Dub\'edat's commutation relation in Appendix~\ref{sec::commutation_radial}. 
We provide analogous conclusion for Theorem~\ref{thm::solutionspace_dimension} in the chordal setting in Appendix~\ref{sec::chordalBPZ}. 

\paragraph{Acknowledgment.}
We thank Mo Chen and Yilin Wang for helpful discussion. 
H.W. is supported by New Cornerstone Investigator Program 100001127. H.W. is partly affiliated at Yanqi Lake Beijing Institute of Mathematical Sciences and Applications, Beijing, China.

%% file: tex_radial/pre_multiSLE.tex
In this section, we collect preliminaries on multi-chordal SLEs and multi-radial SLEs.
Fix
\begin{align}\label{eqn::parameters}
\kappa\in (0,4], \qquad \mathfrak{b}=\frac{6-\kappa}{2\kappa}, \qquad \tilde{\mathfrak{b}}=\frac{(6-\kappa)(\kappa-2)}{8\kappa}, \qquad \mathfrak{c}=\frac{(6-\kappa)(3\kappa-8)}{2\kappa}. 
\end{align}

\subsection{Preliminaries}
\label{subsec::chordalBPZ_pre}
\paragraph*{Brownian loop measure.} Brownian loop measure is a $\sigma$-finite measure on planar unrooted Brownian loops. We denote this measure by $\blm^\mathrm{loop}$ and refer to~\cite{LawlerWernerBrownianLoopsoup} for its definition and properties. The total mass of $\blm^\mathrm{loop}$ is infinite, but the mass on macroscopic loops is finite: suppose $\Omega$ is a domain and $K_1, K_2$ are two disjoint compact subsets of $\overline{\Omega}$, the total mass of Brownian loops that stay in $\Omega$ and intersect both $K_1$ and $K_2$ is finite and we denote this quantity by $\blm(\Omega; K_1, K_2)$. 
In general, for $n\ge 2$, suppose $K_1, \ldots, K_n$ are disjoint compact subsets of $\overline{\Omega}$, we denote
\begin{equation*}%\label{eqn::blm_def}
\blm(\Omega; K_1, \ldots, K_n)=\sum_{j=2}^n \blm^{\mathrm{loop}}\left(\ell\subset\Omega: \ell\cap K_i\neq \emptyset\text{ for at least }j \text{ of the }i\in\{1, \ldots, n\}\right). 
\end{equation*}
See~\cite{LawlerPartitionFunctionsSLE} and~\cite{PeltolaWangSLELDP} for its properties and see~\cite{DubedatEulerIntegralsCommutingSLEs, DubedatCommutationSLE, KozdronLawlerMultipleSLEs, PeltolaWuGlobalMultipleSLEs} for its alternative forms.

\paragraph*{Polygon.} We say that $(\Omega; \bs{x})=(\Omega; x_1, \ldots, x_n)$ 
is a (topological) $n$-polygon if $\Omega\subsetneq\C$ is simply connected, $\partial\Omega$ is locally connected, and $x_1, x_2, \ldots, x_n \in \partial\Omega$ are distinct points lying counterclockwise along the boundary. 
We say that $(\Omega; \bs{x})=(\Omega; x_1, \ldots, x_n)$ 
is a nice polygon if we assume further that the marked boundary points $x_1, x_2, \ldots, x_n$ lie on $C^{1+\eps}$-boundary segments, for some $\eps>0$, so that derivatives of conformal maps on $\Omega$ are defined there.

\paragraph*{Green's function.}
For upper-half plane $\HH$, the Green's function is defined as 
\begin{equation*}%\label{eqn::Green_H}
	\Green_{\HH}(z,w)= \log\left|\frac{z-\overline{w}}{z-w}\right|,\qquad z,w\in\HH. 
\end{equation*}
For a simply connected domain $\Omega\subsetneq \C$, we define the Green's function via conformal invariance:
\begin{equation*}%\label{eqn::Green_inv}
    \Green(\Omega;z,w)=\Green(\HH;\varphi(z),\varphi(w)), \qquad z,w\in \Omega,
\end{equation*}
where $\varphi$ is any conformal map from $\Omega$ onto $\HH$. In particular, for unit disc $\U$, we have 
\begin{equation}\label{eqn::Green_U}
	\Green_{\U}(z,w)= \log\left|\frac{1-w\overline{z}}{z-w}\right|,\qquad z,w\in\U. 
\end{equation}

\paragraph*{Poisson kernel.}
For 1-polygon $(\HH;x)$ with $z\in\HH$, the Poisson kernel is defined as 
\begin{equation} \label{eqn::Poisson_H}
	\Poisson(\HH;x;z):=\frac{2\Im(z)}{|z-x|^2}, \qquad x\in \R,z\in \HH.
\end{equation}
For nice 1-polygon $(\Omega;x)$ with $z\in\Omega$, we extend its definition via conformal covariance:
\begin{equation} \label{eqn::Poisson_cov}
	\Poisson(\Omega;x;z):= |\varphi'(x)| \Poisson(\HH;\varphi(x);\varphi(z)),
\end{equation}
where $\varphi$ is any conformal map from $\Omega$ to $\HH$.

\paragraph*{Boundary Poisson kernel.}
For the 2-polygon $(\HH; x_1, x_2)$, the boundary Poisson kernel is defined as
\begin{equation}\label{eqn::bPoisson_H}
\Poisson(\HH; x_1, x_2)=|x_1-x_2|^{-2},\qquad x_1, x_2\in\R. 
\end{equation}
For nice 2-polygon $(\Omega; x_1, x_2)$, we extend its definition via conformal covariance: 
\begin{equation}\label{eqn::bPoisson_cov}
\Poisson(\Omega; x_1, x_2)=|\varphi'(x_1)\varphi'(x_2)|\Poisson(\HH; \varphi(x_1), \varphi(x_2)), 
\end{equation}
where $\varphi$ is any conformal map from $\Omega\to \HH$. In particular, for $(\U; \ee^{\ii\theta_1}, \ee^{\ii\theta_2})$, we have
\begin{equation*}
\Poisson(\U; \ee^{\ii\theta_1}, \ee^{\ii\theta_2})=|\ee^{\ii\theta_1}-\ee^{\ii\theta_2}|^{-2}=\left(2\sin\left(\frac{\theta_2-\theta_1}{2}\right)\right)^{-2}.
\end{equation*}
Poisson kernel satisfies the following monotonicity.
Let $(\Omega; x_1, x_2)$ be a nice 2-polygon and let $U\subset\Omega$ be a simply connected subdomain that agrees with $\Omega$ in neighborhoods of $x_1$ and of $x_2$. Then
\begin{equation}\label{eqn::bPoisson_mono}
\Poisson(U; x_1, x_2)\le \Poisson(\Omega; x_1, x_2).
\end{equation}

\paragraph*{Lie bracket of operators.}
For two linear operators (or matrices) $A_i$ and $A_j$, we define commutator of them by
\begin{equation*}
	[A_i,A_j]:=A_i A_j - A_j A_i.
\end{equation*}
\begin{lemma}\label{lem::radialBPZoperator_commute}
We denote the differential operators in the radial BPZ equations~\eqref{eqn::BPZ_radial} by
\begin{equation}\label{eqn::radialBPZ_operatordef}
	\LD_j:=\frac{\kappa}{2}\partial_j^2	+\sum_{\ell\neq j}\left(		\cot\left(\frac{\theta_{\ell}-\theta_j}{2}\right)\partial_{\ell}		-\frac{(6-\kappa)/\kappa}{4\sin^2\left(\frac{\theta_{\ell}-\theta_j}{2}\right)}\right),\qquad \text{for }j\in\{1, \ldots, n\}.
\end{equation}
and denote the differential operators in the conformal Ward identity~\eqref{eqn::ward_radial} by
\begin{equation*} %\label{eqn::conformalWard_operatordef}
	\LR:=\sum_{j=1}^n \partial_j.
\end{equation*}
Then
\begin{align}\label{eqn::radialBPZoperator_commute}
	&[\LD_i,\LD_j]+ \frac{\LD_i-\LD_j}{\sin^2\left(\frac{\theta_i-\theta_j}{2}\right)} =0,\qquad \text{for }i\neq j;\\
	&[\LD_j,\LR]=0,\quad \text{for }j\in\{1, \ldots, n\}.\label{eqn::radialBPZ_Ward_commute}
\end{align}
\end{lemma}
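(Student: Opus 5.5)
The plan is a direct computation with the operators $\LD_j$. The second identity~\eqref{eqn::radialBPZ_Ward_commute} is the easy one. All coefficients of $\LD_j$ are functions of the differences $\theta_\ell-\theta_j$ only. The constant-coefficient parts $\frac{\kappa}{2}\partial_j^2$ and $\partial_\ell$ commute with $\LR$. The commutator of $\LR$ with multiplication by a function $f(\bs{\theta})$ is multiplication by $\LR f$. Since $\LR$ annihilates any function of differences, each coefficient term commutes with $\LR$, and $[\LD_j,\LR]=0$ follows.

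For~\eqref{eqn::radialBPZoperator_commute}, fix $i\neq j$. Write $c_{ab}:=\cot\bigl(\frac{\theta_a-\theta_b}{2}\bigr)$, which is antisymmetric, and $s_{ab}:=\frac{1}{4\sin^2((\theta_a-\theta_b)/2)}$. We use the elementary identities
\[
\partial_a c_{ab}=-2s_{ab}, \qquad \partial_b c_{ab}=2s_{ab},
\]
together with the cotangent addition identity
\[
c_{ab}c_{bc}+c_{bc}c_{ca}+c_{ca}c_{ab}=-1 \qquad \text{(for angles summing to zero).}
\]
Write $\LD_j=\frac{\kappa}{2}\partial_j^2+V_j+U_j$, with first-order part $V_j=\sum_{\ell\neq j}c_{\ell j}\partial_\ell$ and potential $U_j=-h\sum_{\ell\ne j}s_{\ell j}$, where $h=(6-\kappa)/\kappa$. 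I will expand $[\LD_i,\LD_j]$ by order of derivatives and compare each order with $-\,4s_{ij}(\LD_i-\LD_j)$, noting that $1/\sin^2=4s_{ij}$.

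The order-by-order steps are as follows.

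\begin{itemize}
\item \emph{Third order.} The terms $[\partial_i^2,c_{\ell j}\partial_\ell]$ produce only $\partial_i\partial_\ell$ terms, so no third-order terms appear on either side.
\item \emph{Second order.} Here $[\frac{\kappa}{2}\partial_i^2,V_j]$ gives $\kappa(\partial_ic_{ij})\partial_i^2=-2\kappa s_{ij}\partial_i^2$ (the term $\ell=i$). Symmetrically, $-[\frac{\kappa}{2}\partial_j^2,V_i]$ contributes $+2\kappa s_{ij}\partial_j^2$, because $\partial_jc_{ji}=-2s_{ij}$. The sum is $-4s_{ij}\cdot\frac{\kappa}{2}(\partial_i^2-\partial_j^2)$, which matches the right-hand side.
\item \emph{First order.} The pieces are $[V_i,V_j]$ and the second-derivative-of-coefficient terms $\frac{\kappa}{2}(\partial_i^2 c_{ij})\partial_i$, and symmetrically in $j$. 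These must reproduce $-4s_{ij}(V_i-V_j)$. Computing $[V_i,V_j]$ requires the three-point cotangent identity for the cross terms in $\ell\ne i,j$.
\item \emph{Zeroth order.} The pieces are $V_i(U_j)-V_j(U_i)$ plus $\frac{\kappa}{2}(\partial_i^2U_j-\partial_j^2U_i)$. These must equal $-4s_{ij}(U_i-U_j)$.
\end{itemize}

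I expect the first- and zeroth-order bookkeeping to be the main obstacle. There, the value $h=(6-\kappa)/\kappa$ has to combine with $\kappa$ so that the $\kappa$-dependent terms cancel; this is the usual mechanism by which the conformal weight $h_{1,2}$ makes the BPZ operators commute modulo the system.

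If this bookkeeping becomes unwieldy, my fallback is a conjugation. The chordal analogue of this relation is classical (Dubédat). The substitution $x=\ee^{\ii\theta}$, with the Möbius map from $\U$ to $\HH$ and the appropriate covariance factors, should transform $\LD_j$ into the chordal BPZ operator plus a $\lambda$-independent constant shift in which an extra marked point (the image of $0$) appears. The relation would then follow from the chordal identity. Commutator relations of this form are preserved under conjugation by a common multiplier and under shifts of all $\LD_j$ by the same constant, because $\LD_i-\LD_j$ is unchanged.

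I would nonetheless attempt the direct computation first, reducing every $\ell$-sum to terms with one index $\ell\notin\{i,j\}$ at a time. Each such term then involves only the triple $(i,j,\ell)$ and closes using the three-point cotangent identity together with its derivative.
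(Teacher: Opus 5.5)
Your route is the same direct computation the paper uses; the paper only says ``direct calculation'' and cites Zhang. Your treatment of $[\LD_j,\LR]$ and of the third- and second-order terms is correct. The gap is at first order: your inventory there is incomplete, and as set up the comparison cannot close. The commutators $[\frac{\kappa}{2}\partial_i^2,U_j]=\frac{\kappa}{2}(\partial_i^2U_j)+\kappa(\partial_iU_j)\partial_i$ and $[U_i,\frac{\kappa}{2}\partial_j^2]=-\frac{\kappa}{2}(\partial_j^2U_i)-\kappa(\partial_jU_i)\partial_j$ have first-order parts, and you kept only their zeroth-order parts. Since $\partial_is_{ij}=-c_{ij}s_{ij}$, the omitted parts contribute $\kappa h\,c_{ij}s_{ij}(\partial_i+\partial_j)$. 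There are two other contributions of the same shape:
\begin{itemize}
\item $\frac{\kappa}{2}(\partial_i^2c_{ij})\partial_i-\frac{\kappa}{2}(\partial_j^2c_{ji})\partial_j=\kappa c_{ij}s_{ij}(\partial_i+\partial_j)$;
\item the piece $-2c_{ij}s_{ij}(\partial_i+\partial_j)$ of $[V_i,V_j]$.
\end{itemize}
So the coefficient of $c_{ij}s_{ij}(\partial_i+\partial_j)$ on the left is $\kappa(1+h)-2$, against $4$ coming from $-4s_{ij}(V_i-V_j)$. This is the one place where $h=(6-\kappa)/\kappa$ is used, since it gives $\kappa(1+h)=6$. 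Without the omitted terms you would be left with $(\kappa-6)c_{ij}s_{ij}(\partial_i+\partial_j)$. At zeroth order, by contrast, $\kappa$ drops out. Indeed $\partial_i^2U_j=\partial_j^2U_i$ because $s_{ij}$ is even in $\theta_i-\theta_j$, so that identity is linear in $h$ and holds for every $h$.

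The second problem is the sign of your three-point identity. The three half-angles sum to $0$, and $\cot(A+B)=\frac{\cot A\cot B-1}{\cot A+\cot B}$ then gives $c_{ab}c_{bc}+c_{bc}c_{ca}+c_{ca}c_{ab}=+1$. For example, half-angles $\pi/6,\pi/6,-\pi/3$ give $3-1-1=1$. Set $x=c_{mi}$, $y=c_{mj}$, $z=c_{ij}$ for $m\notin\{i,j\}$; the identity then reads $z(x-y)=1+xy$. After clearing $4s_{ab}=1+c_{ab}^2$, the discrepancies are, up to constant factors:
\begin{itemize}
\item in the $\partial_m$-coefficient, $(x-y)\bigl[(1+xy)-z(x-y)\bigr]$;
\item in the zeroth-order term, $h(x+y)(x-y)\bigl[z(x-y)-(1+xy)\bigr]$.
\end{itemize}
Both vanish with $+1$, and neither vanishes with your $-1$. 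With these two repairs your computation proves the lemma.

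Your Cayley-map fallback is not a shortcut. The point $0$ becomes a bulk point, so you would need a chordal identity with a bulk insertion. Moreover $\partial_{\theta_j}=\frac{1+x_j^2}{2}\partial_{x_j}$ rescales each $\LD_j$ by its own factor, which is not conjugation by a common multiplier plus a common shift.
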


\begin{proof}
This can be checked by direct calculation. See also~\cite[Theorem~1.5 (ii)]{zhang2025multipleradialslekappaquantum}.
\end{proof}

\subsection{Radial Loewner chain}
\label{subsec::radialLoewner}
\paragraph*{Conformal radius.}
For a simply connected domain $\Omega\subsetneq\C$ and $z\in \Omega$, the \emph{conformal radius} of $\Omega$ seen from $z$ is defined by $\CR(\Omega; z) := 1/\phi'(z)$  where $\phi : \Omega \to \U$ is the conformal map such that $\phi(z)=0$ and $\phi'(z)>0$. 
In particular, for $\Omega=\HH$ and $z\in\HH$, we have
\begin{align}\label{eqn::CR_H}
\CR(\HH; z)=2\Im(z). 
\end{align}
Conformal radius is conformally covariant: for any conformal map $\varphi$ on $\Omega$, we have
\begin{align} \label{eqn::CR_cov}
\CR(\Omega; z)=|\varphi'(z)|^{-1}\CR(\varphi(\Omega); \varphi(z)).
\end{align}

\paragraph*{Radial Loewner chain.}
Fix $\theta\in [0,2\pi)$. Suppose $\gamma:[0,T]\to \overline{\U}$ is a continuous simple curve such that $\gamma_0=\ee^{\ii \theta}$ and $\gamma_{(0,T)} \subset \U \setminus \{0\}$. Let $U_t$ be the connected component of $\U\setminus \gamma_{[0,t]}$ containing the origin. Let $\mathfrak{g}_t:U_t \to \U$ be the unique conformal map with $\mathfrak{g}_t(0)=0$ and $\mathfrak{g}'_t(0)>0$. We say that the curve is parameterized by capacity if $\mathfrak{g}'_t(0)=\ee^t$. Then $\mathfrak{g}_t$ satisfies the radial Loewner chain:
\begin{equation*}
	\partial_t \mathfrak{g}_t(z)=\mathfrak{g}_t(z) \frac{\ee^{\ii\xi_t}+\mathfrak{g}_t(z)}{\ee^{\ii\xi_t}-\mathfrak{g}_t(z)}, \quad \mathfrak{g}_0(z)=z,
\end{equation*}
where $t\mapsto \xi_t \in \R$ is continuous and called driving function of $\gamma$. Let $\phi_t$ be the covering map of $\mathfrak{g}_t$, i.e., the continuous function such that $\mathfrak{g}_t(\ee^{\ii \theta})=\ee^{\ii \phi_t(\theta)}$ and $\phi_0(\theta)=\theta$, we have
\begin{equation*}
	\partial_t \phi_t(\theta) =\cot \left( \frac{\phi_t(\theta)-\xi_t}{2} \right).
\end{equation*}

\paragraph*{Radial $\SLE_{\kappa}^{\mu}(\rho)$ process.}
Fix $\kappa>0$ and $\mu\in\R$ and $p\ge 1$.
Suppose $\bs{\rho}=(\rho_2,\ldots,\rho_p)\in \R_{\ge 0}^{p-1}$ and $\bs{\theta}=(\theta_1,\ldots,\theta_p)\in \LX_p^{\U}$.	
The radial $\SLE_{\kappa}^{\mu}(\bs{\rho})$ in $(\U;\ee^{\ii\bs{\theta}};0)$ is defined as the radial Loewner chain $(K_t)_{t\ge 0}$ driven by a continuous function $\xi:[0,\infty)\to \R$ satisfying the SDE system
\begin{equation*}%\label{eqn::radialSLEkappa_rho_sde}
\begin{cases}
\displaystyle 	\ud \xi_t = \sqrt{\kappa} \ud B_t + \sum_{j=2}^{p} \frac{\rho_j}{2} \cot \left( \frac{\xi_t-V_t^j}{2} \right) \ud t + \mu \ud t, \qquad \xi_0=\theta_1, \\
\displaystyle 	\ud V_t^j = \cot \left( \frac{V_t^j-\xi_t}{2} \right) \ud t, \qquad V_0^{j}=\theta_j, \qquad 2\le j\le p, 
\end{cases}
\end{equation*}
where $B_t$ is a standard one-dimensional Brownian motion. In this article, we only consider the case when $\kappa\in (0,4]$ and all weights are non-negative. In such case, the process is well-defined for all time and is almost surely generated by a continuous curve $\gamma$, see e.g.~\cite{MillerSheffieldIG4, HuangPeltolaWuMultiradialSLE}. Moreover, in this case, the curve does not touch the boundary except at the beginning, and the evolution $V_t^{j}$ coincides with $\phi_t(\theta_j)$ for $2\le j\le p$.
For nice $p$-polygon $(\Omega; \bs{x})$ with $\bs{x}=(x_1, \ldots, x_p)$ and $z\in\Omega$, we define radial $\SLE_{\kappa}^{\mu}(\bs{\rho})$ in $(\Omega;\bs{x};z)$ as the pushforward measure of radial $\SLE_{\kappa}^{\mu}(\bs{\rho})$ in $(\U; \ee^{\ii\bs{\theta}};0)$ by the map $\varphi^{-1}$, where $\varphi:\Omega\to \U$ is any conformal map such that $\varphi(z)=0$ and $\varphi(x_j)=\ee^{\ii\theta_j}$ for each $j$.

\begin{lemma}[{\cite[Proposition~2.4]{HuangPeltolaWuMultiradialSLE}}] \label{lem::martingale_implies_BPZ}
Fix $\kappa>0$ and $\lambda\in\R$ and $p\ge 1$.
Assume $\gamma$ is radial $\SLE_{\kappa}$ in $(\U;\ee^{\ii \theta_1};0)$. For a function $\LZ:\LX_{p}^{\U} \to \R$, define
\begin{equation}\label{eqn::BPZ_radial_mart}
	M_t(\LZ)=\LZ (\xi_t, \phi_t(\theta_2), \ldots, \phi_t(\theta_{p})) \mathfrak{g}'_t(0)^{ - \frac{\lambda}{\kappa} } 
	\prod_{j=2}^{p} \phi'_t(\theta_j)^{\mathfrak{b}}.
\end{equation}
Then the process $M(\LZ)$ is a local martingale with respect to $\gamma$ if and only if $\LZ$ is smooth and satisfies the following BPZ equation:
\begin{equation}\label{eqn::BPZ_radial_first}
\LD_1\LZ=\frac{\lambda}{\kappa}\LZ,
\end{equation}
where $\LD_1$ is defined in~\eqref{eqn::radialBPZ_operatordef}.
\end{lemma}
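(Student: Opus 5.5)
The plan is a direct Itô computation for $M_t(\LZ)$ along radial $\SLE_\kappa$ driven by $\ud\xi_t=\sqrt{\kappa}\,\ud B_t$. Each factor in~\eqref{eqn::BPZ_radial_mart} evolves deterministically, except through $\xi_t$.

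\textbf{Evolution of the factors.} Since $\mathfrak g_t'(0)=\ee^t$, the term $\mathfrak g_t'(0)^{-\lambda/\kappa}$ contributes $-\frac{\lambda}{\kappa}\,\ud t$ to the logarithm. Differentiating $\partial_t\phi_t(\theta)=\cot\bigl(\frac{\phi_t(\theta)-\xi_t}{2}\bigr)$ in $\theta$ gives
\[
\partial_t\log\phi_t'(\theta_j)=-\frac{1}{2\sin^2\bigl(\frac{\phi_t(\theta_j)-\xi_t}{2}\bigr)}.
\]
Hence $\prod_j\phi_t'(\theta_j)^{\mathfrak b}$ contributes
\[
-\sum_{j\ge2}\frac{\mathfrak b}{2\sin^2\bigl(\frac{\phi_t(\theta_j)-\xi_t}{2}\bigr)}\,\ud t,
\]
and $\mathfrak b/2=\frac{(6-\kappa)/\kappa}{4}$ is exactly the potential coefficient in $\LD_1$. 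The variables $V^j_t=\phi_t(\theta_j)$ evolve with the drift $\cot\bigl(\frac{V^j_t-\xi_t}{2}\bigr)$, which matches the first-order part of $\LD_1$.

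\textbf{The ``if'' direction.} Assume $\LZ$ is smooth, so $C^2$ suffices for Itô's formula. Then
\[
\ud M_t=\Bigl(\prod\cdots\Bigr)\Bigl[\sqrt\kappa\,\partial_1\LZ\,\ud B_t+\bigl(\LD_1-\tfrac{\lambda}{\kappa}\bigr)\LZ\,\ud t\Bigr],
\]
evaluated at $(\xi_t,V_t)$. The drift vanishes by~\eqref{eqn::BPZ_radial_first}, so $M$ is a local martingale up to the exit time of $\LX_p^\U$.

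\textbf{The ``only if'' direction.} This is the main obstacle, because $\LZ$ is only assumed to be a function. If $\LZ\in C^2$, the same computation shows that the drift $(\LD_1-\lambda/\kappa)\LZ$, evaluated along the path, must vanish $\ud t$-a.s. The support of the law of $(\xi_t,V_t)$ has nonempty interior near the starting point (Stroock–Varadhan support theorem, or simply hypoellipticity of the generator). Taking $t\to0$ and using continuity, $(\LD_1-\lambda/\kappa)\LZ=0$ at every initial point.

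For general measurable $\LZ$, I would first show smoothness. Up to the explicit positive prefactor, the local martingale property says that $\LZ$ is space–time harmonic for the diffusion $(\xi_t,V_t)$. Its generator $L=\frac\kappa2\partial_1^2+\sum_{j\ge2}\cot(\cdot)\partial_j$ satisfies Hörmander's bracket condition. Iterated commutators of $\partial_1$ with the drift field produce $\partial_1^k\cot\bigl(\frac{\theta_j-\theta_1}{2}\bigr)\partial_j$. For distinct $\theta_j$, the Vandermonde-type nondegeneracy of these coefficients yields all of the $\partial_j$. Hörmander's theorem then makes the heat kernel smooth, and writing $\LZ$ as an expectation against this kernel shows $\LZ$ is smooth. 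Once $\LZ$ is smooth, the previous argument applies, and the equivalence follows as in~\cite[Proposition~2.4]{HuangPeltolaWuMultiradialSLE}.
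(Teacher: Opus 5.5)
Your architecture is the standard one. The paper gives no proof of its own here and defers to Proposition~2.4 of Huang--Peltola--Wu. By It\^o's formula, with $X_s=(\xi_s,\phi_s(\theta_2),\ldots,\phi_s(\theta_p))$, the drift of $M_t(\LZ)$ is the positive prefactor times $(\LD_1-\lambda/\kappa)\LZ(X_t)$. This gives the ``if'' direction and, for $\LZ\in C^2$, the converse. Your bookkeeping of $\mathfrak{g}_t'(0)=\ee^t$, $\partial_t\log\phi_t'(\theta_j)=-\frac12\sin^{-2}(\frac{\phi_t(\theta_j)-\xi_t}{2})$ and $\mathfrak b/2=\frac{(6-\kappa)/\kappa}{4}$ is correct.

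Two small remarks:
\begin{itemize}
\item In the $C^2$ converse you need no support theorem. Uniqueness of the semimartingale decomposition forces $\int_0^t(\cdots)(\LD_1-\lambda/\kappa)\LZ(X_s)\,\ud s\equiv0$. Continuity at $s=0$ then gives the equation at the starting point, and the starting point is arbitrary.
\item Your ``Vandermonde-type'' claim needs an argument in the radial case. Suppose $\sum_{j\ge2}c_j\,\partial_u^k\cot(u/2)|_{u=\theta_j-\theta_1}=0$ for all $k\ge1$. Then $s\mapsto\sum_j c_j\cot((\theta_j-\theta_1+s)/2)$ is constant near $0$, hence constant everywhere. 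This forces all $c_j=0$, because the function has a simple pole with residue $2c_j$ at $s=\theta_1-\theta_j$.
\end{itemize}

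\textbf{The gap: smoothness when $\LZ$ is not known to be $C^2$.} A local martingale gives no identity $\LZ(\bs\theta)=\mathbb{E}_{\bs\theta}[M_t]$ at a fixed time, so there is nothing to integrate ``against the heat kernel''.

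You can localize at the exit time $\tau_K$ of a compact $K\subset\LX_p^{\U}$, but only if $\LZ$ is a priori measurable and locally bounded. You should state this assumption; in the paper's use of the lemma it follows from~\eqref{eqn::LZmix_bound}. Even then, the resulting identity is
$\LZ(\bs\theta)=\mathbb{E}_{\bs\theta}[\LZ(X_{t\wedge\tau_K})W_{t\wedge\tau_K}]$, with the Feynman--Kac weight $W_t=\ee^{-\lambda t/\kappa}\prod_j\phi_t'(\theta_j)^{\mathfrak b}$. It involves the law of $(\tau_K,X_{\tau_K})$, and the smoothness of that law in $\bs\theta$ does not follow from smoothness of any transition kernel.

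The missing step is the weak formulation:
\begin{itemize}
\item Integrate the identity against $\varphi\in C_c^\infty$ supported well inside $K$.
\item Use Dynkin's formula (by duality, for smooth functions) together with $\mathbb{P}(\tau_K\le t)=O(\ee^{-c/t})$.
\item Divide by $t$ and let $t\to0$ to obtain $\int\LZ\,(\LD_1-\lambda/\kappa)^{*}\varphi=0$.
\end{itemize}
Then apply H\"ormander's theorem to the operator $\LD_1-\lambda/\kappa=\frac{\kappa}{2}\partial_1^2+\sum_{\ell\ge2}\cot(\frac{\theta_\ell-\theta_1}{2})\partial_\ell+(\text{potential})$ itself. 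Its bracket condition is exactly the one you verified, so $\LZ\in C^\infty$, and your $C^2$ argument finishes the proof.
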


In the following, we present one example of local martingale of the form~\eqref{eqn::BPZ_radial_mart}. 
Fix $\kappa\in (0,4]$ and $\mu\in\R$ and $p\ge 2$.
Recall that $\LZrad{p}^{(\mu)}(\bs{\theta})$ defined in~\eqref{eqn::LZrad_mu_U} satisfies the radial BPZ system~\eqref{eqn::BPZ_radial} with $\lambda=\frac{1}{2}(\mu^2+1-p^2)$. Thus, 
\begin{align}\label{eqn::multiradial_marginal_mart}
	M_t(\LZrad{p}^{(\mu)})=\LZrad{p}^{(\mu)}(\xi_t, \phi_t(\theta_2), \ldots, \phi_t(\theta_p))	\mathfrak{g}'_t(0)^{ \frac{p^2-1-\mu^2}{2\kappa}} \prod_{j=2}^p \phi'_t(\theta_j)^{\mathfrak{b}}
\end{align}
is a local martingale with respect to radial $\SLE_{\kappa}$ in $(\U; \ee^{\ii\theta_1}; 0)$. In fact, the law of radial $\SLE_{\kappa}$ in $(\U; \ee^{\ii\theta_1}; 0)$ weighted by the local martingale $M_t(\LZrad{p}^{(\mu)})$ is radial $\SLE_{\kappa}^{\mu}(2, \ldots, 2)$ in $(\U;\ee^{\ii\bs{\theta}};0)$ (see~\cite[Lemma~2.3]{HuangPeltolaWuMultiradialSLE}) whose driving function $\xi_t$ satisfies 
\begin{align*}
\ud \xi_t=&\sqrt{\kappa}\ud B_t+\kappa\left(\partial_1\log\LZrad{p}^{(\mu)}\right)(\xi_t, \phi_t(\theta_2), \ldots, \phi_t(\theta_p))\ud t\\
=&\sqrt{\kappa}\ud B_t+\sum_{j=2}^{p} \cot \left( \frac{\xi_t-\phi_t(\theta_j)}{2} \right) \ud t + \mu \ud t.%\label{eqn::multiradial_marginal_SDE}
\end{align*}
This process is very special, as it satisfies boundary perturbation property, see~\cite[Lemma~4.4]{HuangPeltolaWuMultiradialSLE}.

\subsection{Multi-chordal SLE}
\label{subsec::multichordalSLE}

\paragraph*{Chordal SLE.}
For $2$-polygon $(\Omega; x_1, x_2)$, we denote by $\chamber(\Omega; x_1, x_2)$ the space of continuous simple curves $\eta\subset\overline{\Omega}$ from $x_1$ to $x_2$ such that $\eta\cap\partial\Omega=\{x_1,x_2\}$. 
Fix $\kappa\in (0,4]$ and $2$-polygon $(\Omega; x_1, x_2)$.
Chordal $\SLE_{\kappa}$ is a continuous simple curve in $\chamber(\Omega;x_1,x_2)$. It has the same law as radial $\SLE_{\kappa}(\kappa-6)$ in $\Omega$ starting from $x_1$ with force point $x_2$. We denote its law by $\Pchordtwo(\Omega; x_1, x_2)$ and its partition function is given by 
\[\LZtwo(\Omega; x_1, x_2)=\Poisson(\Omega; x_1, x_2)^{\mathfrak{b}}.\]

\paragraph*{Curve space.}
Fix a $2N$-polygon $(\Omega;\bs{x})$ with $\bs{x}=(x_1,\ldots,x_{2N})$. We consider $N$ disjoint curves in $\Omega$ connecting $\{x_1, \ldots, x_{2N}\}$ pairwise. 
Topologically, these $N$ curves form a planar pair partition, which we call a link pattern. More precisely, a link pattern $\alpha$ of size $N$ is a pair partition $\alpha=\{\{a_1,b_1\},\ldots,\{a_N,b_N\}\}$ of the set $\{1,\ldots,2N\}$, where there do not exist two links $\{a_i,b_i\},\{a_j,b_j\}\in\alpha$ such that $a_i<a_j<b_i<b_j$. We denote the set of such link patterns by $\LP_N$; in particular,
\[\LP_0=\{\emptyset\}, \qquad 
\#\LP_N=\Cat_N=\frac{1}{N+1}\binom{2N}{N}.
\]
Fix a $2N$-polygon $(\Omega;\bs{x})$ with $\bs{x}=(x_1,\ldots,x_{2N})$ and $\alpha\in\LP_N$. We define the configuration space associated with $\alpha$ by
\begin{equation*}%\label{eqn::curve_space_alpha}
	\chamber_{\alpha}(\Omega;\bs{x}):=\left\{\bs{\eta}=(\eta^1,\ldots,\eta^N):\eta^j\in\chamber(\Omega;x_{a_j},x_{b_j})\text{ for all }j,\quad \eta^i\cap\eta^j=\emptyset\text{ for }i\neq j\right\}.
\end{equation*}
In other words, an element of $\chamber_{\alpha}(\Omega;\bs{x})$ is a family of pairwise disjoint continuous simple curves in $\overline{\Omega}$ whose connectivity is determined by $\alpha$; each curve hits $\partial\Omega$ only at its two marked endpoints.

\paragraph*{Multi-chordal SLE.}
Fix $\kappa\in (0,4]$ and $2N$-polygon $(\Omega; \bs{x})$ with $\bs{x}=(x_1, \ldots, x_{2N})$. 
The $N$-chordal $\SLE_{\kappa}$ in polygon $(\Omega; \bs{x})$ associated with link pattern $\alpha\in\LP_N$ is the unique probability measure on the configuration space $\chamber_{\alpha}(\Omega; \bs{x})$ with resampling property: for each $j\in\{1, \ldots, N\}$, the conditional law of $\eta^j$ given $\{\eta^i: i\neq j\}$ is chordal $\SLE_{\kappa}$ connecting $x_{a_j}$ and $x_{b_j}$ in the connected component $\Omega_j$ of the domain $\Omega\setminus\cup_{i\neq j}\eta^i$ having the end points $x_{a_j}$ and $x_{b_j}$ on its boundary. We denote its law by $\Pchord_{\alpha}=\Pchord_{\alpha}(\Omega; \bs{x})$. 
There are several literature about the existence and uniqueness of $N$-chordal $\SLE_{\kappa}$ for different range of $\kappa$: \cite{MillerSheffieldIG1, MillerSheffieldIG2, PeltolaWuGlobalMultipleSLEs, WuHyperSLE, BeffaraPeltolaWuUniqueness, AngHoldenSunYu2023, ZhanExistenceUniquenessMultipleSLE, FengLiuPeltolaWu2024}. 
They guarantee the existence and uniqueness of $N$-chordal $\SLE_{\kappa}$ for $\kappa\in (0,4]$. In this article, we focus on $\kappa\in (0,4]$. 

\paragraph*{Pure partition functions.} We denote
$\LX_n^{\HH}=\{(x_1, \ldots, x_n)\in\R^n: x_1<\cdots<x_n\}$.
Pure partition functions of multi-chordal $\SLE_{\kappa}$ are the recursive collection $\{\LZ_{\alpha} \colon \alpha \in \bigsqcup_{N\geq 0} \LP_N\}$ 
of functions $\LZ_{\alpha}(\HH; \cdot) \colon \LX_{2N}^{\HH}\to\R$
uniquely determined by the following four properties:
\begin{itemize}
	\item Chordal BPZ system with $n=2N$ and $\lambda=0$: 
	\begin{equation*}
\frac{\kappa}{2}\frac{\partial_j^2\LZ}{\LZ}+\sum_{\ell\neq j}\left(\frac{2}{x_{\ell}-x_j}\frac{\partial_{\ell}\LZ}{\LZ}-\frac{(6-\kappa)/\kappa}{(x_{\ell}-x_j)^2}\right)=0,\qquad \text{for all }1\le j\le 2N. 
\end{equation*}

	\item M\"{o}bius covariance: for all M\"obius maps $\varphi$ of the upper half-plane $\HH$ such that $\varphi(x_{1}) < \cdots < \varphi(x_{2N})$, we have
	\begin{align*}%\label{eqn::PPF_COV}
		\LZ_{\alpha}(\HH; x_{1},\ldots,x_{2N}) = 
		\prod_{j=1}^{2N} \varphi'(x_{j})^{\mathfrak{b}} 
		\times \LZ_{\alpha}(\HH; \varphi(x_{1}),\ldots,\varphi(x_{2N})).
	\end{align*}
	\item Asymptotics: with $\LZ_{\emptyset} \equiv 1$ for the empty link pattern $\emptyset \in \LP_0$, the collection $\{\LZ_{\alpha} \colon \alpha\in\LP_N\}$ satisfies the following recursive asymptotics property. Fix $j \in \{1,2, \ldots, 2N-1\}$ and $y \in (x_{j-1}, x_{j+2})$ (with the convention that $x_0 = -\infty$ and  $x_{2N+1} = +\infty$). We have
	\begin{align*}%\label{eqn::PPF_ASY} 
		\lim_{x_j,x_{j+1}\to y} \frac{\LZ_{\alpha}(\HH; x_1,\ldots, x_{2N})}{ (x_{j+1}-x_j)^{-2\mathfrak{b}} }
		= 
		\begin{cases}
			\LZ_{\alpha/\{j,j+1\}}(\HH; x_1, \ldots, x_{j-1}, x_{j+2}, \ldots, x_{2N}), 
			& \quad \text{if }\{j, j+1\}\in\alpha , \\
			0 ,
			& \quad \text{if }\{j, j+1\} \not\in \alpha ,
		\end{cases}
	\end{align*}
	where $\alpha/\{k,l\}$ denotes the link pattern in $\LP_{N-1}$ obtained by removing $\{k,l\}$ from $\alpha$ and then relabeling the remaining indices so that they are the first $2(N-1)$ positive integers. 
	\item The functions are positive and satisfy the following power-law bound:
	\begin{align*}%\label{eqn::PPF_PLB}
		0<\LZ_{\alpha}(\HH; x_1, \ldots, x_{2N})\le\prod_{\{a,b\}\in\alpha}|x_a-x_b|^{-2\mathfrak{b}}, \quad \text{for all }x_1<\cdots<x_{2N}. 
	\end{align*}
\end{itemize}

The uniqueness when $\kappa\in (0,4]$ of such collection of functions was proved in~\cite{FloresKlebanPDE2}. The existence when $\kappa\in (0,4]$ is proved in~\cite{ KytolaPeltolaPurePartitionFunctions, PeltolaWuGlobalMultipleSLEs, WuHyperSLE, AngHoldenSunYu2023, FengLiuPeltolaWu2024}. In this article, we focus on $\kappa\in (0,4]$. 
We extend the definition of $\LZ_{\alpha}$ to general nice $2N$-polygon $(\Omega; \bs{x})=(\Omega; x_1, \ldots, x_{2N})$ as 
\begin{align}\label{eqn::PPF_COV}
	\LZ_{\alpha}(\Omega; x_1, \ldots, x_{2N}):=\prod_{j=1}^{2N}|\varphi'(x_j)|^{\mathfrak{b}}\times \LZ_{\alpha}(\HH; \varphi(x_1), \ldots, \varphi(x_{2N})),
\end{align}
where $\varphi$ is any conformal map from $\Omega$ onto $\HH$ with $\varphi(x_1)<\cdots<\varphi(x_{2N})$. 
The asymptotics become
\begin{align}\label{eqn::PPF_ASY}
\lim_{x_j, x_{j+1}\to y}\frac{\LZ_{\alpha}(\Omega; \bs{x})}{\Poisson(\Omega; x_j, x_{j+1})^{\mathfrak{b}}}=\begin{cases}
\LZ_{\alpha/\{j,j+1\}}(\Omega; \ddot{\bs{x}}_j), &\text{if }\{j,j+1\}\in\alpha,\\
0,&\text{if }\{j,j+1\}\not\in\alpha,
\end{cases}
\end{align}
where $\ddot{\bs{x}}_j=(x_1, \ldots, x_{j-1}, x_{j+2}, \ldots, x_{2N})$. 
The power-law bound becomes
\begin{equation}\label{eqn::PPF_PLB}
0<\LZ_{\alpha}(\Omega; \bs{x})\le \prod_{\{a,b\}\in\alpha}\Poisson(\Omega; x_a, x_b)^{\mathfrak{b}}. 
\end{equation}

The law of  multi-chordal SLE is encoded by the pure partition functions. Fix $N\ge 1$ and $\alpha\in\LP_N$. 
Consider the pure partition functions of $N$-chordal $\SLE_{\kappa}$ associated with $\alpha$. We write
\begin{equation*}
	\LZ_{\alpha}(\bs{\theta}):=\LZ_{\alpha}(\U; \ee^{\ii \theta_1}, \ldots, \ee^{\ii \theta_{2N}}) \qquad \text{for }\bs{\theta}=(\theta_1, \ldots, \theta_{2N}) \in \LX_{2N}^{\U}. 
\end{equation*}	
One may check that $\LZ_{\alpha}(\bs{\theta})$ satisfies~\eqref{eqn::BPZ_radial_first} with $\lambda=\kappa\tilde{\mathfrak{b}}$. Thus,
\begin{equation}\label{eqn::multirchordal_marginal_mart}
	M_t(\LZ_{\alpha})=\LZ_{\alpha} (\xi_t, \phi_t(\theta_2), \ldots, \phi_t(\theta_{2N})) \mathfrak{g}'_t(0)^{-\tilde{\mathfrak{b}} } \prod_{j=2}^{2N} \phi'_t(\theta_j)^{\mathfrak{b}}
\end{equation}
is a local martingale with respect to radial $\SLE_{\kappa}$ in $(\U; \ee^{\ii\theta_1};0)$. 
Assume $\bs{\eta}=(\eta^1,\ldots,\eta^N)$ is $N$-chordal $\SLE_{\kappa}$ associated with $\alpha$ in polygon $(\U; \ee^{\ii \bs{\theta}})$ with $\eta^1$ starting from $\ee^{\ii \theta_1}$. Then the law of $\eta^1$ is the same as radial $\SLE_{\kappa}$ in $(\U; \ee^{\ii\theta_1}; 0)$ weighted by the local martingale $M_{t}(\LZ_{\alpha})$ (see~\cite[Lemma~2.3]{FengWuYangIsing}).

\paragraph*{Boundary perturbation.}
When $\kappa\in (0,4]$, there is a construction of $N$-chordal $\SLE_{\kappa}$ using Brownian loop measure that we describe in Lemma~\ref{lem::multichordalSLE_construction_blm}. Using this construction, it is clear to see the ``boundary perturbation property" of multi-chordal SLE, see Lemma~\ref{lem::multichordalSLE_bp}.  
\begin{lemma}[{\cite[Section~3]{PeltolaWuGlobalMultipleSLEs}}]
\label{lem::multichordalSLE_construction_blm}
Fix $\kappa\in (0,4]$ and $N\ge 1$. 
Fix $\alpha=\{\{a_1, b_1\}, \ldots, \{a_N, b_N\}\}\in\LP_N$ and $2N$-polygon $(\Omega;\bs{x})$ with $\bs{x}=( x_1, \ldots, x_{2N})$. 
For $1\le j\le N$, let $\eta^j\sim\Pchordtwo(\Omega; x_{a_j}, x_{b_j})$ be chordal $\SLE_{\kappa}$ in  $(\Omega; x_{a_j}, x_{b_j})$. 
Denote by $\otimes_{\{a,b\}\in\alpha}\Pchordtwo(\Omega; x_a, x_b)$ the law on $(\eta^1, \ldots, \eta^N)$ under which $\eta^1, \ldots, \eta^N$ are independent. 
Then $\Pchord_{\alpha}(\Omega; \bs{x})$ the law of $N$-chordal $\SLE_{\kappa}$ associated with $\alpha$ is the same as $\otimes_{\{a,b\}\in\alpha}\Pchordtwo(\Omega; x_a, x_b)$ weighted by 
\begin{equation*}
\frac{\prod_{\{a,b\}\in\alpha}\LZtwo(\Omega; x_a, x_b)}{\LZ_{\alpha}(\Omega; \bs{x})}\one\left\{\eta^j\cap\eta^i=\emptyset, \forall i\neq j\right\}\exp\left(\frac{\mathfrak{c}}{2}\blm(\Omega; \eta^1, \ldots, \eta^N)\right). 
\end{equation*}
\end{lemma}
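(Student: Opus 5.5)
The plan is to show that the weighted measure, call it $\QQ_\alpha$, is a probability measure on $\chamber_\alpha(\Omega;\bs{x})$ with the resampling property. The uniqueness of $N$-chordal $\SLE_\kappa$ for $\kappa\in(0,4]$, quoted above, then forces $\QQ_\alpha=\Pchord_\alpha(\Omega;\bs{x})$. This identification also shows that the normalizing constant
\[
\LZ_\alpha(\Omega;\bs{x})\Big/\prod_{\{a,b\}\in\alpha}\LZtwo(\Omega;x_a,x_b)
\]
is the correct one.

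\emph{Step 1: loop-measure decomposition.} Fix $j$ and write $K_j=\cup_{i\neq j}\eta^i$. Split the loops that hit at least two of the curves according to whether they hit $\eta^j$. This gives
\[
\blm(\Omega;\eta^1,\ldots,\eta^N)=\blm(\Omega;(\eta^i)_{i\neq j})+\blm^{\mathrm{loop}}\bigl(\ell\subset\Omega:\ell\cap\eta^j\neq\emptyset,\ \ell\cap K_j\neq\emptyset\bigr),
\]
and the last term is $\blm(\Omega;\eta^j,K_j)$. Hence, conditionally on $(\eta^i)_{i\neq j}$, the law of $\eta^j$ under the weighted measure is chordal $\SLE_\kappa$ in $(\Omega;x_{a_j},x_{b_j})$ reweighted by $\one\{\eta^j\cap K_j=\emptyset\}\exp\bigl(\frac{\mathfrak c}{2}\blm(\Omega;\eta^j,K_j)\bigr)$, up to a factor depending only on $(\eta^i)_{i\neq j}$.

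\emph{Step 2: chordal boundary perturbation.} I will then invoke the restriction property of chordal $\SLE_\kappa$ for $\kappa\le4$ (Lawler--Schramm--Werner). For a simply connected $\Omega_j\subset\Omega$ agreeing with $\Omega$ near $x_{a_j}$ and $x_{b_j}$,
\[
\frac{\ud\Pchordtwo(\Omega_j;x_{a_j},x_{b_j})}{\ud\Pchordtwo(\Omega;x_{a_j},x_{b_j})}(\eta)=\one\{\eta\subset\Omega_j\}\exp\Bigl(\tfrac{\mathfrak c}{2}\blm(\Omega;\eta,\Omega\setminus\Omega_j)\Bigr)\frac{\Poisson(\Omega;x_{a_j},x_{b_j})^{\mathfrak b}}{\Poisson(\Omega_j;x_{a_j},x_{b_j})^{\mathfrak b}}.
\]
Applied with $\Omega_j$ the component of $\Omega\setminus K_j$ containing the endpoints, this shows the conditional law in Step 1 is exactly $\Pchordtwo(\Omega_j;x_{a_j},x_{b_j})$. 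That is the resampling property. A detail to check here is that loops hitting $\Omega\setminus\Omega_j$ and $\eta$ are the same as loops hitting $K_j$ and $\eta$, since $\eta\subset\Omega_j$.

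\emph{Step 3: finiteness and positivity of the total mass.} For $\kappa\in(8/3,4]$ we have $\mathfrak c>0$, so the weight is unbounded and finiteness is not automatic. I will integrate out the curves one at a time. By Step 2, integrating out $\eta^N$ against $\Pchordtwo(\Omega;x_{a_N},x_{b_N})$ with the indicator and the loop term produces the factor $\bigl(\Poisson(\Omega_N;\cdot)/\Poisson(\Omega;\cdot)\bigr)^{\mathfrak b}\le1$, by the monotonicity~\eqref{eqn::bPoisson_mono}. One must first reorganize the loop term of the remaining curves, which is Step 1 again. Iterating bounds the total mass by $1$, which matches the power-law bound~\eqref{eqn::PPF_PLB}. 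Positivity holds because disjoint configurations have positive probability under the product measure.

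\emph{Step 4: identifying the total mass with $\LZ_\alpha$.} This is the main obstacle. Once Steps 1--3 give the resampling property, uniqueness of $N$-chordal $\SLE_\kappa$ shows that $\QQ_\alpha$ normalized is $\Pchord_\alpha$. The remaining claim is that the total mass equals $\LZ_\alpha/\prod\LZtwo$, so that the constant written in the statement is correct. I would define $\widetilde\LZ_\alpha$ as $\prod\LZtwo$ times the total mass and verify the four defining properties of pure partition functions.
\begin{itemize}
\item Conformal covariance is inherited from $\Poisson$ and from the conformal invariance of $\blm$.
\item The power-law bound is Step 3.
\item The asymptotics~\eqref{eqn::PPF_ASY} follow by letting $x_j,x_{j+1}\to y$. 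If $\{j,j+1\}\in\alpha$, the short curve shrinks and its loop interaction vanishes. If $\{j,j+1\}\notin\alpha$, the two curves from nearby points must stay disjoint and the ratio degenerates to $0$.
\item The BPZ equations come from Lemma~\ref{lem::martingale_implies_BPZ}-type reasoning: conditional on an initial segment of $\eta^1$, the domain Markov structure of $\QQ_\alpha$ makes the marginal density a martingale, which forces the BPZ equation in the first variable; the others follow by symmetry.
\end{itemize}
The uniqueness of Flores--Kleban then yields $\widetilde\LZ_\alpha=\LZ_\alpha$. The delicate parts are the asymptotic estimates and the smoothness needed for the BPZ step; for these I would rely on the estimates of~\cite{PeltolaWuGlobalMultipleSLEs}.
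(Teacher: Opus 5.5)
Your proposal is correct and is essentially the argument of \cite[Section~3]{PeltolaWuGlobalMultipleSLEs}, which the paper cites in place of a proof: tilt independent chordal SLEs by the loop weight, obtain the resampling property from chordal boundary perturbation, bound the total mass by iterated integration together with the monotonicity~\eqref{eqn::bPoisson_mono}, then verify PDE, COV, ASY and PLB and invoke Flores--Kleban uniqueness. One small simplification: the case $\{j,j+1\}\notin\alpha$ of the asymptotics follows directly from your Step~3 bound and needs no disjointness argument, since $\prod_{\{a,b\}\in\alpha}\Poisson(\Omega;x_a,x_b)^{\mathfrak b}$ stays bounded while $\Poisson(\Omega;x_j,x_{j+1})^{\mathfrak b}\to\infty$.
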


\begin{lemma}\label{lem::multichordalSLE_bp} 
Fix $\kappa\in (0,4]$ and $N\ge 1$. 
Fix $\alpha=\{\{a_1, b_1\}, \ldots, \{a_N, b_N\}\}\in\LP_N$ and $2N$-polygon $(\Omega;\bs{x})$ with $\bs{x}=( x_1, \ldots, x_{2N})$. 
Suppose $K$ is a compact subset of $\overline{\Omega}$ such that $K$ has a positive distance from $\{x_1, \ldots, x_{2N}\}$ and $\{\bs{\eta}\cap K=\emptyset\}$ is not empty. Then, the $N$-chordal $\SLE_{\kappa}$ in the smaller domain $(\Omega\setminus K; \bs{x})$ is absolutely continuous with respect to that in $(\Omega; \bs{x})$, with Radon-Nikodym derivative
\begin{align*}
	%\label{eqn::multichordalSLESLEbp}
\frac{\ud \Pchord_{\alpha}(\Omega\setminus K; \bs{x})}{\ud\Pchord_{\alpha}(\Omega;\bs{x})}=	\frac{\LZ_{\alpha}(\Omega; \bs{x})}{\LZ_{\alpha}(\Omega\setminus K; \bs{x})}\one\left\{ \bs{\eta}\cap K=\emptyset \right\} \exp\left(\frac{\mathfrak{c}}{2} \blm\left(\Omega; \bs{\eta},K\right)\right), 
	\end{align*}
where $\LZ_{\alpha}(\Omega\setminus K; \bs{x})$ is the product of pure partition functions of connected components of $\Omega\setminus K$. 
\end{lemma}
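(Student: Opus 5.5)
The plan is to derive the Radon--Nikodym formula from the Brownian loop measure construction in Lemma~\ref{lem::multichordalSLE_construction_blm}, applied in both domains $\Omega$ and $\Omega\setminus K$, combined with the boundary perturbation property of a single chordal $\SLE_\kappa$ for $\kappa\in(0,4]$. The single-curve fact I will use is the standard restriction formula: if $U\subset\Omega$ agrees with $\Omega$ near $x_a,x_b$, then
\[
\frac{\ud \Pchordtwo(U; x_a,x_b)}{\ud\Pchordtwo(\Omega;x_a,x_b)}(\eta)=\frac{\LZtwo(\Omega;x_a,x_b)}{\LZtwo(U;x_a,x_b)}\one\{\eta\subset U\}\exp\Big(\frac{\mathfrak c}{2}\blm^{\mathrm{loop}}(\ell\subset\Omega:\ell\cap\eta\neq\emptyset,\ \ell\cap K\neq\emptyset)\Big),
\]
which is the classical Lawler--Schramm--Werner restriction formula, with normalization identified through the Poisson kernel $\Poisson^{\mathfrak b}$. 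If $K$ disconnects $\Omega$, this is applied in the component containing $x_a,x_b$ (curves hitting $K$ are excluded anyway).

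First I write $\Pchord_\alpha(\Omega\setminus K;\bs x)$ via Lemma~\ref{lem::multichordalSLE_construction_blm} in the domain $\Omega\setminus K$ (component-wise, with $\LZ_\alpha(\Omega\setminus K;\bs x)$ being the product over components, each factor itself given by that lemma). It is the product law $\otimes\Pchordtwo(\Omega\setminus K;x_a,x_b)$ weighted by
\[
\frac{\prod\LZtwo(\Omega\setminus K;x_a,x_b)}{\LZ_\alpha(\Omega\setminus K;\bs x)}\one\{\text{disjoint}\}\exp\Big(\frac{\mathfrak c}{2}\blm(\Omega\setminus K;\bs\eta)\Big).
\]
Next I replace each factor $\Pchordtwo(\Omega\setminus K;\cdot)$ by $\Pchordtwo(\Omega;\cdot)$ using the restriction formula; the $\LZtwo(\Omega\setminus K)$ factors cancel, leaving $\prod\LZtwo(\Omega;x_a,x_b)$, the indicator $\one\{\bs\eta\cap K=\emptyset\}$, and the sum over $j$ of the loop masses of loops in $\Omega$ hitting both $\eta^j$ and $K$. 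Then I divide by the density of $\Pchord_\alpha(\Omega;\bs x)$ relative to the same product measure, again from Lemma~\ref{lem::multichordalSLE_construction_blm}. The $\prod\LZtwo(\Omega)$ cancels and the ratio of partition functions becomes $\LZ_\alpha(\Omega;\bs x)/\LZ_\alpha(\Omega\setminus K;\bs x)$.

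The main step, and the only real obstacle, is the loop-measure bookkeeping. I must show that
\[
\blm(\Omega\setminus K;\bs\eta)-\blm(\Omega;\bs\eta)+\sum_j\blm^{\mathrm{loop}}(\ell\subset\Omega:\ell\cap\eta^j\ne\emptyset\ne\ell\cap K)=\blm(\Omega;\bs\eta,K),
\]
where $\blm(\Omega;\bs\eta,K)$ is the multi-set quantity with the $N+1$ sets $\eta^1,\dots,\eta^N,K$. I will prove this by inclusion--exclusion on the number $m$ of curves a loop hits, classified by whether it hits $K$. Since $\blm(\cdot)$ counts a loop with multiplicity $\max(m-1,0)$, a loop avoiding $K$ contributes equally to both $\blm(\Omega\setminus K;\cdot)$ and $\blm(\Omega;\cdot)$. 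A loop hitting $K$ and $m$ curves contributes $-\max(m-1,0)+m=\min(m,1)+\dots$, which is exactly $m$ when $m\ge1$, matching its multiplicity $(m+1)-1=m$ in $\blm(\Omega;\bs\eta,K)$. The finiteness of all terms holds on the event $\{\bs\eta\cap K=\emptyset\}$ because the sets are disjoint compact sets. Finally, the nonemptiness assumption on $\{\bs\eta\cap K=\emptyset\}$ guarantees that the resulting density has positive mass, so $\LZ_\alpha(\Omega\setminus K;\bs x)>0$ and both measures are genuine probability measures, which completes the identification.
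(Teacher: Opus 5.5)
\textbf{There is a genuine error in the loop bookkeeping}, which is the step you yourself call the main one. Your overall strategy is sound, but the identity you assert there is false. Take a loop $\ell\subset\Omega$ that meets $K$ and exactly $m$ of the curves. Its weight in
$\blm(\Omega\setminus K;\eta^1,\ldots,\eta^N)-\blm(\Omega;\eta^1,\ldots,\eta^N)+\sum_j\blm(\Omega;\eta^j,K)$
is $0-\max(m-1,0)+m$. For $m\ge1$ this equals $1$, not $m$. A loop avoiding $K$ has weight $0$, as you correctly observe. So your left-hand side is exactly
\[
\blm^{\mathrm{loop}}\bigl(\ell\subset\Omega:\ \ell\cap K\neq\emptyset,\ \ell\cap(\eta^1\cup\cdots\cup\eta^N)\neq\emptyset\bigr).
\]
This is the two-set quantity $\blm(\Omega;\bs\eta,K)$ with $\bs\eta=\eta^1\cup\cdots\cup\eta^N$ regarded as a single set.

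You instead read the target as the $(N+1)$-set quantity $\blm(\Omega;\eta^1,\ldots,\eta^N,K)$, and with that reading the identity fails in two ways:
\begin{itemize}
\item a loop meeting $K$ and $m\ge2$ curves has weight $m$ there, not $1$;
\item a loop avoiding $K$ but meeting $m\ge2$ curves has weight $m-1$ there, not $0$.
\end{itemize}
Taken literally, your argument therefore outputs a wrong Radon--Nikodym derivative. The union reading is the intended one: it is the form in Peltola--Wu's boundary perturbation formula, and the paper writes $\bs\eta=\cup_j\eta^j$ throughout.

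Once the count is corrected, your proof goes through, and it takes a different route from the paper's. The paper groups the curves by the components $\Omega_1,\ldots,\Omega_m$ of $\Omega\setminus K$. It applies the simply connected multi-curve boundary perturbation of Peltola--Wu to each group $\bs\eta_j$ in $\Omega_j\subset\Omega$. It then compares $\otimes_j\Pchord_{\alpha_j}(\Omega)$ with $\Pchord_\alpha(\Omega)$ through a grouped form of Lemma~\ref{lem::multichordalSLE_construction_blm}. The only loop fact it needs is that a loop meeting two different groups must hit $K$.

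You work curve by curve instead. You need only the one-curve restriction formula (the $N=1$ case of the Peltola--Wu result) and Lemma~\ref{lem::multichordalSLE_construction_blm} in both domains, and the disconnected case needs no separate grouping. The price is the finer inclusion--exclusion count, which is exactly where the slip occurred.

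A small side remark: the non-emptiness hypothesis is what guarantees that each pair of $\alpha$ lies in a single component of $\Omega\setminus K$, so that $\Pchord_\alpha(\Omega\setminus K;\bs x)$ and $\LZ_\alpha(\Omega\setminus K;\bs x)$ are defined. Positivity of $\LZ_\alpha$ is automatic and does not come from this hypothesis.
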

\begin{proof}
See~\cite[Proposition~3.4]{PeltolaWuGlobalMultipleSLEs} for the case when $\Omega\setminus K$ is simply connected. The same proof can be adjusted to the more general setting and we briefly explain it. Suppose there are $m$ connected components of $\Omega\setminus K$ having some of $\{x_1, \ldots, x_{2N}\}$ on the boundary. We denote these $m$ connected components by $\Omega_j$ with $1\le j\le m$ and denote by $I_j=\{a\in\{1, \ldots, 2N\}: x_a\in \partial\Omega_j\}$.  Note that $\cup _{j=1}^m I_j=\{1, \ldots, 2N\}$. As $\{\bs{\eta}\cap K=\emptyset\}$ is not empty, the link pattern $\alpha$ induces  sub-link patterns $\alpha_j=\{\{a,b\}\in\alpha: a,b\in I_j\}$ on $I_j$ for $1\le j\le m$. Denote by $\bs{\eta}_j$ the subset of $\bs{\eta}$ containing the curves connecting $x_a$ and $x_b$ for $\{a,b\}\in\alpha_j$ for $1\le j\le m$. 
The pure partition function for the smaller domain $\Omega\setminus K$ is defined by\footnote{The marked points along $\partial\Omega_j$ or $\partial\Omega$ are clear from the context, and we omit them from the notations.}
\begin{equation}%\label{eqn::PPFproduct}
\LZ_{\alpha}(\Omega\setminus K; \bs{x})=\prod_{j=1}^m \LZ_{\alpha_j}(\Omega_j). 
\end{equation}
Let us compare the law of $\bs{\eta}$ under $\otimes_{j=1}^m\Pchord_{\alpha_j}(\Omega_j)$ and its law under $\Pchord_{\alpha}(\Omega)$.\footnote{As the marked points on the boundary are clear from the context, we omit them from the notations.} 
\begin{itemize}
\item On the one hand, the law of $\bs{\eta}_j$ under $\Pchord_{\alpha_j}(\Omega_j)$ is the same as $\Pchord_{\alpha_j}(\Omega)$ weighted by 
\[\frac{\LZ_{\alpha_j}(\Omega)}{\LZ_{\alpha_j}(\Omega_j)}\one\left\{\bs{\eta}_j\cap K=\emptyset\right\}\exp\left(\frac{\mathfrak{c}}{2}\blm(\Omega; \bs{\eta}_j, K)\right). \]
As a consequence, the law of $\bs{\eta}$ under $\otimes_{j=1}^m\Pchord_{\alpha_j}(\Omega_j)$ is the same as $\otimes_{j=1}^m\Pchord_{\alpha_j}(\Omega)$ weighted by 
\begin{align}\label{eqn::multichordalSLESLE_bp_aux1}
&\frac{\prod_{j=1}^m \LZ_{\alpha_j}(\Omega)}{\prod_{j=1}^m \LZ_{\alpha_j}(\Omega_j)}\one\{\bs{\eta}\cap K=\emptyset\}\exp\left(\frac{\mathfrak{c}}{2}\sum_{j=1}^m \blm(\Omega; \bs{\eta}_j, K)\right).
\end{align}
\item On the other hand, from Lemma~\ref{lem::multichordalSLE_construction_blm}, the law of $\bs{\eta}$ under $\Pchord_{\alpha}(\Omega)$ is the same as $\otimes_{j=1}^m \Pchord_{\alpha_j}(\Omega)$ weighted by 
\begin{align}\label{eqn::multichordalSLESLE_bp_aux2}
\frac{\prod_{j=1}^m \LZ_{\alpha_j}(\Omega)}{\LZ_{\alpha}(\Omega)}\one\left\{\bs{\eta}_j\cap\bs{\eta}_i=\emptyset, \forall i\neq j\right\}\exp\left(\frac{\mathfrak{c}}{2}\blm(\Omega; \bs{\eta}_1, \ldots, \bs{\eta}_m)\right). 
\end{align} 
\end{itemize}
Combining~\eqref{eqn::multichordalSLESLE_bp_aux1} and~\eqref{eqn::multichordalSLESLE_bp_aux2}, the law of $\bs{\eta}$ under $\otimes\Pchord_{\alpha_j}(\Omega_j)$ is the same as the law of $\bs{\eta}$ under $\Pchord_{\alpha}(\Omega)$ weighted by 
\begin{align*}
&\frac{\LZ_{\alpha}(\Omega)}{\prod_{j=1}^m\LZ_{\alpha_j}(\Omega_j)}\one\left\{\bs{\eta}\cap K=\emptyset\right\}\exp\left(\frac{\mathfrak{c}}{2}\sum_{j=1}^m \blm(\Omega; \bs{\eta}_j, K)-\frac{\mathfrak{c}}{2}\blm(\Omega; \bs{\eta}_1, \ldots, \bs{\eta}_m)\right)\\
=&\frac{\LZ_{\alpha}(\Omega; \bs{x})}{\LZ_{\alpha}(\Omega\setminus K; \bs{x})}\one\left\{\bs{\eta}\cap K=\emptyset\right\}\exp\left(\frac{\mathfrak{c}}{2}\sum_{j=1}^m \blm(\Omega; \bs{\eta}_j, K)-\frac{\mathfrak{c}}{2}\blm(\Omega; \bs{\eta}_1, \ldots, \bs{\eta}_m)\right).
\end{align*}
It remains to show 
\begin{equation*}%\label{eqn::multichordalSLESLE_bp_aux3}
\sum_{j=1}^m \blm(\Omega; \bs{\eta}_j, K)-\blm(\Omega; \bs{\eta}_1, \ldots, \bs{\eta}_m)=\blm(\Omega; \bs{\eta}, K). 
\end{equation*}
This is true because any Brownian loop intersecting at least two of $\{\bs{\eta}_1, \ldots, \bs{\eta}_m\}$ must intersect $K$. 
\end{proof}

\subsection{Multi-radial SLE with spiral}
\label{subsec::multiradialSLE}
\paragraph*{Multi-radial SLE with spiral.}
Multi-radial SLE with spiral is introduced in~\cite{HuangPeltolaWuMultiradialSLE} whose definition uses multi-time martingale. As the multi-time martingale has heavy notation, we introduce multi-radial SLE using its equivalent definition. Fix $\kappa\in (0,4]$ and $\mu\in\R$. Fix $p$-polygon $(\Omega; \bs{x})$ with $\bs{x}=(x_1, \ldots, x_p)$ and $z\in\Omega$. 
The $p$-radial $\SLE_{\kappa}^{\mu}$ in $(\Omega; \bs{x}; z)$ is the probability measure on $\bs{\gamma}=(\gamma^1, \ldots, \gamma^p)$ such that 
\begin{itemize}
\item $\gamma^1$ is radial $\SLE_{\kappa}^{\mu}(2, \ldots, 2)$ in $\Omega$ from $x_1$ to $z$ with force points $(x_2, \ldots, x_p)$; 
\item given $\gamma^1$, the conditional law of $(\gamma^2, \ldots, \gamma^p)$ is half-$(p-1)$-watermelon $\SLE_{\kappa}$ in $(\Omega\setminus\gamma^1; x_2, \ldots, x_p; z)$. 
\end{itemize}
We denote the law of $p$-radial $\SLE_{\kappa}^{\mu}$ in $(\Omega; \bs{x}; z)$ by $\Prad{p}^{(\mu)}(\Omega; \bs{x}; z)$. 
The partition function for $p$-radial $\SLE_{\kappa}^{\mu}$ in $(\Omega; \bs{x}; z)$ is defined to be
\begin{equation} \label{eqn::LZrad_mu_def}
	\LZrad{p}^{(\mu)}(\Omega; \bs{x}; z)= \LZrad{p}^{(0)}(\Omega; \bs{x}; z) \times \CR(\Omega;z)^{\frac{\mu^2}{2\kappa}} \exp\left( -\frac{\mu}{\kappa} p \arg \varphi'(z) + \frac{\mu}{\kappa} \sum_{j=1}^{p} \arg \varphi(x_j) \right),
\end{equation}
with
\begin{equation*}\label{eqn::LZrad_def}
	\LZrad{p}^{(0)}(\Omega; \bs{x}; z)= \CR(\Omega;z)^{\frac{(\kappa-4)^2-4p^2}{8\kappa}}\times \prod_{j=1}^p \Poisson(\Omega;x_j;z)^{\frac{2p+4-\kappa}{2\kappa}}\times \prod_{1\le i<\ell\le p} \Poisson(\Omega;x_i,x_{\ell})^{-\frac{1}{\kappa}},
\end{equation*}
where $\CR(\Omega; z)$ is conformal radius~\eqref{eqn::CR_H}-\eqref{eqn::CR_cov}, 
and $\Poisson(\Omega; x_i, z)$ is Poisson kernel~\eqref{eqn::Poisson_H}-\eqref{eqn::Poisson_cov}, 
and $\Poisson(\Omega; x_i, x_j)$ is boundary Poisson kernel~\eqref{eqn::bPoisson_H}-\eqref{eqn::bPoisson_cov}, 
and $\varphi$ is any conformal map from $\Omega$ onto $\U$ with $\varphi(z)=0$. Note that, for $\bs{\theta}=(\theta_1, \ldots, \theta_p)\in\LX_p^{\U}$, the function
$\LZrad{p}^{(\mu)}(\U; \ee^{\ii\bs{\theta}}; 0)$ is the same as $\LZrad{p}^{(\mu)}(\bs{\theta})$ given by~\eqref{eqn::LZrad_mu_U}. 

\begin{lemma}[{\cite[Proposition~1.4]{HuangPeltolaWuMultiradialSLE}}]
\label{lem::multiradialSLE_bp}
Fix $\kappa\in (0,4]$ and $\mu\in\R$. Fix $p\ge 1$ and nice $p$-polygon $(\Omega; \bs{x})$ with $\bs{x}=(x_1, \ldots, x_p)$ and $z\in\Omega$.
Suppose $K$ is a compact subset of $\overline{\Omega}$ such that $\Omega\setminus K$ is simply connected and it coincides with $\Omega$ in a neighborhood of $\{x_1, \ldots, x_p, z\}$. Then, the $p$-radial $\SLE_{\kappa}^{\mu}$ probability measure 
in the smaller polygon $(\Omega\setminus K; \bs{x}; z)$ is absolutely continuous with respect to that in $(\Omega; \bs{x}; z)$, 
with Radon-Nikodym derivative 
\begin{align*}%\label{eqn::multiradial_bp}
	\frac{\ud \Prad{p}^{(\mu)} (\Omega\setminus K;\bs{x};z) }{\ud \Prad{p}^{(\mu)} (\Omega;\bs{x}; z) }(\bs{\gamma})=\frac{\LZrad{p}^{(\mu)}(\Omega; \bs{x};z)}{\LZrad{p}^{(\mu)}(\Omega\setminus K; \bs{x};z)} \, \one\{\bs{\gamma}\cap K=\emptyset\}\, \exp\Big(\frac{\mathfrak{c}}{2} \blm(\Omega; \bs{\gamma}, K)\Big) ,
\end{align*} 
where $\LZrad{p}^{(\mu)}$ is partition function in~\eqref{eqn::LZrad_mu_def}. 
\end{lemma}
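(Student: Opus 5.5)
We follow the proof of \cite[Proposition~3.4]{PeltolaWuGlobalMultipleSLEs} and of Lemma~\ref{lem::multichordalSLE_bp}: compare both laws with a common reference measure built from independent curves. The reference is $\Prad{p}^{(\mu)}(\Omega;\bs{x};z)$ itself. We want to show that restricting it to $\{\bs{\gamma}\cap K=\emptyset\}$ and reweighting by $\exp(\frac{\mathfrak{c}}{2}\blm(\Omega;\bs{\gamma},K))$ yields, after normalisation, the $p$-radial $\SLE_\kappa^\mu$ in $\Omega\setminus K$. Uniqueness of the latter can be verified through the equivalent definition given above: the marginal of $\gamma^1$ together with the conditional law of the rest.

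\textbf{Step 1 (single curve).} First show the Radon--Nikodym formula for the first marginal $\gamma^1$, which is radial $\SLE_\kappa^\mu(2,\ldots,2)$. Let $\psi_t$ be the conformal map from $\mathfrak{g}_t(\Omega\setminus K\setminus\gamma^1_{[0,t]})$ onto $\U$ normalised at $0$. Consider
\[
N_t=\one\{\gamma^1_{[0,t]}\cap K=\emptyset\}\exp\Big(\frac{\mathfrak{c}}{2}\blm(\Omega;\gamma^1_{[0,t]},K)\Big)\,\frac{\LZrad{p}^{(\mu)}(\Omega_t\setminus K;\ldots)}{\LZrad{p}^{(\mu)}(\Omega_t;\ldots)},
\]
where $\Omega_t=\Omega\setminus\gamma^1_{[0,t]}$. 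We check by It\^o's formula that $N_t$ is a local martingale. This uses the explicit form~\eqref{eqn::LZrad_mu_def}: each factor transforms covariantly, namely $\CR$ through $|\psi_t'(0)|$, $\Poisson(\cdot;x_j;z)$ and $\Poisson(\cdot;x_i,x_\ell)$ through $\psi_t'$ at the boundary points, and the spiral term through $\arg\psi_t'(0)$ together with the angles $\arg\psi_t(\cdot)$. The standard Schwarzian computation (the drift of $\psi_t'(\xi_t)$ produces $-\frac{\mathfrak{c}}{6}S\psi_t(\xi_t)$, which is exactly cancelled by the Brownian loop term) then gives the martingale property. This follows the one-curve restriction computation for radial $\SLE_\kappa(\rho)$ with spiral, and is really the content of \cite[Lemma~4.4]{HuangPeltolaWuMultiradialSLE}, which we may quote as the boundary perturbation of the marginal process. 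Since $\kappa\le4$, $N_t$ is bounded; this uses the monotonicity~\eqref{eqn::bPoisson_mono} together with the analogous bounds for $\CR$ and the interior Poisson kernel near $z$, which are unaffected because $K$ stays away from $z$. Optional stopping as $t\to\infty$, where $\gamma^1$ reaches $z$ and the ratio of partition functions tends to the partition functions of the remaining configuration, identifies the reweighted marginal of $\gamma^1$.

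\textbf{Step 2 (remaining curves).} Given $\gamma^1$, the remaining curves form a half-$(p-1)$-watermelon, that is, a chordal-type multiple SLE in the slit domain $\Omega\setminus\gamma^1$ aimed at $z$. Its boundary perturbation by $K$ has Radon--Nikodym derivative of the same form, with factor $\exp(\frac{\mathfrak{c}}{2}\blm(\Omega\setminus\gamma^1;\gamma^{2..p},K))$. We obtain this by the argument of Lemma~\ref{lem::multichordalSLE_bp}, namely the Brownian loop construction applied to the watermelon as a degenerate multichordal SLE. Multiplying the two derivatives, the partition function of the watermelon in $\Omega\setminus\gamma^1$ cancels against the terminal value from Step~1. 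The loop masses add up by the additivity
\[
\blm(\Omega;\gamma^1,K)+\blm(\Omega\setminus\gamma^1;\gamma^{2..p},K)=\blm(\Omega;\bs{\gamma},K),
\]
which holds because a loop hitting $K$ and $\bs{\gamma}$ either hits $\gamma^1$ or avoids it. This produces the claimed formula.

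\textbf{Main obstacle.} The delicate step is the terminal limit in Step~1. As $t\to\infty$ the slit domain degenerates at $z$, and the ratio $\LZrad{p}^{(\mu)}(\Omega_t\setminus K)/\LZrad{p}^{(\mu)}(\Omega_t)$ must be matched with the watermelon partition functions. In particular, the spiral factor $\arg\varphi'(z)$ must converge; it does, since the winding of $\gamma^1$ near $z$ is common to both domains. We would handle this by expressing the ratio in terms of the map $\psi_t$ near $0$, which converges to the identity.
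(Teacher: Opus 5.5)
Your decomposition (boundary perturbation of the marginal $\gamma^1$, then of the conditional half-watermelon, glued by additivity of loop masses) is a reasonable architecture. The paper itself gives no proof here: it imports the statement from \cite{HuangPeltolaWuMultiradialSLE}, whose Lemma~4.4 is the boundary perturbation of radial $\SLE_\kappa^\mu(2,\ldots,2)$ that you propose to quote. However, two of your justifications fail.

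\textbf{Boundedness of $N_t$.} The claim that $N_t$ is bounded because $\kappa\le 4$ is false for $\kappa\in(8/3,4]$. There $\mathfrak{c}>0$, and already for $p=1$ the terminal value of $N$ is $\one\{\gamma^1\cap K=\emptyset\}\exp(\frac{\mathfrak c}{2}\blm(\Omega;\gamma^1,K))$. This is unbounded, because $\gamma^1$ can pass arbitrarily close to $K$ without hitting it. In fact the density in the lemma is itself unbounded, so no density process for it can be bounded. Your appeal to~\eqref{eqn::bPoisson_mono} also points the wrong way, since $\Poisson(\cdot;x_i,x_\ell)$ enters $\LZrad{p}^{(\mu)}$ with exponent $-1/\kappa<0$. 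Hence optional stopping at $t=\infty$ is unjustified. A positive local martingale only gives $\mathbb{E}[N_\infty]\le N_0$, and equality is exactly what identifies the law.

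The standard repair is to localize. Stop at $\sigma$, the first time $\gamma^1$ comes within $\epsilon$ of $K$ or $N$ exceeds $m$; there Girsanov identifies the law in $\Omega\setminus K$ with $(N_\sigma/N_0)$ times the law in $\Omega$ on $\mathcal{F}_\sigma$. Then let $\epsilon\to 0$ and $m\to\infty$. Under the law in $\Omega\setminus K$, the curve stays at positive distance from $K$ (for $\kappa\le 4$ it does not touch the boundary) and $N_t$ has a finite limit, so $\sigma=\infty$ with probability tending to one. Mass conservation then follows a posteriori.

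\textbf{Step 2 and the terminal limit.} Step~2 cannot come from the Brownian-loop construction of Lemma~\ref{lem::multichordalSLE_construction_blm}. In a half-watermelon all curves end at the same point $z$, so the disjointness indicator degenerates. Moreover, the mutual loop mass of independent chordal curves aimed at $z$ is infinite, since every scale around $z$ contributes. You need a separate argument. One option is induction on the number of curves along the watermelon's own cascade: the first curve is a chordal $\SLE_\kappa(2,\ldots,2)$ aimed at $z$ with force points at the remaining $x_j$, and then comes a half-watermelon with one curve fewer. Each step is a one-curve argument as in Step~1, with the localization above.

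There is a further normalization issue. The point $z$ is the tip of $\gamma^1$, a rough boundary point, so the watermelon partition function there only exists as a ratio. That ratio is normalized by the conformal map $\Omega\setminus(\gamma^1\cup K)\to\Omega\setminus\gamma^1$ that is hydrodynamically normalized at $z$, and you must check that this is exactly what $\lim_{t\to\infty}N_t$ produces.

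This is also where your plan for the terminal limit is off. The convergence $\psi_t\to\mathrm{id}$ near $0$ only disposes of the $\CR$ and spiral factors. As $t\to\infty$, the arc $\mathfrak{g}_t(\partial\Omega)$, which carries $\mathfrak{g}_t(x_2),\ldots,\mathfrak{g}_t(x_p)$ and the hull $\mathfrak{g}_t(K)$, collapses to a point of the circle. The factors attached to $x_2,\ldots,x_p$ therefore do not tend to $1$. Blow up at that point, in rescaled half-plane coordinates $y_j$, with $\hat h$ the hydrodynamically normalized map removing the rescaled hull. The exponents then combine to $\frac{2p+4-\kappa}{2\kappa}-\frac{p-1}{\kappa}=\mathfrak{b}$ at each $x_j$ and to $\frac{2}{\kappa}$ for each pair. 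This gives $\prod_{j}|\hat h'(y_j)|^{\mathfrak b}\prod_{i<\ell}|\hat h(y_\ell)-\hat h(y_i)|^{2/\kappa}|y_\ell-y_i|^{-2/\kappa}$, which is precisely the half-watermelon ratio that has to cancel against Step~2.
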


%% file: tex_radial/solutions_radial.tex
In this section, we construct two families of solutions to the radial BPZ system~\eqref{eqn::BPZ_radial}: partition function for multi-chordal SLE weighted by the conformal radius, defined in Proposition~\ref{prop::LZalphaCR} in Section~\ref{subsec::multichordalSLE_CR}, and partition function for a mixture of multi-chordal SLE and multi-radial SLE with spiral, defined in Proposition~\ref{prop::LZmix_BPZ_COV_ASY} in Section~\ref{subsec::mix_chordal_radial}. We collect all our solutions in Section~\ref{subsec::enumeration}, and show that they are linearly independent. We complete the proof of Proposition~\ref{prop::GFF_rad} in Section~\ref{subsec::GFFrad}.
Fix parameters the same as in~\eqref{eqn::parameters}:
\begin{align*}
\kappa\in (0,4], \qquad \mathfrak{b}=\frac{6-\kappa}{2\kappa}, \qquad \tilde{\mathfrak{b}}=\frac{(6-\kappa)(\kappa-2)}{8\kappa}, \qquad \mathfrak{c}=\frac{(6-\kappa)(3\kappa-8)}{2\kappa}. 
\end{align*}

\subsection{Multi-chordal SLE weighted by conformal radius}
\label{subsec::multichordalSLE_CR}

Fix $\kappa\in (0,4]$ and $\mu\in \R$. 
Fix $n=2N$ an even number and $\alpha\in\LP_N$. Suppose $(\Omega; \bs{x})$ is nice $2N$-polygon with $\bs{x}=(x_1, \ldots, x_{2N})$ and $z\in\Omega$. 
\begin{itemize}
\item Suppose $\bs{\eta}=(\eta^1, \ldots, \eta^N)$ is $N$-chordal $\SLE_{\kappa}$ in $(\Omega; \bs{x})$ associated with $\alpha$. We also write $\bs{\eta}=\cup_{j=1}^N\eta^j$ and denote its law by $\Pchord_{\alpha}=\Pchord_{\alpha}(\Omega;\bs{x})$. Its partition function is given by $\LZ_{\alpha}(\Omega;\bs{x})$. 
\item The domain $\Omega\setminus\bs{\eta}$ has $(N+1)$ connected components. 
We enumerate these connected components as $\Omega_{\bs{\eta}}^{\ell}$ for $1\le \ell\le N+1$.  
Denote by $\CR(\Omega\setminus\bs{\eta};z)$ the conformal radius of the connected component of $\Omega\setminus\bs{\eta}$ containing $z$. 
\end{itemize}
Define 
\begin{align}\label{eqn::LZalphaCR}
\LZalphaCR(\Omega; \bs{x}; z)=\LZ_{\alpha}(\Omega;\bs{x})\Echord_{\alpha}\left[\one\{z\in\Omega_{\bs{\eta}}^{\ell}\}\CR(\Omega\setminus\bs{\eta};z)^{\frac{\mu}{\kappa}}\right]. 
\end{align}
\begin{proposition}\label{prop::LZalphaCR}
Fix $\kappa\in (0,4]$ and $\mu\in \R$. 
Fix $n=2N$ an even number and $\alpha\in\LP_N$.
\begin{itemize}
\item When $\mu>\frac{\kappa}{8}(\kappa-8)$, the expectation in~\eqref{eqn::LZalphaCR} is finite: there exists a constant $\const_{\kappa}^{(\mu)}\in (0,\infty)$ depending only on $(\kappa, \mu)$ such that 
\begin{align}\label{eqn::LZalphaCR_bound}
\LZalphaCR(\Omega; \bs{x};z)\le N\const_{\kappa}^{(\mu)}\CR(\Omega;z)^{\frac{\mu}{\kappa}}\prod_{\{a,b\}\in\alpha}\Poisson(\Omega; x_a, x_b)^{\mathfrak{b}}.
\end{align}
\item If we write 
$\LZalphaCR(\bs{\theta})=\LZalphaCR(\U; \ee^{\ii\bs{\theta}}; 0)$ for $\bs{\theta}=(\theta_1, \ldots, \theta_n)\in\LX_n^{\U}$,
then $\LZalphaCR: \LX_n^{\U}\to \R_{>0}$ satisfies radial BPZ system~\eqref{eqn::BPZ_radial} with $n=2N$ and $\lambda=\frac{1}{8}(6-\kappa)(\kappa-2)+\mu$: 
\begin{align}\label{eqn::LZalphaCR_BPZ}
\LD_j\LZ=\left(\frac{(6-\kappa)(\kappa-2)}{8\kappa}+\frac{\mu}{\kappa}\right)\LZ, \qquad \text{for }j\in\{1, \ldots, n\}, 
\end{align}
where $\LD_j$ is defined in~\eqref{eqn::radialBPZ_operatordef}. 
\item If we write 
$\LZalphaCR(\bs{\theta})=\LZalphaCR(\U; \ee^{\ii\bs{\theta}}; 0)$ for $\bs{\theta}=(\theta_1, \ldots, \theta_n)\in\LX_n^{\U}$,
then $\LZalphaCR: \LX_n^{\U}\to \R_{>0}$ satisfies the following rotation invariance: for $a\in\R$, 
\begin{align}\label{eqn::LZalphaCR_rotation}
\LZalphaCR(\bs{\theta}+a)=\LZalphaCR(\bs{\theta}),\qquad \text{for }\bs{\theta}\in\LX_n^{\U}. 
\end{align}
\item The function $\LZalphaCR(\Omega; \bs{x}; z)$ has the following asymptotics: fix $j\in \{1, \ldots, n-1\}$ and $y\in (x_{j-1}x_{j+2})$, then
\begin{align}\label{eqn::LZalphaCR_ASY}
\lim_{x_j, x_{j+1}\to y}\frac{\LZalphaCR(\Omega; \bs{x}; z)}{\Poisson(\Omega; x_j, x_{j+1})^{\mathfrak{b}}}=
\begin{cases}
\LZalphaCRhat(\Omega; \ddot{\bs{x}}_j; z), &\text{if $\{j,j+1\}\in\alpha$ and $\partial\Omega_{\bs{\eta}}^{\ell}$ does not contain $(x_{j}x_{j+1})$},\\
0, &\text{if else},
\end{cases}
\end{align}
where $\hat{\alpha}=\alpha/\{j,j+1\}$ and $\ddot{\bs{x}}_j=(x_1, \ldots, x_{j-1}, x_{j+2}, \ldots, x_n)$.
\end{itemize}
\end{proposition}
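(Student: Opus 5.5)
We treat the four items in order, using the boundary perturbation of multi-chordal SLE (Lemma~\ref{lem::multichordalSLE_bp}) and the one-curve martingale characterization (Lemma~\ref{lem::martingale_implies_BPZ}).

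\textbf{Finiteness bound~\eqref{eqn::LZalphaCR_bound}.} For $\mu\ge0$ we use $\CR(\Omega\setminus\bs{\eta};z)\le\CR(\Omega;z)$ together with the power-law bound~\eqref{eqn::PPF_PLB}. For $\kappa(\kappa-8)/8<\mu<0$ we need a tail estimate. By conformal invariance we may take $(\Omega;z)=(\U;0)$, and we control the probability that $\bs{\eta}$ comes within distance $\epsilon$ of $0$. By a union bound over the $N$ curves (this is the source of the factor $N$), it suffices to bound, for each curve $\eta^j$, the probability of approaching $0$. The resampling property says $\eta^j$ is chordal $\SLE_\kappa$ in a subdomain given the others. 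The one-point estimate for chordal $\SLE_\kappa$ gives $\PP[\dist(0,\eta)<\epsilon]\lesssim\epsilon^{(8-\kappa)/8}$, uniformly after normalizing by the conformal radius. Since $\CR(\Omega\setminus\bs{\eta};0)\asymp\dist(0,\bs{\eta})$ by Koebe, integrating gives $\E[\CR^{\mu/\kappa}]<\infty$ exactly when $\mu/\kappa>-(8-\kappa)/8$, which is the stated threshold. The prefactor $\Poisson^{\mathfrak b}$ then comes from~\eqref{eqn::PPF_PLB}.

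\textbf{Rotation invariance~\eqref{eqn::LZalphaCR_rotation}.} Rotating by $a$ gives the conformal map $w\mapsto\ee^{\ii a}w$. This map preserves $\LZ_\alpha(\U;\cdot)$ because $|\varphi'|=1$, maps the law $\Pchord_\alpha$ to itself, preserves conformal radius at $0$, and preserves the labeling of the components. So the invariance is immediate.

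\textbf{BPZ system~\eqref{eqn::LZalphaCR_BPZ}.} By symmetry it suffices to prove it for $j=1$. Let $\eta^1$ start from $\ee^{\ii\theta_1}$, and let $\tau$ be a stopping time before $\eta^1$ separates any marked point or $0$. By the domain Markov property of multi-chordal SLE, given $\eta^1[0,t]$ the remaining configuration is $N$-chordal SLE in $\U\setminus\eta^1[0,t]$. Mapping by $\mathfrak g_t$ (with $\mathfrak g_t'(0)=\ee^t$), the conditional expectation of $\one\{0\in\Omega^\ell\}\CR^{\mu/\kappa}$ becomes
\[
\ee^{-t\mu/\kappa}\,\frac{\LZalphaCR(\xi_t,\phi_t(\theta_2),\ldots)}{\LZ_\alpha(\xi_t,\phi_t(\theta_2),\ldots)}.
\]
The component label is tracked consistently because components not yet cut are unchanged. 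Hence this ratio process is a martingale under the law of $\eta^1$. The law of $\eta^1$ is radial $\SLE_\kappa$ weighted by $M_t(\LZ_\alpha)$ from~\eqref{eqn::multirchordal_marginal_mart}. Multiplying the two shows that
\[
\LZalphaCR(\ldots)\,\mathfrak g_t'(0)^{-\tilde{\mathfrak b}-\mu/\kappa}\prod_{j\ge2}\phi_t'(\theta_j)^{\mathfrak b}
\]
is a local martingale for radial $\SLE_\kappa$. Lemma~\ref{lem::martingale_implies_BPZ} with $\lambda=\kappa\tilde{\mathfrak b}+\mu$ then gives smoothness and the BPZ equation.

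\textbf{Main obstacle.} The delicate point is that Lemma~\ref{lem::martingale_implies_BPZ} requires the function to be smooth, or else must be applied in its ``if and only if'' form, which presupposes some regularity. We would first establish continuity from the bound and dominated convergence, then invoke hypoellipticity as in~\cite{PeltolaWuGlobalMultipleSLEs}: the martingale property makes $\LZalphaCR$ a distributional solution of a hypoelliptic equation, hence smooth.

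\textbf{Asymptotics~\eqref{eqn::LZalphaCR_ASY}.} Write $\LZalphaCR/\Poisson^{\mathfrak b}$ as $(\LZ_\alpha/\Poisson^{\mathfrak b})\cdot\Echord_\alpha[\cdots]$.
\begin{itemize}
\item If $\{j,j+1\}\notin\alpha$, the first factor tends to $0$ by~\eqref{eqn::PPF_ASY} and the expectation is bounded, so the limit is $0$.
\item If $\{j,j+1\}\in\alpha$, the curve joining $x_j$ and $x_{j+1}$ shrinks to the point $y$ (known for multi-chordal SLE with $\kappa\le4$). The remaining curves then converge in law to $\Pchord_{\hat\alpha}$ on $\ddot{\bs x}_j$.
\item If the small region cut off by that curve is the $\ell$-th component, the indicator tends to $0$, since $z$ stays away from $y$. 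Otherwise the indicator and conformal radius converge, and the limit equals $\LZalphaCRhat$ after relabeling components.
\end{itemize}
The convergence is justified by dominated convergence using the bound~\eqref{eqn::LZalphaCR_bound} and the continuity of $\CR$ under Carath\'eodory convergence.
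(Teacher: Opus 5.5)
\textbf{Rotation invariance and BPZ.} Your rotation-invariance argument is fine. Your derivation of the BPZ system is the standard Doob-martingale route: the paper cites Feng--Wu for this item, and runs exactly the same argument for the mixed functions in Lemmas~\ref{lem::LZmix_BPZ_s}--\ref{lem::LZmix_BPZ_v}.

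\textbf{Gap in the bound.} The genuine gap is in the bound~\eqref{eqn::LZalphaCR_bound} for $\mu<0$, which the paper imports from Feng--Wu. Conditioning on the other curves via the resampling property only gives $\PP[\dist(z,\eta^j)<\eps\,|\,\eta^i,\,i\neq j]\lesssim(\eps/\CR(\Omega_j;z))^{1-\kappa/8}$. This is normalized by the conformal radius of the \emph{random} domain $\Omega_j$, which is small precisely when the other curves approach $z$. That is exactly what you are trying to control, so the union bound is circular and yields no uniform estimate.

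Your plan also bounds $\Echord_\alpha[\CR(\Omega\setminus\bs\eta;z)^{\mu/\kappa}]$ by $N\const\,\CR(\Omega;z)^{\mu/\kappa}$ uniformly, and only afterwards uses $\LZ_\alpha\le\prod\Poisson^{\mathfrak b}$. That is stronger than~\eqref{eqn::LZalphaCR_bound}: the natural argument only gives $\Echord_\alpha[\cdots]\lesssim\CR^{\mu/\kappa}\prod\Poisson^{\mathfrak b}/\LZ_\alpha$, which is unbounded when $\LZ_\alpha\ll\prod\Poisson^{\mathfrak b}$.

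The missing idea is to keep $\LZ_\alpha$ attached to the expectation and use the \emph{marginal} law of a single curve.
\begin{itemize}
\item By Koebe, $\CR(\Omega\setminus\bs\eta;z)\ge\dist(z,\partial\Omega\cup\bs\eta)\ge\frac14\min_j\CR(\Omega\setminus\eta^j;z)$. Hence $\CR(\Omega\setminus\bs\eta;z)^{\mu/\kappa}\le4^{|\mu|/\kappa}\sum_{j=1}^N\CR(\Omega\setminus\eta^j;z)^{\mu/\kappa}$, and this sum is where the factor $N$ comes from.
\item Split off $\eta^j$ as in the proof of Lemma~\ref{lem::LZalphaCR_ASY_aux1}, and integrate out the other curves with Lemma~\ref{lem::multichordalSLE_bp} (cf.~\eqref{eqn::etagivengamma}). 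The law of $\eta^j$ under $\Pchord_\alpha$ is then chordal $\SLE_\kappa$ in $(\Omega;x_{a_j},x_{b_j})$ weighted by $\Poisson(\Omega;x_{a_j},x_{b_j})^{\mathfrak b}\LZ_{\alpha_j}(\Omega\setminus\eta^j)/\LZ_\alpha(\Omega;\bs x)$, where $\alpha_j=\alpha\setminus\{\{a_j,b_j\}\}$.
\item By~\eqref{eqn::PPF_PLB} and~\eqref{eqn::bPoisson_mono}, $\LZ_{\alpha_j}(\Omega\setminus\eta^j)\le\prod_{i\neq j}\Poisson(\Omega;x_{a_i},x_{b_i})^{\mathfrak b}$. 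The one-curve bound~\eqref{eqn::LZtwo_mu_bound} then finishes.
\end{itemize}

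\textbf{Gap in the asymptotics.} The asymptotics are also not justified as written.

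For $\{j,j+1\}\notin\alpha$, boundedness of $\Echord_\alpha[\cdots]$ does not follow from~\eqref{eqn::LZalphaCR_bound}, since there $\LZ_\alpha/\prod\Poisson^{\mathfrak b}\to0$. Apply~\eqref{eqn::LZalphaCR_bound} directly instead: it bounds the ratio by $N\const\,\CR(\Omega;z)^{\mu/\kappa}\prod_{\{a,b\}\in\alpha}\Poisson(\Omega;x_a,x_b)^{\mathfrak b}/\Poisson(\Omega;x_j,x_{j+1})^{\mathfrak b}$, which tends to $0$.

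For $\{j,j+1\}\in\alpha$, you rely on unproven convergence in law of the whole configuration, then on ``dominated convergence using~\eqref{eqn::LZalphaCR_bound}''. But that bound controls an expectation, not the integrand, and the underlying measure changes with $(x_j,x_{j+1})$. You would need uniform integrability, especially for $\mu<0$ on the event that $z$ lies in the shrinking component, where $\CR^{\mu/\kappa}$ blows up. You would also need continuity of $\CR$ and of the component indicator along the convergence. None of this is supplied.

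The paper sidesteps all of it. It writes $\Pchord_\alpha$ as a tilt of the product of a chordal $\SLE_\kappa$ in $(\Omega;x_j,x_{j+1})$ with $\Pchord_{\hat\alpha}$ (Lemma~\ref{lem::multichordalSLE_construction_blm}), and integrates out the short curve given $\hat{\bs\eta}$. The integrand becomes a one-curve quantity in $\Omega\setminus\hat{\bs\eta}$ divided by $\Poisson(\Omega;x_j,x_{j+1})^{\mathfrak b}$, under the \emph{fixed} law $\Pchord_{\hat\alpha}$. The explicit asymptotics~\eqref{eqn::LZtwo_mu_left_ASY}--\eqref{eqn::LZtwo_mu_right_ASY} and the pointwise dominating function $\const\,\CR(\Omega\setminus\hat{\bs\eta};z)^{\mu/\kappa}$ from~\eqref{eqn::LZtwo_mu_bound} then give the three cases of Lemma~\ref{lem::LZalphaCR_ASY_aux1} by bounded convergence.
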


\begin{lemma}
Fix $\kappa\in (0,4]$ and $\mu>\frac{\kappa}{8}(\kappa-8)$. Fix 2-polygon $(\Omega; x_1, x_2)$ with $z\in\Omega$. 
Suppose $\eta$ is chordal $\SLE_{\kappa}$ in $(\Omega; x_1, x_2)$ and we orient $\eta$ from $x_1$ to $x_2$. 
The domain $\Omega\setminus\eta$ has two connected components. We denote by $\Omega_{\eta}^L$ (resp. $\Omega_{\eta}^R$) the one to the left of $\eta$ (resp. to the right of $\eta$). 
Define 
\begin{align*}
&\LZtwo^{(L;\mu)}(\Omega;x_1, x_2;z)=\LZtwo(\Omega; x_1, x_2)\Echordtwo\left[\one\{z\in\Omega_{\eta}^L\}\CR(\Omega\setminus\eta; z)^{\frac{\mu}{\kappa}}\right],\\
&\LZtwo^{(R;\mu)}(\Omega;x_1, x_2;z)=\LZtwo(\Omega; x_1, x_2)\Echordtwo\left[\one\{z\in\Omega_{\eta}^R\}\CR(\Omega\setminus\eta; z)^{\frac{\mu}{\kappa}}\right],\\
&\LZtwo^{(\mu)}(\Omega; x_1, x_2;z)=\LZtwo(\Omega; x_1, x_2)\Echordtwo\left[\CR(\Omega\setminus\eta; z)^{\frac{\mu}{\kappa}}\right]. 
\end{align*}
Then we have the following properties.
\begin{itemize}
\item There exists a constant $\const_{\kappa}^{(\mu)}\in (0,\infty)$ depending only on $(\kappa, \mu)$ such that 
\begin{align}\label{eqn::LZtwo_mu_bound}
\LZtwo^{(\mu)}(\Omega; x_1, x_2;z)\le \const_{\kappa}^{(\mu)}\CR(\Omega;z)^{\frac{\mu}{\kappa}}\LZtwo(\Omega; x_1, x_2). 
\end{align}
\item Fix $y\in (x_1x_2)$, we have
\begin{align}\label{eqn::LZtwo_mu_left_ASY}
\lim_{x_1,x_2\to y}\frac{\LZtwo^{(L;\mu)}(\Omega; x_1, x_2; z)}{\LZtwo(\Omega;x_1,x_2)}=\CR(\Omega; z)^{\frac{\mu}{\kappa}}. 
\end{align}

\item Fix $y\in (x_1x_2)$, we have
\begin{align}\label{eqn::LZtwo_mu_right_ASY}
\lim_{x_1,x_2\to y}\frac{\LZtwo^{(R;\mu)}(\Omega; x_1, x_2; z)}{\LZtwo(\Omega;x_1,x_2)}=0. 
\end{align}
\end{itemize} 
\end{lemma}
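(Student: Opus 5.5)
We first reduce to the unit disc. Take a conformal map $\varphi:\Omega\to\U$ with $\varphi(z)=0$. By~\eqref{eqn::CR_cov}, the ratio $\CR(\Omega\setminus\eta;z)/\CR(\Omega;z)$ is conformally invariant, and chordal $\SLE_\kappa$ is conformally invariant as well. Hence
\[\Echordtwo\left[\one\{z\in\Omega_\eta^{L/R}\}\CR(\Omega\setminus\eta;z)^{\frac{\mu}{\kappa}}\right]=\CR(\Omega;z)^{\frac{\mu}{\kappa}}\,\Echordtwo(\U;\ee^{\ii\theta_1},\ee^{\ii\theta_2})\left[\one\{0\in\U_\eta^{L/R}\}\CR(\U\setminus\eta;0)^{\frac{\mu}{\kappa}}\right].\]
So it suffices to show two things in $\U$ with $z=0$. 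First, $\sup_{\theta_1,\theta_2}\E[\CR(\U\setminus\eta;0)^{s}]<\infty$ for $s=\mu/\kappa>-(1-\kappa/8)$. Second, the stated limits hold as $\ee^{\ii\theta_1},\ee^{\ii\theta_2}\to\varphi(y)$.

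\textbf{Uniform bound.} For $s\ge0$ the bound is trivial, since $\CR(\U\setminus\eta;0)\le 1$. For $s<0$ we use the Koebe estimate $\dist(0,\eta)\le \CR(\U\setminus\eta;0)\le 4\dist(0,\eta)$ together with the one-point estimate for chordal $\SLE_\kappa$, $\kappa\in(0,4]$:
\[\PP[\dist(0,\eta)\le\epsilon]\le C_\kappa\,\epsilon^{1-\kappa/8},\qquad \epsilon\in(0,1).\]
This estimate holds uniformly over the endpoints. The SLE Green's function at an interior point is $\epsilon^{1-\kappa/8}$ times $\sin(\arg)^{8/\kappa-1}$ in half-plane coordinates, and this prefactor is bounded by $1$. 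Uniform up-to-constants versions are standard, by Lawler--Werness or Beffara.

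The layer-cake formula then gives
\[\E[\CR^{s}]\le 1+\int_1^\infty \PP\left[\dist(0,\eta)\le t^{1/s}\right]\ud t\le 1+C\int_1^\infty t^{(1-\kappa/8)/s}\ud t<\infty .\]
The last integral converges exactly when $(1-\kappa/8)/|s|>1$, i.e. when $\mu>\kappa(\kappa-8)/8$. This proves~\eqref{eqn::LZtwo_mu_bound}. Since the condition on $\mu$ is open, the same argument also gives a uniform bound on $\E[\CR^{s'}]$ for some $s'<s$ when $s<0$, which we use for uniform integrability below.

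\textbf{Asymptotics.} As $\theta_1,\theta_2$ merge at $\varphi(y)$, we claim $\diam(\eta)\to0$ in probability. To see this, map to $\HH$ sending the two points to $0$ and $\epsilon$. The SLE from $0$ to $\epsilon$ is a scaled copy of SLE from $0$ to $1$, which is almost surely bounded, so its diameter is $O_{\PP}(\epsilon)$. On the event $\{\diam(\eta)<\delta\}$, the origin lies in the component whose boundary contains the complementary arc. With $y\in(x_1x_2)$ this is the left component $\U_\eta^L$. Hence $\PP[0\in\U_\eta^R]\to0$.

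On the same event, $\CR(\U\setminus\eta;0)\to1$. This follows from monotonicity, $\CR\le1$, and continuity of conformal radius under Carathéodory convergence to $\U$. Consequently $\one\{0\in\U^L_\eta\}\CR^{s}\to1$ in probability. The uniform $L^{s'/s}$ bound (for $s<0$), or the trivial bound (for $s\ge0$), gives uniform integrability, so the expectation converges to $1$, which is~\eqref{eqn::LZtwo_mu_left_ASY}.

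For~\eqref{eqn::LZtwo_mu_right_ASY} we apply Hölder's inequality:
\[\E[\one\{0\in\U^R_\eta\}\CR^{s}]\le \PP[0\in\U^R_\eta]^{1-s/s'}\,\E[\CR^{s'}]^{s/s'}\to0 .\]

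\textbf{Main obstacle.} The main obstacle is the uniformity of the one-point estimate over endpoint configurations, in particular when an endpoint approaches the point antipodal to the merge point. I would cite the explicit chordal SLE Green's function bound, which is uniform because the angular factor is at most one, and the boundary cases are only better.
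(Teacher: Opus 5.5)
Your argument is correct, but it takes a different route from the paper. The paper gives no proof of its own: it cites Schramm--Sheffield--Wilson for the two asymptotics and Feng--Wu (Lemma~2.3 there) for the uniform bound.

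The Schramm--Sheffield--Wilson input is an exact computation. Chordal $\SLE_\kappa$ from $x_1$ to $x_2$, seen from $z$, is radial $\SLE_\kappa(\kappa-6)$ with force point $x_2$ until $z$ is disconnected. Solving the associated ODE in the angle variable gives explicit formulas for the left- and right-restricted moments of $\CR(\Omega\setminus\eta;z)/\CR(\Omega;z)$. Both the finiteness threshold and the limits as the harmonic-measure angle degenerates are read off from these formulas.

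You replace this with a soft argument:
\begin{itemize}
\item conformal invariance of that ratio;
\item Koebe's estimate plus the uniform one-point bound $\PP[\dist(0,\eta)\le\epsilon]\lesssim\epsilon^{1-\kappa/8}$, which is uniform because the angular factor $\sin(\arg w)^{8/\kappa-1}$ is at most $1$, combined with the layer-cake formula to control negative moments;
\item $\diam(\eta)\to0$ in probability, from scaling invariance of chordal $\SLE_\kappa$ in $\HH$;
\item uniform integrability from a slightly more negative exponent $s'$, available because the condition on $\mu$ is open, together with H\"older's inequality for the right component.
\end{itemize}

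Your route uses only the exponent $1-\kappa/8$ and an up-to-constants bound. It makes transparent why the threshold is exactly $\mu>\kappa(\kappa-8)/8$, and it carries over to settings where no exact formula is available. The exact-formula route gives more information, namely the full law and rates of convergence, at the cost of an explicit ODE computation.

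One cosmetic point: your H\"older display is written for $s<0$. For $s\ge0$ the right-component limit is simply $\E[\one\{0\in\U^R_\eta\}\CR^s]\le\PP[0\in\U^R_\eta]\to0$.
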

\begin{proof}
See~\cite{SchrammSheffieldWilsonConformalRadii}. The upper bound~\eqref{eqn::LZtwo_mu_bound} is proved in~\cite[Lemma~2.3]{FengWuRadialBPZFKIsing}. 
\end{proof}

\begin{lemma}\label{lem::LZalpha_mu_bound}
Assume the same setup as in Proposition~\ref{prop::LZalphaCR}. The bound in~\eqref{eqn::LZalphaCR_bound} holds. 
\end{lemma}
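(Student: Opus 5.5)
The idea is to reduce the multi-chordal expectation to single-curve expectations, using the fact that the component of $\Omega\setminus\bs{\eta}$ containing $z$ is bounded by only some of the curves.

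\textbf{Step 1 (single-curve bound).} Since $\mu/\kappa$ may be negative, I will not use monotonicity of the conformal radius directly. The component $\Omega_{\bs{\eta}}^{\ell}$ containing $z$ is the intersection of the components of $\Omega\setminus\eta^j$ containing $z$, over those $j$ with $\eta^j\subset\partial\Omega^{\ell}_{\bs{\eta}}$. The relevant event $\{z\in\Omega_{\bs{\eta}}^{\ell}\}$ is thus governed by finitely many curves. Rather than bounding $\CR(\Omega\setminus\bs{\eta};z)^{\mu/\kappa}$ by a product over curves, I will bound it by a sum. For $\mu\ge0$ this is immediate: $\CR(\Omega\setminus\bs{\eta};z)\le\CR(\Omega\setminus\eta^j;z)$ for any single $j$, and one may take for example $j=1$. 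For $\mu<0$ the conformal radius of the component can be much smaller than that of any single-curve complement. In this case I will instead use the Koebe estimate $\CR(\Omega\setminus\bs{\eta};z)\asymp\dist(z,\bs{\eta})=\min_j\dist(z,\eta^j)$. This gives
\[
\CR(\Omega\setminus\bs{\eta};z)^{\mu/\kappa}\le 4^{|\mu|/\kappa}\sum_{j=1}^N\CR(\Omega\setminus\eta^j;z)^{\mu/\kappa},
\]
and this sum is the source of the factor $N$.

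\textbf{Step 2 (change of measure to independent curves).} Next I use Lemma~\ref{lem::multichordalSLE_construction_blm}. It expresses $\LZ_{\alpha}(\Omega;\bs{x})\,\Pchord_{\alpha}$ as $\prod_{\{a,b\}\in\alpha}\LZtwo(\Omega;x_a,x_b)$ times the independent product law, weighted by $\one\{\text{disjoint}\}\exp(\tfrac{\mathfrak{c}}{2}\blm(\Omega;\eta^1,\ldots,\eta^N))$. For $\kappa\in(8/3,4]$ we have $\mathfrak{c}>0$, so the loop term is not bounded by $1$. In that regime I would therefore avoid this lemma. Instead I would use the resampling property: conditionally on $\{\eta^i\}_{i\ne j}$, the curve $\eta^j$ is chordal $\SLE_\kappa$ in the component $\Omega_j$. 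Applying \eqref{eqn::LZtwo_mu_bound} in $\Omega_j$ gives
\[
\Echord\big[\CR(\Omega\setminus\eta^j;z)^{\mu/\kappa}\one\{\cdot\}\mid \eta^i,i\neq j\big]\le \const_\kappa^{(\mu)}\CR(\Omega_j;z)^{\mu/\kappa}.
\]
Here $\CR(\Omega_j\setminus\eta^j;z)$ equals the conformal radius in the correct component. Multiplying by $\LZ_\alpha(\Omega;\bs{x})$ and using the power-law bound \eqref{eqn::PPF_PLB} produces the factor $\prod\Poisson(\Omega;x_a,x_b)^{\mathfrak{b}}$.

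\textbf{Step 3 (removing $\CR(\Omega_j;z)^{\mu/\kappa}$ for $\mu<0$).} This is the main obstacle. It would be handled by an induction on $N$ in which the hypothesis is the very bound \eqref{eqn::LZalphaCR_bound} applied to the remaining $N-1$ curves. The marginal law of $\{\eta^i\}_{i\ne j}$ is not multi-chordal with $N-1$ curves, but its density with respect to $\Pchord_{\alpha/\{a_j,b_j\}}$ is $\LZtwo(\Omega_j)/\LZtwo(\Omega)\le1$ times the ratio of partition functions, by the monotonicity \eqref{eqn::bPoisson_mono}. Hence
\[
\LZ_\alpha(\Omega)\,\Echord_\alpha[\cdots]\le \const_\kappa^{(\mu)}\,\LZtwo(\Omega;x_{a_j},x_{b_j})\,\LZ_{\alpha/\{a_j,b_j\}}\Echord_{\alpha/\{a_j,b_j\}}\big[\CR(\Omega\setminus\cup_{i\ne j}\eta^i;z)^{\mu/\kappa}\big].
\]
Iterating this $N$ times yields the product bound, but with constant $(\const_\kappa^{(\mu)})^N$ rather than $N\const_\kappa^{(\mu)}$.

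To get the linear constant in $N$, I would not iterate. Instead I would apply Step 1 to pick the single curve $\eta^j$ closest to $z$. For the other curves I would bound $\CR(\Omega_j;z)^{\mu/\kappa}\le \dist(z,\eta^j)^{\mu/\kappa}\cdot$const, which is valid on the event that $\eta^j$ is the closest curve. Summing over $j$ then gives $N\const_\kappa^{(\mu)}$. Making this "closest curve" decoupling rigorous, while keeping the constant uniform in the domain, is the step I expect to require the most care. The condition $\mu>\frac{\kappa}{8}(\kappa-8)$ enters only through the finiteness of \eqref{eqn::LZtwo_mu_bound}.
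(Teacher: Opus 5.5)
Your Step~1 is the right reduction, and it is where the factor $N$ comes from. For $\mu<0$, Koebe's quarter theorem gives $\CR(\Omega\setminus\bs{\eta};z)^{\mu/\kappa}\le 4^{|\mu|/\kappa}\sum_{j=1}^N\CR(\Omega\setminus\eta^j;z)^{\mu/\kappa}$. For $\mu\ge 0$, simply use $\CR(\Omega\setminus\bs{\eta};z)\le\CR(\Omega;z)$ together with \eqref{eqn::PPF_PLB}.

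The gap is in what comes after Step~1.

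\textbf{Steps 2--3 give the wrong constant.} By conditioning $\eta^j$ on $\{\eta^i\}_{i\neq j}$, you replace $\CR(\Omega;z)$ by $\CR(\Omega_j;z)$, which for $\mu<0$ goes the wrong way. Removing it by induction only yields the constant $(\const_\kappa^{(\mu)})^N$. Since $\const_\kappa^{(\mu)}\ge 1$ for $\mu<0$ (and generically $>1$), this is not of the form $N\const_\kappa^{(\mu)}$ with a constant depending only on $(\kappa,\mu)$.

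\textbf{The ``closest curve'' repair is circular.} After taking the conditional expectation over $\eta^j$ given the other curves, both the event that $\eta^j$ is the closest curve and the quantity $\dist(z,\eta^j)$ depend on the curve you have just integrated out. So they cannot be used to bound $\CR(\Omega_j;z)$.

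\textbf{A further problem in Step~2.} For the terms of the Step~1 sum with $z\notin\Omega_j$, the conditional law of $\eta^j$ in $\Omega_j$ gives no control of $\CR(\Omega\setminus\eta^j;z)$. The curve $\eta^j$ can approach $z$ from the far side of a separating curve.

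\textbf{The missing idea.} Use the \emph{marginal} law of $\eta^j$ rather than its conditional law; then Step~3 is unnecessary. The quantity $\CR(\Omega\setminus\eta^j;z)$ depends only on $\eta^j$. Set $\hat{\alpha}=\alpha/\{a_j,b_j\}$. In Lemma~\ref{lem::multichordalSLE_construction_blm} the loop term splits as
\[
\blm(\Omega;\eta^1,\ldots,\eta^N)=\blm(\Omega;(\eta^i)_{i\neq j})+\blm(\Omega;\eta^j,\cup_{i\neq j}\eta^i).
\]
Integrate out $(\eta^i)_{i\neq j}$ and apply Lemma~\ref{lem::multichordalSLE_bp} with $K=\eta^j$; this is the same computation as \eqref{eqn::etagivengamma}. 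It shows that under $\Pchord_{\alpha}(\Omega;\bs{x})$, the curve $\eta^j$ is chordal $\SLE_\kappa$ in $(\Omega;x_{a_j},x_{b_j})$ weighted by
\[
\Poisson(\Omega;x_{a_j},x_{b_j})^{\mathfrak{b}}\,\LZ_{\hat{\alpha}}(\Omega\setminus\eta^j)/\LZ_{\alpha}(\Omega;\bs{x}).
\]
The sign of $\mathfrak{c}$ plays no role. The loop factor is absorbed into $\LZ_{\hat{\alpha}}(\Omega\setminus\eta^j)$ and never needs a pointwise bound. By \eqref{eqn::PPF_PLB} and \eqref{eqn::bPoisson_mono},
\[
\LZ_{\hat{\alpha}}(\Omega\setminus\eta^j)\le\prod_{\{a,b\}\in\hat{\alpha}}\Poisson(\Omega;x_a,x_b)^{\mathfrak{b}}.
\]
Write $\mathbb{E}_j$ for expectation under chordal $\SLE_\kappa$ in $(\Omega;x_{a_j},x_{b_j})$. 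Applying \eqref{eqn::LZtwo_mu_bound} in $\Omega$ itself gives
\begin{align*}
\LZ_{\alpha}(\Omega;\bs{x})\,\Echord_{\alpha}\Big[\CR(\Omega\setminus\eta^j;z)^{\frac{\mu}{\kappa}}\Big]
&\le \prod_{\{a,b\}\in\alpha}\Poisson(\Omega;x_a,x_b)^{\mathfrak{b}}\;\mathbb{E}_j\Big[\CR(\Omega\setminus\eta^j;z)^{\frac{\mu}{\kappa}}\Big]\\
&\le \const_{\kappa}^{(\mu)}\,\CR(\Omega;z)^{\frac{\mu}{\kappa}}\prod_{\{a,b\}\in\alpha}\Poisson(\Omega;x_a,x_b)^{\mathfrak{b}}.
\end{align*}
Summing over $j$ and combining with Step~1 yields \eqref{eqn::LZalphaCR_bound} with constant $N\,4^{|\mu|/\kappa}\const_\kappa^{(\mu)}$.

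The paper itself only cites Feng--Wu for this bound. Your exponential-in-$N$ estimate would be enough for the later dominated-convergence arguments, but it does not prove the lemma as stated.
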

\begin{proof}
This is proved in~\cite[Lemma~2.1]{FengWuRadialBPZFKIsing}. 
\end{proof}

\begin{figure}[ht!]
\begin{subfigure}[t]{0.3\textwidth}
\begin{center}
\includegraphics[width=0.8\textwidth]{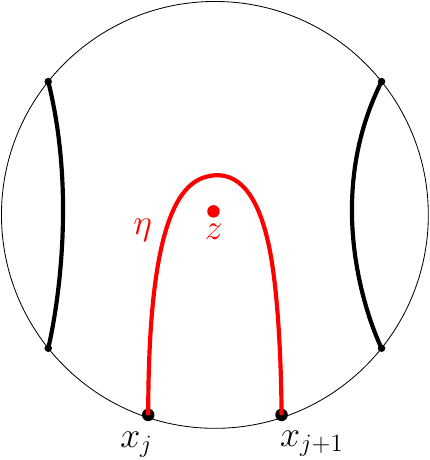}
\end{center}
\caption{Case 1}
\end{subfigure}
$\quad$
\begin{subfigure}[t]{0.3\textwidth}
\begin{center}
\includegraphics[width=0.8\textwidth]{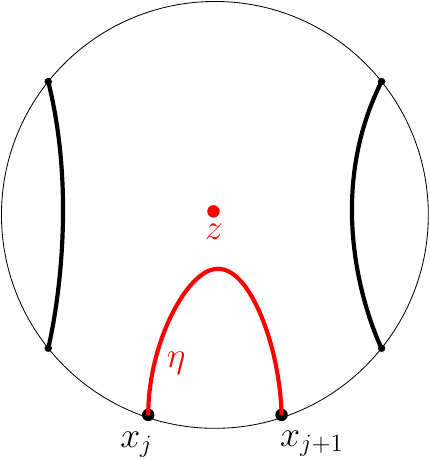}
\end{center}
\caption{Case 2}
\end{subfigure}
$\quad$
\begin{subfigure}[t]{0.3\textwidth}
\begin{center}
\includegraphics[width=0.8\textwidth]{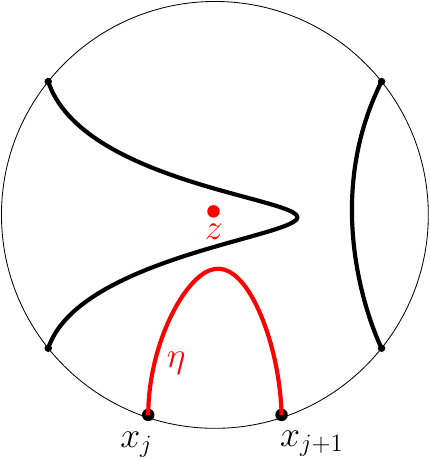}
\end{center}
\caption{Case 3}
\end{subfigure}
\caption{\label{fig::LZalphaCR_ASY} 
The three cases in the proof of Lemma~\ref{lem::LZalphaCR_ASY_aux1}. 
Subfigure~(a) indicates Case 1: $z\in \Omega_{\bs{\eta}}^{\ell}$ and $\partial\Omega_{\bs{\eta}}^{\ell}$ contains $(x_jx_{j+1})$. 
Subfigure~(b) indicates Case 2: $z\in \Omega_{\bs{\eta}}^{\ell}$ and $\partial\Omega_{\bs{\eta}}^{\ell}$ contains $(x_{j-1}x_j)$.
Subfigure~(c) indicates Case 3: $z\in \Omega_{\bs{\eta}}^{\ell}$ and $\partial\Omega_{\bs{\eta}}^{\ell}$ does not contain $(x_{j-1}x_j)$ nor $(x_jx_{j+1})$. 
}
\end{figure}

\begin{lemma}\label{lem::LZalphaCR_ASY_aux1}
Assume the same setup as in Proposition~\ref{prop::LZalphaCR}. Fix $j\in \{1, \ldots, n-1\}$ and $y\in (x_{j-1}x_{j+2})$.
\begin{itemize}
\item If $\{j,j+1\}\in\alpha$ and $\partial\Omega_{\bs{\eta}}^{\ell}$ contains $(x_{j}x_{j+1})$, we have 
\begin{align}\label{eqn::LZalphaCR_ASY_aux1b}
\lim_{x_j, x_{j+1}\to y}\frac{\LZalphaCR(\Omega; \bs{x}; z)}{\Poisson(\Omega; x_j, x_{j+1})^{\mathfrak{b}}}=0. 
\end{align}
\item If $\{j,j+1\}\in\alpha$ and $\partial\Omega_{\bs{\eta}}^{\ell}$ does not contain $(x_{j}x_{j+1})$, we have 
\begin{align}\label{eqn::LZalphaCR_ASY_aux1a}
\lim_{x_j, x_{j+1}\to y}\frac{\LZalphaCR(\Omega; \bs{x}; z)}{\Poisson(\Omega; x_j, x_{j+1})^{\mathfrak{b}}}=\LZalphaCRhat(\Omega; \ddot{\bs{x}}_j; z). 
\end{align}
\end{itemize}
\end{lemma}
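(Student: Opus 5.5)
The natural route is to condition on the curve $\eta^{k}$ connecting $x_j$ and $x_{k}$ with $\{j,j+1\}=\{a_k,b_k\}\in\alpha$, and use the resampling property. Given $\eta^k$, the remaining curves $\hat{\bs{\eta}}$ form an $(N-1)$-chordal $\SLE_\kappa$ associated with $\hat\alpha$ in the component $\Omega'$ of $\Omega\setminus\eta^k$ containing $\ddot{\bs{x}}_j$. Equivalently, via Lemma~\ref{lem::multichordalSLE_construction_blm}, the joint law is the product of $\Pchordtwo(\Omega;x_j,x_{j+1})$ and $\Pchord_{\hat\alpha}(\Omega;\ddot{\bs{x}}_j)$, weighted by
\[
\frac{\LZtwo(\Omega;x_j,x_{j+1})\LZ_{\hat\alpha}(\Omega;\ddot{\bs{x}}_j)}{\LZ_\alpha(\Omega;\bs{x})}\one\{\eta^k\cap\hat{\bs{\eta}}=\emptyset\}\exp\Big(\frac{\mathfrak{c}}{2}\blm(\Omega;\eta^k,\hat{\bs{\eta}})\Big).
\]
This gives
\begin{align*}
\frac{\LZalphaCR(\Omega;\bs{x};z)}{\Poisson(\Omega;x_j,x_{j+1})^{\mathfrak{b}}}
=\LZ_{\hat\alpha}(\Omega;\ddot{\bs{x}}_j)\,\E\Big[\one\{\eta^k\cap\hat{\bs{\eta}}=\emptyset\}\,e^{\frac{\mathfrak{c}}{2}\blm(\Omega;\eta^k,\hat{\bs{\eta}})}\,\one\{z\in\Omega^{\ell}_{\bs\eta}\}\,\CR(\Omega\setminus\bs\eta;z)^{\frac{\mu}{\kappa}}\Big],
\end{align*}
where $\eta^k\sim\Pchordtwo(\Omega;x_j,x_{j+1})$ and $\hat{\bs\eta}\sim\Pchord_{\hat\alpha}(\Omega;\ddot{\bs{x}}_j)$ are independent.

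As $x_j,x_{j+1}\to y$, the chordal curve $\eta^k$ shrinks to $y$ in probability: its diameter tends to $0$, by conformal invariance and the standard fact that the SLE excursion between nearby points stays small. On this event the following hold.
\begin{itemize}
\item The indicator $\one\{\eta^k\cap\hat{\bs\eta}=\emptyset\}$ tends to $1$ in probability, since $\hat{\bs\eta}$ stays away from $y$ almost surely.
\item The loop term $\blm(\Omega;\eta^k,\hat{\bs\eta})$ tends to $0$.
\item $\CR(\Omega\setminus\bs\eta;z)$ tends to $\CR(\Omega\setminus\hat{\bs\eta};z)$.
\end{itemize}

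The component labelling splits into the two cases of the lemma. If $\partial\Omega^\ell_{\bs\eta}$ contains the arc $(x_jx_{j+1})$, the component $\Omega^\ell_{\bs\eta}$ is the small region cut out by $\eta^k$. Then $z\in\Omega^\ell_{\bs\eta}$ forces $\eta^k$ to separate $z$ from $\partial\Omega$ away from $y$, so the integrand vanishes in the limit, which gives \eqref{eqn::LZalphaCR_ASY_aux1b}. Otherwise, $\Omega^\ell_{\bs\eta}$ corresponds to a component $\Omega^{\hat\ell}_{\hat{\bs\eta}}$ of $\Omega\setminus\hat{\bs\eta}$, and the integrand converges to $\one\{z\in\Omega^{\hat\ell}_{\hat{\bs\eta}}\}\CR(\Omega\setminus\hat{\bs\eta};z)^{\mu/\kappa}$. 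Its expectation times $\LZ_{\hat\alpha}$ is exactly $\LZalphaCRhat(\Omega;\ddot{\bs{x}}_j;z)$, which gives \eqref{eqn::LZalphaCR_ASY_aux1a}.

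The main obstacle is justifying the exchange of limit and expectation, i.e.\ uniform integrability, since $\mu/\kappa$ may be negative. Because $\mathfrak{c}\le 0$ for $\kappa\le 4$, the Brownian-loop exponential is bounded by $1$. The remaining factor $\CR(\Omega\setminus\bs\eta;z)^{\mu/\kappa}$ can be controlled by applying H\"older with an exponent slightly above $1$. The point is that $\mu'=(1+\delta)\mu$ still satisfies $\mu'>\frac{\kappa}{8}(\kappa-8)$ when $\delta$ is small, and the bound of Lemma~\ref{lem::LZalpha_mu_bound} at $\mu'$ gives a moment that is uniform in $x_j,x_{j+1}$ near $y$. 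This uses $\LZ_\alpha\le\Poisson^{\mathfrak b}\prod\Poisson^{\mathfrak b}$ together with the asymptotics \eqref{eqn::PPF_ASY}, so that the normalized partition function stays bounded. In case 1 one can instead use the one-curve estimate \eqref{eqn::LZtwo_mu_right_ASY}, after conditioning on $\hat{\bs\eta}$ and applying the chordal lemma in the component of $\Omega\setminus\hat{\bs\eta}$ containing $y$, together with the boundary perturbation of Lemma~\ref{lem::multichordalSLE_bp}.
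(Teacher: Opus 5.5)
\textbf{Gap: the sign of $\mathfrak c$.} The gap is in the step you yourself flag as the main obstacle. The claim ``$\mathfrak c\le 0$ for $\kappa\le 4$'' is false. Since $\mathfrak{c}=\frac{(6-\kappa)(3\kappa-8)}{2\kappa}$, it is positive for $\kappa\in(8/3,4]$; for instance $\mathfrak c=1/2$ at $\kappa=3$ and $\mathfrak c=1$ at $\kappa=4$. Write $W:=\one\{\eta^k\cap\hat{\bs\eta}=\emptyset\}\exp(\frac{\mathfrak c}{2}\blm(\Omega;\eta^k,\hat{\bs\eta}))$. In that range $W$ is at least $1$ on the disjointness event and is unbounded, because the loop mass between two disjoint curves diverges as they come close.

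Your uniform-integrability argument needs $W^{1+\delta}\le W$. Only with that can the $(1+\delta)$-moment bound, which controls $\E[WX^{1+\delta}]$ with $X=\one\{z\in\Omega^\ell_{\bs\eta}\}\CR(\Omega\setminus\bs\eta;z)^{\mu/\kappa}$, be turned into uniform integrability of $WX$ under the product measure. Without it, exchanging limit and expectation is unjustified in cases 2 and 3, which are the cases producing the nonzero limit $\LZalphaCRhat$. For $\kappa\le 8/3$ your argument is fine as written.

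\textbf{How the paper avoids it.} The paper never bounds the loop term; it conditions on $\hat{\bs\eta}$ first. Boundary perturbation for a single chordal curve then integrates out $\eta^k$ exactly. The weighted law of $\eta^k$ is chordal $\mathrm{SLE}_\kappa$ in $\Omega\setminus\hat{\bs\eta}$, with total mass $(\Poisson(\Omega\setminus\hat{\bs\eta};x_j,x_{j+1})/\Poisson(\Omega;x_j,x_{j+1}))^{\mathfrak b}\le 1$ by \eqref{eqn::bPoisson_mono}.

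In each case the inner expectation is then a one-curve quantity in the domain $\Omega\setminus\hat{\bs\eta}$:
\begin{itemize}
\item[(i)] $\LZtwo^{(R;\mu)}$,
\item[(ii)] $\LZtwo^{(L;\mu)}$, or
\item[(iii)] the Poisson-kernel ratio times $\CR(\Omega\setminus\hat{\bs\eta};z)^{\mu/\kappa}$.
\end{itemize}
By \eqref{eqn::LZtwo_mu_bound}, each is dominated by $C\,\CR(\Omega\setminus\hat{\bs\eta};z)^{\mu/\kappa}$, uniformly in $x_j,x_{j+1}$. This dominating function does not depend on $x_j,x_{j+1}$ and is integrable under $\Pchord_{\hat\alpha}$ by Lemma~\ref{lem::LZalpha_mu_bound}. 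The pointwise limits come from \eqref{eqn::LZtwo_mu_left_ASY} and \eqref{eqn::LZtwo_mu_right_ASY}. So the conditioning you mention only as an option for case 1 is exactly what makes all three cases work. It also removes the need for the shrinking-diameter, loop-mass and H\"older steps.

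\textbf{Repair within your route.} If you want to keep the direct product-measure argument, you can fix it without any sign assumption on $\mathfrak c$:
\begin{itemize}
\item[(a)] $W\to 1$ in probability.
\item[(b)] $\E[W]=\LZ_\alpha(\Omega;\bs x)/\big(\Poisson(\Omega;x_j,x_{j+1})^{\mathfrak b}\LZ_{\hat\alpha}(\Omega;\ddot{\bs x}_j)\big)\to 1$ by \eqref{eqn::PPF_ASY}.
\item[(c)] By Scheff\'e's lemma, (a) and (b) make $W$ uniformly integrable.
\item[(d)] Your uniform bound on $\E[WX^{1+\delta}]$ then gives uniform integrability of $WX$, by splitting on $\{X\le K\}$ and $\{X>K\}$.
\end{itemize}
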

\begin{proof}
Suppose $\eta$ is chordal $\SLE_{\kappa}$ in $(\Omega; x_j, x_{j+1})$ and suppose $\hat{\bs{\eta}}$ is $(N-1)$-chordal $\SLE_{\kappa}$ in $(\Omega; \ddot{\bs{x}}_j)$ associated with $\hat{\alpha}$. Denote by $\Pchordtwo\otimes \Pchord_{\hat{\alpha}}$ the probability measure under which $\eta$ and $\hat{\bs{\eta}}$ are independent. 
From Lemma~\ref{lem::multichordalSLE_construction_blm}, 
the law of $(\eta, \hat{\bs{\eta}})$ under $\Pchord_{\alpha}(\Omega; \bs{x})$ is the same as $\Pchordtwo\otimes \Pchord_{\hat{\alpha}}$ weighted by 
\[\frac{\LZtwo(\Omega; x_j, x_{j+1})\LZ_{\hat{\alpha}}(\Omega; \ddot{\bs{x}}_j)}{\LZ_{\alpha}(\Omega; \bs{x})}\one\{\eta\cap\hat{\bs{\eta}}=\emptyset\}\exp\left(\frac{\mathfrak{c}}{2}\blm(\Omega; \eta, \hat{\bs{\eta}})\right).\]
Plugging into~\eqref{eqn::LZalphaCR}, we have
\begin{align}\label{eqn::LZaphaCR_ASY_aux2}
\frac{\LZalphaCR(\Omega; \bs{x}; z)}{\Poisson(\Omega; x_j, x_{j+1})^{\mathfrak{b}}}=\LZ_{\hat{\alpha}}(\Omega; \ddot{\bs{x}}_j)\Echordtwo\otimes\Echord_{\hat{\alpha}}\left[\one\{\eta\cap\hat{\bs{\eta}}=\emptyset\}\exp\left(\frac{\mathfrak{c}}{2}\blm(\Omega; \eta, \hat{\bs{\eta}})\right)\one\{z\in\Omega_{\bs{\eta}}^{\ell}\}\CR(\Omega\setminus\bs{\eta};z)^{\frac{\mu}{\kappa}}\right].
\end{align}

For the RHS of~\eqref{eqn::LZaphaCR_ASY_aux2}, we take conditional expectation with respect to $\hat{\bs{\eta}}$, then the law of $\eta$ is the same as chordal $\SLE_{\kappa}$ in $(\Omega\setminus\hat{\bs{\eta}}; x_j, x_{j+1})$. 
Let us consider the conformal radius $\CR(\Omega\setminus\bs{\eta};z)$. 
There are $(N+1)$ connected components of $\Omega\setminus\bs{\eta}$. Among them, there are two connected components having $\eta$ on the boundary: one has $(x_jx_{j+1})$ on the boundary and one has $(x_{j-1}x_j)$ on the boundary. For the relation between $z$ and $\eta$, there are three cases, see Figure~\ref{fig::LZalphaCR_ASY}. 
\begin{itemize}
\item Case 1: $z\in \Omega_{\bs{\eta}}^{\ell}$ and $\partial\Omega_{\bs{\eta}}^{\ell}$ contains $(x_jx_{j+1})$. In this case, we have $\CR(\Omega\setminus\bs{\eta};z)=\CR(\Omega\setminus\eta;z)$. Thus, 
\begin{align*}
\frac{\LZalphaCR(\Omega; \bs{x}; z)}{\Poisson(\Omega; x_j, x_{j+1})^{\mathfrak{b}}}=\LZ_{\hat{\alpha}}(\Omega; \ddot{\bs{x}}_j)\Echord_{\hat{\alpha}}\left[\frac{\LZtwo^{(R; \mu)}(\Omega\setminus\hat{\bs{\eta}}; x_j, x_{j+1}; z)}{\LZtwo(\Omega; x_j, x_{j+1})}\right]. 
\end{align*}
For the integrand in the RHS, we have almost surely,
\begin{align*}
\lim_{x_j, x_{j+1}\to y}\frac{\LZtwo^{(R;\mu)}(\Omega\setminus\hat{\bs{\eta}}; x_j, x_{j+1}; z)}{\LZtwo(\Omega;x_j,x_{j+1})}=0;\tag{due to~\eqref{eqn::LZtwo_mu_right_ASY}}
\end{align*}
and 
\begin{align*}
\frac{\LZtwo^{(R;\mu)}(\Omega\setminus\hat{\bs{\eta}}; x_j, x_{j+1};z)}{\LZtwo(\Omega;x_j,x_{j+1})}\le \const_{\kappa}^{(\mu)}\CR(\Omega\setminus\hat{\bs{\eta}};z)^{\frac{\mu}{\kappa}};\tag{due to~\eqref{eqn::LZtwo_mu_bound}}
\end{align*}
where $\CR(\Omega\setminus\hat{\bs{\eta}};z)^{\frac{\mu}{\kappa}}$ is integrable due to Lemma~\ref{lem::LZalpha_mu_bound}. Thus, bounded convergence theorem gives~\eqref{eqn::LZalphaCR_ASY_aux1b} as desired. 

\item Case 2: $z\in \Omega_{\bs{\eta}}^{\ell}$ and $\partial\Omega_{\bs{\eta}}^{\ell}$ contains $(x_{j-1}x_j)$. In this case, we have 
\begin{align*}
\frac{\LZalphaCR(\Omega; \bs{x}; z)}{\Poisson(\Omega; x_j, x_{j+1})^{\mathfrak{b}}}=\LZ_{\hat{\alpha}}(\Omega; \ddot{\bs{x}}_j)\Echord_{\hat{\alpha}}\left[\frac{\LZtwo^{(L;\mu)}(\Omega\setminus\hat{\bs{\eta}}; x_j, x_{j+1}; z)}{\LZtwo(\Omega;x_j,x_{j+1})}\one\{z\in\Omega_{\hat{\bs{\eta}}}^{\ell}\}\right].
\end{align*}
For the integrand in the RHS, we have almost surely,
\begin{align*}
\lim_{x_j, x_{j+1}\to y}\frac{\LZtwo^{(L;\mu)}(\Omega\setminus\hat{\bs{\eta}}; x_j, x_{j+1}; z)}{\LZtwo(\Omega;x_j,x_{j+1})}=\CR(\Omega\setminus\hat{\bs{\eta}};z)^{\frac{\mu}{\kappa}};\tag{due to~\eqref{eqn::LZtwo_mu_left_ASY}}
\end{align*}
and 
\begin{align*}
\frac{\LZtwo^{(L;\mu)}(\Omega\setminus\hat{\bs{\eta}}; x_j, x_{j+1}; z)}{\LZtwo(\Omega;x_j,x_{j+1})}\le \const_{\kappa}^{(\mu)}\CR(\Omega\setminus\hat{\bs{\eta}};z)^{\frac{\mu}{\kappa}};\tag{due to~\eqref{eqn::LZtwo_mu_bound}}
\end{align*}
where $\CR(\Omega\setminus\hat{\bs{\eta}};z)^{\frac{\mu}{\kappa}}$ is integrable due to Lemma~\ref{lem::LZalpha_mu_bound}. Thus, bounded convergence theorem guarantees
\begin{align*}
\lim_{x_j, x_{j+1}\to y}\frac{\LZalphaCR(\Omega; \bs{x}; z)}{\Poisson(\Omega; x_j, x_{j+1})^{\mathfrak{b}}}=&\LZ_{\hat{\alpha}}(\Omega; \ddot{\bs{x}}_j)\Echord_{\hat{\alpha}}\left[\one\{z\in\Omega_{\hat{\bs{\eta}}}^{\ell}\}\CR(\Omega\setminus\hat{\bs{\eta}};z)^{\frac{\mu}{\kappa}}\right]
=\LZalphaCRhat(\Omega; \ddot{\bs{x}}_j; z), 
\end{align*}
as desired in~\eqref{eqn::LZalphaCR_ASY_aux1a}. 
\item Case 3: $z\in \Omega_{\bs{\eta}}^{\ell}$ and $\partial\Omega_{\bs{\eta}}^{\ell}$ does not contain $(x_{j-1}x_j)$ nor $(x_jx_{j+1})$. In this case, we have $\CR(\Omega\setminus\bs{\eta};z)=\CR(\Omega\setminus\hat{\bs{\eta}};z)$. Thus, 
\begin{align*}
\frac{\LZalphaCR(\Omega; \bs{x}; z)}{\Poisson(\Omega; x_j, x_{j+1})^{\mathfrak{b}}}=\LZ_{\hat{\alpha}}(\Omega; \ddot{\bs{x}}_j)\Echord_{\hat{\alpha}}\left[\frac{\LZtwo(\Omega\setminus\hat{\bs{\eta}}; x_j, x_{j+1})}{\LZtwo(\Omega; x_j, x_{j+1})}\one\{z\in\Omega_{\hat{\bs{\eta}}}^{\ell}\}\CR(\Omega\setminus\hat{\bs{\eta}};z)^{\frac{\mu}{\kappa}}\right]. 
\end{align*}
For the integrand in the RHS, we have almost surely,
\begin{align*}
\lim_{x_j, x_{j+1}\to y}\frac{\LZtwo(\Omega\setminus\hat{\bs{\eta}}; x_j, x_{j+1})}{\LZtwo(\Omega; x_j, x_{j+1})}=1;
\end{align*}
and
\begin{align*}
\frac{\LZtwo(\Omega\setminus\hat{\bs{\eta}}; x_j, x_{j+1})}{\LZtwo(\Omega; x_j, x_{j+1})}\one\{z\in\Omega_{\hat{\bs{\eta}}}^{\ell}\}\CR(\Omega\setminus\hat{\bs{\eta}};z)^{\frac{\mu}{\kappa}}\le \CR(\Omega\setminus\hat{\bs{\eta}};z)^{\frac{\mu}{\kappa}},  
\end{align*}
where $\CR(\Omega\setminus\hat{\bs{\eta}};z)^{\frac{\mu}{\kappa}}$ is integrable due to Lemma~\ref{lem::LZalpha_mu_bound}. Thus, bounded convergence theorem guarantees
\begin{align*}
\lim_{x_j, x_{j+1}\to y}\frac{\LZalphaCR(\Omega; \bs{x}; z)}{\Poisson(\Omega; x_j, x_{j+1})^{\mathfrak{b}}}=&\LZ_{\hat{\alpha}}(\Omega; \ddot{\bs{x}}_j)\Echord_{\hat{\alpha}}\left[\one\{z\in\Omega_{\hat{\bs{\eta}}}^{\ell}\}\CR(\Omega\setminus\hat{\bs{\eta}};z)^{\frac{\mu}{\kappa}}\right]
=\LZalphaCRhat(\Omega; \ddot{\bs{x}}_j; z), 
\end{align*}
as desired in~\eqref{eqn::LZalphaCR_ASY_aux1a}. 
\end{itemize}
\end{proof}

\begin{proof}[Proof of Proposition~\ref{prop::LZalphaCR}]
The bound~\eqref{eqn::LZalphaCR_bound} is proved in~\cite[Lemma~2.1]{FengWuRadialBPZFKIsing}. 
The BPZ equations~\eqref{eqn::LZalphaCR_BPZ} are proved in~\cite[Lemma~2.2]{FengWuRadialBPZFKIsing}.
The rotation invariance is clear from the definition. 
For the asymptotics~\eqref{eqn::LZalphaCR_ASY}, there are three cases: case~1: $\{j,j+1\}\in\alpha$ and $\partial\Omega_{\bs{\eta}}^{\ell}$ contains $(x_{j}x_{j+1})$; case~2: $\{j,j+1\}\in\alpha$ and $\partial\Omega_{\bs{\eta}}^{\ell}$ does not contain $(x_{j}x_{j+1})$; case~3: $\{j,j+1\}\not\in\alpha$. The first two cases are proved in Lemma~\ref{lem::LZalphaCR_ASY_aux1}. The third case follows from~\eqref{eqn::LZalphaCR_bound}.
\end{proof}

\subsection{Mixture of multi-chordal and multi-radial SLE}
\label{subsec::mix_chordal_radial}
Fix $\kappa\in (0,4]$ and $\mu\in\R$. 
Fix integers $N\ge 0$ and $p\ge 0$ and suppose $n=2N+p\ge 2$.
Fix $\alpha\in\LP_N$. 
Suppose $\bs{s}=\{s_1, \ldots, s_p\}$ is a subset of $\{1, \ldots, n\}$ such that $s_1<\cdots<s_p$. We write $\{v_1, \ldots, v_{2N}\}=\{1, \ldots, n\}\setminus\{s_1, \ldots, s_p\}$ such that $v_1<\cdots<v_{2N}$. Suppose $(\Omega; x_1, \ldots, x_n)$ is nice $n$-polygon and $z\in\Omega$. 
\begin{itemize}
\item Suppose $\bs{\gamma}=(\gamma^1, \ldots, \gamma^p)$ is $p$-radial $\SLE_{\kappa}^{\mu}$ in $(\Omega; x_{s_1}, \ldots, x_{s_p}; z)$. We also write $\bs{\gamma}=\cup_{j=1}^p\gamma^j$ and denote its law by 
$\Prad{p}^{(\mu)}=\Prad{p}^{(\mu)}(\Omega; x_{s_1}, \ldots, x_{s_p}; z)$.
Its partition function is given by 
\[\LZrad{p}^{(\mu)}(\Omega; x_{s_1}, \ldots, x_{s_p}; z).\] 
\item Suppose $\bs{\eta}=(\eta^1, \ldots, \eta^N)$ is $N$-chordal $\SLE_{\kappa}$ in $(\Omega; x_{v_1}, \ldots, x_{v_{2N}})$ associated with $\alpha$. We also write $\bs{\eta}=\cup_{j=1}^N\eta^j$ and denote its law by 
$\Pchord_{\alpha}=\Pchord_{\alpha}(\Omega; x_{v_1}, \ldots, x_{v_{2N}})$.
Its partition function is given by 
\[\LZ_{\alpha}(\Omega; x_{v_1}, \ldots, x_{v_{2N}}).\] 
\item Let $\Prad{p}^{(\mu)}\otimes\Pchord_{\alpha}$ be the measure under which $\bs{\gamma}\sim\Prad{p}^{(\mu)}$ and $\bs{\eta}\sim\Pchord_{\alpha}$ are independent. 
We say that the pair $(\bs{s}, \alpha)$ is allowable if $\{\bs{\gamma}\cap\bs{\eta}=\emptyset\}$ is not empty, see Figure~\ref{fig::Allowable}.
When $(\bs{s}, \alpha)$ is allowable, define 
\begin{align}\label{eqn::LZmix_def}
\begin{split}
\LZmix(\Omega; x_1, \ldots, x_n; z):=&\LZrad{p}^{(\mu)}(\Omega; x_{s_1}, \ldots, x_{s_p}; z)\times\LZ_{\alpha}(\Omega; x_{v_1}, \ldots, x_{v_{2N}})\\
&\times\Erad{p}^{(\mu)}\otimes\Echord_{\alpha}\left[\one\{\bs{\gamma}\cap\bs{\eta}=\emptyset\}\exp\left(\frac{\mathfrak{c}}{2}\blm(\Omega; \bs{\gamma}, \bs{\eta})\right)\right]. 
\end{split}
\end{align}
\end{itemize}

\begin{proposition}\label{prop::LZmix_BPZ_COV_ASY}
Fix $\kappa\in (0,4]$ and $\mu\in\R$ and we use the above notation. 
\begin{itemize}
\item The expectation in~\eqref{eqn::LZmix_def} is finite: 
\begin{align}\label{eqn::LZmix_bound}
\LZmix(\Omega; x_1, \ldots, x_n; z)\le \LZrad{p}^{(\mu)}(\Omega; x_{s_1}, \ldots, x_{s_p}; z)\times \prod_{\{a,b\}\in\alpha}\Poisson(\Omega; x_{v_a}, x_{v_b})^{\mathfrak{b}}. 
\end{align}
\item If we write 
$\LZmix(\bs{\theta})=\LZmix(\U; \ee^{\ii\bs{\theta}}; 0)$ for $\bs{\theta}=(\theta_1, \ldots, \theta_n)\in\LX_n^{\U}$,
then $\LZmix: \LX_n^{\U}\to \R_{>0}$ satisfies radial BPZ system~\eqref{eqn::BPZ_radial} with $\lambda=\frac{1}{2}(1-p^2+\mu^2)$: 
\begin{align}\label{eqn::LZmix_BPZ}
\LD_j\LZ=\left(\frac{1-p^2+\mu^2}{2\kappa}\right)\LZ, \qquad \text{for }j\in\{1, \ldots, n\}, 
\end{align}
where $\LD_j$ is defined in~\eqref{eqn::radialBPZ_operatordef}.
\item If we write 
$\LZmix(\bs{\theta})=\LZmix(\U; \ee^{\ii\bs{\theta}}; 0)$ for $\bs{\theta}=(\theta_1, \ldots, \theta_n)\in\LX_n^{\U}$,
then $\LZmix: \LX_n^{\U}\to \R_{>0}$ satisfies the following rotation covariance: for $a\in\R$, 
\begin{align}\label{eqn::LZmix_rotation}
\LZmix(\bs{\theta}+a)=\LZmix(\bs{\theta})\exp\left(\frac{\mu}{\kappa}pa\right),\qquad \text{for }\bs{\theta}\in\LX_n^{\U}. 
\end{align}
\item The partition function $\LZmix(\Omega; \bs{x}; z)$ has the following asymptotics: fix $j\in \{1, \ldots, n-1\}$ and $y\in (x_{j-1}x_{j+2})$, then
\begin{align}\label{eqn::LZmix_ASY}
\lim_{x_j, x_{j+1}\to y}\frac{\LZmix(\Omega; \bs{x}; z)}{\Poisson(\Omega; x_j, x_{j+1})^{\mathfrak{b}}}=
\begin{cases}
\LZmixhat(\Omega; \ddot{\bs{x}}_j; z), &\text{if $\{j,j+1\}$ are paired in $\alpha$},\\
0, &\text{if else}
\end{cases}
\end{align}
where by $\{j,j+1\}$ paired in $\alpha$, we mean that there is $i$ such that $v_i=j, v_{i+1}=j+1$ and $\{i,i+1\}\in\alpha$, and $\hat{\alpha}=\alpha/\{i,i+1\}$ and $\ddot{\bs{x}}_j=(x_1, \ldots, x_{j-1}, x_{j+2}, \ldots, x_n)$.
\end{itemize}
\end{proposition}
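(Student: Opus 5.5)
The plan is to prove the four items of Proposition~\ref{prop::LZmix_BPZ_COV_ASY} in order. Each item reduces, through the product-measure representation~\eqref{eqn::LZmix_def}, to a statement already available for multi-radial SLE (Lemma~\ref{lem::multiradialSLE_bp}) or for multi-chordal SLE (Lemmas~\ref{lem::multichordalSLE_construction_blm}--\ref{lem::multichordalSLE_bp}).

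\textbf{Bound.} Since $\kappa\le 4$ we have $\mathfrak{c}\le 0$, so the integrand in~\eqref{eqn::LZmix_def} is at most $\one\{\bs{\gamma}\cap\bs{\eta}=\emptyset\}\le 1$. This gives $\LZmix\le \LZrad{p}^{(\mu)}\LZ_{\alpha}$, and then~\eqref{eqn::PPF_PLB} yields~\eqref{eqn::LZmix_bound}.

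\textbf{Rotation covariance.} Rotating $\U$ by $\ee^{\ii a}$ preserves both laws and $\blm$, so the expectation is invariant. The chordal factor $\LZ_{\alpha}$ is invariant by~\eqref{eqn::PPF_COV}. The radial factor $\LZrad{p}^{(\mu)}(\U;\ee^{\ii\bs{\theta}};0)$ is given by~\eqref{eqn::LZrad_mu_U}, which picks up exactly $\exp(\frac{\mu}{\kappa}pa)$ under $\theta_j\mapsto\theta_j+a$ for the $p$ radial indices. This proves~\eqref{eqn::LZmix_rotation}.

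\textbf{BPZ equations.} The natural route is the martingale characterization in Lemma~\ref{lem::martingale_implies_BPZ}. By symmetry (relabel so the curve in question starts at $\ee^{\ii\theta_1}$), it suffices to treat $j=1$, and there are two cases. If $1\in\bs{s}$, grow $\gamma^1$ under $\Prad{p}^{(\mu)}\otimes\Pchord_{\alpha}$ weighted by the interaction term. Using the domain Markov property of $p$-radial SLE (given $\gamma^1[0,t]$, the rest is $p$-radial SLE in $\mathfrak{g}_t$-coordinates) together with boundary perturbation, I would identify
\[\LZmix(\U\setminus\gamma^1[0,t];\ldots)\]
conformally mapped as $M_t(\LZmix)$ times a radial SLE local martingale. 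Concretely, Lemma~\ref{lem::multichordalSLE_bp} applied with $K=\gamma^1[0,t]$ and Lemma~\ref{lem::multiradialSLE_bp} show that the Brownian loop terms telescope, because $\blm(\Omega;\bs{\gamma},\bs{\eta})$ splits into loops hitting $\gamma^1[0,t]$ plus loops in $\Omega\setminus\gamma^1[0,t]$. The conditional expectation of the total mass given $\gamma^1[0,t]$ is then a martingale of the form~\eqref{eqn::BPZ_radial_mart} with the radial parameter $\lambda=\frac12(1-p^2+\mu^2)$, inherited from~\eqref{eqn::multiradial_marginal_mart}. If instead $1\in\{v_i\}$, grow $\eta^1$. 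The chordal marginal martingale~\eqref{eqn::multirchordal_marginal_mart} and the conformal covariance of $\LZrad{p}^{(\mu)}$ (including its $\CR$ and $\arg\varphi'$ factors) supply the correct $\mathfrak{g}_t'(0)$ exponent, so the same $\lambda$ results. Smoothness comes from Lemma~\ref{lem::martingale_implies_BPZ} (or hypoellipticity) once the local martingale property is established.

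\textbf{Main obstacle.} I expect the hardest step to be the careful bookkeeping of exponents, namely checking that the power of $\mathfrak{g}_t'(0)$ coming from the chordal growth matches $-\lambda/\kappa$ with the radial $\lambda$. I would verify this by computing how $\LZrad{p}^{(\mu)}(\U\setminus\eta^1[0,t];\ldots;0)$ transforms under $\mathfrak{g}_t$, using~\eqref{eqn::LZrad_mu_def}.

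\textbf{Asymptotics.} I would follow Lemma~\ref{lem::LZalphaCR_ASY_aux1}. If $\{j,j+1\}$ is paired in $\alpha$, rewrite via Lemma~\ref{lem::multichordalSLE_construction_blm} with the pair's curve independent of the rest, so that the ratio to $\Poisson^{\mathfrak{b}}$ is $\LZmixhat$ times an expectation of $\LZtwo(\text{slit domain})/\LZtwo(\Omega)\to 1$. Dominated convergence applies with the bound~\eqref{eqn::LZmix_bound}, and the small curve avoids $\bs{\gamma}$ with probability tending to $1$. If the two points are unpaired chordal points, the bound~\eqref{eqn::LZmix_bound} combined with~\eqref{eqn::PPF_ASY} gives $0$. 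If at least one of $j,j+1$ is radial, the explicit form of $\LZrad{p}^{(\mu)}$ shows it is $O(\Poisson(\Omega;x_j,x_{j+1})^{-1/\kappa})$ or bounded, which is $o(\Poisson^{\mathfrak{b}})$ since $\mathfrak{b}>-1/\kappa$; together with~\eqref{eqn::LZmix_bound}, the limit is $0$.
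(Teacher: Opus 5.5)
There is a genuine gap in your first step. It rests on a false sign claim: $\mathfrak{c}=\frac{(6-\kappa)(3\kappa-8)}{2\kappa}$ is $\le 0$ only for $\kappa\le 8/3$. For $\kappa\in(8/3,4]$ it is strictly positive; for instance $\mathfrak{c}=1/2$ at $\kappa=3$ and $\mathfrak{c}=1$ at $\kappa=4$, the cases most relevant to this paper. In that range the weight $\exp(\frac{\mathfrak{c}}{2}\blm(\Omega;\bs{\gamma},\bs{\eta}))$ is not bounded by $1$. It is in fact unbounded on $\{\bs{\gamma}\cap\bs{\eta}=\emptyset\}$, because the loop mass between two disjoint curves blows up as they come close. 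So your argument establishes neither the finiteness of the expectation in~\eqref{eqn::LZmix_def} nor the intermediate inequality $\LZmix\le\LZrad{p}^{(\mu)}\LZ_{\alpha}$, and hence not~\eqref{eqn::LZmix_bound}. The gap propagates, because three later steps lean on this bound for all $\kappa\in(8/3,4]$:
\begin{itemize}
\item the domination in the paired asymptotics;
\item the vanishing in the unpaired and radial cases;
\item the integrability behind your Doob martingales in the BPZ step.
\end{itemize}

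The missing idea is to integrate the loop weight out exactly rather than bound it. Conditionally on $\bs{\gamma}$, apply Lemma~\ref{lem::multichordalSLE_bp} with $K=\bs{\gamma}$. This is allowed because $(\bs{s},\alpha)$ is allowable and $\bs{\gamma}$ stays away from the $x_{v_i}$. It gives
\[\LZ_{\alpha}(\Omega; x_{v_1},\ldots,x_{v_{2N}})\,\Echord_{\alpha}\Big[\one\{\bs{\gamma}\cap\bs{\eta}=\emptyset\}\exp\Big(\frac{\mathfrak{c}}{2}\blm(\Omega;\bs{\gamma},\bs{\eta})\Big)\,\Big|\,\bs{\gamma}\Big]=\LZ_{\alpha}(\Omega\setminus\bs{\gamma}; x_{v_1},\ldots,x_{v_{2N}}),\]
which holds for every $\kappa\in(0,4]$ whatever the sign of $\mathfrak{c}$. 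Then apply the power-law bound~\eqref{eqn::PPF_PLB} in the components of $\Omega\setminus\bs{\gamma}$, followed by the monotonicity~\eqref{eqn::bPoisson_mono} (which uses $\mathfrak{b}>0$). This yields finiteness and~\eqref{eqn::LZmix_bound}, and is exactly the paper's Lemma~\ref{lem::LZmix_bound}. The paper then uses the resulting cascade identities~\eqref{eqn::LZmix_cascade1}--\eqref{eqn::LZmix_cascade2} both for the BPZ martingales and for the paired asymptotics, the latter via~\eqref{eqn::PPF_ASY} applied to $\LZ_{\alpha}(\Omega\setminus\bs{\gamma};\cdot)$.

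With the bound repaired, the rest of your plan goes through. Your rotation argument is the paper's. Your loop-telescoping Doob martingale is an unpackaged form of the cascade argument. Your paired-case splitting of the curve joining $x_j,x_{j+1}$ via Lemma~\ref{lem::multichordalSLE_construction_blm} is a valid alternative route. In it the one-curve factor $\big(\Poisson(\Omega\setminus(\bs{\gamma}\cup\hat{\bs{\eta}});x_j,x_{j+1})/\Poisson(\Omega;x_j,x_{j+1})\big)^{\mathfrak{b}}$ is at most $1$ and tends to $1$. The dominating function is the $\hat{\alpha}$-interaction weight, which is integrable precisely because $\LZmixhat<\infty$.
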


\begin{figure}[ht!]
\begin{subfigure}[t]{0.15\textwidth}
\begin{center}
\includegraphics[width=\textwidth]{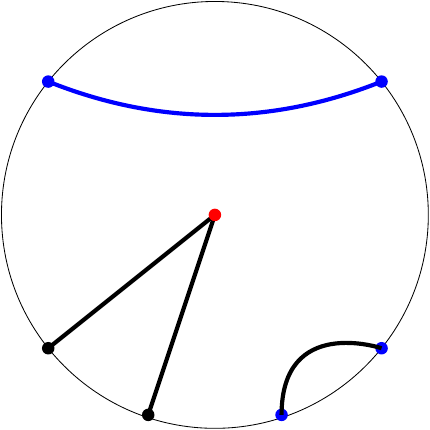}
\end{center}
\caption{}
\end{subfigure}
\begin{subfigure}[t]{0.15\textwidth}
\begin{center}
\includegraphics[width=\textwidth]{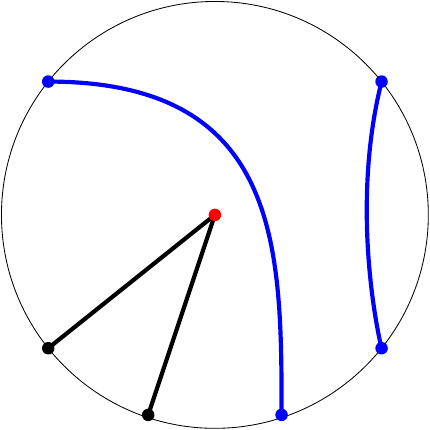}
\end{center}
\caption{}
\end{subfigure}
\begin{subfigure}[t]{0.15\textwidth}
\begin{center}
\includegraphics[width=\textwidth]{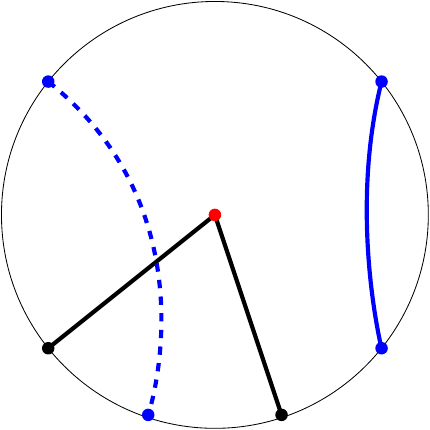}
\end{center}
\caption{}
\end{subfigure}
\begin{subfigure}[t]{0.15\textwidth}
\begin{center}
\includegraphics[width=\textwidth]{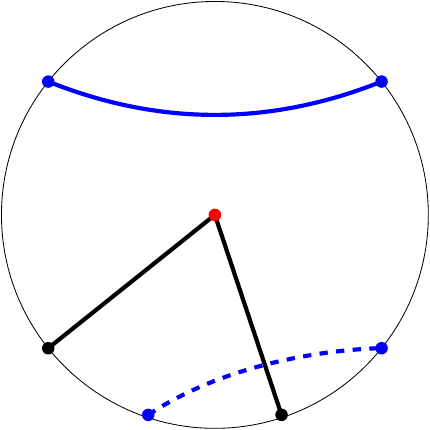}
\end{center}
\caption{}
\end{subfigure}
\begin{subfigure}[t]{0.15\textwidth}
\begin{center}
\includegraphics[width=\textwidth]{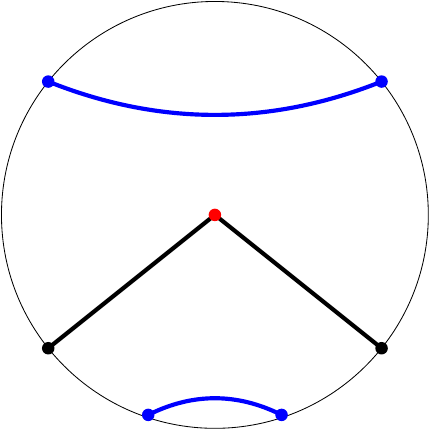}
\end{center}
\caption{}
\end{subfigure}
\begin{subfigure}[t]{0.15\textwidth}
\begin{center}
\includegraphics[width=\textwidth]{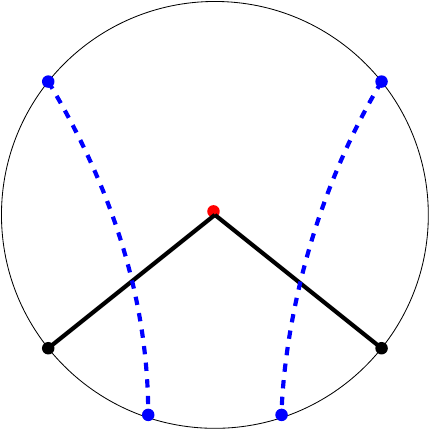}
\end{center}
\caption{}
\end{subfigure}

\caption{\label{fig::Allowable} 
Allowable link patterns for $n=6$ and $p=2$: red dot indicates $z$; black dots indicate $\{x_{s_1}, x_{s_2}\}$; and blue dots indicate $\{x_{v_1}, x_{v_2}, x_{v_3}, x_{v_4}\}$. The link patterns in (a), (b), (e) are allowable. The link patterns in (c), (d), (f) are not allowed. 
}
\end{figure}

The proof of Proposition~\ref{prop::LZmix_BPZ_COV_ASY} is split into four lemmas. 
\begin{lemma}\label{lem::LZmix_bound}
Assume the same setup as in Proposition~\ref{prop::LZmix_BPZ_COV_ASY}. 
We have
\begin{align}\label{eqn::LZmix_cascade1}
\LZmix(\Omega; x_1, \ldots, x_n; z)
	=&\LZrad{p}^{(\mu)}(\Omega; x_{s_1}, \ldots, x_{s_p}; z)
	\Erad{p}^{(\mu)}\left[\LZ_{\alpha}(\Omega\setminus\bs{\gamma}; x_{v_1}, \ldots, x_{v_{2N}})\right];\\
\label{eqn::LZmix_cascade2}
	\LZmix(\Omega; x_1, \ldots, x_n; z)
	=&\LZ_{\alpha}(\Omega; x_{v_1}, \ldots, x_{v_{2N}})
	\Echord_{\alpha}\left[\LZrad{p}^{(\mu)}(\Omega\setminus\bs{\eta}; x_{s_1}, \ldots, x_{s_p}; z)\right]. 
\end{align}
In particular, the bound~\eqref{eqn::LZmix_bound} holds. 
\end{lemma}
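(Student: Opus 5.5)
We will prove \eqref{eqn::LZmix_cascade1} and \eqref{eqn::LZmix_cascade2} by conditioning in the defining expectation \eqref{eqn::LZmix_def}, and then read off the bound \eqref{eqn::LZmix_bound} from \eqref{eqn::LZmix_cascade1}.

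\emph{Identity \eqref{eqn::LZmix_cascade1}.} Under the product measure $\Prad{p}^{(\mu)}\otimes\Pchord_{\alpha}$, condition on $\bs{\gamma}$ and integrate out $\bs{\eta}$ first. Apply the boundary perturbation property Lemma~\ref{lem::multichordalSLE_bp} with $K=\bs{\gamma}$. This needs care, since $\bs{\gamma}$ is not at positive distance from $0$ or from the points $x_{s_i}$; we only need it away from the $x_{v_a}$. We truncate $\bs{\gamma}$ away from $z$ and from the endpoints $x_{s_i}$ and pass to the limit by monotone convergence, or use that $\bs{\gamma}$ is a.s.\ at positive distance from the points $x_{v_a}$. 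Allowability guarantees that $\{\bs{\eta}\cap\bs{\gamma}=\emptyset\}$ is nonempty, so the lemma applies. It gives
\[
\Echord_{\alpha}\left[\one\{\bs{\eta}\cap\bs{\gamma}=\emptyset\}\exp\left(\tfrac{\mathfrak{c}}{2}\blm(\Omega;\bs{\eta},\bs{\gamma})\right)\right]=\frac{\LZ_{\alpha}(\Omega\setminus\bs{\gamma};x_{v_1},\ldots,x_{v_{2N}})}{\LZ_{\alpha}(\Omega;x_{v_1},\ldots,x_{v_{2N}})}.
\]
Here $\LZ_{\alpha}(\Omega\setminus\bs{\gamma})$ is the product over the components of $\Omega\setminus\bs{\gamma}$, as in the proof of Lemma~\ref{lem::multichordalSLE_bp}. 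We use the convention that it is zero if some pair of $\alpha$ is separated by $\bs{\gamma}$, and in that case the indicator also vanishes a.s. Substituting this into \eqref{eqn::LZmix_def} cancels the factor $\LZ_{\alpha}(\Omega)$ and gives \eqref{eqn::LZmix_cascade1}.

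\emph{Identity \eqref{eqn::LZmix_cascade2}.} Symmetrically, condition on $\bs{\eta}$ and use Lemma~\ref{lem::multiradialSLE_bp} with $K=\bs{\eta}$. This yields
\[
\Erad{p}^{(\mu)}\left[\one\{\bs{\gamma}\cap\bs{\eta}=\emptyset\}\exp\left(\tfrac{\mathfrak{c}}{2}\blm\right)\right]=\frac{\LZrad{p}^{(\mu)}(\Omega\setminus\bs{\eta})}{\LZrad{p}^{(\mu)}(\Omega)}.
\]
Here $\LZrad{p}^{(\mu)}(\Omega\setminus\bs{\eta})$ is evaluated in the component containing $z$ when all $x_{s_i}$ lie on its boundary, and is set to zero otherwise.

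\emph{Main obstacle.} Lemma~\ref{lem::multiradialSLE_bp} assumes that $\Omega\setminus K$ is simply connected and coincides with $\Omega$ near the marked points. Neither need hold for $K=\bs{\eta}$: $\Omega\setminus\bs{\eta}$ has several components, and $\bs{\eta}$ touches $\partial\Omega$ at the $x_{v_a}$. I would resolve this in three steps.
\begin{itemize}
\item Note that the $\bs{\gamma}$-curves lie entirely in the component $D$ of $\Omega\setminus\bs{\eta}$ containing $z$.
\item Replace $K$ by $\Omega\setminus D$, which gives a simply connected domain $D$ agreeing with $\Omega$ near the $x_{s_i}$ and near $z$.
\item Check that $\blm(\Omega;\bs{\gamma},\bs{\eta})=\blm(\Omega;\bs{\gamma},\Omega\setminus D)$. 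This holds because a loop from $\bs{\gamma}$ that exits $D$ must cross $\partial D\cap\Omega\subset\bs{\eta}$.
\end{itemize}
The lack of compactness or positive distance is handled by approximating $\Omega\setminus D$ by compact sets avoiding neighbourhoods of the $x_{v_a}$, followed by a monotone limit. The same device, with roles reversed, justifies the first identity.

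\emph{Bound \eqref{eqn::LZmix_bound}.} Apply the power-law bound \eqref{eqn::PPF_PLB} componentwise, then the Poisson kernel monotonicity \eqref{eqn::bPoisson_mono}, to get
\[
\LZ_{\alpha}(\Omega\setminus\bs{\gamma})\le\prod_{\{a,b\}\in\alpha}\Poisson(\Omega\setminus\bs{\gamma};x_{v_a},x_{v_b})^{\mathfrak{b}}\le\prod_{\{a,b\}\in\alpha}\Poisson(\Omega;x_{v_a},x_{v_b})^{\mathfrak{b}}.
\]
Inserting this into \eqref{eqn::LZmix_cascade1} gives \eqref{eqn::LZmix_bound}, which also shows that the expectation in \eqref{eqn::LZmix_def} is finite.
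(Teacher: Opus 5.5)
Your proof is correct and follows the paper's argument. You condition on $\bs{\gamma}$ and apply Lemma~\ref{lem::multichordalSLE_bp} to get \eqref{eqn::LZmix_cascade1}, condition on $\bs{\eta}$ and apply Lemma~\ref{lem::multiradialSLE_bp} to get \eqref{eqn::LZmix_cascade2}, and then combine \eqref{eqn::PPF_PLB} with \eqref{eqn::bPoisson_mono} to obtain \eqref{eqn::LZmix_bound}. Your extra care about the hypotheses (the zero conventions, and replacing $\bs{\eta}$ by the complement of the component $D$ containing $z$) makes explicit what the paper leaves implicit. The truncation and monotone-limit step is not needed, though: $\overline{\bs{\gamma}}$ is already compact and at positive distance from the $x_{v_a}$, and, working in $\U$, $\overline{\U}\setminus D$ already satisfies the hypotheses of Lemma~\ref{lem::multiradialSLE_bp}.
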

\begin{proof}
Let us consider the pair $(\bs{\gamma}, \bs{\eta})$ in the construction above Proposition~\ref{prop::LZmix_BPZ_COV_ASY}. 
\begin{itemize}
\item From the boundary perturbation in Lemma~\ref{lem::multichordalSLE_bp}, the conditional law of $\bs{\eta}$ given $\bs{\gamma}$ weighted by 
\[\one\{\bs{\gamma}\cap\bs{\eta}=\emptyset\}\exp\left(\frac{\mathfrak{c}}{2}\blm(\Omega; \bs{\gamma}, \bs{\eta})\right)\]
is the same as $N$-chordal $\SLE_{\kappa}$ in the connected components of $\Omega\setminus\bs{\gamma}$. Therefore, 
\begin{align}\label{eqn::etagivengamma}
\LZ_{\alpha}(\Omega; x_{v_1}, \ldots, x_{v_{2N}})\Echord_{\alpha}\left[\one\{\bs{\gamma}\cap\bs{\eta}=\emptyset\}\exp\left(\frac{\mathfrak{c}}{2}\blm(\Omega; \bs{\gamma}, \bs{\eta})\right)\cond \bs{\gamma}\right]=\LZ_{\alpha}(\Omega\setminus\bs{\gamma}; x_{v_1}, \ldots, x_{v_{2N}}). 
\end{align}
This gives~\eqref{eqn::LZmix_cascade1} as desired. 
\item From the boundary perturbation in Lemma~\ref{lem::multiradialSLE_bp}, the conditional law of $\bs{\gamma}$ given $\bs{\eta}$ weighted by 
\[\one\{\bs{\gamma}\cap\bs{\eta}=\emptyset\}\exp\left(\frac{\mathfrak{c}}{2}\blm(\Omega; \bs{\gamma}, \bs{\eta})\right)\]
is the same as $p$-radial $\SLE_{\kappa}^{\mu}$ in the connected component of $\Omega\setminus\bs{\eta}$ containing $z$. Therefore, 
\begin{align*}
\LZrad{p}^{(\mu)}(\Omega; x_{s_1}, \ldots, x_{s_p}; z)
	\Erad{p}^{(\mu)}\left[\one\{\bs{\gamma}\cap\bs{\eta}=\emptyset\}
	\exp\left(\frac{\mathfrak{c}}{2}\blm(\Omega;\bs{\gamma},\bs{\eta})\right) \cond\bs{\eta}\right] 
	= \LZrad{p}^{(\mu)}(\Omega\setminus\bs{\eta}; x_{s_1}, \ldots, x_{s_p}; z).
\end{align*}
This gives~\eqref{eqn::LZmix_cascade2} as desired. 
\end{itemize}
For the RHS of~\eqref{eqn::etagivengamma}, we have
\begin{align*}
\LZ_{\alpha}(\Omega\setminus\bs{\gamma}; x_{v_1}, \ldots, x_{v_{2N}})
\le& \prod_{\{a,b\}\in\alpha}\Poisson(\Omega\setminus\bs{\gamma}; x_{v_a}, x_{v_b})^{\mathfrak{b}}\tag{due to~\eqref{eqn::PPF_PLB}}\\
\le& \prod_{\{a,b\}\in\alpha}\Poisson(\Omega; x_{v_a}, x_{v_b})^{\mathfrak{b}}.\tag{due to~\eqref{eqn::bPoisson_mono}}
\end{align*}
Plugging into~\eqref{eqn::LZmix_cascade1}, we obtain~\eqref{eqn::LZmix_bound} as desired. 
\end{proof}

\begin{lemma}\label{lem::LZmix_BPZ_s}
Assume the same setup as in Proposition~\ref{prop::LZmix_BPZ_COV_ASY}. The function $\LZmix(\bs{\theta})$ satisfies radial BPZ equations~\eqref{eqn::LZmix_BPZ} for $j\in\{s_1, \ldots, s_p\}$.  
\end{lemma}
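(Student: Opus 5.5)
The plan is to show the BPZ equation at a radial index $j=s_k$ by a martingale argument. We use the cascade representation~\eqref{eqn::LZmix_cascade1} and then apply Lemma~\ref{lem::martingale_implies_BPZ}. By symmetry of the construction under relabeling which radial curve is grown first, it suffices to treat $j=s_1$; after a rotation we may regard $x_{s_1}$ as the first marked point. The multi-radial law $\Prad{p}^{(\mu)}$ is defined by growing $\gamma^1$ first. The resampling characterization of multi-radial SLE in~\cite{HuangPeltolaWuMultiradialSLE} gives the same description with any $\gamma^k$ grown first: radial $\SLE_\kappa^\mu(2,\ldots,2)$ with force points at the other radial points, and the rest conditionally half-watermelon. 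So the argument below applies to each $s_k$.

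Let $\gamma=\gamma^{1}$ be a radial $\SLE_\kappa$ in $(\U;\ee^{\ii\theta_{s_1}};0)$ stopped at a small stopping time $\tau$, before it comes close to the other marked points. Consider the process
\[
M_t:=M_t(\LZmix),
\]
given by the formula~\eqref{eqn::BPZ_radial_mart} with $\lambda=\frac12(1-p^2+\mu^2)$, with $\xi_t$ in the slot $s_1$ and $\phi_t(\theta_i)$ in the other slots. Conformal covariance holds: all partition functions involved transform with weight $\mathfrak b$ at boundary points and the prescribed power of $\CR$ at $0$. Together with the cascade~\eqref{eqn::LZmix_cascade1} and the domain Markov property of multi-radial SLE, this should give the identity
\[
M_t=M_t(\LZrad{p}^{(\mu)})\;\Erad{p}^{(\mu)}\bigl[\LZ_{\alpha}(\U\setminus\bs{\gamma};\ee^{\ii\theta_{v_1}},\ldots,\ee^{\ii\theta_{v_{2N}}})\,\big|\,\gamma[0,t]\bigr],
\]
where the ratio is understood via the Radon–Nikodym relation. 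More concretely, $M_t/M_0$ equals the radial SLE expectation of $\LZ_\alpha(\U\setminus\bs\gamma;\cdot)\cdot M_\infty(\LZrad{p}^{(\mu)})$ conditioned on $\mathcal F_t$. This uses two facts. First, $M_t(\LZrad{p}^{(\mu)})$ from~\eqref{eqn::multiradial_marginal_mart} is the density martingale of $\gamma^1$ under $\Prad{p}^{(\mu)}$ against radial $\SLE_\kappa$. Second, given $\gamma[0,t]$, the remainder of $\bs\gamma$ is $p$-radial $\SLE_\kappa^\mu$ in $\U\setminus\gamma[0,t]$, and $\LZ_\alpha(\U\setminus\bs\gamma)$ is unchanged when we first map out $\gamma[0,t]$, by~\eqref{eqn::PPF_COV}. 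With this identity, $M$ is a conditional expectation of an integrable random variable, hence a martingale. Integrability comes from~\eqref{eqn::LZmix_bound} and~\eqref{eqn::PPF_PLB}. Lemma~\ref{lem::martingale_implies_BPZ} then gives smoothness and $\LD_{s_1}\LZmix=\frac{1-p^2+\mu^2}{2\kappa}\LZmix$.

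The main obstacle is the bookkeeping in this identity, namely checking that the conformal covariance factors match exactly. The map $\mathfrak g_t$ sends $(\U\setminus\gamma[0,t];\bs x;0)$ to $(\U;\ee^{\ii\xi_t},\ee^{\ii\phi_t(\theta_i)};0)$. The factor $\prod_{i\ne s_1}\phi_t'(\theta_i)^{\mathfrak b}$ must account for both $\LZrad{p}^{(\mu)}$ and $\LZ_\alpha$. The factor $\mathfrak g_t'(0)^{-\lambda/\kappa}$ must match the exponent $\frac{p^2-1-\mu^2}{2\kappa}$ appearing in~\eqref{eqn::multiradial_marginal_mart}. This matching works because $\LZ_\alpha(\U\setminus\bs\gamma)$ carries no $\CR$ factor, and because $\LZmix$ and $\LZrad{p}^{(\mu)}$ share the same $\lambda$. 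One must also handle the spiral term $\exp(\frac{\mu}{\kappa}\sum\arg)$: it transforms consistently because the covering maps $\phi_t$ are continuous lifts. Finally, localization with stopping times is needed so that $M$ stays bounded before the marked points are approached.
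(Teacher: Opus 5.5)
Your proposal is correct and follows essentially the same route as the paper. The paper also works with the Doob martingale $N_t=\Erad{p}^{(\mu)}\bigl[\LZ_\alpha(\U\setminus\bs\gamma;\cdot)\,\big|\,\gamma^1_{[0,t]}\bigr]$, identifies it via~\eqref{eqn::PPF_COV} and~\eqref{eqn::LZmix_cascade1} with the ratio of $\LZmix$ to $\LZrad{p}^{(\mu)}$ times $\prod_{j}\phi_t'(\theta_{v_j})^{\mathfrak b}$, multiplies by the marginal density martingale~\eqref{eqn::multiradial_marginal_mart} to obtain $M_t(\LZmix)$ as a local martingale for radial $\SLE_\kappa$, and concludes by Lemma~\ref{lem::martingale_implies_BPZ} and symmetry. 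One small correction: $\LZ_\alpha(\U\setminus\bs\gamma)$ is not ``unchanged'' when you map out $\gamma[0,t]$; it picks up the factor $\prod_j\phi_t'(\theta_{v_j})^{\mathfrak b}$, which you do account for later in your bookkeeping.
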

\begin{proof}
Without loss of generality, we assume $j=s_1=1$. Let $\bs{\theta}=(\theta_1, \ldots, \theta_n)\in\LX_n^{\U}$. 
Let $\bs{\gamma}=(\gamma^1,\ldots,\gamma^p)$ be $p$-radial $\SLE_{\kappa}^{\mu}$ in $(\U;\ee^{\ii\theta_{s_1}}, \ldots, \ee^{\ii\theta_{s_p}};0)$ with $\gamma^1$ starting from $\ee^{\ii \theta_1}$. We parameterize $\gamma^1$ by radial capacity and let $\mathfrak{g}_t$ be the radial mapping-out function and $\phi_t$ its covering map.
We have
\begin{align}
	N_t	:=&\Erad{p}^{(\mu)}\left[\LZ_{\alpha}(\U\setminus\bs{\gamma}; \ee^{\ii\theta_{v_1}},\ldots,\ee^{\ii\theta_{v_{2N}}})\cond\gamma_{[0,t]}^1 \right]\notag\\
	= & \Erad{p}^{(\mu)}\left[\LZ_{\alpha}(\U\setminus \mathfrak{g}_t(\bs{\gamma}); \ee^{\ii \phi_t(\theta_{v_1})},\ldots,\ee^{\ii\phi_t(\theta_{v_{2N}})})\cond\gamma_{[0,t]}^1 \right]
	\prod_{j=1}^{2N}\phi_t'(\theta_{v_j})^{\mathfrak b} \tag{due to~\eqref{eqn::PPF_COV}}\\
	=&\frac{\LZmix(\xi_t,\phi_t(\theta_2),\ldots,\phi_t(\theta_n))}
	{\LZrad{p}^{(\mu)}(\xi_t,\phi_t(\theta_{s_2}),\ldots,\phi_t(\theta_{s_p}))}
	\prod_{j=1}^{2N}\phi_t'(\theta_{v_j})^{\mathfrak b} \tag{due to~\eqref{eqn::LZmix_cascade1}}
\end{align}
is a local martingale with respect to $\gamma^1$. Combining with~\eqref{eqn::multiradial_marginal_mart}, we find
\begin{align*}%\label{eqn::LZmix_common_mart}
	M_t(\LZmix)=\LZmix(\xi_t,\phi_t(\theta_2),\ldots,\phi_t(\theta_n)) \mathfrak g_t'(0)^{\frac{p^2-1-\mu^2}{2\kappa}}
	\prod_{j=2}^n \phi_t'(\theta_j)^{\mathfrak b},
\end{align*}
is a local martingale with respect to the radial $\SLE_{\kappa}$ in $(\U;\ee^{\ii \theta_1};0)$. Lemma~\ref{lem::martingale_implies_BPZ} implies that the partition function $\LZmix(\bs{\theta})$ satisfies radial BPZ equations~\eqref{eqn::LZmix_BPZ} for $j=s_1=1$. By symmetry, we conclude that $\LZmix(\bs{\theta})$ satisfies radial BPZ equations~\eqref{eqn::LZmix_BPZ} for $j\in\{s_1, \ldots, s_p\}$ as desired.
\end{proof}

\begin{lemma}\label{lem::LZmix_BPZ_v}
Assume the same setup as in Proposition~\ref{prop::LZmix_BPZ_COV_ASY} and $N\ge 1$. The function $\LZmix(\bs{\theta})$ satisfies radial BPZ equations~\eqref{eqn::LZmix_BPZ} for $j\in\{v_1, \ldots, v_{2N}\}$.  
\end{lemma}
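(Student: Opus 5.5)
Mirroring the proof of Lemma~\ref{lem::LZmix_BPZ_s}, but with the roles of $\bs{\gamma}$ and $\bs{\eta}$ swapped: we grow a chordal curve first and use the cascade formula~\eqref{eqn::LZmix_cascade2} in place of~\eqref{eqn::LZmix_cascade1}. By rotation symmetry we may assume $j=v_1=1$, with $\{1,v_b\}\in\alpha$ for the partner index $v_b$. Let $\bs{\eta}\sim\Pchord_{\alpha}(\U;\ee^{\ii\theta_{v_1}},\ldots,\ee^{\ii\theta_{v_{2N}}})$, and let $\eta^1$ be the curve started from $\ee^{\ii\theta_1}$. We parameterize $\eta^1$ by radial capacity seen from $0$, up to the first time $\tau$ at which it disconnects $0$ from some marked point (or $0$ from $\ee^{\ii\theta_{v_b}}$). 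Let $\mathfrak{g}_t,\phi_t,\xi_t$ be the associated radial Loewner objects.

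Define
\[
N_t:=\Echord_{\alpha}\left[\LZrad{p}^{(\mu)}(\U\setminus\bs{\eta};\ee^{\ii\theta_{s_1}},\ldots,\ee^{\ii\theta_{s_p}};0)\cond\eta^1_{[0,t]}\right],
\]
which is a martingale by construction. By the domain Markov property of $N$-chordal SLE, the conditional law of $\bs{\eta}$ given $\eta^1_{[0,t]}$ is, after applying $\mathfrak{g}_t$, an $N$-chordal $\SLE_\kappa$ with link pattern $\alpha$ in $(\U;\ee^{\ii\xi_t},\ee^{\ii\phi_t(\theta_{v_2})},\ldots)$. Transforming $\LZrad{p}^{(\mu)}$ by $\mathfrak{g}_t$ produces the factors $\prod_{k}\phi_t'(\theta_{s_k})^{\mathfrak{b}}$ from the Poisson kernels, together with $\mathfrak{g}_t'(0)$ powers from $\CR$ and $\Poisson(\cdot;x;z)$ at the interior point. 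The exponent is $\frac{(\kappa-4)^2-4p^2}{8\kappa}+\frac{\mu^2}{2\kappa}+\frac{2p+4-\kappa}{2\kappa}\cdot p$ up to sign conventions. The $\arg$ terms are rotation-equivariant, since $\mathfrak{g}_t(0)=0$ and $\mathfrak{g}_t'(0)>0$. Applying~\eqref{eqn::LZmix_cascade2} in the mapped-out domain then gives
\[
N_t=\frac{\LZmix(\xi_t,\phi_t(\theta_2),\ldots,\phi_t(\theta_n))}{\LZ_{\alpha}(\xi_t,\phi_t(\theta_{v_2}),\ldots,\phi_t(\theta_{v_{2N}}))}\prod_{k=1}^p\phi_t'(\theta_{s_k})^{\mathfrak b}\,\mathfrak{g}_t'(0)^{e}
\]
for an explicit exponent $e$.

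Next, recall that the law of $\eta^1$ is radial $\SLE_\kappa$ in $(\U;\ee^{\ii\theta_1};0)$ weighted by $M_t(\LZ_{\alpha})$ from~\eqref{eqn::multirchordal_marginal_mart}. Hence $N_tM_t(\LZ_\alpha)$ is a local martingale for radial $\SLE_\kappa$, up to $\tau$, and it equals
\[
\LZmix(\xi_t,\phi_t(\theta_2),\ldots)\,\mathfrak{g}_t'(0)^{e-\tilde{\mathfrak b}}\prod_{i=2}^n\phi_t'(\theta_i)^{\mathfrak b}.
\]
Lemma~\ref{lem::martingale_implies_BPZ} then yields $\LD_1\LZmix=\frac{\lambda'}{\kappa}\LZmix$ with $\lambda'=\kappa(\tilde{\mathfrak b}-e)$. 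Smoothness of $\LZmix$ is part of that lemma's conclusion; alternatively, it follows from hypoellipticity once the local martingale property is known. Since the BPZ equation is a local statement and holds at time $0$ for every starting configuration, stopping at $\tau$ is harmless.

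The main obstacle is the bookkeeping: verifying that $\lambda'$ equals $\frac12(1-p^2+\mu^2)$. The cleanest way to do this is by consistency. The total $\mathfrak{g}_t'(0)$ exponent must match that of~\eqref{eqn::multiradial_marginal_mart} combined with the conformal-radius scaling of $\LZrad{p}^{(\mu)}$, namely $\LZrad{p}^{(\mu)}(\Omega)$ scales like $\CR(\Omega;0)^{\frac{\mu^2+1-p^2}{2\kappa}-\tilde{\mathfrak{b}}}$ relative to its boundary covariance. Concretely, $\CR$ has exponent $\frac{(\kappa-4)^2-4p^2+4\mu^2}{8\kappa}$, and each $\Poisson(\U;\ee^{\ii\theta};0)$ factor, under $\mathfrak{g}_t$, contributes $\phi_t'$ times a constant, with no $\mathfrak{g}_t'(0)$ dependence. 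Thus $e=-\frac{(\kappa-4)^2-4p^2+4\mu^2}{8\kappa}$ with the sign fixed by $\CR(\U\setminus K_t;0)=\mathfrak{g}_t'(0)^{-1}$. Then $\tilde{\mathfrak b}-e=\frac{(6-\kappa)(\kappa-2)+(\kappa-4)^2-4p^2+4\mu^2}{8\kappa}=\frac{4-4p^2+4\mu^2}{8\kappa}$, as required. A secondary point is justifying the conditional-expectation identity. Here finiteness follows from the bound~\eqref{eqn::LZmix_bound}, and the nonemptiness of the event $\{\bs{\gamma}\cap\bs{\eta}=\emptyset\}$ follows from allowability.
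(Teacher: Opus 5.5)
Your proposal is correct and follows the paper's route. You build the Doob martingale $N_t$ from~\eqref{eqn::LZmix_cascade2}, multiply it by $M_t(\LZ_\alpha)$ from~\eqref{eqn::multirchordal_marginal_mart}, and conclude with Lemma~\ref{lem::martingale_implies_BPZ}. Your final exponent $e=-\frac{(\kappa-4)^2-4p^2+4\mu^2}{8\kappa}$ agrees with the paper's $\tilde{\mathfrak b}+\frac{p^2-1-\mu^2}{2\kappa}$.

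One correction: drop your first tentative exponent, which wrongly includes a $\frac{2p+4-\kappa}{2\kappa}\cdot p$ term from the interior Poisson kernels. Your later observation that $\Poisson(\cdot\,;x;0)$ carries no $\mathfrak g_t'(0)$ weight is the right one and gives the stated $e$.
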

\begin{proof}
Without loss of generality, we assume $j=v_1=1$. Let $\bs{\theta}=(\theta_1, \ldots, \theta_n)\in\LX_n^{\U}$. 
Let $\bs{\eta}=(\eta^1,\ldots,\eta^N)$ be $N$-chordal $\SLE_{\kappa}$ in $(\U;\ee^{\ii\theta_{v_1}}, \ldots, \ee^{\ii\theta_{v_{2N}}})$ with $\eta^1$ starting from $\ee^{\ii \theta_1}$. We parameterize $\eta^1$ by radial capacity and let $\mathfrak{g}_t$ be the radial mapping-out function and $\phi_t$ its covering map.
We have
\begin{align}
	N_t	:=&\Echord_{\alpha}\left[\LZrad{p}^{(\mu)}(\U \setminus \bs{\eta}; \ee^{\ii\theta_{s_1}},\ldots,\ee^{\ii\theta_{s_p}};0)\cond\eta_{[0,t]}^1 \right]\notag\\
	= & \Echord_{\alpha}\left[\LZrad{p}^{(\mu)}(\U \setminus \mathfrak{g}_t(\bs{\eta}); \ee^{\ii \phi_t(\theta_{s_1})},\ldots,\ee^{\ii\phi_t(\theta_{s_p})};0)\cond\eta_{[0,t]}^1 \right] \mathfrak g_t'(0)^{\tilde{\mathfrak b}+\frac{p^2-1-\mu^2}{2\kappa}}
	\prod_{j=1}^p\phi_t'(\theta_{s_j})^{\mathfrak b} \tag{due to~\eqref{eqn::LZrad_mu_def}}\\
	=&\frac{\LZmix(\xi_t,\phi_t(\theta_2),\ldots,\phi_t(\theta_n))}
	{\LZ_{\alpha}(\xi_t,\phi_t(\theta_{v_2}),\ldots,\phi_t(\theta_{v_{2N}}))} \mathfrak g_t'(0)^{\tilde{\mathfrak b}+\frac{p^2-1-\mu^2}{2\kappa}}
	\prod_{j=1}^p\phi_t'(\theta_{s_j})^{\mathfrak b} \tag{due to~\eqref{eqn::LZmix_cascade2}}
\end{align}
is a local martingale with respect to $\eta^1$. Combining with~\eqref{eqn::multirchordal_marginal_mart}, we find
\begin{align*}%\label{eqn::LZmix_common_mart}
	M_t(\LZmix)=\LZmix(\xi_t,\phi_t(\theta_2),\ldots,\phi_t(\theta_n)) \mathfrak g_t'(0)^{\frac{p^2-1-\mu^2}{2\kappa}}
	\prod_{j=2}^n \phi_t'(\theta_j)^{\mathfrak b},
\end{align*}
is a local martingale with respect to the radial $\SLE_{\kappa}$ in $(\U;\ee^{\ii \theta_1};0)$. Lemma~\ref{lem::martingale_implies_BPZ} implies that the partition function $\LZmix(\bs{\theta})$ satisfies radial BPZ equations~\eqref{eqn::LZmix_BPZ} for $j=v_1=1$. By symmetry, we conclude that $\LZmix(\bs{\theta})$ satisfies radial BPZ equations~\eqref{eqn::LZmix_BPZ} for $j\in\{v_1, \ldots, v_{2N}\}$ as desired.
\end{proof}

\begin{lemma}\label{lem::LZmix_ASY}
Assume the same setup as in Proposition~\ref{prop::LZmix_BPZ_COV_ASY}. Fix $j\in \{1, \ldots, n-1\}$ and $y\in (x_{j-1}x_{j+2})$. If $\{j,j+1\}$ are paired in $\alpha$, we have 
\begin{equation*}
\lim_{x_j, x_{j+1}\to y}\frac{\LZmix(\Omega; \bs{x}; z)}{\Poisson(\Omega; x_j, x_{j+1})^{\mathfrak{b}}}=
\LZmixhat(\Omega; \ddot{\bs{x}}_j; z). 
\end{equation*}
\end{lemma}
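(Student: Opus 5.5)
Write $j=v_i$, $j+1=v_{i+1}$ with $\{i,i+1\}\in\alpha$, and let $\eta$ be the curve of $\bs{\eta}$ linking $x_j$ and $x_{j+1}$. The remaining chordal curves $\hat{\bs{\eta}}$ then form a configuration with link pattern $\hat{\alpha}$. The plan mirrors Case~3 of Lemma~\ref{lem::LZalphaCR_ASY_aux1}: isolate $\eta$ through the Brownian loop construction, and then use a bounded convergence argument.

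First, start from the cascade representation~\eqref{eqn::LZmix_cascade1}. For fixed $\bs{\gamma}$, the marked points $x_j,x_{j+1}$ lie on the boundary of a single component of $\Omega\setminus\bs{\gamma}$. Apply Lemma~\ref{lem::multichordalSLE_construction_blm} in that component, as in the derivation of~\eqref{eqn::LZaphaCR_ASY_aux2}, to split off the link $\{i,i+1\}$. This gives
\begin{align*}
\frac{\LZ_{\alpha}(\Omega\setminus\bs{\gamma}; \bs{x}_{\bs{v}})}{\Poisson(\Omega;x_j,x_{j+1})^{\mathfrak{b}}}
=\LZ_{\hat{\alpha}}(\Omega\setminus\bs{\gamma};\ddot{\bs{x}}_{\bs{v}})\,\Echord_{\hat{\alpha}}\left[\frac{\LZtwo(\Omega\setminus(\bs{\gamma}\cup\hat{\bs{\eta}});x_j,x_{j+1})}{\LZtwo(\Omega;x_j,x_{j+1})}\right],
\end{align*}
where the expectation is over $\hat{\bs{\eta}}$ in $\Omega\setminus\bs{\gamma}$. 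Here I would use the same manipulation as in the earlier cases: condition on $\hat{\bs{\eta}}$, so that $\eta$ is chordal $\SLE_\kappa$ in the smaller domain, and use chordal boundary perturbation. As $x_j,x_{j+1}\to y$, the curves $\bs{\gamma}$ and $\hat{\bs{\eta}}$ stay almost surely at positive distance from $y$. Hence the ratio $\LZtwo(\Omega\setminus(\bs{\gamma}\cup\hat{\bs{\eta}});x_j,x_{j+1})/\LZtwo(\Omega;x_j,x_{j+1})$ tends to $1$. This ratio is also bounded by $1$, by the monotonicity~\eqref{eqn::bPoisson_mono}.

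Next comes the dominated convergence step, which is the main obstacle. The laws of $\bs{\gamma}$ and $\hat{\bs{\eta}}$ do not depend on $x_j,x_{j+1}$: the law of $\bs{\gamma}$ involves only $x_{s_1},\ldots,x_{s_p}$, and the law of $\hat{\bs{\eta}}$ involves only the other $v$'s. So the integrand converges pointwise. It is dominated by $\prod_{\{a,b\}\in\hat{\alpha}}\Poisson(\Omega;x_{v_a},x_{v_b})^{\mathfrak{b}}$, using~\eqref{eqn::PPF_PLB} and~\eqref{eqn::bPoisson_mono}, and this bound is deterministic. Two points need care.
\begin{itemize}
\item The pointwise convergence for the conditional law of $\eta$ in the component of $\Omega\setminus\bs{\gamma}$ requires that $\bs{\gamma}$ avoid a neighborhood of $y$. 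This holds almost surely because $y$ is not among the $x_{s_k}$ and the curves $\gamma^k$ touch $\partial\Omega$ only at their starting points.
\item One must check that the expectation recombines into the cascade formula for $\LZmixhat$.
\end{itemize}

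Finally, the limit equals
\begin{align*}
\LZrad{p}^{(\mu)}(\Omega;\bs{x}_{\bs{s}};z)\,\Erad{p}^{(\mu)}\left[\LZ_{\hat{\alpha}}(\Omega\setminus\bs{\gamma};\ddot{\bs{x}}_{\bs{v}})\right],
\end{align*}
which by~\eqref{eqn::LZmix_cascade1} applied to $(\bs{s},\hat{\alpha})$ on $\ddot{\bs{x}}_j$ is $\LZmixhat(\Omega;\ddot{\bs{x}}_j;z)$. The index set $\bs{s}$ must be relabeled after removing $j,j+1$. Allowability of $(\bs{s},\hat{\alpha})$ follows from that of $(\bs{s},\alpha)$.
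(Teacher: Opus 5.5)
Your proof is correct and has the same skeleton as the paper's: the cascade formula~\eqref{eqn::LZmix_cascade1}, a pathwise limit for fixed $\bs{\gamma}$, domination by the deterministic power-law bound~\eqref{eqn::PPF_PLB} with~\eqref{eqn::bPoisson_mono}, bounded convergence, and recombination via~\eqref{eqn::LZmix_cascade1} for $(\bs{s},\hat{\alpha})$. The only difference is that the paper obtains the pathwise limit $\LZ_{\alpha}(\Omega\setminus\bs{\gamma};\ldots)/\Poisson(\Omega;x_j,x_{j+1})^{\mathfrak{b}}\to\LZ_{\hat{\alpha}}(\Omega\setminus\bs{\gamma};\ldots)$ directly from the pure partition function asymptotics~\eqref{eqn::PPF_ASY} in $\Omega\setminus\bs{\gamma}$, together with $\Poisson(\Omega\setminus\bs{\gamma};x_j,x_{j+1})/\Poisson(\Omega;x_j,x_{j+1})\to 1$; you instead re-derive that limit by splitting off the link with the Brownian-loop construction and chordal boundary perturbation, which is valid but longer than needed.
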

\begin{proof}
As $\{j,j+1\}$ are paired in $\alpha$, there is $i$ such that $v_i=j, v_{i+1}=j+1$ and $\{i,i+1\}\in\alpha$. We denote $\hat{\alpha}=\alpha/\{i,i+1\}$. From Lemma~\ref{lem::multichordalSLE_bp} and~\eqref{eqn::etagivengamma}, we have 
\begin{align}\label{eqn::LZmix_ASY_paired_aux1}
\LZmix(\Omega;\bs{x};z)=\LZrad{p}^{(\mu)}(\Omega; x_{s_1}, \ldots, x_{s_p}; z)\Erad{p}^{(\mu)}\left[\LZ_{\alpha}(\Omega\setminus\bs{\gamma}; x_{v_1}, \ldots, x_{v_{2N}})\right]. 
\end{align}
Let us consider the asymptotics of $\LZ_{\alpha}(\Omega\setminus\bs{\gamma}; x_{v_1}, \ldots, x_{v_{2N}})$. 
Given $\bs{\gamma}$, from~\eqref{eqn::PPF_ASY}, we have almost surely, 
\begin{align*}
\lim_{x_j, x_{j+1}\to y}\frac{\LZ_{\alpha}(\Omega\setminus\bs{\gamma}; x_{v_1}, \ldots, x_{v_{2N}})}{\Poisson(\Omega; x_j, x_{j+1})^{\mathfrak{b}}}
=&\lim_{x_j, x_{j+1}\to y}\frac{\LZ_{\alpha}(\Omega\setminus\bs{\gamma}; x_{v_1}, \ldots, x_{v_{2N}})}{\Poisson(\Omega\setminus\bs{\gamma}; x_j, x_{j+1})^{\mathfrak{b}}}\frac{\Poisson(\Omega\setminus\bs{\gamma}; x_j, x_{j+1})^{\mathfrak{b}}}{\Poisson(\Omega; x_j, x_{j+1})^{\mathfrak{b}}}\\
=&\LZ_{\hat{\alpha}}(\Omega\setminus\bs{\gamma}; x_{v_1}, \ldots, x_{v_{i-1}}, x_{v_{i+2}}, \ldots, x_{v_{2N}}). 
\end{align*}
From~\eqref{eqn::bPoisson_mono} and~\eqref{eqn::PPF_PLB}, we have 
\begin{align*}
\frac{\LZ_{\alpha}(\Omega\setminus\bs{\gamma}; x_{v_1}, \ldots, x_{v_{2N}})}{\Poisson(\Omega; x_j, x_{j+1})^{\mathfrak{b}}}\le \prod_{\substack{\{a,b\}\in\alpha\\\{a,b\}\neq \{i,i+1\}}}\Poisson(\Omega; x_{v_a}, x_{v_b})^{\mathfrak{b}}. 
\end{align*}
Plugging the two observations into~\eqref{eqn::LZmix_ASY_paired_aux1}, bounded convergence theorem gives
\begin{align*}
\lim_{x_j, x_{j+1}\to y}\frac{\LZmix(\Omega; \bs{x}; z)}{\Poisson(\Omega; x_j, x_{j+1})^{\mathfrak{b}}}=&\LZrad{p}^{(\mu)}(\Omega; x_{s_1}, \ldots, x_{s_p}; z)\Erad{p}^{(\mu)}\left[\LZ_{\hat{\alpha}}(\Omega\setminus\bs{\gamma}; x_{v_1}, \ldots, x_{v_{i-1}}, x_{v_{i+2}}, \ldots, x_{v_{2N}})\right]\\
=&\LZmixhat(\Omega; \ddot{\bs{x}}_j; z),
\end{align*}
as desired. 
\end{proof}

\begin{proof}[Proof of Proposition~\ref{prop::LZmix_BPZ_COV_ASY}]
The upper bound~\eqref{eqn::LZmix_bound} is proved in Lemma~\ref{lem::LZmix_bound}. 
The BPZ equations~\eqref{eqn::LZmix_BPZ} are proved in Lemmas~\ref{lem::LZmix_BPZ_v} and~\ref{lem::LZmix_BPZ_s}. 
The rotation covariance~\eqref{eqn::LZmix_rotation} follows from~\eqref{eqn::LZrad_mu_U} and~\eqref{eqn::LZmix_def}. 
The first case in~\eqref{eqn::LZmix_ASY} is proved in Lemma~\ref{lem::LZmix_ASY}. 
The second case in~\eqref{eqn::LZmix_ASY} follows from~\eqref{eqn::LZmix_bound}.
\end{proof}

\subsection{Enumeration of solutions}
\label{subsec::enumeration}
Fix $\kappa\in (0,4]$ and $n\ge 2$ and $\lambda\in\R$ and consider the radial BPZ system~\eqref{eqn::BPZ_radial}. 
%\begin{align*}
%\frac{\kappa}{2}\frac{\partial_j^2\LZ}{\LZ}+\sum_{\ell\neq j}\left(\cot\left(\frac{\theta_{\ell}-\theta_j}{2}\right)\frac{\partial_{\ell}\LZ}{\LZ}-\frac{(6-\kappa)/\kappa}{4\sin^2\left(\frac{\theta_{\ell}-\theta_j}{2}\right)}\right)=\frac{\lambda}{\kappa}, \qquad \text{for all }1\le j\le n. 
%\end{align*}
In this section, we enumerate its solutions that we constructed in the preceding sections. The enumeration will be different for odd $n$ and for even $n$. 
Recall that $\mathsf{P}_n=\{p\in\{1, \ldots, n\}: n-p\text{ is even}\}$ is defined in~\eqref{eqn::parity_n_p}. Define 
\begin{align}\label{eqn::mu_p_def}
\begin{split}
\mu_0=\lambda-\frac{1}{8}(6-\kappa)(\kappa-2); \qquad 
\mu_p=\sqrt{2\lambda+p^2-1}>0,\qquad\text{for }p\in\mathsf{P}_n.
\end{split}
\end{align}

\begin{proposition}\label{prop::radialsolutions_linearindept_oddn}
Fix $\kappa\in (0,4]$ and $\lambda>0$ and odd $n\ge 3$. 
Recall that $\LZmix$ is the function constructed in Proposition~\ref{prop::LZmix_BPZ_COV_ASY}. 
The collection 
\begin{align}\label{eqn::radialsolutions_oddn}
\bigcup_{p\in\mathsf{P}_n}\{\LZmix(\bs{\theta}): \mu=\pm \mu_p, (\bs{s}, \alpha)\text{ is allowable}\}
\end{align}
contains $2^n$ linearly independent 
solutions to the radial BPZ system~\eqref{eqn::BPZ_radial}.
\end{proposition}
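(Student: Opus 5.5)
We need $2^n$ linearly independent functions from the collection. Counting first: for odd $n$ and $p\in\mathsf{P}_n$, the number of allowable pairs $(\bs{s},\alpha)$ should be $\binom{n}{(n-p)/2}$. Summing over $p$ and over the two signs $\pm\mu_p$ gives $2\sum_{p\text{ odd}}\binom{n}{(n-p)/2}=2\cdot 2^{n-1}=2^n$. For the count of allowable pairs I would argue as follows. Allowability means that every chord of $\alpha$ separates no two radial points from one another and does not separate $z$ from the radial points. Equivalently, with $z$ inside, the chords must form a planar matching in which each radial endpoint $x_{s_k}$ is ``visible'' from $z$. Such configurations are in bijection with choices of which $N=(n-p)/2$ indices are ``left endpoints'' of chords. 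This is the standard ballot or cyclic-Dyck counting: reading around the circle, the sequence of opening and closing steps in which radial points act as unmatched steps determines the structure. I would make this bijection explicit, using the cyclic lemma to get $\binom{n}{N}$.

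For linear independence, the plan is to separate first by the Ward eigenvalue. By \eqref{eqn::LZmix_rotation}, each $\LZmix$ with $\mu=\pm\mu_p$ is an eigenfunction of $\LR=\sum_j\partial_j$ with eigenvalue $\pm p\mu_p/\kappa$. Since $\lambda>0$, we have $\mu_p>0$ for every $p\ge1$. Moreover the values $p\mu_p=p\sqrt{2\lambda+p^2-1}$ are strictly increasing in $p$, so the $2|\mathsf{P}_n|$ numbers $\pm p\mu_p$ are pairwise distinct. None of them is zero, which is exactly where odd $n$ with $p\ge1$ helps. Eigenfunctions of $\LR$ with distinct eigenvalues are linearly independent. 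It therefore suffices to show that, for fixed $p$ and fixed sign, the functions $\{\LZmix:(\bs{s},\alpha)\text{ allowable}\}$ are linearly independent.

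For fixed $p$ and $\mu$, I would use the asymptotics \eqref{eqn::LZmix_ASY} and induct on $N$. Suppose $\sum c_{\bs{s},\alpha}\LZmix=0$. Take an adjacent pair $\{j,j+1\}$ and divide by $\Poisson(\Omega;x_j,x_{j+1})^{\mathfrak{b}}$ as $x_j,x_{j+1}\to y$. Only the terms in which $\{j,j+1\}$ is paired in $\alpha$ survive. These become $\LZ^{(p;\mu)}_{\bs{s}';\hat\alpha}$ on $n-2$ points, and they are distinct for distinct $(\bs{s},\alpha)$ with that pair, as allowability is preserved. By induction on $N$, all coefficients of pairs containing the chord $\{j,j+1\}$ vanish. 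Every allowable $\alpha$ with $N\ge1$ contains some chord between consecutive indices $j,j+1$ among $\{1,\dots,n\}$: an innermost chord, on the side away from $z$, encloses no radial point, hence encloses nothing. The chord $\{n,1\}$ across the cut can be handled by cyclic relabeling, which rotation allows. Hence all coefficients with $N\ge1$ vanish. The base case $N=0$ is the single function $\LZrad{n}^{(\mu)}$, which is nonzero.

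The main obstacle is the combinatorial count of allowable pairs, together with verifying that a reduction preserves allowability and injectivity. A secondary point is that the limit \eqref{eqn::LZmix_ASY} is stated with $x_j,x_{j+1}$ adjacent in the linear order. For cyclically adjacent pairs I would first rotate by conformal invariance of the disc, using \eqref{eqn::LZmix_rotation}, which changes each function only by the common factor $\exp(\mu pa/\kappa)$.
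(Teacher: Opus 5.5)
Your proof is correct and follows the paper's argument. The paper counts $\sum_{p\in\mathsf{P}_n}2\binom{n}{(n-p)/2}=2^n$ functions. It then splits a vanishing combination by the rotation exponents using the factors $\exp(\pm\mu_p pa/\kappa)$ as functions of $a$, which is the same as your eigenvalue argument for $\sum_j\partial_j$. Finally it applies \eqref{eqn::LZmix_ASY} recursively, and your induction on $N$ writes that step out explicitly.

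The one genuinely different ingredient is the count of allowable pairs. Lemma~\ref{lem::allowable_pair} fixes a radial point, counts the remaining configurations as a $p$-fold convolution of Catalan numbers, and divides by the overcount $p$. Your bijection with the $N$-element sets of left endpoints (as seen from $z$) gives $\binom{n}{N}$ directly. You do not need the cyclic lemma: cyclic bracket matching of $N$ openers against the other $N+p$ points closes every opener and leaves exactly $p$ points unmatched, which are the radial ones.

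One justification needs fixing: \eqref{eqn::LZmix_rotation} does not let you treat the chord joining $x_n$ and $x_1$. Rotating the disc does not change the labels, and \eqref{eqn::LZmix_ASY} is only stated for $j\le n-1$. You can repair this in either of two ways.
\begin{itemize}
\item Use cyclic relabeling $\bs{\theta}\mapsto(\theta_n-2\pi,\theta_1,\ldots,\theta_{n-1})$. Under it, each function in the family becomes the function of the cyclically rotated allowable pair, up to a nonzero constant ($e^{2\pi\mu/\kappa}$ when $n\in\bs{s}$, coming from the factor $\exp(\frac{\mu}{\kappa}\sum_k\theta_{s_k})$). This uses the label-independence of the construction, not rotation covariance.
\item Avoid the cyclic chord altogether. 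Any chord $\{j,j+1\}$ with $j\le n-1$ is removed by the linear asymptotics. An allowable pair with $N\ge1$ whose only innermost chord joins $x_n$ and $x_1$ must be the fully nested one: chords joining $x_k$ and $x_{n+1-k}$ for $1\le k\le N$, with radial points $N+1,\ldots,N+p$. That function is positive, so its coefficient also vanishes.
\end{itemize}
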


\begin{proposition}\label{prop::radialsolutions_linearindept_evenn}
Fix $\kappa\in (0,4]$ and $\lambda>-\frac{3}{2}$ and even $n\ge 2$. 
Recall that $\LZalphaCR$ is the function constructed in Proposition~\ref{prop::LZalphaCR}
and $\LZmix$ is the function constructed in Proposition~\ref{prop::LZmix_BPZ_COV_ASY}. The collection 
\begin{align}\label{eqn::radialsolutions_evenn}
\{\LZalphaCR(\bs{\theta}): \mu=\mu_0, \alpha\in\LP_{n/2}, 1\le \ell\le n/2+1\}\bigcup_{p\in\mathsf{P}_n}\{\LZmix(\bs{\theta}): \mu=\pm \mu_p, (\bs{s}, \alpha)\text{ is allowable}\}
\end{align}
contains $2^n$ linearly independent
solutions to the radial BPZ system~\eqref{eqn::BPZ_radial}.
\end{proposition}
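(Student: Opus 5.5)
We need to prove Proposition~\ref{prop::radialsolutions_linearindept_evenn}: for even $n$, $\kappa\in(0,4]$ and $\lambda>-3/2$, the collection \eqref{eqn::radialsolutions_evenn} contains $2^n$ linearly independent solutions of \eqref{eqn::BPZ_radial}. The plan has two parts: first check the functions are well defined and solve the system, then prove linear independence by separating the family according to rotation covariance and boundary asymptotics.

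\textbf{Well-posedness.} For $\LZalphaCR$ with $\mu=\mu_0$, Proposition~\ref{prop::LZalphaCR} requires $\mu_0>\frac{\kappa}{8}(\kappa-8)$. This is equivalent to
\[
\lambda>\tfrac18(6-\kappa)(\kappa-2)+\tfrac{\kappa}{8}(\kappa-8)=\tfrac18\bigl(2\kappa^2-16\kappa-12+\kappa\cdot 0\bigr)+\dots,
\]
which one checks is implied by $\lambda>-3/2$ for $\kappa\in(0,4]$; indeed the right-hand side is $\frac14(\kappa^2-8\kappa-6)\le -3/2$. Proposition~\ref{prop::LZalphaCR} then gives \eqref{eqn::BPZ_radial} with $\lambda=\frac18(6-\kappa)(\kappa-2)+\mu_0$.

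For $\LZmix$ with $\mu=\pm\mu_p$ we need $2\lambda+p^2-1>0$ for every even $p\ge2$, which holds since $\lambda>-3/2$. Proposition~\ref{prop::LZmix_BPZ_COV_ASY} then gives \eqref{eqn::BPZ_radial} with $\lambda=\frac12(1-p^2+\mu_p^2)$.

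\textbf{Counting.} We check that the number of functions is $2^n$.
\begin{itemize}
\item The pairs $(\alpha,\ell)$ number $\Cat_{n/2}(n/2+1)=\binom{n}{n/2}$.
\item For fixed $p$ and fixed sign of $\mu$, the allowable pairs $(\bs{s},\alpha)$ are in bijection with sets $\bs{s}$ of size $p$ together with a noncrossing pairing of the rest avoiding the radial curves. This is the standard count of $\binom{n}{(n-p)/2}$, via ballot/lattice paths on the circle, which should be shown bijectively.
\end{itemize}
Summing, $\binom{n}{n/2}+2\sum_{p\ge2\text{ even}}\binom{n}{(n-p)/2}=2^n$.

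\textbf{Independence.} Suppose a linear combination of the family vanishes.

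First, group the terms by rotation covariance. By \eqref{eqn::LZmix_rotation} and \eqref{eqn::LZalphaCR_rotation}, each function is an eigenfunction of $\LR$ with eigenvalue $\pm p\mu_p/\kappa$, or $0$ for the $\LZalphaCR$ family. These values $p\mu_p$ are distinct for distinct even $p\ge2$ (strictly increasing in $p$) and all nonzero. Since eigenfunctions of $\LR$ with distinct eigenvalues are independent, each group sums to zero separately.

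Within a fixed group we use an induction on $n$ via the asymptotics \eqref{eqn::LZmix_ASY} and \eqref{eqn::LZalphaCR_ASY}. Choose a minimal-length link $\{j,j+1\}$ and send $x_j,x_{j+1}\to y$. The limit kills every term not having $\{j,j+1\}$ as a link, or (in the CR family) having $z$ in the component adjacent to $(x_jx_{j+1})$. The surviving terms are distinct functions of the same type with $n-2$ points and the same $p,\mu$, so they are independent by induction and their coefficients vanish.

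The terms not killed this way need separate treatment:
\begin{itemize}
\item Terms with $p=n$, i.e.\ pure radial, or more generally terms with no adjacent pair linked. For $p=n$ there is a single element per sign, which is nonzero, so this case is trivial.
\item Terms with no adjacent link anywhere, all indices in $\bs{s}$ between them.
\end{itemize}
For the latter, we repeat the argument over all adjacent pairs $\{j,j+1\}$. Every term other than the pure $\bs{s}=\{1,\dots,n\}$ has some chordal link, and the innermost such link is an adjacent pair in the cyclic sense. Rotating handles the cyclic pair $\{n,1\}$.

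In the CR family, the case where $z$ lies next to $(x_jx_{j+1})$ needs care. For fixed $\alpha$, a component $\ell$ is adjacent to an innermost arc only when it is that arc's outer side. Using different innermost links of $\alpha$, if $\alpha$ has at least two of them, separates all $\ell$ except possibly one. That last one is recovered because the coefficients already shown to vanish leave a single remaining term.

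The base case $n=2$ is direct: the four functions $\LZ^{(1;\mu_0)}$, $\LZ^{(2;\mu_0)}$ and the two $\pm\mu_2$ radial ones. The $\pm$ ones are separated by their $\LR$-eigenvalues, and $\ell=1,2$ are separated by their asymptotics.

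The main obstacle is this combinatorial bookkeeping: ensuring the limit procedure separates all allowable $(\bs{s},\alpha)$ and all $\ell$, and proving the allowable count equals $\binom{n}{(n-p)/2}$.
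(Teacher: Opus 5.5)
Your proposal is correct and follows the paper's proof: the same count ($(N+1)\Cat_N=\binom{n}{n/2}$ plus the allowable-pair count $\binom{n}{(n-p)/2}$, which the paper proves in Lemma~\ref{lem::allowable_pair} by a Catalan-convolution double count rather than bijectively), the same separation by the rotation factors $\exp(\pm p\mu_p a/\kappa)$, and the same recursive use of \eqref{eqn::LZalphaCR_ASY} and \eqref{eqn::LZmix_ASY}, and your handling of the terms not caught by any adjacent pair (cyclic pair, rainbow) is more explicit than the paper's one-line ``apply recursively''. One arithmetic slip: $\frac18(6-\kappa)(\kappa-2)+\frac{\kappa}{8}(\kappa-8)=-\frac32$ exactly, so $\mu_0>\frac{\kappa}{8}(\kappa-8)$ is equivalent to $\lambda>-\frac32$ (the origin of that threshold), not the expression $\frac14(\kappa^2-8\kappa-6)$ you wrote, though your conclusion is unaffected.
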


\begin{lemma}\label{lem::allowable_pair}
Fix $p\ge 1$ and $N\ge 0$ and $n=2N+p$. The number of allowable pairs $(\bs{s},\alpha)$ in Proposition~\ref{prop::LZmix_BPZ_COV_ASY} is $\binom{n}{N}$. 
\end{lemma}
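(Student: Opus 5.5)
We first make the combinatorial content of allowability explicit, and then count. Place the $n$ marked points on the circle with $z$ in the interior. The $p$ radial curves $\bs{\gamma}$ go from the points $x_{s_1},\ldots,x_{s_p}$ to $z$, and the $N$ chordal curves $\bs{\eta}$ connect $\{x_{v_1},\ldots,x_{v_{2N}}\}$ according to $\alpha$.

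Since $p\ge 1$, the curves in $\bs{\gamma}$ cut $\U$ into $p$ sectors; for $p=1$ the single slit domain is treated as one sector. The event $\{\bs{\gamma}\cap\bs{\eta}=\emptyset\}$ has positive probability if and only if two conditions hold:
\begin{itemize}
\item every link $\{a,b\}\in\alpha$ joins points $x_{v_a},x_{v_b}$ lying in the same sector;
\item within each sector, the restriction of $\alpha$ is a planar pair partition.
\end{itemize}
Necessity is topological, since a curve from one sector to another must cross some $\gamma^j$. For sufficiency, we build a disjoint deterministic configuration and use that the supports of radial and chordal SLE with $\kappa\le 4$ are the full sets of simple curves. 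By full support we get $\Prad{p}^{(\mu)}$-positive probability that $\bs{\gamma}$ stays in a thin neighbourhood of prescribed radial segments. Conditionally, the multi-chordal SLE has positive probability of staying in the complement. This settles allowability.

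For the count, we encode an allowable pair as a walk. Read the points $x_1,\ldots,x_n$ counterclockwise and assign a step to each:
\begin{itemize}
\item $+1$ to each point that opens a link of $\alpha$;
\item $-1$ to each point that closes a link;
\item $-1$ also to each radial point $x_{s_i}$.
\end{itemize}
This gives a word with $N$ up-steps and $N+p$ down-steps. We then use the cyclic lemma (Raney/Dvoretzky–Motzkin) and the standard bijection between such cyclic words and arrangements of $n$ points on a circle split into $N$ "$+$" and $N+p$ "$-$" labels.

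The counting claim is that the map sending an allowable pair to the set $A\subset\{1,\ldots,n\}$ of opener positions, with $|A|=N$, is a bijection onto all $N$-subsets. Given $A$, we reconstruct $(\bs{s},\alpha)$ by cyclic matching. Each opener is matched to the first subsequent unmatched "$-$" point in counterclockwise order, in the manner of a parenthesis matching performed cyclically. Because there are strictly more "$-$" than "$+$" points ($p\ge1$), this matching is well-defined and unique on the circle. It pairs all $N$ openers into noncrossing arcs, and the $p$ leftover "$-$" points become $\bs{s}$. The arcs do not enclose any leftover point when viewed from the side not containing $z$, which is exactly the allowability criterion. Conversely, allowability forces the arcs to be matched this way. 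Hence the number of allowable pairs is $\binom{n}{N}$.

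The main obstacle is the orientation convention. Each link of $\alpha$ is a chord that must separate no $s_i$ from $z$ on the "outer" side, and we must check that "opener" is defined so that the cyclic matching is well-defined. We handle this by choosing the opener of each link to be the endpoint from which the arc not containing $z$ runs counterclockwise.
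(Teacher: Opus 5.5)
Your proof is correct, but it takes a different route from the paper.

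\textbf{The paper's route.} The paper fixes one radial index $j\in\bs{s}$ and cuts the circle at $x_j$. It counts the configurations with $x_j$ radial as the $p$-fold Catalan convolution $\sum_{m_1+\cdots+m_p=N}\mathrm{Cat}_{m_1}\cdots\mathrm{Cat}_{m_p}=\frac{p\,(n-1)!}{N!\,(N+p)!}$, an identity it quotes without proof. It then double counts over the $n$ choices of $j$, dividing by $p$ because each allowable pair has $p$ radial points.

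\textbf{Your route.} You give a direct bijection between allowable pairs and $N$-subsets of $\{1,\ldots,n\}$ (the opener set), reconstructed by cyclic parenthesis matching. This buys a self-contained argument: you need neither the convolution identity nor the double counting. It also explains transparently why the count is $\binom{n}{N}$. The appeal to the cyclic lemma is therefore superfluous. That lemma is essentially what underlies the paper's ``divide by $p$'' step, and your bijection never uses it, so you can drop it. You also justify the topological characterisation of allowability via the full support of $\SLE_\kappa$ for $\kappa\le4$; the paper takes this for granted.

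\textbf{Two points of precision.}
\begin{enumerate}
\item The matching must be the standard one. An opener is paired with the first later point at which the running count returns to zero; equivalently, one repeatedly cancels cyclically adjacent $(+,-)$ pairs. A greedy ``first subsequent unmatched $-$'' rule, processed in the wrong order, produces crossings. For example, on the word $(+,+,-,-,-)$, treating opener $1$ first gives the crossing arcs $1\to3$ and $2\to4$ instead of the correct arcs $2\to3$ and $1\to4$.
\item The hypothesis $p\ge1$ is not what makes the cyclic matching well defined; the matching is also well defined when $p=0$. What $p\ge1$ does is make the opener of each link well defined. All radial curves end at $z$ and cannot cross a chord, so all radial points lie on one side of it, and the arc ``not containing $z$'' is the one free of radial points. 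Your last paragraph essentially says this; the earlier justification should be replaced by it.
\end{enumerate}
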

\begin{proof}
Fix $j\in \{1, \ldots, n\}$, and suppose $x_j$ is connected to $z$. We first calculate the number of allowable link patterns for the remaining $2N+p-1$ points.  Denote by $\Cat_m=\frac{1}{m+1}\binom{2m}{m}$ the $m$th Catalan number.  
Assume the same notation as in Proposition~\ref{prop::LZmix_BPZ_COV_ASY}. 
The domain $\Omega\setminus\bs{\gamma}$ has $p$ connected components. In each connected component, the boundary points are connected pairwise by some of $\{\eta^1, \ldots, \eta^N\}$. Thus the number of different link patterns is given by 
\[\sum_{\substack{m_1, \ldots, m_p\ge 0\\ m_1+\cdots+m_p=N}} \Cat_{m_1}\cdots\Cat_{m_p}.\]
This is $p$-fold convolution of Catalan numbers, which is known to be 
\[\binom{2N+p-1}{N}-\binom{2N+p-1}{N-1}=\frac{p (2N+p-1)!}{N!(N+p)!}.\]
\medbreak
Next, let us calculate the number of allowable pairs $(\bs{s}, \alpha)$. 
Summing over $j\in \{1, \ldots, n\}$ in the above enumeration, each allowable link patterns is counted $p$ times. Thus the number of allowable link patterns is
\[\frac{(2N+p)}{p}\frac{p (2N+p-1)!}{N!(N+p)!}=\frac{(2N+p)!}{N!(N+p)!}=\binom{n}{N}.\]
\end{proof}

\begin{proof}[Proof of Proposition~\ref{prop::radialsolutions_linearindept_oddn}]
First, let us check that there are $2^n$ solutions in the collection~\eqref{eqn::radialsolutions_oddn}. 
For each fixed $p\in\mathsf{P}_n$, 
we write $2N=n-p$. There are $2\binom{n}{N}$ solutions in $\{\LZmix(\bs{\theta}): \mu=\pm \mu_p, (\bs{s}, \alpha)\text{ is allowable}\}$ due to Lemma~\ref{lem::allowable_pair}. Thus, the number of solutions in the collection~\eqref{eqn::radialsolutions_oddn} is 
\begin{align*}
\sum_{N=0}^{\lfloor n/2\rfloor}2\binom{n}{N}=\sum_{N=0}^n \binom{n}{N}=2^n
\end{align*}
as desired. 
\medbreak
Next, let us check that the $2^n$ solutions in~\eqref{eqn::radialsolutions_oddn} are linearly independent. 
Suppose $\{\Cmixp, \Cmixpminus\}$ are constants such that 
\begin{align*}
\sum_{p\in\mathsf{P}_n}\sum_{(\bs{s},\alpha)\text{ allowable}}\left(\Cmixp\LZmixp(\bs{\theta})+\Cmixpminus\LZmixpminus(\bs{\theta})\right)=0,\qquad\text{for }\bs{\theta}\in\LX_n^{\U}.
\end{align*}
We apply~\eqref{eqn::LZmix_rotation}, for $a\in\R$, we have, for $\bs{\theta}\in\LX_n^{\U}$, 
\begin{align*}
\sum_{p\in\mathsf{P}_n}\exp\left(\frac{\mu_p}{\kappa}pa\right)\sum_{(\bs{s},\alpha)\text{ allowable}}\Cmixp\LZmixp(\bs{\theta})+\sum_{p\in\mathsf{P}_n}\exp\left(-\frac{\mu_p}{\kappa}pa\right)\sum_{(\bs{s},\alpha)\text{ allowable}}\Cmixpminus\LZmixpminus(\bs{\theta})=0.
\end{align*}
Therefore, for $p\in\mathsf{P}_n$, 
\begin{align*}
\sum_{(\bs{s},\alpha)\text{ allowable}}\Cmixp\LZmixp(\bs{\theta})=0, \quad \sum_{(\bs{s},\alpha)\text{ allowable}}\Cmixpminus\LZmixpminus(\bs{\theta})=0, \quad\text{for }\bs{\theta}\in\LX_n^{\U}. 
\end{align*}
We apply~\eqref{eqn::LZmix_ASY} recursively and obtain $\Cmixp=\Cmixpminus=0$. This completes the proof of linear independence. 
\end{proof}

\begin{proof}[Proof of Proposition~\ref{prop::radialsolutions_linearindept_evenn}]
First, let us check that there are $2^n$ solutions in the collection~\eqref{eqn::radialsolutions_evenn}. 
\begin{itemize}
\item For $p=0$, set $n=2N$. 
The functions in Proposition~\ref{prop::LZalphaCR} are constructed  in this way: we first choose $\alpha\in\LP_N$ and sample $\bs{\eta}=(\eta^1, \ldots, \eta^N)\sim\Pchord_{\alpha}$ as $N$-chordal $\SLE_{\kappa}$ associated with $\alpha$. The domain $\Omega\setminus\bs{\eta}$ has $(N+1)$ connected components and we enumerate them as $\Omega_{\bs{\eta}}^j$ for $1\le j\le N+1$, and define $\LZalphaCR$ as in~\eqref{eqn::LZalphaCR}. 
The number of solutions in this family is 
\begin{equation*} %\label{eqn::num_solutionalphaCR}
	(N+1)\#\LP_N=\binom{2N}{N}=\binom{n}{n/2}.	
\end{equation*}
\item For each fixed $p\in\mathsf{P}_n$, we write $2N=n-p$. There are $2\binom{n}{N}$ solutions in $\{\LZmix(\bs{\theta}): \mu=\pm \mu_p, (\bs{s}, \alpha)\text{ is allowable}\}$ due to Lemma~\ref{lem::allowable_pair}. 
\end{itemize}
Thus, the number of solutions in the collection~\eqref{eqn::radialsolutions_evenn} is
\[\binom{n}{n/2}+\sum_{N=0}^{n/2-1} 2\binom{n}{N}=\sum_{N=0}^n\binom{n}{N}=2^n.\] 

Next, let us check that the $2^n$ solutions are linearly independent. Suppose $\{\CalphaCR, \Cmixp, \Cmixpminus\}$ are constants such that, for $\bs{\theta}\in\LX_n^{\U}$, 
\begin{align*}
\sum_{\alpha\in\LP_{n/2}}\sum_{\ell=1}^{n/2+1}\CalphaCR\LZalphaCR(\bs{\theta})+\sum_{\substack{1\le p\le n\\ p\text{ even}}}\sum_{(\bs{s},\alpha)\text{ allowable}}\left(\Cmixp\LZmixp(\bs{\theta})+\Cmixpminus\LZmixpminus(\bs{\theta})\right)=0.
\end{align*}
We apply~\eqref{eqn::LZalphaCR_rotation} and~\eqref{eqn::LZmix_rotation}, for $a\in\R$, we have, for $\bs{\theta}\in\LX_n^{\U}$, 
\begin{align*}
\sum_{\alpha\in\LP_{n/2}}\sum_{\ell=1}^{n/2+1}\CalphaCR\LZalphaCR(\bs{\theta})
+&\sum_{\substack{1\le p\le n\\ p\text{ even}}}\exp\left(\frac{\mu_p}{\kappa}pa\right)\sum_{(\bs{s},\alpha)\text{ allowable}}\Cmixp\LZmixp(\bs{\theta})\\
+&\sum_{\substack{1\le p\le n\\ p\text{ even}}}\exp\left(-\frac{\mu_p}{\kappa}pa\right)\sum_{(\bs{s},\alpha)\text{ allowable}}\Cmixpminus\LZmixpminus(\bs{\theta})=0.
\end{align*}
Therefore, for all $p\in\mathsf{P}_n$, 
\begin{align*}
\sum_{\alpha\in\LP_{n/2}}\sum_{\ell=1}^{n/2+1}\CalphaCR\LZalphaCR(\bs{\theta})=0, \qquad &\sum_{(\bs{s},\alpha)\text{ allowable}}\Cmixp\LZmixp(\bs{\theta})=0\\
&\sum_{(\bs{s},\alpha)\text{ allowable}}\Cmixpminus\LZmixpminus(\bs{\theta})=0. 
\end{align*}
We apply~\eqref{eqn::LZalphaCR_ASY} recursively and obtain $\CalphaCR=0$. 
We apply~\eqref{eqn::LZmix_ASY} recursively and obtain $\Cmixp=\Cmixpminus=0$. This completes the proof of linear independence. 
\end{proof}

\subsection{GFF level lines: proof of Proposition~\ref{prop::GFF_rad}}
\label{subsec::GFFrad}

\paragraph*{Gaussian free field (GFF).}
For a domain $D\subsetneq \mathbb{C}$ and two functions $ f, g \in L^2(D)$, we denote by $(f, g)$ their inner product in $L^2(D)$. 
We denote by $H_s(D)$ the space of real-valued smooth functions which are compactly supported in $D$. This space has a Dirichlet inner product defined by
\[
	(f, g)_{\nabla} := \frac{1}{2\pi} \int_D \nabla f(z) \cdot \nabla g(z)d^2z.
\]
We denote by $H(D)$ the Hilbert space completion of $ H_s(D) $ with respect to the Dirichlet inner product.
	
The Dirichlet GFF on $D$ is a random sum of the form $\Gamma = \sum_{j=1}^\infty f_j X_j$, where $X_j$ are i.i.d. standard normal random variables and $ (f_j)_{j \geq 1} $ is an orthonormal basis for $H(D)$. This sum almost surely diverges within $H(D)$; however, it does converge almost surely in the space of distributions. The limiting value as a function of \( g \) is almost surely a continuous functional on \( H_s(D) \). See e.g.~\cite{SheffieldGFFMath} and~\cite{BerestyckiPowellGFFandLQG} for more details.  
In general, for any harmonic function $\varphi$ on $D$, we define the GFF with boundary data $\varphi$ by the Dirichlet GFF plus $\varphi$.

\begin{proof}[Proof of Proposition~\ref{prop::GFF_rad}]
Assume $\gamma$ is driven by the radial Loewner chain whose driving function satisfies~\eqref{eqn::LFrad_sigma_SDE}:
\begin{align}\label{eqn::GFF_rad_aux1}
	\begin{split}
		\ud\xi_t=&2\ud B_t+4\left(\partial_j\log\LFrad{n}^{(\sigma;\mu)}\right)
		(\phi_t(\theta_1),\ldots,\phi_t(\theta_{j-1}),\xi_t,\phi_t(\theta_{j+1}),\ldots,\phi_t(\theta_n))\ud t\\
		=&2\ud B_t+\left(\sigma_j\sum_{i\neq j}\sigma_i
		\cot\left(\frac{\xi_t-\phi_t(\theta_i)}{2}\right)+\mu\sigma_j\right)\ud t.
	\end{split}
\end{align}
First, we derive a martingale process for $\gamma$.
For $z\in\U$, let us prove that the process
\begin{align}\label{eqn::GFF_rad_aux2}
\begin{split}
	M_t(z):={}&\frac{\pi}{2}\sum_{i=1}^n\sigma_i	-\frac{\sigma_j}{2}\arg\left(\frac{\ee^{\ii\xi_t}-\mathfrak g_t(z)}{1-\overline{\mathfrak g_t(z)}\ee^{\ii\xi_t}}\right)-\frac12\sum_{i\neq j}\sigma_i\arg\left(		\frac{\ee^{\ii\phi_t(\theta_i)}-\mathfrak g_t(z)}{1-\overline{\mathfrak g_t(z)}\ee^{\ii\phi_t(\theta_i)}}\right)\\
	&+\frac12 \sum_{i=1}^n\sigma_i \arg\mathfrak g_t(z)+\frac{\mu}{2}\log|\mathfrak g_t(z)|,
\end{split}
\end{align}
is a local martingale.
By~\eqref{eqn::GFF_rad_aux1} and It\^o's formula, we have
\begin{align*}
	\ud\arg(\ee^{\ii\xi_t}-\mathfrak g_t(z))
	=&2\Re\left(\frac{\ee^{\ii\xi_t}}{\ee^{\ii\xi_t}-\mathfrak g_t(z)}\right)\ud B_t+\mu\sigma_j\Re\left(\frac{\ee^{\ii\xi_t}}{\ee^{\ii\xi_t}-\mathfrak g_t(z)}\right)\ud t\\
	&-\sigma_j\sum_{i\neq j}\sigma_i\Im\left(	\frac{\ee^{\ii\xi_t}(\ee^{\ii\xi_t}+\ee^{\ii\phi_t(\theta_i)})}{(\ee^{\ii\xi_t}-\mathfrak g_t(z))(\ee^{\ii\xi_t}-\ee^{\ii\phi_t(\theta_i)})}	\right)\ud t +\Im\left(\frac{\mathfrak g_t(z)}{\ee^{\ii\xi_t}-\mathfrak g_t(z)}\right)\ud t,\\
	\ud\arg(\ee^{\ii\phi_t(\theta_i)}-\mathfrak g_t(z))
	=&2\Im\left(\frac{\ee^{2\ii\xi_t}}{(\ee^{\ii\xi_t}-\mathfrak g_t(z))	(\ee^{\ii\xi_t}-\ee^{\ii\phi_t(\theta_i)})}	\right)\ud t,\qquad i\neq j,\\
	\ud\arg\mathfrak g_t(z)
	=&\Im\left(\frac{\ee^{\ii\xi_t}+\mathfrak g_t(z)}{\ee^{\ii\xi_t}-\mathfrak g_t(z)}\right)\ud t, \quad \ud\log|\mathfrak g_t(z)|
	=\Re\left(\frac{\ee^{\ii\xi_t}+\mathfrak g_t(z)}{\ee^{\ii\xi_t}-\mathfrak g_t(z)}\right)\ud t, \\
	\ud\left(\sigma_j\xi_t+\sum_{i\neq j}\sigma_i\phi_t(\theta_i)\right)	=&2\sigma_j\ud B_t+\mu\ud t.
\end{align*}
Plugging these identities into~\eqref{eqn::GFF_rad_aux2}, we obtain
\begin{align}\label{eqn::GFF_rad_martingale_diff}
	\ud M_t(z)=\sigma_j\left(
	1-2\Re\left(\frac{\ee^{\ii\xi_t}}
	{\ee^{\ii\xi_t}-\mathfrak g_t(z)}\right)
	\right)\ud B_t.
\end{align}
Thus, $M_t(z)$ is a local martingale.

\medbreak
Second, let us calculate the variation of Green's function. 
Recall from~\eqref{eqn::Green_U} that $\Green_{\U}(z,w)=\log|(1-w\overline z)/(z-w)|$. Thus, 
\begin{align*}
	\ud \Green_{\U}(\mathfrak g_t(z),\mathfrak g_t(w))
	=&\left(1-2\Re\left(\frac{\ee^{2\ii\xi_t}}{(\ee^{\ii\xi_t}-\mathfrak g_t(z))	(\ee^{\ii\xi_t}-\mathfrak g_t(w))}+\frac{\overline{\mathfrak g_t(z)}\mathfrak g_t(w)}{(\ee^{-\ii\xi_t}-\overline{\mathfrak g_t(z)})(\ee^{\ii\xi_t}-\mathfrak g_t(w))}\right)\right)\ud t\\
	=&-\left(1-2\Re\left(\frac{\ee^{\ii\xi_t}}{\ee^{\ii\xi_t}-\mathfrak g_t(z)}\right)\right) \left(1-2\Re\left( \frac{\ee^{\ii\xi_t}}{\ee^{\ii\xi_t}-\mathfrak g_t(w)} \right)\right)\ud t.
\end{align*}
Comparing with~\eqref{eqn::GFF_rad_martingale_diff}, we have 
\begin{align}\label{eqn::GFF_rad_Hadamard}
	\ud\langle M_t(z),M_t(w)\rangle
	=-\ud \Green_{\U}(\mathfrak g_t(z),\mathfrak g_t(w)).
\end{align}

\medbreak
Finally, we derive the coupling between $\gamma$ and the GFF $\Phi$. Combining~\eqref{eqn::GFF_rad_aux2} and~\eqref{eqn::GFF_rad_Hadamard}, the pair $(\Phi, \gamma)$ can be coupled such that given $\gamma_{[0,t]}$, the field $\Phi(\cdot)+\varphi_{\sigma}(\bs\theta;\cdot)+\frac{\mu}{2}\log|\cdot|$ restricted to $\U\setminus\gamma_{[0,t]}$ has the same law as
\begin{align*}
	\bigg(\Phi(\cdot)+\varphi_{\sigma}(\phi_t(\theta_1),\ldots,\phi_t(\theta_{j-1}),
	\xi_t,\phi_t(\theta_{j+1}),\ldots,\phi_t(\theta_n);\cdot)
	+\frac{\mu}{2}\log|\cdot|\bigg)\circ\mathfrak g_t.
\end{align*}
This confirms the coupling so that $\gamma$ is the level line of
$\Phi+\varphi_{\sigma}(\bs\theta;\cdot)+\frac{\mu}{2}\log|\cdot|$ starting from $\ee^{\ii\theta_j}$ as desired.
\end{proof}

%% file: tex_radial/PDE_analysis.tex
Recall that $\kappa>0$ and $\mathfrak{b}=\frac{6-\kappa}{2\kappa}$. 
We finish the proof for Theorems~\ref{thm::solutionspace_dimension} and~\ref{thm::solutionspace_decompositon_dimension} in this section.
\begin{proof}[Proof of Theorem~\ref{thm::solutionspace_dimension}]
The upper bound on the dimension is proved in Lemma~\ref{lem::dim_upperbound}. 
The lower bound on the dimension is proved in Lemma~\ref{lem::dim_lowerbound}.
Propositions~\ref{prop::radialsolutions_linearindept_oddn} and~\ref{prop::radialsolutions_linearindept_evenn} provide $2^n$ linearly independent positive solutions for the radial BPZ system~\eqref{eqn::BPZ_radial} when $\kappa\in (0,4]$ and $\lambda>0$. 
\end{proof}

\begin{proof}[Proof of Theorem~\ref{thm::solutionspace_decompositon_dimension}]
This is a combination of Lemma~\ref{lem::solutionspace_decompositon_dimension_nondegenerate}, Lemma~\ref{lem::solutionspace_decompositon_dimension_degenerate_oddn}, and Lemma~\ref{lem::solutionspace_decompositon_dimension_degenerate_evenn}.
\end{proof}

\subsection{Upper bound on the dimension}
\label{subsec::dimension_upper}
\begin{lemma} \label{lem::dim_upperbound}
Assume the same setup as in Theorem~\ref{thm::solutionspace_dimension}. We have 
\[\dim\LSrad{n}^{(\lambda)}\le 2^n.\]
\end{lemma}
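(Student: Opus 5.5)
Following the strategy announced in the introduction, we show that the linear map
\[
T:\LSrad{n}^{(\lambda)}\to\R^{2^n},\qquad T(\LZ)=\bigl(\partial^\beta\LZ(\bs{\theta}^0)\bigr)_{\beta\in\{0,1\}^n},
\]
is injective for a fixed base point $\bs{\theta}^0\in\LX_n^{\U}$. Here $\partial^\beta=\prod_{j:\beta_j=1}\partial_j$ is the square-free derivative.

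\textbf{Step 1: analyticity.} Sum the $n$ equations~\eqref{eqn::BPZ_radial}. This gives $\frac{\kappa}{2}\Delta\LZ+(\text{first order terms})+(\text{zeroth order terms})=\frac{n\lambda}{\kappa}\LZ$, a second-order elliptic equation with real-analytic coefficients on the open set $\LX_n^{\U}$. The coefficients $\cot$ and $\sin^{-2}$ are analytic away from the diagonals $\theta_\ell-\theta_j\in2\pi\Z$, and these diagonals do not meet $\LX_n^{\U}$. A $C^2$ solution is therefore $C^\infty$ by elliptic regularity, and real-analytic by the classical analytic hypoellipticity of elliptic operators with analytic coefficients (Petrowsky/Morrey–Nirenberg). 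Since $\LX_n^{\U}$ is convex, hence connected, a solution vanishing to infinite order at $\bs{\theta}^0$ vanishes identically. So it suffices to show that all partial derivatives $\partial^{\bs{m}}\LZ(\bs{\theta}^0)$, $\bs{m}\in\Z_{\ge0}^n$, are determined linearly by $T(\LZ)$.

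\textbf{Step 2: recursive reduction.} Rewrite the $j$-th equation as
\[
\partial_j^2\LZ=\frac{2}{\kappa}\Bigl(\frac{\lambda}{\kappa}\LZ-\sum_{\ell\ne j}\cot\bigl(\tfrac{\theta_\ell-\theta_j}{2}\bigr)\partial_\ell\LZ+\sum_{\ell\neq j}\frac{(6-\kappa)/\kappa}{4\sin^2\bigl(\frac{\theta_\ell-\theta_j}{2}\bigr)}\LZ\Bigr).
\]
The right-hand side involves only derivatives of order at most one, and no $\partial_j$. We prove by induction on a suitable order that every $\partial^{\bs{m}}\LZ(\bs{\theta}^0)$ is a fixed linear combination of the entries of $T(\LZ)$.

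Order multi-indices first by total degree $|\bs{m}|$, then by the excess $e(\bs{m})=\sum_j\max(m_j-1,0)$. If $e(\bs{m})=0$, then $\partial^{\bs{m}}$ is square-free, and its value at $\bs\theta^0$ is an entry of $T(\LZ)$. Otherwise pick $j$ with $m_j\ge2$ and write $\partial^{\bs{m}}=\partial^{\bs{m}-2e_j}\partial_j^2$. Substitute the identity above and expand with the Leibniz rule, using the smooth coefficients.

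Each resulting term is a coefficient times $\partial^{\bs{k}}\LZ$, where either $|\bs{k}|<|\bs{m}|$, or $\bs{k}=\bs{m}-2e_j+e_\ell$ with $\ell\ne j$. In the second case $|\bs{k}|=|\bs{m}|-1<|\bs{m}|$ as well. So every term has strictly smaller total degree. Induction on $|\bs{m}|$ alone therefore suffices, and the base cases are $|\bs{m}|\le1$, which are square-free.

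Concretely, for each $\bs{m}$ there are universal constants $c_{\bs{m},\beta}$, depending only on $\bs{\theta}^0,\kappa,\lambda$, such that for every solution $\LZ$,
\[
\partial^{\bs{m}}\LZ(\bs{\theta}^0)=\sum_\beta c_{\bs{m},\beta}\,\partial^\beta\LZ(\bs{\theta}^0).
\]
Hence $T(\LZ)=0$ forces all derivatives at $\bs{\theta}^0$ to vanish.

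\textbf{Step 3: conclusion.} By Steps 1 and 2, $T(\LZ)=0$ implies $\LZ\equiv0$, so $T$ is injective and $\dim\LSrad{n}^{(\lambda)}\le2^n$.

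\textbf{Main obstacle.} The only genuinely analytic step is Step 1: upgrading $C^2$ solutions to real-analytic ones. I would invoke standard interior analytic regularity for the elliptic operator $\sum_j\LD_j$, after first bootstrapping $C^2$ to $C^\infty$ via Schauder estimates. The combinatorial reduction in Step 2 is straightforward, because the BPZ operators contain no mixed second derivatives.
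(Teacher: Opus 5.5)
Your proposal is correct and follows the paper's proof essentially step for step. You get analyticity from elliptic regularity of the summed BPZ equation, then induct on $|\bs{m}|$, using the $j$-th equation to rewrite any derivative at $\bs{\theta}^0$ with a repeated index through lower-order ones, so the square-free jet map is injective. The only cosmetic difference is that you first bootstrap $C^2\to C^\infty$ via Schauder estimates and then invoke analytic hypoellipticity, whereas the paper cites Morrey's analytic regularity theorem directly for $C^2$ solutions.
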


\begin{proof}
First we prove that any $\LZ\in \LSrad{n}^{(\lambda)}$ is an analytic function in $\LX_n^{\U}$.
Summing over all the equations in~\eqref{eqn::BPZ_radial}, we obtain
\begin{equation}\label{eqn::dim_chordalBPZ_upperbound_aux1}
	\left(	\frac{\kappa}{2}\sum_{j=1}^n\partial_j^2 +\sum_{1\le j<\ell\le n}\cot\left(\frac{\theta_\ell-\theta_j}{2}\right)(\partial_\ell-\partial_j) -\sum_{1\le j<\ell\le n}\frac{\mathfrak{b}}{\sin^2\left(\frac{\theta_\ell-\theta_j}{2}\right)} -\frac{n\lambda}{\kappa} \right)\LZ=0.
\end{equation}
Since $\LZ\in C^2(\LX_n^{\U}\to \R)$ satisfies the elliptic equation~\eqref{eqn::dim_chordalBPZ_upperbound_aux1} with analytic coefficients, the analytic elliptic regularity theory~\cite[Theorem~5.7.1]{MorreyMultipleIntegralsCalculusVariations} (the original result appears in~\cite{MorreyNirenberganalyticityellipticPDE}) implies that $\LZ$ is an analytic function in $\LX_n^{\U}$.
\medbreak	
Next, we prove that $\dim\LSrad{n}^{(\lambda)}\le 2^n$. To this end, we consider the partial derivatives of $\LZ\in \LSrad{n}^{(\lambda)}$ of all orders. Let 
\begin{equation*}
	\bs{\beta}=(\beta_1,\ldots,\beta_n)\in \Z_{\ge 0}^n,\quad |\bs{\beta}|=\sum_{j=1}^n \beta_j, \quad \text{and define } \partial^{\bs{\beta}} \LZ:=\partial_{1}^{\beta_1}\cdots \partial_{n}^{\beta_n} \LZ.
\end{equation*}
Let $\bs{0}:=(0,\ldots,0)$, and let $\bs{e}_j$ be the vector with a single $1$ in the $j$th coordinate and zeros elsewhere.
Fix $\bs{\theta}^0\in \LX_n^{\U}$. We will prove that the following linear map
\begin{equation} \label{eqn::iso_LS_R2n}
	\varsigma: \LSrad{n}^{(\lambda)} \to \R^{2^n}, \quad \LZ \mapsto (\partial^{\bs{\beta}} \LZ(\bs{\theta}^0))_{\bs{\beta}\in \{0,1\}^n}.
\end{equation}
is an injection. Assume $\varsigma(\LZ)=(0,\ldots,0)$. 
Then we have $\partial^{\bs{\beta}} \LZ(\bs{\theta}^0)=0$ for all $\bs{\beta}\in \Z_{\ge 0}^n$ by the following reduction argument.
\begin{itemize}
	\item We have $\partial^{\bs{0}} \LZ(\bs{\theta}^0)=\LZ(\bs{\theta}^0)=0$ since $\varsigma(\LZ)=(0,\ldots,0)$.
	\item Assume, for some $m\ge 1$, that $\partial^{\bs{\beta}'} \LZ(\bs{\theta}^0)=0$ for all $|\bs{\beta}'|< m$. For any $\bs{\beta}\in \Z_{\ge 0}^n$ such that $|\bs{\beta}|=m$, if $\bs{\beta}\in \{0,1\}^n$ then $\partial^{\bs{\beta}} \LZ(\bs{\theta}^0)=0$ since $\varsigma(\LZ)=(0,\ldots,0)$. Otherwise, there exists $j\in \{1,\ldots,n\}$ such that $\beta_j\ge 2$. From~$\eqref{eqn::BPZ_radial}$, we have 
	\begin{equation} \label{eqn::dim_chordalBPZ_upperbound_aux2}
		\partial^{\bs{\beta}} \LZ (\bs{\theta}^0)= \partial^{\bs{\beta}-2\bs{e}_j} \left(-\frac{2}{\kappa}\sum_{\ell\neq j}\cot\left(\frac{\theta_\ell-\theta_j}{2}\right)\partial_\ell +\sum_{\ell\neq j}\frac{\mathfrak{b}/\kappa}{\sin^2\left(\frac{\theta_\ell-\theta_j}{2}\right)} +\frac{2\lambda}{\kappa^2}\right) \LZ(\bs{\theta}^0).
	\end{equation}
	The RHS of~\eqref{eqn::dim_chordalBPZ_upperbound_aux2} is a linear combination of $\partial^{\bs{\beta}'} \LZ(\bs{\theta}^0)$ with $|\bs{\beta}'|<m$. By the induction hypothesis we have $\partial^{\bs{\beta}} \LZ(\bs{\theta}^0)=0$.
\end{itemize}
Since $\LZ$ is analytic and $\partial^{\bs{\beta}} \LZ(\bs{\theta}^0)=0$ for all $\bs{\beta}\in \Z_{\ge 0}^n$, there exists $r>0$ such that 
\begin{equation*}
	\LZ(\bs{\theta})=\sum_{\bs{\beta}\in\Z_{\ge 0}^n}
	\frac{\partial^{\bs{\beta}}\LZ(\bs{\theta}^0)}{\beta_1!\cdots\beta_n!}
	\prod_{j=1}^n(\theta_j-\theta_j^0)^{\beta_j}=0, \quad \text{for all }\bs{\theta}\in B(\bs{\theta}^0,r).
\end{equation*}
Since $\LZ$ is analytic, we obtain that $\LZ(\bs{\theta})=0$ for all $\bs{\theta}\in \LX_n^{\U}$. Therefore, we conclude that $\varsigma$ is an injection, which implies $\dim\LSrad{n}^{(\lambda)}\le \dim \R^{2^n} =2^n$ as desired.
\end{proof}

\subsection{Lower bound on the dimension}
\label{subsec::dimension_lower}
The proof for the lower bound on the dimension of $\LSrad{n}^{(\lambda)}$ consists
 the following Lemma~\ref{lem::radialBPZ_equi_linear_system}--Lemma~\ref{lem::linear_system_injective}. Our main idea is to construct $2^n$ linearly independent solutions to the radial BPZ system~\eqref{eqn::BPZ_radial}. In Lemma~\ref{lem::radialBPZ_equi_linear_system}, we construct a linear system~\eqref{eqn::linear_system_matrix} involving high order square free derivatives for the partition function $\LZ$, which is equivalent to the radial BPZ system~\eqref{eqn::BPZ_radial}. We construct solutions to this linear system~\eqref{eqn::linear_system_matrix} in Lemma~\ref{lem::linear_system_sol_existence} using the existence and uniqueness theorem for initial value problems of ordinary differential equations. The main input of Lemma~\ref{lem::linear_system_sol_existence} is that the linear operators (or matrices) in the linear system~\eqref{eqn::linear_system_matrix} are compatible, which is shown in Lemma~\ref{lem::radialBPZoperator_commute} and Lemma~\ref{lem::linear_system_compatibility}, where we can see the compatibility comes from the commutation relation for the radial BPZ differential operators. In Lemma~\ref{lem::linear_system_injective}, we prove that the solutions we constructed in Lemma~\ref{lem::linear_system_sol_existence} are linearly independent.

Let $\bs{0}:=(0,\ldots,0)$, and let $\bs{e}_j$ be the vector with a single $1$ in the $j$th coordinate and zeros elsewhere. 

\begin{lemma}\label{lem::radialBPZ_equi_linear_system}
	Assume $\LZ\in C^{\infty}(\LX_n^{\U}\to \R)$. Let $\bs{\beta}\in \{0,1\}^n$ and $\bs{u}=(u_{\bs{\beta}})$. Consider the linear system
	\begin{equation} \label{eqn::linear_system_matrix}
		\partial_j\bs{u}(\bs{\theta}) = A_j(\bs{\theta})\bs{u}(\bs{\theta}),	\qquad \text{for }j\in\{1, \ldots, n\},
	\end{equation}
	defined by the following recursion (using Leibniz rule):
	\begin{equation} \label{eqn::linear_system_recursion}
		\begin{cases}
			\partial_j u_{\bs{\beta}} = u_{\bs{\beta}+\bs{e}_j}, &\text{if } \beta_j=0,\\
			\partial_j u_{\bs{\beta}} = \frac{2}{\kappa} \partial^{\bs{\beta}-\bs{e}_j} \left( \frac{\lambda}{\kappa} u_{\bs{0}} -\sum_{\ell\ne j}	\cot\left(\frac{\theta_\ell-\theta_{j}}{2}\right) u_{\bs{e}_{\ell}} + \sum_{\ell\ne j} \frac{2\mathfrak{b}}{4 \sin^2\left(\frac{\theta_\ell-\theta_{j}}{2}\right)}	 u_{\bs{0}} \right), & \text{if }\beta_j=1.
		\end{cases}
	\end{equation}
	The recursion in~\eqref{eqn::linear_system_recursion} uniquely determines the matrix $A_j$.
	Moreover, $\LZ$ satisfies the radial BPZ system~\eqref{eqn::BPZ_radial} if and only if $\bs{u}=(\partial^{\bs{\beta}} \LZ)_{\bs{\beta}\in \{0,1\}^n}$ satisfies the linear system~\eqref{eqn::linear_system_recursion} (or in the matrix form~\eqref{eqn::linear_system_matrix}).
\end{lemma}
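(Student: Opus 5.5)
The lemma has two parts: that the recursion~\eqref{eqn::linear_system_recursion} produces a well-defined matrix $A_j(\bs{\theta})$ with entries analytic on $\LX_n^{\U}$, and that the BPZ system is equivalent to the first-order system for the square-free derivatives. I would treat them in that order.

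\textbf{Well-definedness of $A_j$.} Fix $j$ and $\bs{\beta}\in\{0,1\}^n$.

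If $\beta_j=0$, the row of $A_j$ for $u_{\bs{\beta}}$ is the unit vector selecting $u_{\bs{\beta}+\bs{e}_j}$, and $\bs{\beta}+\bs{e}_j\in\{0,1\}^n$.

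If $\beta_j=1$, set $\bs{\gamma}=\bs{\beta}-\bs{e}_j$, which has $\gamma_j=0$. Expand $\partial^{\bs{\gamma}}$ of the bracket by the Leibniz rule. Every term has the form
\[
\partial^{\bs{\gamma}_1}(\text{coefficient})\cdot\partial^{\bs{\gamma}_2}u_{\bs{0}}
\quad\text{or}\quad
\partial^{\bs{\gamma}_1}(\text{coefficient})\cdot\partial^{\bs{\gamma}_2}u_{\bs{e}_\ell},
\qquad \bs{\gamma}_1+\bs{\gamma}_2=\bs{\gamma}.
\]
Since $\bs{\gamma}_2\le\bs{\gamma}$ is square-free, $\partial^{\bs{\gamma}_2}u_{\bs{0}}$ is read as $u_{\bs{\gamma}_2}$. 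For the second type, $\partial^{\bs{\gamma}_2}u_{\bs{e}_\ell}$ is read as $u_{\bs{\gamma}_2+\bs{e}_\ell}$ when $(\bs{\gamma}_2)_\ell=0$. The step I expect to be the main obstacle is the case $(\bs{\gamma}_2)_\ell=1$, which would produce a square $\partial_\ell^2$. I plan to resolve it by substituting the $\ell$-th equation of the recursion itself. This is an induction on $|\bs{\beta}|$: the substitution costs one order, and the remaining derivatives involve only indices in $\bs{\gamma}_2-\bs{e}_\ell$ together with $\ell$, so they stay square-free or reduce further in the same way. Alternatively, one simply defines the interpretation of $\partial^{\bs{\beta}-\bs{e}_j}$ acting on $\bs{u}$ as this iterated reduction. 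With this reading, all coefficients are derivatives of $\cot$ and $\sin^{-2}$, hence analytic on $\LX_n^{\U}$, and $A_j$ is uniquely determined.

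\textbf{Forward direction.} Suppose $\LZ$ satisfies~\eqref{eqn::BPZ_radial}; by Lemma~\ref{lem::dim_upperbound} it is analytic. Set $u_{\bs{\beta}}=\partial^{\bs{\beta}}\LZ$. The case $\beta_j=0$ is the identity $\partial_j\partial^{\bs{\beta}}\LZ=\partial^{\bs{\beta}+\bs{e}_j}\LZ$. For $\beta_j=1$, we have $\partial_j u_{\bs{\beta}}=\partial^{\bs{\beta}-\bs{e}_j}\partial_j^2\LZ$. Solving the $j$-th BPZ equation for $\partial_j^2\LZ$ and applying $\partial^{\bs{\beta}-\bs{e}_j}$ gives exactly the right-hand side of~\eqref{eqn::linear_system_recursion}. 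The reductions used in defining $A_j$ are themselves consequences of the BPZ equations, so they are consistent with this identification.

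\textbf{Converse.} Suppose $\bs{u}=(\partial^{\bs{\beta}}\LZ)$ satisfies~\eqref{eqn::linear_system_recursion}. Take $\bs{\beta}=\bs{e}_j$ in the second case; then $\bs{\beta}-\bs{e}_j=\bs{0}$, so no Leibniz expansion occurs and the relation reads
\[
\partial_j^2\LZ=\frac{2}{\kappa}\Big(\frac{\lambda}{\kappa}\LZ-\sum_{\ell\ne j}\cot\Big(\frac{\theta_\ell-\theta_j}{2}\Big)\partial_\ell\LZ+\sum_{\ell\neq j}\frac{\mathfrak{b}}{2\sin^2\big(\frac{\theta_\ell-\theta_j}{2}\big)}\LZ\Big).
\]
Since $2\mathfrak{b}/4=\mathfrak{b}/2$ and $\mathfrak{b}=(6-\kappa)/(2\kappa)$, this is exactly the $j$-th equation of~\eqref{eqn::BPZ_radial} after multiplying by $\kappa/2$. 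So the BPZ system follows directly from the $\bs{\beta}=\bs{e}_j$ rows.
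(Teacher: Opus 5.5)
Your proposal is correct and follows essentially the same route as the paper. The paper builds each row by Leibniz expansion, eliminates any squared derivative $\partial_\ell^2$ by substituting the $\bs{e}_\ell$-row of $A_\ell$, and inducts on the degree of the multi-index; the equivalence it calls ``direct calculations'' is exactly your argument that the $\bs{\beta}=\bs{e}_j$ rows are the BPZ equations and all other rows are derivatives of them. One small remark: citing Lemma~\ref{lem::dim_upperbound} for analyticity is unnecessary, since $\LZ\in C^\infty$ is already assumed.
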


\begin{proof}
	We first justify the uniqueness of $A_j$. After applying the Leibniz rule, the right-hand side of the second equation in~\eqref{eqn::linear_system_recursion} only involves terms of the form $\partial^{\bs{\delta}}u_{\bs{0}}$ and $\partial^{\bs{\delta}}u_{\bs{e}_\ell}$, where $\bs{\delta}\in\{0,1\}^n$. The former is $u_{\bs{\delta}}$. For the latter, if $\delta_\ell=0$, then $	\partial^{\bs{\delta}}u_{\bs{e}_\ell}=u_{\bs{\delta}+\bs{e}_\ell}$.
	If $\delta_\ell=1$, write 
	\[\partial^{\bs{\delta}}u_{\bs{e}_\ell}=\partial^{\bs{\delta}-\bs{e}_\ell} \bigl(\partial_\ell u_{\bs{e}_\ell}\bigr)
	\]
	and plug $\bs{\beta}=\bs{e}_\ell$ into the second equation in~\eqref{eqn::linear_system_recursion}. Applying the Leibniz rule again produces terms of the same two types, but with derivative multi-indices of degree at most $|\bs{\delta}|-1$. Induction on $|\bs{\delta}|$ therefore expresses every such term uniquely as a linear combination of the components $u_{\bs{\delta}}$, $\bs{\delta}\in\{0,1\}^n$. Thus every row of $A_j$ is uniquely determined by~\eqref{eqn::linear_system_recursion}. The remaining assertions follow by direct calculations.
\end{proof}

\begin{example}
		For $n=2$, let $\bs u=(u_{00},u_{10},u_{01},u_{11})^{T}$.  We write $A_1$ and $A_2$ explicitly. 
		Set $\vartheta=\frac{1}{2}(\theta_1-\theta_2)$. 
		By~\eqref{eqn::linear_system_recursion}, we have
		\begin{align*}
			A_1={}&
			\begin{pmatrix}
				0&1&0&0\\
				\dfrac{2\lambda}{\kappa^2}+\dfrac{6-\kappa}{2\kappa^2
					\sin^2\vartheta}
				&0&\dfrac{2}{\kappa}\cot\vartheta&0\\
				0&0&0&1\\
				\dfrac{\cot\vartheta}{2\kappa^3}
				\left(8\lambda+\dfrac{(6-\kappa)(\kappa+2)}
				{\sin^2\vartheta}\right)
				&-\dfrac{4}{\kappa^2}\cot^2\vartheta
				&\dfrac{2\lambda}{\kappa^2}+\dfrac{\kappa+6}{2\kappa^2
					\sin^2\vartheta}&0
			\end{pmatrix},\\[1ex]
			A_2={}&
			\begin{pmatrix}
				0&0&1&0\\
				0&0&0&1\\
				\dfrac{2\lambda}{\kappa^2}+\dfrac{6-\kappa}{2\kappa^2
					\sin^2\vartheta}
				&-\dfrac{2}{\kappa}\cot\vartheta&0&0\\
				-\dfrac{\cot\vartheta}{2\kappa^3}
				\left(8\lambda+\dfrac{(6-\kappa)(\kappa+2)}
				{\sin^2\vartheta}\right)
				&\dfrac{2\lambda}{\kappa^2}+\dfrac{\kappa+6}{2\kappa^2
					\sin^2\vartheta}
				&-\dfrac{4}{\kappa^2}\cot^2\vartheta&0
			\end{pmatrix}.
		\end{align*}
		Direct differentiation
		and matrix multiplication give
		\begingroup
		\renewcommand{\arraystretch}{1.8}
		\begin{align*}
			\partial_1A_2-\partial_2A_1
			={}&
			\begin{pmatrix}
				0&0&0&0\\
				\dfrac{(\kappa-6)\cos\vartheta}
				{2\kappa^2\sin^3\vartheta}
				&0&-\dfrac{1}{\kappa\sin^2\vartheta}&0\\
				\dfrac{(\kappa-6)\cos\vartheta}
				{2\kappa^2\sin^3\vartheta}
				&\dfrac{1}{\kappa\sin^2\vartheta}&0&0\\
				0&\dfrac{(2-\kappa)\cos\vartheta}
				{2\kappa^2\sin^3\vartheta}
				&\dfrac{(2-\kappa)\cos\vartheta}
				{2\kappa^2\sin^3\vartheta}&0
			\end{pmatrix}=-[A_2, A_1]. 
		\end{align*}
		\endgroup
		Therefore, we have $\partial_1A_2-\partial_2A_1+[A_2,A_1]=0$, which is exactly~\eqref{eqn::linear_system_compatibility} shown in Lemma~\ref{lem::linear_system_compatibility}.
\end{example}

%{\color{red}For $n=3$, the matrices are two large! Let
%	\[
%	\bs u=(u_{000},u_{100},u_{010},u_{001},u_{110},u_{101},u_{011},u_{111})^{\mathsf T}.
%	\]
%	\input{tex_radial/PDE_analysis_A_matrices}
%}

\begin{lemma}\label{lem::linear_system_compatibility}
The matrices in Lemma~\ref{lem::radialBPZ_equi_linear_system} satisfy
\begin{equation}\label{eqn::linear_system_compatibility}
	\partial_i A_j-\partial_j A_i+[A_j,A_i]=0,	\qquad \text{for }i\ne j.
\end{equation}
\end{lemma}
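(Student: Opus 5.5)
We reduce the compatibility identity to the commutation relation of Lemma~\ref{lem::radialBPZoperator_commute}, working at the level of the module of differential operators rather than by brute-force matrix computation. The core idea is that $A_j$ is the matrix of the operator ``multiply by $\partial_j$'' acting on the space of square-free derivatives, computed modulo the BPZ relations. Flatness of the resulting connection is then equivalent to the statement that these reductions are consistent. Consistency, in turn, is exactly what the commutator identity~\eqref{eqn::radialBPZoperator_commute} guarantees.

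\textbf{Step 1: an algebraic setup.} Let $\mathcal{W}$ be the ring of differential operators on $\LX_n^{\U}$ with analytic coefficients. Let $\mathcal{I}$ be the left ideal generated by the operators $\LD_j-\lambda/\kappa$. Consider the left $\mathcal{W}$-module $\mathcal{M}=\mathcal{W}/\mathcal{I}$. The recursion~\eqref{eqn::linear_system_recursion} says precisely that, modulo $\mathcal{I}$, $\partial_j\partial^{\bs\beta}$ can be rewritten as a combination $\sum_{\bs\delta}(A_j)_{\bs\beta\bs\delta}\,\partial^{\bs\delta}$ with function coefficients. Hence the classes $[\partial^{\bs\beta}]$, $\bs\beta\in\{0,1\}^n$, span $\mathcal{M}$ over the coefficient functions. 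If we show that they form a \emph{free} basis, then the compatibility follows immediately. Indeed, computing $\partial_i\partial_j[\partial^{\bs\beta}]$ in two ways gives the coefficients of $\partial_i A_j+A_jA_i$ on one side. Since $\partial_i\partial_j=\partial_j\partial_i$ in $\mathcal{W}$, the other side gives $\partial_j A_i+A_iA_j$. Freeness lets us compare coefficients, yielding $\partial_iA_j-\partial_jA_i+[A_j,A_i]=0$ (with the index/transpose convention fixed once and for all, as in the $n=2$ example).

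\textbf{Step 2: freeness via a Gröbner-type normal form.} The main obstacle is proving that the square-free monomials are independent modulo $\mathcal{I}$. In other words, we must show that the rewriting rule $\partial_j^2\mapsto -\frac{2}{\kappa}\sum_{\ell\ne j}\cot(\cdot)\partial_\ell+\cdots$ is confluent. I would treat this as a Buchberger/diamond-lemma check on the generators $P_j:=\frac{2}{\kappa}(\LD_j-\lambda/\kappa)=\partial_j^2+(\text{order}\le1)$. The leading symbols $\xi_j^2$ are pairwise coprime, so the only critical pairs are the $S$-polynomials $\partial_i^2P_j-\partial_j^2P_i$. Up to lower-order rearrangement, these are governed by the commutator $[P_i,P_j]$. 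Lemma~\ref{lem::radialBPZoperator_commute} gives
\[
[\LD_i,\LD_j]=-\frac{\LD_i-\LD_j}{\sin^2\left(\frac{\theta_i-\theta_j}{2}\right)},
\]
and the right-hand side lies in $\mathcal{I}$, since $\LD_i-\LD_j=(\LD_i-\lambda/\kappa)-(\LD_j-\lambda/\kappa)$. So every $S$-polynomial reduces to zero, which gives confluence. Hence the normal forms, namely the combinations of the $\partial^{\bs\beta}$ with $\bs\beta\in\{0,1\}^n$, are unique. I expect the careful bookkeeping here, checking that the coprime-leading-term criterion applies in the Weyl-type algebra with non-constant coefficients, to be the delicate part. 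If it becomes messy, an alternative for this step is to prove freeness at a point directly. Using the injectivity argument of Lemma~\ref{lem::dim_upperbound} in reverse, a formal power series solution with arbitrary prescribed square-free data at $\bs\theta^0$ exists order by order, precisely because the commutator obstructions vanish modulo $\mathcal{I}$.

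\textbf{Step 3: conclusion.} With freeness established, the two expansions of $\partial_i\partial_j[\partial^{\bs\beta}]$ coincide coefficientwise, giving~\eqref{eqn::linear_system_compatibility}. The explicit computation in the $n=2$ example serves as a consistency check of the sign convention.
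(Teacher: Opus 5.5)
Your proposal is correct and follows essentially the same route as the paper. The paper also argues that reducing $\partial_i\partial_j\partial^{\boldsymbol\beta}$ to square-free derivatives does not depend on whether $\partial_i^2$ or $\partial_j^2$ is eliminated first, with the only overlap resolved by the commutator identity~\eqref{eqn::radialBPZoperator_commute}; your freeness of $\mathcal{W}/\mathcal{I}$ via a Buchberger-type confluence check is a more rigorous version of that sketch.

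One wording fix. The coprime-leading-term criterion fails in general in this non-commutative setting, and it is not what your argument actually uses. What you need is the standard-representation form of Buchberger's criterion. It holds here because, writing $P_j=\partial_j^2+T_j$, one has $\partial_j^2P_i-\partial_i^2P_j=[P_j,P_i]-T_jP_i+T_iP_j$ with $[P_j,P_i]=\frac{2}{\kappa}\sin^{-2}\bigl(\frac{\theta_i-\theta_j}{2}\bigr)(P_i-P_j)$, so every term has order at most $3$, below the order-$4$ lcm. Since all leading coefficients equal $1$, working over a coefficient ring that is not a field causes no trouble.
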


\begin{proof}
	Consider the following two systems induced by~\eqref{eqn::linear_system_recursion}:
	\begin{equation*} %\label{eqn::linear_system_compatibility_aux1}
		\partial_i \partial_j \bs{u} = \left( (\partial_iA_j)+A_jA_i \right)\bs{u},\qquad\text{and}\qquad\partial_j \partial_i \bs{u} = \left( (\partial_j A_i)+A_i A_j \right)\bs{u}.
	\end{equation*}
	We prove that the two systems are identical. Recall that the matrices $A_j$ in Lemma~\ref{lem::radialBPZ_equi_linear_system} are obtained by the following procedure. 
	\begin{itemize}
		\item Write $\bs{u}=(\partial^{\bs{\beta}} \LZ)_{\bs{\beta}\in \{0,1\}^n}$.
		\item We write $\partial_j\partial^{\bs{\beta}} \LZ$ as a linear combination of $\partial^{\bs{\delta}}$ where $\bs{\delta}\in \{0,1\}^n$, whenever a second order derivative with respect to $j$, $\partial_j^2 \LZ$ occurs in $\partial_j\partial^{\bs{\beta}} \LZ$, we use the radial BPZ equation~\eqref{eqn::BPZ_radial}:
		$(\LD_j-\lambda/\kappa)\LZ=0$, to eliminate it, and continue this procedure using the Leibniz rule, until only the derivatives $\partial^{\bs{\delta}} Z$, $\delta\in\{0,1\}^n$ remain.
		\item Replace $\partial^{\bs{\beta}} \LZ$ by $u_{\bs{\beta}}$.
	\end{itemize}
	By Lemma~\ref{lem::radialBPZ_equi_linear_system}, the procedure above uniquely determines $A_j$. Let us prove that the procedure for the system $\partial_i \partial_j \bs{u} = \left( (\partial_iA_j)+A_jA_i \right)\bs{u}$ and that for the system $\partial_j \partial_i \bs{u} = \left( (\partial_j A_i)+A_iA_j \right)\bs{u}$ agree.
	Indeed, the only possible ambiguity occurs when both $\partial_i^2$ and
	$\partial_j^2$ occur simultaneously in the second step. By~\eqref{eqn::radialBPZoperator_commute}, the difference between eliminating the $i$-derivatives first and eliminating the $j$-derivatives
	first is determined by the relation:
	\[
	(\LD_i-\lambda/\kappa)(\LD_j-\lambda/\kappa)-(\LD_j-\lambda/\kappa)(\LD_i-\lambda/\kappa)=-\frac{(\LD_i-\lambda/\kappa)-(\LD_j-\lambda/\kappa)}{\sin^2((\theta_i-\theta_j)/2)}=0.
	\]
	Applying further derivatives, the Leibniz rule only produces derivatives of these same relations.
	Hence the two systems $\partial_i \partial_j \bs{u} = \left( (\partial_iA_j)+A_jA_i \right)\bs{u}$ and $\partial_j \partial_i \bs{u} = \left( (\partial_j A_i)+A_i A_j \right)\bs{u}$ are identical, which implies~\eqref{eqn::linear_system_compatibility} as desired.
\end{proof}

\begin{lemma}\label{lem::linear_system_sol_existence}
	Fix $\bs{\theta}^{0}\in\LX_n^{\U}$ and $\bs{v}\in\R^{\{0,1\}^n}$. There exists a unique smooth solution $\bs{u}$ of~\eqref{eqn::linear_system_matrix} on $\LX_n^{\U}$ such that $\bs{u}(\bs{\theta}^{0})=\bs{v}$.
\end{lemma}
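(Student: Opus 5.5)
We will prove Lemma~\ref{lem::linear_system_sol_existence} with the standard Frobenius-type argument for linear Pfaffian systems on a convex domain, using Lemma~\ref{lem::linear_system_compatibility} as the integrability condition.

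First, we note that $\LX_n^{\U}$ is convex. It is an intersection of open half-spaces $\{\theta_j<\theta_{j+1}\}$ and $\{\theta_n<\theta_1+2\pi\}$. Hence for every $\bs{\theta}\in\LX_n^{\U}$ the segment $s\mapsto \bs{\theta}^0+s(\bs{\theta}-\bs{\theta}^0)$, $s\in[0,1]$, stays in $\LX_n^{\U}$. The coefficients of each $A_j$ are polynomials in $\cot((\theta_\ell-\theta_j)/2)$ and $\sin^{-2}((\theta_\ell-\theta_j)/2)$, so they are smooth (indeed analytic) on $\LX_n^{\U}$.

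\textbf{Construction.} For $\bs{\theta}\in\LX_n^{\U}$ we define $\bs{u}(\bs{\theta})$ as $\bs{w}(1)$, where $\bs{w}$ solves the linear ODE
\[\bs{w}'(s)=\sum_{j=1}^n(\theta_j-\theta^0_j)\,A_j\bigl(\bs{\theta}^0+s(\bs{\theta}-\bs{\theta}^0)\bigr)\bs{w}(s),\qquad \bs{w}(0)=\bs{v}.\]
Linear ODE theory gives global existence on $[0,1]$, uniqueness, and smooth dependence on the parameter $\bs{\theta}$. So $\bs{u}$ is smooth and $\bs{u}(\bs{\theta}^0)=\bs{v}$.

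\textbf{Uniqueness.} Any solution of~\eqref{eqn::linear_system_matrix} restricted to this segment solves the same ODE with the same initial value, so it equals $\bs{u}$.

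\textbf{Verification (the main step).} We must show that $\partial_i\bs{u}=A_i\bs{u}$ for each $i$, which is where compatibility~\eqref{eqn::linear_system_compatibility} enters. Write $\bs{\theta}(s)=\bs{\theta}^0+s(\bs{\theta}-\bs{\theta}^0)$ and $U(s,\bs{\theta})=\bs{w}(s)$, and set
\[\bs{r}_i(s):=\partial_{\theta_i}U(s,\bs{\theta})-s\,A_i(\bs{\theta}(s))U(s,\bs{\theta}).\]
Then $\bs{r}_i(0)=0$. Differentiating in $s$, using the ODE, the chain rule $\partial_s[A_i(\bs{\theta}(s))]=\sum_j(\theta_j-\theta^0_j)(\partial_jA_i)(\bs{\theta}(s))$, and $\partial_{\theta_i}$ of the ODE's right-hand side (which produces $A_i$ plus $s\sum_j(\theta_j-\theta_j^0)\partial_iA_j$), we get
\[\bs{r}_i'(s)=\Bigl(\sum_j(\theta_j-\theta^0_j)A_j\Bigr)\bs{r}_i+s\sum_j(\theta_j-\theta^0_j)\bigl(\partial_iA_j-\partial_jA_i+[A_j,A_i]\bigr)U.\]
By Lemma~\ref{lem::linear_system_compatibility} the bracket vanishes for $j\neq i$; for $j=i$ it vanishes trivially. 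So $\bs{r}_i$ solves a homogeneous linear ODE with zero initial value, hence $\bs{r}_i\equiv0$. Evaluating at $s=1$ gives $\partial_i\bs{u}=A_i\bs{u}$.

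The main obstacle is this derivative bookkeeping, in particular getting the commutator in the orientation $[A_j,A_i]$ that matches~\eqref{eqn::linear_system_compatibility}. Everything else is routine ODE theory.
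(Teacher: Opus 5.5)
Your proof is correct. Your computation of $\bs{r}_i'$ checks out, including the orientation $\partial_iA_j-\partial_jA_i+[A_j,A_i]$, which matches~\eqref{eqn::linear_system_compatibility}.

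Your route differs somewhat from the paper's. The paper integrates the ODE along arbitrary piecewise $C^1$ paths from $\bs{\theta}^0$. It then proves that the endpoint value is path-independent using the straight-line homotopy $H(s,t)=s\gamma(t)+(1-s)\tilde\gamma(t)$: its defect $\Lambda(s,t)$ solves a homogeneous linear ODE by compatibility. Only after that does it read off $\partial_j\bs{u}=A_j\bs{u}$, by appending a short segment in the $\theta_j$-direction.

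You instead fix the rays from $\bs{\theta}^0$, so well-definedness is automatic, and you differentiate with respect to the endpoint. Your defect $\bs{r}_i$ is essentially the paper's $\Lambda$ for the family of rays ending at $\bs{\theta}+\epsilon\bs{e}_i$, evaluated at the endpoint. So the core mechanism is the same: compatibility kills the inhomogeneous term, and a zero initial defect propagates.

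Your organization delivers the PDE and the smoothness of $\bs{u}$ (from smooth parameter dependence) in a single step. It also avoids the tacit use of path-independence for concatenated paths in the paper's final step. The paper's version instead yields the stronger path-independence statement, which is the form that would generalize, with a general homotopy, beyond domains that are star-shaped about $\bs{\theta}^0$. Here convexity of $\LX_n^{\U}$ makes both approaches work equally well.
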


\begin{proof}
	Let us construct the solutions using the existence and uniqueness theorem for initial value problems of ordinary differential equations. 
	For $\bs{\theta}\in \LX_n^{\U}$, consider a piecewise $C^1$ path $\gamma=(\gamma^1,\ldots,\gamma^n):[0,1]\to\LX_n^{\U}$ from $\bs{\theta}^{0}$ to $\bs{\theta}$. We solve the usual linear ODE for $\bs{w}:[0,1]\to \R^{2^n}$:
	\begin{equation}\label{eqn::linear_system_sol_existence_aux1}
		\dot{\bs{w}}(t)=\sum_{j=1}^n\dot\gamma_j(t) A_j(\gamma(t))\bs{w}(t),
		\qquad \bs{w}(0)=\bs{v}.
	\end{equation}
	We show that $\bs{w}(1)$ only depends on the endpoint $\bs{\theta}$. For two piecewise $C^1$ paths $\gamma$ and $\tilde{\gamma}$, consider the homotopy between them:
	\begin{equation*} 
		H(s,t):= s \gamma(t) + (1-s) \tilde{\gamma}(t), \quad 0\le s\le 1,\, 0\le t\le 1.
	\end{equation*}
	Since $\LX_n^{\U}$ is convex, $H(s,\cdot)=(H_1(s,\cdot),\ldots,H_n(s,\cdot))$ is also a piecewise $C^1$ path from $\bs{\theta}^{0}$ to $\bs{\theta}$. Let $\bs{w}(s,t)$ be the solution of~\eqref{eqn::linear_system_sol_existence_aux1} along the path $H(s,\cdot)$:
	\begin{equation} \label{eqn::linear_system_sol_existence_aux2}
		\partial_t \bs{w}(s,t)=\sum_{j=1}^n \partial_t H_j(s,t) A_j(H(s,t)) \bs{w}(s,t).
	\end{equation} 
	By the usual smooth dependence of a linear ODE on parameters, we may differentiate the solution $w(s,t)$ with respect to $s$. By~\eqref{eqn::linear_system_sol_existence_aux2} we have
	\begin{align*}
		\begin{split}
			&\partial_t \underbrace{\left( \partial_s w(s,t) - \sum_{j=1}^{n} \partial_s H_j(s,t) A_j(H(s,t)) w(s,t) \right)}_{:=\Lambda(s,t)} \\
			= & \left( \sum_{i=1}^{n} \partial_t H_i(s,t) A_i(H(s,t)) \right) \left( \partial_s w(s,t) - \sum_{j=1}^{n} \partial_s H_j(s,t) A_j(H(s,t)) \right) \\
			&+ \sum_{i,j=1}^{n} \partial_t H_i(s,t) \partial_s H_j(s,t) \underbrace{\left( \partial_j A_i(H(s,t)) - \partial_i A_j(H(s,t)) + [A_i(H(s,t)),A_j(H(s,t))] \right)}_{=0, \text{ due to}~\eqref{eqn::linear_system_compatibility}}w(s,t).
		\end{split}
	\end{align*}
	That is
	\begin{equation} \label{eqn::linear_system_sol_existence_aux3}
		\partial_t \Lambda(s,t) = \left( \sum_{i=1}^{n} \partial_t H_i(s,t) A_i(H(s,t)) \right) \Lambda(s,t).	
	\end{equation}
	At $t=0$, since the initial point $H(s,0)=\bs{\theta}^0$ and initial value $\bs{w}(s,0)=\bs{v}$ are fixed, we have $\Lambda(s,0)=0$. Uniqueness of the solution for the ODE~\eqref{eqn::linear_system_sol_existence_aux3} gives $\Lambda(s,t)=0$ for all $0\le s,t\le 1$. Since the endpoint $H(s,1)=\bs{\theta}$ is fixed and $\Lambda(s,1)=0$, we conclude that $\partial_s \bs{w}(s,1)=0$ which implies $\bs{w}(s,1)$ is independent of $s$. Hence the solution $w(1)$ to~\eqref{eqn::linear_system_sol_existence_aux1} only depends on the endpoint of $\gamma$. 
	
	Therefore, we may define $\bs{u}(\bs{\theta})$ as $w(1)$, the
	endpoint value of~\eqref{eqn::linear_system_sol_existence_aux1}. Considering a short segment in the $\theta_j$ direction,~\eqref{eqn::linear_system_sol_existence_aux1} shows that
	$\partial_j\bs{u}(\bs{\theta})=A_j(\bs{\theta}) \bs{u}(\bs{\theta})$. The uniqueness follows by restricting any other solution to a path from $\bs{\theta}^{0}$. This completes the proof.
\end{proof}

\begin{lemma} \label{lem::linear_system_injective}
	Let $\bs{0}=(0,\ldots,0)$. If $\bs{u}$ is a solution of~\eqref{eqn::linear_system_matrix} and $u_{\bs{0}}(\bs{\theta})=0$ for all $\bs{\theta}\in \LX_{n}^{\U}$, then $u_{\bs{\beta}}(\bs{\theta})=0$ for all $\bs{\theta}\in \LX_n^{\U}$. Moreover, the map $\bs{v}\mapsto u_{\bs{0}}$ in Lemma~\ref{lem::linear_system_sol_existence} is injective.
\end{lemma}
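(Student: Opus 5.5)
The plan is to prove by induction on $|\bs{\beta}|$ that $u_{\bs{\beta}}\equiv 0$ for every $\bs{\beta}\in\{0,1\}^n$. The key input is the first line of the recursion~\eqref{eqn::linear_system_recursion}: whenever $\beta_j=0$ we have $\partial_j u_{\bs{\beta}}=u_{\bs{\beta}+\bs{e}_j}$. In other words, the first-order system already forces the square-free components to be iterated derivatives of $u_{\bs{0}}$. The solution $\bs{u}$ is smooth by Lemma~\ref{lem::linear_system_sol_existence}, so these identities hold pointwise on the whole chamber $\LX_n^{\U}$.

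The base case is the hypothesis $u_{\bs{0}}\equiv 0$. For the inductive step, take $\bs{\beta}\in\{0,1\}^n$ with $|\bs{\beta}|=m\ge 1$ and pick any $j$ with $\beta_j=1$. Then $\bs{\beta}':=\bs{\beta}-\bs{e}_j\in\{0,1\}^n$ has $\beta'_j=0$ and $|\bs{\beta}'|=m-1$. The first case of~\eqref{eqn::linear_system_recursion} gives
\[
u_{\bs{\beta}}=\partial_j u_{\bs{\beta}'}.
\]
By the induction hypothesis $u_{\bs{\beta}'}\equiv 0$ on the open set $\LX_n^{\U}$, so its derivative vanishes identically and $u_{\bs{\beta}}\equiv 0$. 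Equivalently, unwinding the induction, $u_{\bs{\beta}}=\partial^{\bs{\beta}}u_{\bs{0}}$ for every $\bs{\beta}\in\{0,1\}^n$. The order in which the unit steps are taken does not matter, since all the resulting expressions are derivatives of the same function $u_{\bs{0}}$. This proves the first assertion.

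For injectivity, the map $\bs{v}\mapsto u_{\bs{0}}$ is linear. The solution of Lemma~\ref{lem::linear_system_sol_existence} is unique for each initial datum, so $\bs{v}\mapsto\bs{u}$ is linear, and composing with the projection to the $\bs{0}$-component keeps it linear. It therefore suffices to show the kernel is trivial. If $u_{\bs{0}}\equiv 0$, the first part gives $\bs{u}\equiv 0$, and in particular $\bs{v}=\bs{u}(\bs{\theta}^0)=0$.

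There is no real obstacle here. The only point to check is that we use only the first line of~\eqref{eqn::linear_system_recursion}, which is available for every $\bs{\beta}$ with $\beta_j=0$, so the more involved second line (and the compatibility relations) never enters. Combined with Lemma~\ref{lem::radialBPZ_equi_linear_system}, this lemma then shows that $u_{\bs{0}}$ is a BPZ solution whose square-free derivatives at $\bs{\theta}^0$ equal $\bs{v}$. The resulting $2^n$ solutions are linearly independent, which gives the lower bound.
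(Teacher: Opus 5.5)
Your proof is correct and matches the paper's argument. The paper simply says the lemma follows from the first equation in \eqref{eqn::linear_system_recursion}, and your induction on $|\bs{\beta}|$ via $u_{\bs{\beta}}=\partial_j u_{\bs{\beta}-\bs{e}_j}$, combined with the linearity of $\bs{v}\mapsto u_{\bs{0}}$ coming from uniqueness, is exactly that argument written out (for the first assertion you need only that the partial derivatives of a solution exist, not the smoothness supplied by Lemma~\ref{lem::linear_system_sol_existence}).
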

\begin{proof}
	This follows from the first equation in~\eqref{eqn::linear_system_recursion}.
\end{proof}

\begin{lemma}\label{lem::dim_lowerbound}
Assume the same setup as in Theorem~\ref{thm::solutionspace_dimension}. We have 
\[\dim\LSrad{n}^{(\lambda)}\ge 2^n.\]
\end{lemma}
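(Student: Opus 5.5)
The plan is to exhibit $2^n$ linearly independent elements of $\LSrad{n}^{(\lambda)}$ using the first-order system of Lemma~\ref{lem::radialBPZ_equi_linear_system}. Fix $\bs{\theta}^0\in\LX_n^{\U}$. For each standard basis vector $\bs{v}=\bs{e}_{\bs{\beta}}$ of $\R^{\{0,1\}^n}$, $\bs{\beta}\in\{0,1\}^n$, Lemma~\ref{lem::linear_system_sol_existence} (which relies on the compatibility~\eqref{eqn::linear_system_compatibility} from Lemma~\ref{lem::linear_system_compatibility}) gives a unique smooth solution $\bs{u}^{(\bs{\beta})}$ of~\eqref{eqn::linear_system_matrix} on $\LX_n^{\U}$ with $\bs{u}^{(\bs{\beta})}(\bs{\theta}^0)=\bs{e}_{\bs{\beta}}$. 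We set $\LZ^{(\bs{\beta})}:=u^{(\bs{\beta})}_{\bs{0}}$. Since the coefficients $A_j$ are smooth (indeed analytic) on $\LX_n^{\U}$, the standard ODE theory makes $\bs{u}^{(\bs{\beta})}$, and hence $\LZ^{(\bs{\beta})}$, smooth.

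The central step is to show that each $\LZ^{(\bs{\beta})}$ satisfies the BPZ system, that is, that $\bs{u}^{(\bs{\beta})}$ coincides with $(\partial^{\bs{\delta}}\LZ^{(\bs{\beta})})_{\bs{\delta}\in\{0,1\}^n}$. First, induction on $|\bs{\delta}|$ using the first line of~\eqref{eqn::linear_system_recursion} gives $u_{\bs{\delta}}=\partial^{\bs{\delta}}u_{\bs{0}}$ for every square-free $\bs{\delta}$. To see this, write $\bs{\delta}=\bs{\delta}'+\bs{e}_j$ with $\delta'_j=0$. Then $u_{\bs{\delta}}=\partial_j u_{\bs{\delta}'}=\partial_j\partial^{\bs{\delta}'}u_{\bs{0}}=\partial^{\bs{\delta}}u_{\bs{0}}$, and the order of differentiation is irrelevant because $u_{\bs{0}}$ is smooth. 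Once this identification holds, the ``if'' direction of Lemma~\ref{lem::radialBPZ_equi_linear_system} applies. Concretely, taking $\bs{\beta}=\bs{e}_j$ in the second line of~\eqref{eqn::linear_system_recursion} gives $\partial_j^2 \LZ=\frac{2}{\kappa}(\cdots)$, which is precisely the $j$th equation of~\eqref{eqn::BPZ_radial}. Therefore $\LZ^{(\bs{\beta})}\in\LSrad{n}^{(\lambda)}$.

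Linear independence then follows from Lemma~\ref{lem::linear_system_injective}. Suppose $\sum_{\bs{\beta}}c_{\bs{\beta}}\LZ^{(\bs{\beta})}\equiv0$. The combination $\bs{u}:=\sum_{\bs{\beta}} c_{\bs{\beta}}\bs{u}^{(\bs{\beta})}$ solves~\eqref{eqn::linear_system_matrix} because the system is linear, and it has $u_{\bs{0}}\equiv0$. Lemma~\ref{lem::linear_system_injective} then gives $\bs{u}\equiv0$, and evaluating at $\bs{\theta}^0$ yields $(c_{\bs{\beta}})=0$. Hence $\dim\LSrad{n}^{(\lambda)}\ge 2^n$. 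An equivalent way to see it is that the map $\varsigma$ of~\eqref{eqn::iso_LS_R2n} is surjective.

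I expect the main obstacle to be the step that upgrades a solution of the first-order system to a genuine BPZ solution. The risk is that the identification $u_{\bs{\delta}}=\partial^{\bs{\delta}}u_{\bs{0}}$ could be in tension with the second line of the recursion, which also prescribes $\partial_j u_{\bs{\delta}}$ whenever $\delta_j=1$. No real conflict arises, since the first line only ever adds a missing index and the two lines are never applied to the same pair $(j,\bs{\delta})$. Still, the argument must check carefully that the second line, read at $\bs{\beta}=\bs{e}_j$, reproduces~\eqref{eqn::BPZ_radial} exactly, including the constant $2\mathfrak{b}/4=(6-\kappa)/(4\kappa)$. The genuinely nontrivial input, path-independence via the compatibility condition, is already supplied by Lemmas~\ref{lem::linear_system_compatibility} and~\ref{lem::linear_system_sol_existence}.
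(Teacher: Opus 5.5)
Your proposal is correct and follows the paper's proof essentially verbatim. You take a basis of initial values, integrate the compatible first-order system via Lemma~\ref{lem::linear_system_sol_existence}, use Lemma~\ref{lem::radialBPZ_equi_linear_system} to see that each $u_{\bs{0}}$ solves~\eqref{eqn::BPZ_radial}, and get linear independence from Lemma~\ref{lem::linear_system_injective}; your explicit check that $u_{\bs{\delta}}=\partial^{\bs{\delta}}u_{\bs{0}}$ via the first line of~\eqref{eqn::linear_system_recursion} just spells out the identification the paper uses implicitly (and states in Corollary~\ref{cor::iso_LS_R2n}).
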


\begin{proof}
	Pick a basis $\bs{v}^1,\ldots,\bs{v}^{2^n}$ of $\R^{\{0,1\}^n}= \R^{2^n}$. Lemma~\ref{lem::linear_system_sol_existence} gives the corresponding solutions $\bs{u}^{r}$ such that $\bs{u}^r(\bs{\theta}^0)=\bs{v}^r$. Lemma~\ref{lem::radialBPZ_equi_linear_system} shows that $u^{r}_{\bs{0}}(\bs{\theta})$ are solutions of~\eqref{eqn::BPZ_radial}. Lemma~\ref{lem::linear_system_injective} shows that the functions $u^{r}_{\bs{0}}(\bs{\theta})$ are linearly independent. This proves $\dim\LSrad{n}^{(\lambda)}\ge 2^n$ as desired.
\end{proof}

\begin{corollary}\label{cor::iso_LS_R2n}
Fix $\bs{\theta}^0\in\LX_n^{\U}$. Then the linear map $\varsigma(\LZ)=(\partial^{\bs{\beta}} \LZ(\bs{\theta}^0))_{\bs{\beta}\in \{0,1\}^n}$ in~\eqref{eqn::iso_LS_R2n} is an isomorphism between $\LSrad{n}^{(\lambda)}$ and $\R^{2^n}$.
\end{corollary}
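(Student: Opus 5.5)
Corollary~\ref{cor::iso_LS_R2n} follows by combining the injectivity established in the proof of Lemma~\ref{lem::dim_upperbound} with the dimension count from Lemma~\ref{lem::dim_lowerbound}. The map $\varsigma$ is linear because differentiation and evaluation at $\bs{\theta}^0$ are linear. In the proof of Lemma~\ref{lem::dim_upperbound} we showed that $\varsigma(\LZ)=0$ forces every derivative $\partial^{\bs{\beta}}\LZ(\bs{\theta}^0)$ to vanish, by the recursion~\eqref{eqn::dim_chordalBPZ_upperbound_aux2}. By analyticity (from elliptic regularity) and connectedness of $\LX_n^{\U}$, this gives $\LZ\equiv 0$. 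Hence $\varsigma$ is injective.

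For surjectivity, I see two routes. The abstract one: Lemma~\ref{lem::dim_lowerbound} gives $\dim\LSrad{n}^{(\lambda)}\ge 2^n$, so $\LSrad{n}^{(\lambda)}$ has dimension exactly $2^n$. An injective linear map between two spaces of the same finite dimension is then bijective.

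The direct route, which I would also record because it identifies the inverse: given $\bs{v}\in\R^{2^n}$, Lemma~\ref{lem::linear_system_sol_existence} produces a solution $\bs{u}$ of~\eqref{eqn::linear_system_matrix} with $\bs{u}(\bs{\theta}^0)=\bs{v}$.

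1. I must check that $\bs{u}=(\partial^{\bs{\beta}}u_{\bs{0}})_{\bs{\beta}}$. This follows by induction on $|\bs{\beta}|$, using the first line of~\eqref{eqn::linear_system_recursion}: $\partial_j u_{\bs{\beta}}=u_{\bs{\beta}+\bs{e}_j}$ whenever $\beta_j=0$.
2. Lemma~\ref{lem::radialBPZ_equi_linear_system} then shows that $u_{\bs{0}}$ satisfies~\eqref{eqn::BPZ_radial}.
3. We get $\varsigma(u_{\bs{0}})=\bs{u}(\bs{\theta}^0)=\bs{v}$.

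The only delicate point is step 1, since Lemma~\ref{lem::radialBPZ_equi_linear_system} is stated for $\bs{u}$ already of the form $(\partial^{\bs{\beta}}\LZ)$. The induction above settles this. Either route completes the proof, and the inverse map is $\bs{v}\mapsto u_{\bs{0}}$.
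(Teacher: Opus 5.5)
Your proposal is correct and matches the paper's proof. Injectivity comes from the proof of Lemma~\ref{lem::dim_upperbound}. Surjectivity is your direct route: solve the linear system via Lemma~\ref{lem::linear_system_sol_existence}, then use Lemma~\ref{lem::radialBPZ_equi_linear_system} to get $u_{\bs{0}}\in\LSrad{n}^{(\lambda)}$ and $u_{\bs{\beta}}=\partial^{\bs{\beta}}u_{\bs{0}}$. The paper folds into that lemma the identity you check by induction in step 1. Your dimension-count alternative is also valid and non-circular, since Lemma~\ref{lem::dim_lowerbound} does not depend on the corollary.
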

\begin{proof}
The proof of Lemma~\ref{lem::dim_upperbound} shows that $\varsigma$ is injective. To prove surjectivity, fix $\bs{v}\in\R^{\{0,1\}^n}=\R^{2^n}$. Lemma~\ref{lem::linear_system_sol_existence} constructs a smooth solution $\bs{u}$ of~\eqref{eqn::linear_system_matrix} such that $\bs{u}(\bs{\theta}^0)=\bs{v}$. By Lemma~\ref{lem::radialBPZ_equi_linear_system}, $u_{\bs{0}}\in\LSrad{n}^{(\lambda)}$ and $u_{\bs{\beta}}=\partial^{\bs{\beta}}u_{\bs{0}}$ for all $\bs{\beta}\in\{0,1\}^n$, which shows that $\varsigma(u_{\bs{0}})=\bs{u}(\bs{\theta}^0)=\bs{v}$.
Thus, $\varsigma$ is also surjective, which gives the desired result.
\end{proof}

\subsection{Decomposition of the solution space}
\label{subsec::decomposition}

The proof of Theorem~\ref{thm::solutionspace_decompositon_dimension} consists of the following Lemma~\ref{lem::solutionspace_decompositon_dimension_kappa04lambdage0}--Lemma~\ref{lem::solutionspace_decompositon_dimension_degenerate_evenn}. Our main idea is to translate the conformal Ward identity into an eigenvalue problem for a finite-dimensional matrix. Our strategy is listed as below.
\begin{itemize}
\item In Lemma~\ref{lem::solutionspace_decompositon_dimension_kappa04lambdage0}, we show that Theorem~\ref{thm::solutionspace_decompositon_dimension} holds for $\kappa\in (0,4]$ and $\lambda>0$. Its proof relies on the construction in Propositions~\ref{prop::radialsolutions_linearindept_oddn} and~\ref{prop::radialsolutions_linearindept_evenn}.
\item In Lemma~\ref{lem::ward_matrix_eigenspace}, we construct an isomorphism $\varsigma$ between $\LSrad{n}^{(\lambda;\nu)}$ and eigenspace of the Ward matrix $B$ with respect to the linear system we constructed in Lemma~\ref{lem::radialBPZ_equi_linear_system}. In Lemma~\ref{lem::ward_polynomial_identities}, we study the characteristic polynomial and an annihilating polynomial of the Ward matrix $B$. When $\kappa\in (0,4]$ and $\lambda>0$, Lemma~\ref{lem::solutionspace_decompositon_dimension_kappa04lambdage0} fully decomposes $\LSrad{n}^{(\lambda)}$ and yields all eigenvalues with multiplicities, thus determining the characteristic and annihilating polynomial. By continuous extension, the same polynomials are obtained for general parameters.
\item Lemma~\ref{lem::solutionspace_decompositon_dimension_nondegenerate} addresses the case when the annihilating polynomial of $B$ has no repeated roots (generic case). In this case $B$ has no Jordan blocks, and the result directly follows from the characteristic and minimal polynomials.
\item Lemma~\ref{lem::solutionspace_decompositon_dimension_degenerate_oddn} addresses the degenerate case when $n$ is odd. In this case we obtain the result by showing each $0$-Jordan blocks of $B$ has size two.
Lemma~\ref{lem::solutionspace_decompositon_dimension_degenerate_evenn} addresses the degenerate case when $n$ is even. This is the most complicated case. Our idea is to vary the matrix parameter continuously, comparing with the degenerate case (intuitively letting $\kappa\to \infty$ in~\eqref{eqn::linear_system_recursion}) to obtain an upper bound, and comparing with the non-critical case in Lemma~\ref{lem::solutionspace_decompositon_dimension_nondegenerate} to obtain a lower bound.
\end{itemize}
Recall that $\mathsf{P}_n=\{p\in\{1, \ldots, n\}: n-p\text{ is even}\}$ is defined in~\eqref{eqn::parity_n_p} and $\nu_p=p\sqrt{2\lambda+p^2-1}$ is defined in~\eqref{eqn::nup_def} and $\mu_0=\lambda-\frac{1}{8}(6-\kappa)(\kappa-2)$ and $\mu_p=\sqrt{2\lambda+p^2-1}$ are defined in~\eqref{eqn::mu_p_def}. Note that $\nu_p=p\mu_p$.

\begin{lemma} \label{lem::solutionspace_decompositon_dimension_kappa04lambdage0}
Theorem~\ref{thm::solutionspace_decompositon_dimension} holds for $\kappa\in (0,4]$ and $\lambda>0$. 
\end{lemma}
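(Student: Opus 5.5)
The plan is to read the conformal Ward identity~\eqref{eqn::ward_radial} as an eigenvalue equation for the rotation operator $\LR=\sum_j\partial_j$ acting on the finite-dimensional space $\LSrad{n}^{(\lambda)}$. By~\eqref{eqn::radialBPZ_Ward_commute} in Lemma~\ref{lem::radialBPZoperator_commute}, $[\LD_j,\LR]=0$ for every $j$. Also, every element of $\LSrad{n}^{(\lambda)}$ is analytic, by the proof of Lemma~\ref{lem::dim_upperbound}. Hence $\LR$ maps $\LSrad{n}^{(\lambda)}$ into itself, and
\[
\LSrad{n}^{(\lambda;\nu)}=\ker\bigl(\LR-\tfrac{\nu}{\kappa}\bigr)\big|_{\LSrad{n}^{(\lambda)}}
\]
is exactly the $\nu/\kappa$-eigenspace of this restricted operator. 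The task is therefore to diagonalize $\LR$ on $\LSrad{n}^{(\lambda)}$ using the explicit basis from Section~\ref{sec::solutions_radialBPZ}.

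First I check that the constructions apply for $\kappa\in(0,4]$ and $\lambda>0$. For every $p\in\mathsf{P}_n$ we have $p\ge1$, so $2\lambda+p^2-1>0$. Thus $\mu_p>0$ is well defined and $p^2>1-2\lambda$ holds strictly. For even $n$, the function $\LZalphaCR$ with $\mu=\mu_0$ requires $\mu_0>\frac{\kappa}{8}(\kappa-8)$. A direct computation shows this is equivalent to $\lambda>\frac{(6-\kappa)(\kappa-2)}{8}+\frac{\kappa(\kappa-8)}{8}=-\frac32$, which holds since $\lambda>0$. Propositions~\ref{prop::radialsolutions_linearindept_oddn} and~\ref{prop::radialsolutions_linearindept_evenn} then give $2^n$ linearly independent elements of $\LSrad{n}^{(\lambda)}$. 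By Theorem~\ref{thm::solutionspace_dimension} (Lemmas~\ref{lem::dim_upperbound} and~\ref{lem::dim_lowerbound}), these form a basis.

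Next, each basis element is an eigenfunction of $\LR$. Differentiating the rotation covariance~\eqref{eqn::LZmix_rotation} at $a=0$ gives $\LR\,\LZmix=\frac{p\mu}{\kappa}\LZmix$. So the elements with $\mu=\pm\mu_p$ have $\nu=\pm p\mu_p=\pm\nu_p$. Similarly,~\eqref{eqn::LZalphaCR_rotation} gives $\LR\,\LZalphaCR=0$, i.e.\ $\nu=0$. The values $\nu_p=p\sqrt{2\lambda+p^2-1}$ are positive and strictly increasing in $p$. Hence the numbers $\{\pm\nu_p:p\in\mathsf{P}_n\}$, together with $0$ when $n$ is even, are pairwise distinct.

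Consequently $\LR$ is diagonalizable on $\LSrad{n}^{(\lambda)}$. Its eigenspace for $\pm\nu_p$ is spanned by the $\LZmix$ with $\mu=\pm\mu_p$, and for even $n$ its kernel is spanned by the $\LZalphaCR$. To justify this, write an arbitrary $F\in\LSrad{n}^{(\lambda;\nu)}$ in the basis and apply $\LR-\nu/\kappa$. The coefficients of basis elements with eigenvalue different from $\nu/\kappa$ are multiplied by nonzero scalars and must vanish by linear independence. This also shows $\LSrad{n}^{(\lambda;\nu)}=0$ for every $\nu$ outside the listed set.

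The dimensions then come from counting. By Lemma~\ref{lem::allowable_pair}, there are $\binom{n}{(n-p)/2}$ allowable pairs for each $p$. For even $n$ the kernel has dimension $(n/2+1)\,\mathrm{Cat}_{n/2}=\binom{n}{n/2}$. Together these give the stated dimensions and the direct-sum decompositions. The only point needing care is the parameter verification, namely that $\mu_0$ lies in the integrability range of Proposition~\ref{prop::LZalphaCR}. This is the computation above and is not a serious obstacle; the rest is finite-dimensional linear algebra.
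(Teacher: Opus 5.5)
Your proposal is correct and follows the same route as the paper. The $2^n$ positive solutions of Propositions~\ref{prop::radialsolutions_linearindept_oddn} and~\ref{prop::radialsolutions_linearindept_evenn} form a basis of $\LSrad{n}^{(\lambda)}$, rotation covariance makes each of them an eigenfunction of $\sum_j\partial_j$ with $\nu\in\{\pm\nu_p\}$ or $\nu=0$, and counting via Lemma~\ref{lem::allowable_pair} gives the dimensions and the direct-sum decomposition. You also make explicit several points the paper leaves implicit: that $\mu_0$ lies in the admissible range exactly when $\lambda>-3/2$, that these $\nu$ values are pairwise distinct for $\lambda>0$, and why $\LSrad{n}^{(\lambda;\nu)}=0$ for every other $\nu$. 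All of these check out.
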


\begin{proof}
Propositions~\ref{prop::radialsolutions_linearindept_oddn} and~\ref{prop::radialsolutions_linearindept_evenn}, and Theorem~\ref{thm::solutionspace_dimension} give us the following observations.
\begin{itemize}
	\item For each $p\in\mathsf{P}_n$, the collection $\{\LZmix(\bs{\theta}): \mu=\pm\mu_p, (\bs{s}, \alpha)\text{ is allowable}\}$ contains $\binom{n}{(n-p)/2}$  solutions in $\LSrad{n}^{(\lambda;\pm \nu_p)}$. 
	\item If $n$ is even, the collection $\{\LZalphaCR(\bs{\theta}): \mu=\mu_0, \alpha\in\LP_{n/2}, 1\le \ell\le n/2+1\}$ contains $\binom{n}{n/2}$ solutions in $\LSrad{n}^{(\lambda;0)}$.
	\item  There are $2^n$ functions listed in the above two items, and they form a basis of $\LSrad{n}^{(\lambda)}$.
\end{itemize}
Therefore, 
\begin{align}  \label{eqn::decompose_solutionspace004}
	\LSrad{n}^{(\lambda)}=\begin{cases}
		\bigoplus_{p\in\mathsf{P}_n}
		\left(\LSrad{n}^{\left(\lambda; \nu_p \right)}
		\oplus\LSrad{n}^{\left(\lambda; -\nu_p \right)}\right),
		& n\text{ odd};\\
		\LSrad{n}^{(\lambda;0)}
		\oplus\bigoplus_{p\in\mathsf{P}_n}
		\left(\LSrad{n}^{\left(\lambda;\nu_p \right)}
		\oplus\LSrad{n}^{\left(\lambda;-\nu_p \right)}\right),
		& n\text{ even};
	\end{cases}
\end{align}
which gives the desired result.
\end{proof}

\begin{lemma}\label{lem::ward_matrix_eigenspace}
Fix $\bs{\theta}^0\in\LX_n^{\U}$. Assume the same setup as in Lemma~\ref{lem::radialBPZ_equi_linear_system}. Let
\begin{equation} \label{eqn::ward_matrix}
	B=B(\bs{\theta}^0;\kappa,\lambda):=\kappa\sum_{j=1}^nA_j(\bs{\theta}^0).
\end{equation}
Then the linear map $\varsigma(\LZ)=(\partial^{\bs{\beta}} \LZ(\bs{\theta}^0))_{\bs{\beta}\in \{0,1\}^n}$ in~\eqref{eqn::iso_LS_R2n} is an isomorphism between $\LSrad{n}^{(\lambda;\nu)}$ and $\ker(B-\nu I)$, and
\begin{equation}\label{eqn::ward_eigenspace}
	\dim\LSrad{n}^{(\lambda;\nu)}	=\dim\ker(B-\nu I).
\end{equation}
\end{lemma}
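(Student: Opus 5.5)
By Corollary~\ref{cor::iso_LS_R2n}, $\varsigma$ is already an isomorphism from $\LSrad{n}^{(\lambda)}$ onto $\R^{2^n}$. Since $\LSrad{n}^{(\lambda;\nu)}\subset\LSrad{n}^{(\lambda)}$, it suffices to show that the image of this subspace is exactly $\ker(B-\nu I)$. The dimension identity~\eqref{eqn::ward_eigenspace} then follows immediately.

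The key observation concerns an arbitrary $\LZ\in\LSrad{n}^{(\lambda)}$. Let $\bs{u}=(\partial^{\bs{\beta}}\LZ)_{\bs{\beta}\in\{0,1\}^n}$; it is well defined because $\LZ$ is analytic by the proof of Lemma~\ref{lem::dim_upperbound}. By Lemma~\ref{lem::radialBPZ_equi_linear_system} we have $\partial_j\bs{u}=A_j\bs{u}$, and hence
\[\kappa\,\LR\bs{u}(\bs{\theta})=\kappa\sum_{j=1}^n A_j(\bs{\theta})\bs{u}(\bs{\theta}).\]
Since $\partial^{\bs{\beta}}$ commutes with $\LR=\sum_j\partial_j$, the left-hand side is the vector $(\partial^{\bs{\beta}}(\kappa\LR\LZ))_{\bs{\beta}}$. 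Evaluating at $\bs{\theta}^0$ gives
\[\varsigma(\kappa\LR\LZ)=B\,\varsigma(\LZ).\]
For this to make sense we need $\kappa\LR\LZ\in\LSrad{n}^{(\lambda)}$. This holds by~\eqref{eqn::radialBPZ_Ward_commute}: $[\LD_j,\LR]=0$ gives $(\LD_j-\lambda/\kappa)\LR\LZ=\LR(\LD_j-\lambda/\kappa)\LZ=0$. So $\LR$ preserves $\LSrad{n}^{(\lambda)}$, and the identity above says that $\varsigma$ intertwines $\kappa\LR$ with $B$.

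Both inclusions now follow.

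First, if $\LZ\in\LSrad{n}^{(\lambda;\nu)}$, then $\kappa\LR\LZ=\nu\LZ$. Applying $\varsigma$ gives $B\varsigma(\LZ)=\nu\varsigma(\LZ)$, so $\varsigma(\LZ)\in\ker(B-\nu I)$.

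Conversely, take $\bs{v}\in\ker(B-\nu I)$ and let $\LZ=\varsigma^{-1}(\bs{v})\in\LSrad{n}^{(\lambda)}$. Then $F:=\kappa\LR\LZ-\nu\LZ$ lies in $\LSrad{n}^{(\lambda)}$, and $\varsigma(F)=B\bs{v}-\nu\bs{v}=0$. Injectivity of $\varsigma$ gives $F\equiv0$, which is exactly the Ward identity~\eqref{eqn::ward_radial}. Hence $\LZ\in\LSrad{n}^{(\lambda;\nu)}$.

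The only point needing care is that $\LR\LZ$ is again a genuine solution, so that the injectivity of $\varsigma$ can be applied to $F$. The commutation~\eqref{eqn::radialBPZ_Ward_commute} handles this. The remaining steps are bookkeeping, using that the square-free derivative vector of $\LR\LZ$ is computed through the matrices $A_j$.
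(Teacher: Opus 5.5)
Your proposal is correct and follows the paper's proof essentially step by step. Both arguments first derive the intertwining identity $\varsigma(\kappa\LR\LZ)=B\,\varsigma(\LZ)$ from $[\LD_j,\LR]=0$ and the linear system $\partial_j\bs{u}=A_j\bs{u}$, and then prove the two inclusions. The only cosmetic difference is in the converse: you apply injectivity of $\varsigma$ directly to $(\kappa\LR-\nu)\LZ$, whereas the paper re-runs the reduction-plus-analyticity argument from Lemma~\ref{lem::dim_upperbound}, which amounts to the same thing.
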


\begin{proof}
We show that the linear map $\varsigma(\LZ)=(\partial^{\bs{\beta}} \LZ(\bs{\theta}^0))_{\bs{\beta}\in \{0,1\}^n}$ in~\eqref{eqn::iso_LS_R2n} is an isomorphism between $\LSrad{n}^{(\lambda;\nu)}$ and $\ker(B-\nu I)$.
For $\LZ\in\LSrad{n}^{(\lambda)}$, by~\eqref{eqn::radialBPZ_Ward_commute}, we have $\LR\LZ$ satisfies the radial BPZ system~\eqref{eqn::BPZ_radial}. Lemma~\ref{lem::radialBPZ_equi_linear_system} shows that
\begin{equation} \label{eqn::ward_matrix_jet}
	\varsigma(\kappa\LR\LZ)=\kappa((\LR\partial^{\bs{\beta}}\LZ)(\bs{\theta}^0))_{\bs{\beta}\in\{0,1\}^n}=\kappa\sum_{j=1}^n A_j(\bs{\theta}^0;\kappa,\lambda) (\partial^{\bs{\beta}} \LZ(\bs{\theta}^0))_{\bs{\beta}\in\{0,1\}^n}=B\varsigma(\LZ).
\end{equation}
Now, we restrict $\varsigma$ to $\LSrad{n}^{(\lambda;\nu)}$.

First, let us show that $\varsigma$ is injective from $\LSrad{n}^{(\lambda;\nu)}$ to $\ker(B-\nu I)$.  If $\LZ\in\LSrad{n}^{(\lambda;\nu)}$, then $\kappa \LR\LZ= \nu\LZ$, and~\eqref{eqn::ward_matrix_jet} gives
\[
(B-\nu I)\varsigma(\LZ)
=\varsigma(\kappa\LR\LZ)-\nu\varsigma(\LZ)=0.
\]
Thus, $\varsigma$ maps $\LSrad{n}^{(\lambda;\nu)}$ into $\ker(B-\nu I)$. It is injective on $\LSrad{n}^{(\lambda;\nu)}$ by Corollary~\ref{cor::iso_LS_R2n}.

Next, let us show that $\varsigma$ is surjective from $\LSrad{n}^{(\lambda;\nu)}$ to $\ker(B-\nu I)$. Fix $\bs{v}\in\ker(B-\nu I)$. By Corollary~\ref{cor::iso_LS_R2n}, there exists $\LZ\in\LSrad{n}^{(\lambda)}$ such that $\varsigma(\LZ)=\bs{v}$. It remains to show $\kappa \LR \LZ = \nu \LZ$ on $\LX_n^{\U}$. For each $j\in\{1,\ldots,n\}$, the commutation relation~\eqref{eqn::radialBPZ_Ward_commute} gives
\[
\left(\LD_j-\lambda/\kappa\right) \left(\LR-\nu/\kappa\right)\LZ=\left(\LR-\nu/\kappa\right)\left(\LD_j-\lambda/\kappa\right)\LZ=0.
\]
Therefore, $(\kappa \LR-\nu)\LZ \in\LSrad{n}^{(\lambda)}$. Moreover, Eq.~\eqref{eqn::ward_matrix_jet} shows
\begin{equation*}
	\varsigma((\kappa \LR-\nu)\LZ)=(B-\nu I)\bs{v}=0,
\end{equation*}
which implies all square free derivatives of $(\kappa \LR-\nu)\LZ$ vanish at $\bs{\theta}^0$. Applying the same reduction argument in the proof of Lemma~\ref{lem::dim_upperbound} to $(\kappa \LR-\nu)\LZ$, all of its partial derivatives of every order vanish at $\bs{\theta}^0$. Since $(\kappa \LR-\nu)\LZ$ is analytic on $\LX_n^{\U}$, we have $(\kappa \LR-\nu)\LZ$ vanishes in a neighborhood of $\bs{\theta}^0$, and hence, vanishes in the connected chamber $\LX_n^{\U}$. Therefor we have $\kappa \LR\LZ=\nu\LZ$, which shows that $\LZ\in\LSrad{n}^{(\lambda;\nu)}$ and that $\varsigma$ is surjective.

We conclude that $\varsigma$ restricted in $\LSrad{n}^{(\lambda;\nu)}$ is an isomorphism onto $\ker(B-\nu I)$, which shows~\eqref{eqn::ward_eigenspace} as desired.
\end{proof}

\begin{lemma}\label{lem::ward_polynomial_identities}
Let $\kappa>0$ and $\lambda\in \R$. 
Define 
\[r_{n;q}=\binom{n}{(n-q)/2},\quad\text{for }q\in\mathsf{P}_n;\qquad r_{n;0}=\binom{n}{n/2},\quad\text{for even }n.\]
The characteristic polynomial of the matrix $B$ in~\eqref{eqn::ward_matrix} is
\begin{equation}\label{eqn::ward_characteristic_polynomial}
	\det(tI-B)=\begin{cases}
		\prod_{q\in\mathsf{P}_n} \bigl(t^2-q^2(2\lambda+q^2-1)\bigr)^{r_{n;q}}, &n\text{ odd},\\
		t^{r_{n;0}}\prod_{q\in\mathsf{P}_n} \bigl(t^2-q^2(2\lambda+q^2-1)\bigr)^{r_{n;q}}, &n\text{ even}.
	\end{cases}
\end{equation}
Moreover, the following $Q_\lambda(t)$ is an annihilating polynomial of $B$:
\begin{equation}\label{eqn::ward_annihilating_polynomial}
	Q_\lambda(t):=\begin{cases}
		\prod_{q\in\mathsf{P}_n}\bigl(t^2-q^2(2\lambda+q^2-1)\bigr), &n\text{ odd},\\
		t\prod_{q\in\mathsf{P}_n}\bigl(t^2-q^2(2\lambda+q^2-1)\bigr), &n\text{ even}.
	\end{cases}	
\end{equation}
\end{lemma}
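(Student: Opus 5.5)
The plan is to compute both polynomials first in the regime $\kappa\in(0,4]$, $\lambda>0$, where the spectrum of $B$ is known, and then to extend them to all $(\kappa,\lambda)$ by an algebraic identity argument.

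\textbf{Step 1: the regime $\kappa\in(0,4]$, $\lambda>0$.} Here Lemma~\ref{lem::solutionspace_decompositon_dimension_kappa04lambdage0} gives the direct sum decomposition~\eqref{eqn::decompose_solutionspace004} of $\LSrad{n}^{(\lambda)}$ into the subspaces $\LSrad{n}^{(\lambda;\pm\nu_p)}$, together with $\LSrad{n}^{(\lambda;0)}$ when $n$ is even. Their dimensions are $r_{n;p}$ and $r_{n;0}$. Transport this through the isomorphism $\varsigma$ of Corollary~\ref{cor::iso_LS_R2n} and Lemma~\ref{lem::ward_matrix_eigenspace}. The result is that $\R^{2^n}$ is a direct sum of eigenspaces $\ker(B-\nu I)$ with $\nu\in\{\pm\nu_p\}$, plus $\nu=0$ for even $n$, of the stated dimensions. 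Hence $B$ is diagonalizable with exactly these eigenvalues and multiplicities.

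Since $\nu_p^2=p^2(2\lambda+p^2-1)$, this yields~\eqref{eqn::ward_characteristic_polynomial}. It also shows that $Q_\lambda(B)=0$, since $Q_\lambda$ vanishes at every eigenvalue of a diagonalizable matrix. One point needs care: the $\nu_p$ must be pairwise distinct and nonzero so that the multiplicity count is exact. They are, because $\lambda>0$ makes $\nu_p=p\sqrt{2\lambda+p^2-1}>0$ and strictly increasing in $p$. In any case, the characteristic polynomial identity holds as stated even when roots coincide, since the direct sum decomposition already gives the full count.

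\textbf{Step 2: polynomial dependence of $B$.} Show that the entries of $B(\bs{\theta}^0;\kappa,\lambda)$ are polynomials in $(\lambda,\kappa,\kappa^{-1})$, with coefficients that are analytic functions of $\bs{\theta}^0$. This follows from the recursion~\eqref{eqn::linear_system_recursion}. Each application introduces a factor $2/\kappa$ and terms linear in $\lambda/\kappa$ or $\mathfrak{b}=(6-\kappa)/(2\kappa)$. The recursion terminates after finitely many steps, bounded in terms of $n$ and independent of the parameters, as shown in the proof of Lemma~\ref{lem::radialBPZ_equi_linear_system}.

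Consequently, every entry of $\det(tI-B)$ and of $Q_\lambda(B)$ is a polynomial in $(t,\lambda,\kappa^{-1},\kappa)$. Equivalently, after multiplying by a power of $\kappa$, it is a polynomial in $(t,\lambda,\kappa)$.

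\textbf{Step 3: extension to all parameters.} Fix $\bs{\theta}^0$. Form the differences $\det(tI-B)-(\text{RHS of }\eqref{eqn::ward_characteristic_polynomial})$ and $Q_\lambda(B)$; each is a Laurent-polynomial function of $(t,\kappa,\lambda)$. Both vanish on the set $(0,4]\times(0,\infty)$ of values $(\kappa,\lambda)$, for all $t$. This set has nonempty interior in $\R^2$, so both vanish identically for all $\kappa>0$ and $\lambda\in\R$.

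Note that the right-hand side of~\eqref{eqn::ward_characteristic_polynomial} and $Q_\lambda$ involve only $q^2(2\lambda+q^2-1)$. They are therefore polynomial in $\lambda$, with no square roots. This is why the extension works uniformly, including the range $2\lambda+q^2-1<0$ where the eigenvalues become imaginary.

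\textbf{Main obstacle.} The main obstacle is Step 2: justifying rigorously that the recursion produces entries polynomial in $\lambda$ and in $\kappa^{\pm1}$. This requires the termination of the elimination procedure to be uniform in the parameters. I would handle this by an induction on $|\bs{\delta}|$ that tracks the degree in $\lambda$ and in $\kappa^{-1}$. Analyticity in $(\kappa,\lambda)$ on $(0,\infty)\times\R$, which is connected, would also suffice in place of polynomiality.
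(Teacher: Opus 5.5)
Your proposal is correct and follows essentially the same route as the paper. Both arguments read off the spectrum of $B$ for $\kappa\in(0,4]$, $\lambda>0$ from the explicit eigenbasis supplied by the SLE constructions via the isomorphism $\varsigma$. Both then extend the characteristic-polynomial and annihilating-polynomial identities to all $\kappa>0$, $\lambda\in\R$, using that the entries of $B$ are polynomial in $\lambda$ and Laurent-polynomial in $\kappa$, so the relevant differences vanish identically once they vanish on an open set.
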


\begin{proof}
First, we show~\eqref{eqn::ward_characteristic_polynomial} and~\eqref{eqn::ward_annihilating_polynomial} for $\kappa\in(0,4]$ and $\lambda>0$. From~\eqref{eqn::ward_eigenspace} and~\eqref{eqn::decompose_solutionspace004}, $\{\pm\nu_q: q\in\mathsf{P}_n\}$ are eigenvalues of $B$, and 
\[\dim\ker(B-\nu_q I) = \dim\ker(B+\nu_q I) = \binom{n}{(n-q)/2}.\] When $n$ is even, $0$ is also an eigenvalue of $B$, and $\dim\ker B=\binom{n}{n/2}$. This accounts for all $2^n$ dimensions. Therefore characteristic polynomial of the matrix $B$ is given by~\eqref{eqn::ward_characteristic_polynomial}, and the minimal polynomial of the matrix $B$ is given by~\eqref{eqn::ward_annihilating_polynomial}. 
\medbreak
Next, we show~\eqref{eqn::ward_characteristic_polynomial} and~\eqref{eqn::ward_annihilating_polynomial} for $\kappa>0$ and $\lambda\in \R$. The finite recursion~\eqref{eqn::linear_system_recursion} only uses differentiation in the angles, addition, multiplication, and division by powers of $\kappa$. Since $\mathfrak{b}=\frac{6-\kappa}{2\kappa}$, every entry of $A_j$, and hence of $B$, is polynomial in $\lambda$ and a Laurent polynomial in $\kappa$. Therefore, for some $n_0\in \Z_{\ge 0}$ sufficiently large,
\begin{equation*}
	\kappa^{n_0}\det(tI-B)- \kappa^{n_0} \begin{cases}
		\prod_{q\in\mathsf{P}_n} \bigl(t^2-q^2(2\lambda+q^2-1)\bigr)^{r_{n;q}}, &n\text{ odd},\\
		t^{r_{n;0}}\prod_{q\in\mathsf{P}_n} \bigl(t^2-q^2(2\lambda+q^2-1)\bigr)^{r_{n;q}}, &n\text{ even},
	\end{cases}
\end{equation*}
as well as every entry of $\kappa^{n_0} Q_\lambda(B)$, is a polynomial in $(\kappa,\lambda,t)$. These polynomials vanish on the open set $(0,4)\times(0,\infty)\times \R$, so they vanish identically. This proves~\eqref{eqn::ward_characteristic_polynomial} and~\eqref{eqn::ward_annihilating_polynomial} for every $\kappa>0$ and $\lambda\in\R$.
\end{proof}

\begin{lemma} \label{lem::solutionspace_decompositon_dimension_nondegenerate}
Assume the same setup as in Theorem~\ref{thm::solutionspace_decompositon_dimension}. 
\begin{itemize}
\item If $n$ is odd, we have
\begin{align} \label{eqn::solutionspace_decompositon_dimension_odd_repeat2nd}
	\dim\LSrad{n}^{(\lambda;\nu)}=\begin{cases}
		\binom{n}{(n-p)/2},	&\nu\in\{\pm \nu_p\}, \quad p^2>1-2\lambda,\quad p\in\mathsf{P}_n, \\
		0,&\lambda\in \R, \quad \nu\notin \{ \pm \nu_p: p\in \mathsf{P}_n, p^2\ge 1-2\lambda \}.
	\end{cases}
\end{align}
\item If $n$ is even, we have
\begin{align} \label{eqn::solutionspace_decompositon_dimension_even_repeat2nd}
	\dim\LSrad{n}^{(\lambda;\nu)}=
	\begin{cases}
		\binom{n}{(n-p)/2},	& \nu\in\{\pm \nu_p\},\quad	p^2>1-2\lambda,\quad p\in\mathsf{P}_n, \\
		\binom{n}{n/2},&\nu=0,\quad \lambda\notin\{ (1-p^2)/2: p\in\mathsf{P}_n \} \\
		0,&\lambda\in \R, \quad \nu\notin \{ 0,\pm \nu_p: p\in \mathsf{P}_n, p^2\ge 1-2\lambda \}.
	\end{cases}
\end{align}
\end{itemize}
\end{lemma}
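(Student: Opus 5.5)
By Lemma~\ref{lem::ward_matrix_eigenspace}, $\dim\LSrad{n}^{(\lambda;\nu)}=\dim\ker(B-\nu I)$, so I will compute the geometric multiplicities of the eigenvalues of the Ward matrix $B=B(\bs{\theta}^0;\kappa,\lambda)$ from the polynomial identities of Lemma~\ref{lem::ward_polynomial_identities}. The key fact is that if an annihilating polynomial of $B$ has a simple root $t_0$, then $B$ has no nontrivial Jordan block at $t_0$. Hence $\dim\ker(B-t_0I)$ equals the algebraic multiplicity of $t_0$, which is its multiplicity in the characteristic polynomial.

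I first handle the values of $\nu$ that are not eigenvalues. By~\eqref{eqn::ward_characteristic_polynomial}, the spectrum of $B$ over $\C$ consists of the roots $t=\pm q\sqrt{2\lambda+q^2-1}$ for $q\in\mathsf{P}_n$, together with $0$ when $n$ is even. A real $\nu$ is a root of $t^2-q^2(2\lambda+q^2-1)$ only if $q^2\ge 1-2\lambda$, and then $\nu=\pm\nu_q$. So if $\nu\notin\{\pm\nu_p: p\in\mathsf{P}_n,\ p^2\ge 1-2\lambda\}$ (and additionally $\nu\neq0$ when $n$ is even), then $\det(\nu I-B)\neq0$ and $\ker(B-\nu I)=0$. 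This gives the ``$0$'' cases in~\eqref{eqn::solutionspace_decompositon_dimension_odd_repeat2nd} and~\eqref{eqn::solutionspace_decompositon_dimension_even_repeat2nd}.

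Next comes the generic eigenvalue $\nu=\pm\nu_p$ with $p^2>1-2\lambda$. Then $\nu_p>0$, so $\nu_p\neq-\nu_p$ and $\nu_p\neq 0$. I will check that $\nu_p$ is a simple root of $Q_\lambda$ in~\eqref{eqn::ward_annihilating_polynomial}. This requires $\nu_p^2\neq q^2(2\lambda+q^2-1)$ for $q\in\mathsf{P}_n\setminus\{p\}$. The function $f(q)=q^2(2\lambda+q^2-1)=q^4+(2\lambda-1)q^2$ has $f(p)-f(q)=(p^2-q^2)(p^2+q^2+2\lambda-1)$, and the second factor is positive because $p^2+2\lambda-1>0$ and $q^2>0$. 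So the factors of $Q_\lambda$ for different $q$ have disjoint real roots $\pm\nu_p$, and $\nu_p$ is simple in $Q_\lambda$. Its algebraic multiplicity in~\eqref{eqn::ward_characteristic_polynomial} is then exactly $r_{n;p}=\binom{n}{(n-p)/2}$, since the same factorization shows that no other factor vanishes at $\nu_p$. The geometric multiplicity equals this, which gives the first line of each case.

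Finally, for even $n$ and $\nu=0$ with $\lambda\notin\{(1-p^2)/2\}$, the factor $t$ appears once in $Q_\lambda$. For $q\in\mathsf{P}_n$, the factor $t^2-f(q)$ vanishes at $0$ only if $f(q)=0$, that is, $2\lambda+q^2-1=0$ (recall $q\ge1$), which is excluded. So $0$ is a simple root of $Q_\lambda$ with characteristic multiplicity $r_{n;0}=\binom{n}{n/2}$, and $\dim\ker B=\binom{n}{n/2}$. The main point to verify carefully is the root-separation argument above. Everything else is linear algebra, relying only on the fact that a matrix annihilated by a polynomial with a simple root at $t_0$ has generalized eigenspace equal to its eigenspace at $t_0$.
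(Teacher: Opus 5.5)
Your proposal is correct and follows the paper's proof: you reduce to $\dim\ker(B-\nu I)$ via Lemma~\ref{lem::ward_matrix_eigenspace}, rule out values that are not roots of the characteristic polynomial, and then use that $\pm\nu_p$ (for $p^2>1-2\lambda$), and $0$ (for even $n$ with $\lambda\notin\{(1-p^2)/2\}$), are simple roots of the annihilating polynomial $Q_\lambda$, so geometric and algebraic multiplicities coincide. Your factorization $f(p)-f(q)=(p^2-q^2)(p^2+q^2+2\lambda-1)$ is a slightly more complete version of the paper's monotonicity remark $\nu_p>q\sqrt{2\lambda+q^2-1}$ for $p>q>\sqrt{1-2\lambda}$, since it also explicitly covers the indices $q$ with $q^2\le 1-2\lambda$, which the paper leaves implicit.
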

\begin{proof}
We first check the second case in~\eqref{eqn::solutionspace_decompositon_dimension_odd_repeat2nd} and the third case in~\eqref{eqn::solutionspace_decompositon_dimension_even_repeat2nd}. Assume $\lambda\in \R$. If $\nu\in \R$ is not the root of the characteristic polynomial of $B$~\eqref{eqn::ward_characteristic_polynomial}: 
\begin{align*}
	&\nu\notin \begin{cases}
		\{ \pm \nu_p: p\in \mathsf{P}_n, p^2 \ge 1-2\lambda \}, & n\text{ odd}, \\
		\{ 0, \pm \nu_p: p\in \mathsf{P}_n, p^2 \ge 1-2\lambda \}, & n\text{ even},
	\end{cases} 
\end{align*}
then 
\begin{equation} \label{eqn::solutionspace_decompositon_dimension_nondegenerate_aux1}
	\dim \LSrad{n}^{(\lambda;\nu)}= \dim \ker (B-\nu I)=0,
\end{equation}
by~\eqref{eqn::ward_eigenspace}. This gives the second case in~\eqref{eqn::solutionspace_decompositon_dimension_odd_repeat2nd} and the third case in~\eqref{eqn::solutionspace_decompositon_dimension_even_repeat2nd}. 
\medbreak
Second, we deal with the first case in~\eqref{eqn::solutionspace_decompositon_dimension_odd_repeat2nd} and the first case in~\eqref{eqn::solutionspace_decompositon_dimension_even_repeat2nd}. Assume $\nu\in \{ \pm \nu_p: p\in \mathsf{P}_n, p^2>1-2\lambda \}$, which is the set of all the non-zero real roots of the characteristic polynomial of $B$~\eqref{eqn::ward_characteristic_polynomial}.
Since $\nu_p>q\sqrt{2\lambda+q^2-1}$ for $p>q>\sqrt{1-2\lambda}$, each non-zero real root $\pm \nu_p$ is simple in $Q_\lambda$. By~\eqref{eqn::ward_annihilating_polynomial}, every Jordan block of $B$ for the eigenvalue $\pm \nu_p$ has size one. Therefore, we have
\begin{equation} \label{eqn::solutionspace_decompositon_dimension_nondegenerate_aux2}
	\dim \LSrad{n}^{(\lambda;\pm \nu_p)}= \dim \ker (B\mp \nu_p I)=\binom{n}{(n-p)/2},
\end{equation}
by~\eqref{eqn::ward_eigenspace} and~\eqref{eqn::ward_characteristic_polynomial}. This gives the first case in~\eqref{eqn::solutionspace_decompositon_dimension_odd_repeat2nd} and the first case in~\eqref{eqn::solutionspace_decompositon_dimension_even_repeat2nd}. 
\medbreak
Finally, we deal with the second case in~\eqref{eqn::solutionspace_decompositon_dimension_even_repeat2nd}. Assume $\nu=0$. When $n$ is even, $0$ has algebraic multiplicity $\binom{n}{n/2}$ and is a simple root of $Q_\lambda$. Therefore, every Jordan block of $B$ for the eigenvalue $0$ has size one, and
\begin{equation} \label{eqn::solutionspace_decompositon_dimension_nondegenerate_aux3}
	\dim \LSrad{n}^{(\lambda;0)}= \dim \ker (B)=\binom{n}{n/2},  \quad n\text{ even}.
\end{equation}
Combining~\eqref{eqn::solutionspace_decompositon_dimension_nondegenerate_aux1},~\eqref{eqn::solutionspace_decompositon_dimension_nondegenerate_aux2} and~\eqref{eqn::solutionspace_decompositon_dimension_nondegenerate_aux3}, we obtain~\eqref{eqn::solutionspace_decompositon_dimension_odd_repeat2nd} and~\eqref{eqn::solutionspace_decompositon_dimension_even_repeat2nd} as desired. 
\end{proof}

\begin{lemma}\label{lem::solutionspace_decompositon_dimension_degenerate_oddn}
Fix odd $n$. Theorem~\ref{thm::solutionspace_decompositon_dimension} holds in the following degenerate case: suppose $\lambda=(1-p^2)/2$ for some $p\in\mathsf{P}_n$ and $\nu=0$, 
\begin{align} \label{eqn::solutionspace_decompositon_dimension_odd_repeat3rd}
	\dim\LSrad{n}^{((1-p^2)/2;0)}= \binom{n}{(n-p)/2}.
\end{align}
\end{lemma}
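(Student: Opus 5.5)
The plan is to reduce the statement, via Lemma~\ref{lem::ward_matrix_eigenspace}, to computing $\dim\ker B$ at $\lambda_*:=(1-p^2)/2$. Write $r:=r_{n;p}=\binom{n}{(n-p)/2}$. I will show the lower bound from the annihilating polynomial of Lemma~\ref{lem::ward_polynomial_identities}, and the upper bound from an order-of-vanishing argument for a determinant that is analytic in $\lambda$. This avoids the SLE constructions, which are only available for $\kappa\in(0,4]$, and so works for all $\kappa>0$.

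\emph{Step 1 (spectral bookkeeping at $\lambda_*$).} Set $c_q(\lambda):=q^2(2\lambda+q^2-1)$. At $\lambda=\lambda_*$ we have $c_q(\lambda_*)=q^2(q^2-p^2)$, which vanishes for $q=p$ and is nonzero for every other $q\in\mathsf{P}_n$. Since $n$ is odd, $t=0$ is a root of $\det(tI-B)$ only through the factor $(t^2-c_p)^{r}=t^{2r}$ in~\eqref{eqn::ward_characteristic_polynomial}. Hence the generalized $0$-eigenspace $V$ of $B$ has dimension $2r$. In $Q_{\lambda_*}$ from~\eqref{eqn::ward_annihilating_polynomial}, the root $0$ has multiplicity exactly $2$, so every $0$-Jordan block of $B$ has size at most $2$. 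With $2r$ in total this forces at least $r$ blocks, i.e.\ $\dim\ker B\ge r$. It remains to prove $\dim\ker B\le r$, i.e.\ that all blocks have size exactly two.

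\emph{Step 2 (analytic family near $\lambda_*$).} The entries of $B=B(\bs\theta^0;\kappa,\lambda)$ are polynomial in $\lambda$. For $\lambda$ in a small real neighbourhood $J$ of $\lambda_*$, the roots $\pm\sqrt{c_p(\lambda)}$ stay near $0$, while all other roots of the characteristic polynomial stay away from a fixed small circle $\Gamma$ around $0$. The Riesz projection $P(\lambda)=\frac{1}{2\pi\ii}\oint_\Gamma(zI-B(\lambda))^{-1}\,\ud z$ is therefore real-analytic on $J$, and its rank is constantly $2r$. I will choose an analytic basis of $\mathrm{Ran}\,P(\lambda)$, for instance $P(\lambda)$ applied to a fixed basis of $\mathrm{Ran}\,P(\lambda_*)$. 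In this basis $B$ restricts to a $2r\times 2r$ matrix $M(\lambda)$ that is analytic in $\lambda$.

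Now I use the annihilating polynomial on this invariant subspace. On $\mathrm{Ran}\,P(\lambda)$, every factor $B^2-c_q(\lambda)$ with $q\ne p$ is invertible, because its eigenvalues lie inside $\Gamma$ while $c_q(\lambda)$ is bounded away from them. So $Q_\lambda(B)=0$ gives $M(\lambda)^2=c_p(\lambda)I$. Taking determinants, $(\det M(\lambda))^2=c_p(\lambda)^{2r}$. Hence $\det M(\lambda)=\pm c_p(\lambda)^r=\pm\bigl(2p^2(\lambda-\lambda_*)\bigr)^r$, by analyticity and continuity of the sign on $J$. In particular $\det M$ vanishes to order exactly $r$ at $\lambda_*$.

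\emph{Step 3 (upper bound).} Let $k=\dim\ker M(\lambda_*)=\dim\ker B(\lambda_*)$, using $\ker B\subset V$. Choose a constant invertible matrix $S$ whose first $k$ columns span $\ker M(\lambda_*)$. Then the first $k$ columns of $M(\lambda)S$ are analytic and vanish at $\lambda_*$, so each is divisible by $(\lambda-\lambda_*)$. Multilinearity of the determinant gives that $\det M(\lambda)$ vanishes to order at least $k$. Comparing with Step 2 yields $k\le r$. Combined with Step 1, $\dim\ker B=r$, and Lemma~\ref{lem::ward_matrix_eigenspace} gives~\eqref{eqn::solutionspace_decompositon_dimension_odd_repeat3rd}.

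The main obstacle is Step 2: justifying $M(\lambda)^2=c_p(\lambda)I$ on the analytically varying invariant subspace. This needs the non-$p$ roots to be uniformly separated from $\Gamma$ on $J$, including the complex roots $\pm\sqrt{c_q}$ for $q<p$, where $c_q<0$. I would check this separation first. A more probabilistic alternative works only for $\kappa\in(0,4]$: differentiate $\LZmix$ in $\mu$ at $\mu=0$, which is admissible since $\partial_\mu\lambda=0$ there. This yields $F$ with $\kappa\LR F=p\,\mathcal{Z}^{(p;0)}_{\bs s;\alpha}$, giving $r$ explicit Jordan chains of length two. However, extending that route to all $\kappa$ needs a further argument, so I prefer the determinant route.
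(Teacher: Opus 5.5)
Your argument is correct, but it reaches the upper bound $\dim\ker B(\lambda_*)\le r$ by a different route from the paper.

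Step~1 matches the paper: $0$-blocks have size at most two inside a $2r$-dimensional generalized eigenspace, so $\dim\ker B\ge r$.

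For the reverse inequality, the paper differentiates the identity $Q_\lambda(B(\lambda))=0$ in $\lambda$ and applies it at $\lambda_*$ to $v\in\ker B$. Because the factor $B^2-c_p$ sits on the right, this gives $B\prod_{q\ne p}\bigl(B^2-c_q(\lambda_*)\bigr)B'v=2p^2\prod_{q\ne p}\bigl(-c_q(\lambda_*)\bigr)v$. Hence every kernel vector has the form $B\hat v$ with $B^2\hat v=0$, so no block has size one. You instead bound the geometric multiplicity by the order of vanishing of a determinant in $\lambda$.

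The paper's route is shorter and constructive, since it exhibits the Jordan chains explicitly. Yours rests on a general perturbative principle.

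Your route can also be simplified: the Riesz projection is unnecessary, and with it the spectral-separation check you flagged as the main obstacle. Evaluate~\eqref{eqn::ward_characteristic_polynomial} at $t=0$ and use that $2^n$ is even. This gives $\det B(\lambda)=\prod_{q\in\mathsf{P}_n}\bigl(-c_q(\lambda)\bigr)^{r_{n;q}}$, a polynomial in $\lambda$. It vanishes at $\lambda_*$ to order exactly $r$, because only the factor $q=p$ vanishes there, and $c_p(\lambda)=2p^2(\lambda-\lambda_*)$ has a simple zero. Your column argument applied directly to $B(\lambda)S$ then yields $\dim\ker B(\lambda_*)\le r$. (The separation check is easy anyway: at $\lambda_*$ the other roots are $\pm q\sqrt{q^2-p^2}$ with $q\ge1$, $q\ne p$, all nonzero.)

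In this form your upper bound uses only the characteristic polynomial, and the annihilating polynomial enters only in Step~1. Both proofs ultimately rest on the same transversality $\partial_\lambda c_p=2p^2\ne0$.
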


\begin{proof}
We have the following observations.
\begin{itemize}
	\item Note that $0$ is a double root of the annihilating polynomial of $B$~\eqref{eqn::ward_annihilating_polynomial}. Thus all Jordan blocks of $B$ associated with the eigenvalue $0$ have size at most $2$.
	\item Let us show that all Jordan blocks of $B$ associated with the eigenvalue $0$ have size at least $2$. Assume $v\in \ker B\setminus\{0\}$.
	Write $B'=\partial_\lambda B(\bs{\theta}^0;\kappa,(1-p^2)/2)$, where $\bs{\theta}^0$ is fixed as in the setup of Lemma~\ref{lem::ward_matrix_eigenspace} and $B(\bs{\theta}^0;\kappa,\lambda)$ is defined in~\eqref{eqn::ward_matrix}. Since~\eqref{eqn::ward_annihilating_polynomial} holds for all $\lambda\in \R$, we have
	\begin{align*} 
		\begin{split}
			\partial_{\lambda} Q_{\lambda}(B(\bs{\theta}^0;\kappa,\lambda)) = \partial_{\lambda} \Bigg( &\Big( \prod_{q\in\mathsf{P}_n \setminus \{p\}} \bigl(B(\bs{\theta}^0;\kappa,\lambda)^2-q^2(2\lambda+q^2-1)\bigr) \Big) \\
			& \times \bigl(B(\bs{\theta}^0;\kappa,\lambda)^2-p^2(2\lambda+p^2-1)\bigr) \Bigg) = 0.	
		\end{split}
	\end{align*}
	When $\lambda=(1-p^2)/2$, applying this identity to $v\in\ker B\setminus\{0\}$, we have
	\begin{equation} \label{eqn::solutionspace_decompositon_dimension_degenerate_oddn_aux1}
		2p^2\prod_{q\in\mathsf{P}_n, q\ne p}\bigl(-q^2(q^2-p^2)\bigr)v=B\big(\prod_{q\in\mathsf{P}_n,q\ne p}\bigl(B^2-q^2(q^2-p^2)I\bigr)\big)B'v.		
	\end{equation}
	Let 
	\[ \hat{v}:= \bigl( 2p^2\prod_{q\in\mathsf{P}_n, q\ne p}\bigl(-q^2(q^2-p^2)\bigr) \bigr)^{-1} \times \big(\prod_{q\in\mathsf{P}_n,q\ne p}\bigl(B^2-q^2(q^2-p^2)I\bigr)\big)B'v.\]
	Then~\eqref{eqn::solutionspace_decompositon_dimension_degenerate_oddn_aux1} shows that $B^2 \hat{v}=0$ and $B \hat{v}=v\neq 0$, which implies that all Jordan blocks of $B$ associated with the eigenvalue $0$ have size at least $2$.
	\item By~\eqref{eqn::ward_characteristic_polynomial}, the dimension of the generalized eigenspace of $B$ associated with the eigenvalue $0$ (i.e. $\ker B^{2}$) is $2\binom{n}{(n-p)/2}$.
\end{itemize}  
Combining the observations above, we conclude that there are $\binom{n}{(n-p)/2}$ Jordan blocks of $B$ associated with the eigenvalue $0$, and every block has size $2$. By~\eqref{eqn::ward_eigenspace}, we conclude that
\begin{equation*}
	\dim \LSrad{n}^{((1-p^2)/2;0)} = \dim \ker B(\bs{\theta}^0;\kappa,(1-p^2)/2) = \binom{n}{(n-p)/2}.
\end{equation*}
This proves~\eqref{eqn::solutionspace_decompositon_dimension_odd_repeat3rd} as desired.
\end{proof}

\begin{lemma}\label{lem::solutionspace_decompositon_dimension_degenerate_evenn}
Fix even $n$. Theorem~\ref{thm::solutionspace_decompositon_dimension} holds in the following degenerate case: suppose $\lambda=(1-p^2)/2$ for some $p\in\mathsf{P}_n$ and $\nu=0$, 
	\begin{align} \label{eqn::solutionspace_decompositon_dimension_even_repeat3rd}
		\dim\LSrad{n}^{((1-p^2)/2;0)}= \binom{n}{n/2}.
	\end{align}
\end{lemma}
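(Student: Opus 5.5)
The plan is to pin $\dim\ker B$ at $\lambda_0:=(1-p^2)/2$ from both sides. By Lemma~\ref{lem::ward_matrix_eigenspace}, $\dim\LSrad{n}^{(\lambda_0;0)}=\dim\ker B(\bs{\theta}^0;\kappa,\lambda_0)$.

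By Lemma~\ref{lem::ward_polynomial_identities}, at $\lambda=\lambda_0$ the eigenvalue $0$ of $B$ has algebraic multiplicity $m_0:=\binom{n}{n/2}+2\binom{n}{(n-p)/2}$. The annihilating polynomial $Q_{\lambda_0}$ contains the factor $t^3$, so Jordan blocks at $0$ have size at most $3$. For $\lambda$ near $\lambda_0$ with $\lambda\neq\lambda_0$, the eigenvalue $0$ remains an eigenvalue with geometric multiplicity $\binom{n}{n/2}$ by Lemma~\ref{lem::solutionspace_decompositon_dimension_nondegenerate}. The eigenvalues $\pm\nu_p=\pm p\sqrt{2(\lambda-\lambda_0)}$ each have multiplicity $r_{n;p}=\binom{n}{(n-p)/2}$, and they collide at $0$ as $\lambda\to\lambda_0$. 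The entries of $B$ are polynomial in $\lambda$, so $B$ depends analytically on $\lambda$.

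\textbf{Lower bound.} The rank of $B(\bs{\theta}^0;\kappa,\lambda)$ is lower semicontinuous in $\lambda$, so $\dim\ker B$ is upper semicontinuous. Hence
\[
\dim\ker B(\lambda_0)\ \ge\ \limsup_{\lambda\to\lambda_0}\dim\ker B(\lambda)=\binom{n}{n/2},
\]
using Lemma~\ref{lem::solutionspace_decompositon_dimension_nondegenerate} for $\lambda\neq\lambda_0$ close to $\lambda_0$.

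\textbf{Upper bound.} I have to show that the $2r_{n;p}$ directions coming from the colliding eigenvalues $\pm\nu_p$ add no new kernel vectors, i.e.\ that they organise into Jordan chains at $\lambda_0$. I would work in the parameter $s=\sqrt{\lambda-\lambda_0}$, allowing $\lambda<\lambda_0$ via complex $s$, since $B$ is polynomial in $s^2$.

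Let $\Pi(s)$ be the Riesz projection of $B$ onto the eigenvalues inside a small circle around $0$. It is analytic in $\lambda$ with constant rank $m_0$. Let $M(s)$ be $B$ restricted to $\mathrm{Ran}\,\Pi$. Then $M(M^2-2p^2s^2)=0$, and $\nu_p=cs$ with $c=\sqrt2\,p$ is analytic in $s$.

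Kato's analytic perturbation theory for the eigenvalue branch $+cs$ gives a holomorphic family of $r_{n;p}$-dimensional eigenspaces $E_+(s)$ for $s\neq0$. This family extends holomorphically across $s=0$ as a point of the Grassmannian, since subspace families are holomorphic after removing poles. Set $E_-(s)=E_+(-s)$, using $B(s)=B(-s)$. I would choose a holomorphic frame $w^{(k)}(s)=w^{(k)}_0+s\,w^{(k)}_1+O(s^2)$ of $E_+(s)$, for $1\le k\le r_{n;p}$. Expanding $Bw^{(k)}=cs\,w^{(k)}$ to order $0$ and $1$ in $s$ gives
\[
B(\lambda_0)w_0^{(k)}=0,\qquad B(\lambda_0)w_1^{(k)}=c\,w_0^{(k)}.
\]
So each $w_0^{(k)}$ is the bottom of a chain of length at least $2$.

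It then remains to show two independence statements:
\begin{itemize}
\item the limits $w_0^{(k)}$ are linearly independent modulo $K_0:=\lim_{s\to0}\ker B(s)$, which has dimension $\binom{n}{n/2}$;
\item the first-order coefficients $w_1^{(k)}$ together with $K_0$ and the $w_0^{(k)}$ span $\mathrm{Ran}\,\Pi(0)$.
\end{itemize}
Given these, $\dim\mathrm{Ran}\,\Pi(0)=\binom{n}{n/2}+2r_{n;p}$, and every vector outside $K_0$ is moved by $B(\lambda_0)$. This forces $\ker B(\lambda_0)=K_0$, which is the required upper bound.

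\textbf{Main obstacle.} The hard step is the independence of the limit vectors: the three families of eigenvectors could a priori collapse together at $s=0$. My first attempt would be to take the $s$-derivative of $Q_\lambda(B)=0$ at $s=0$, as in the proof of Lemma~\ref{lem::solutionspace_decompositon_dimension_degenerate_oddn}. The aim is to show that $B(\lambda_0)^2$ has rank exactly $r_{n;p}$, or at least that $\mathrm{rank}\,B(\lambda_0)\ge 2r_{n;p}$; either would give $\dim\ker B(\lambda_0)\le\binom{n}{n/2}$.

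If that computation does not close, I would fall back on comparing with a degenerate model matrix, obtained formally by sending $\kappa\to\infty$ in~\eqref{eqn::linear_system_recursion}. In that model the rank of $B$ at $\lambda_0$ may be explicitly computable. Rank is lower semicontinuous in $\kappa$ as well, so a rank of at least $2^n-\binom{n}{n/2}$ for the model would pass to nearby $\kappa$. Transferring it to all $\kappa>0$ would then need the polynomial dependence on $\kappa$, since the rank can only drop on a Zariski-closed set of parameters.
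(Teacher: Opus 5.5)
Your lower bound is the paper's argument: upper semicontinuity of $\dim\ker B$ in $\lambda$, combined with the count $\binom{n}{n/2}$ at nearby non-degenerate $\lambda$. The upper bound, however, has a genuine gap, and your main route aims at a claim that must be false.

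\textbf{The main route contradicts itself.} If $k(s)\in\ker B(s)$ and $k(s)\to k_0$, then $B(\lambda_0)k_0=0$. So $K_0\subseteq\ker B(\lambda_0)$ with $\dim K_0=\binom{n}{n/2}$. You also derived $B(\lambda_0)w_0^{(k)}=0$. If the $w_0^{(k)}$ were independent modulo $K_0$, as your first bullet requires, you would get $\dim\ker B(\lambda_0)\ge\binom{n}{n/2}+r_{n;p}$. Your closing claim that every vector outside $K_0$ is moved by $B(\lambda_0)$ also contradicts your own zeroth-order identity. For the lemma to hold, the opposite must happen: the colliding $\pm\nu_p$ eigenvectors tend into $K_0=\ker B(\lambda_0)$, and the extra $2r_{n;p}$ generalized directions sit on chains above $K_0$. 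For example, the matrix with rows $(0,1,0),(0,0,1),(0,c^2s^2,0)$ has characteristic polynomial $t(t^2-c^2s^2)$, and its $cs$-eigenvector $(1,cs,c^2s^2)$ tends to the persistent kernel vector $(1,0,0)$.

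\textbf{The first attempt gives no chain information.} Write $R_\lambda(t)=\prod_{q\ne p}(t^2-q^2(2\lambda+q^2-1))$. Differentiate $R_\lambda(B)\,\bigl(B^2-2p^2(\lambda-\lambda_0)\bigr)B=0$ at $\lambda_0$ and apply it to $v\in\ker B$. You only get $R_{\lambda_0}(B)B^2\,\partial_\lambda B\,v=0$, which says nothing about whether $v\in\operatorname{Ran}B$. Unlike the odd case, $0$ stays an eigenvalue for all $\lambda$, and the colliding eigenvalues move like $\sqrt{\lambda-\lambda_0}$.

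\textbf{The fallback points at the right object but the transfer fails.} The $\kappa\to\infty$ model is essentially the hypercube raising matrix $U$. But a Zariski-closed rank-drop locus only yields $\operatorname{rank}B\ge 2^n-\binom{n}{n/2}$ for all but finitely many $\kappa$, not for every $\kappa>0$.

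\textbf{The missing idea is to perform the degeneration at fixed $(\kappa,\lambda)$ by a similarity.} The recursion shows that the entries of $B$ with $|\bs\delta|>|\bs\beta|$ equal $\kappa$ at $\bs\delta=\bs\beta+\bs e_j$ and vanish otherwise. With $D_t=\operatorname{diag}(t^{|\bs\beta|})$, this gives $\frac{1}{\kappa t}D_t^{-1}BD_t\to U$ as $t\to\infty$. Conjugation preserves rank, so lower semicontinuity gives $\operatorname{rank}B\ge\operatorname{rank}U$.

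It remains to compute $\operatorname{rank}U=2^n-\binom{n}{n/2}$. Let $H_m$ be the layer of multi-indices with $|\bs\beta|=m$. On $H_m$ one has $UU^T-U^TU=(n-2m)I$. Hence $U^T$ is injective on $H_m$ for $m<n/2$, and $U$ is injective on $H_m$ for $m>n/2$. This yields $\dim\ker B\le\binom{n}{n/2}$ for every $\kappa>0$ and $\lambda\in\R$, with no perturbation theory needed.
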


\begin{proof}
First, let us show the upper bound
\begin{equation}\label{eqn::ward_zero_dimension_upperbound}
	\dim\ker B\le\binom{n}{n/2}.
\end{equation}
We index the rows and columns of $B$ by $\bs{\beta},\bs{\delta}\in\{0,1\}^n$, as in Lemma~\ref{lem::radialBPZ_equi_linear_system}. The recursion~\eqref{eqn::linear_system_recursion} determines all entries with $|\bs{\delta}|>|\bs{\beta}|$:
\begin{equation} \label{eqn::ward_zero_dimension_upperbound_aux1}
	B_{\bs{\beta},\bs{\delta}}=\begin{cases}
		\kappa,	& \bs{\delta}=\bs{\beta}+\bs e_j\text{ for some }j\text{ with }\beta_j=0,\\
		0, & \text{other }|\bs{\delta}|>|\bs{\beta}|.
	\end{cases}
\end{equation}
When $\beta_j=0$, the first line of~\eqref{eqn::linear_system_recursion} places $1$ in the $(\bs{\beta},\bs{\beta}+\bs e_j)$ entry of $A_j$. When $\beta_j=1$, the recursive reduction in the second line of~\eqref{eqn::linear_system_recursion} only gives entries indexed by $\bs{\delta}$ with $|\bs{\delta}|\le|\bs{\beta}|$. Combining with~\eqref{eqn::ward_matrix}, we obtain~\eqref{eqn::ward_zero_dimension_upperbound_aux1}.

Define the auxiliary matrices $U$ and $(D_t, t>0)$:
\[
U_{\bs{\beta},\bs{\delta}}:=\begin{cases}
	1, & \bs{\delta}=\bs{\beta}+\bs e_j \text{ for some }j\text{ with }\beta_j=0,\\
	0,&\text{otherwise},
\end{cases}
\qquad
(D_t)_{\bs{\beta},\bs{\delta}}:=\begin{cases}
	t^{|\bs{\beta}|},&\bs{\beta}=\bs{\delta},\\
	0,&\bs{\beta}\ne\bs{\delta}.
\end{cases}
\]
By~\eqref{eqn::ward_zero_dimension_upperbound_aux1}, we have
\[
\frac{1}{\kappa t}D_t^{-1}BD_t\longrightarrow U,
\qquad t\to\infty.
\]
Therefore, we have
\begin{equation} \label{eqn::ward_zero_dimension_upperbound_aux2}
	\operatorname{rank}(U) = \operatorname{rank} \left(\lim_{t\to \infty} \frac{1}{\kappa t}D_t^{-1}BD_t \right)\le \operatorname{rank}\left(\frac{1}{\kappa t}D_t^{-1}BD_t\right)=\operatorname{rank}(B).
\end{equation}
It remains to calculate $\operatorname{rank}(U)$. We decompose $\R^{\{0,1\}^n}$ by:
\begin{equation} \label{eqn::ward_zero_dimension_upperbound_aux3}
	\R^{\{0,1\}^n}=\bigoplus_{m=0}^n H_m,\quad \text{where }H_m:=\{\bs v\in\R^{\{0,1\}^n}:v_{\bs{\beta}}=0\text{ if }|\bs{\beta}|\neq m\}.
\end{equation}
Then $\dim H_m=\binom{n}{m}$ and $U(H_m)\subseteq H_{m-1}$ for $1\le m\le n$. Let $U^T$ be the transpose of $U$. Its entries are
\begin{equation*}
	(U^T)_{\bs{\delta},\bs{\beta}}=U_{\bs{\beta},\bs{\delta}}=\begin{cases}
		1, & \bs{\delta}=\bs{\beta}+\bs e_j	\text{ for some }j\text{ with }\beta_j=0,\\
		0, & \text{otherwise}.
	\end{cases}
\end{equation*}
Then $U^T(H_m)\subseteq H_{m+1}$ for $0\le m\le n-1$, and
\begin{align*}
	\left.(UU^T-U^TU)\right|_{H_m}=(n-2m)I,\qquad \|U^Tv\|^2-\|Uv\|^2=(n-2m)\|v\|^2,\quad v\in H_m.
\end{align*}
This shows that $U_m:=\left.U\right|_{H_{m+1}}:H_{m+1}\longrightarrow H_m$ is injective for $m\ge n/2$, and $U_m^{T}:H_{m}\to H_{m+1}$ is injective for $m< n/2$. Therefore, we have
\begin{equation*}
	\operatorname{rank}(U_m)= \operatorname{rank}(U_m^{T})=\begin{cases}
		\dim H_m= \binom{n}{m}, & 0\le m<n/2,\\
		\dim H_{m+1}= \binom{n}{m+1}, & n/2\le m\le n-1.
	\end{cases}
\end{equation*}
According to the orthogonal decomposition~\eqref{eqn::ward_zero_dimension_upperbound_aux3}, write the matrix $U$ in the following block form:
\[
U=\begin{pmatrix}
	0&U_0&0&\cdots&0\\
	0&0&U_1&\ddots&\vdots\\
	\vdots&\ddots&\ddots&\ddots&0\\
	0&\cdots&0&0&U_{n-1}\\
	0&\cdots&\cdots&0&0
\end{pmatrix}.
\]
Therefore, we have
\[ \operatorname{rank} (U)=\sum_{m=0}^{n-1} \operatorname{rank} (U_m) = \sum_{m=0}^{n/2-1} \binom{n}{m} + \sum_{m=n/2}^{n-1} \binom{n}{m+1}=2^n-\binom{n}{n/2}. \]
Plugging into~\eqref{eqn::ward_zero_dimension_upperbound_aux2}, we obtain
\[ \dim \ker B \le \dim \ker U = 2^n- \operatorname{rank} (U)=\binom{n}{n/2}, \]
and finish the proof of~\eqref{eqn::ward_zero_dimension_upperbound}.
\medbreak
Next, let us deal with the lower bound. Note that $B(\bs{\theta}^0;\kappa,\lambda)$ is continuous with respect to $\lambda$, combining with~\eqref{eqn::solutionspace_decompositon_dimension_nondegenerate_aux3}, we have
\begin{align*}
	\dim \ker B(\bs{\theta}^0;\kappa,(1-p^2)/2) = & 2^n - \operatorname{rank} ( B(\bs{\theta}^0;\kappa,(1-p^2)/2)) \\
	\ge & 2^n - \lim_{\eps\to 0} \operatorname{rank} B(\bs{\theta}^0;\kappa,(1-p^2)/2+ \eps) = \binom{n}{n/2}.
\end{align*}
Combining with~\eqref{eqn::ward_zero_dimension_upperbound}, we conclude that $\dim \ker B= \binom{n}{n/2}$. By~\eqref{eqn::ward_eigenspace}, we obtain $\dim \LSrad{n}^{((1-p^2)/2;0)}= \binom{n}{n/2}$, which gives~\eqref{eqn::solutionspace_decompositon_dimension_even_repeat3rd} as desired.
\end{proof}

%% file: tex_radial/commutation_radial.tex
\paragraph*{Axioms of commutation relation.}
Let $D\subset\U$ be a simply connected domain containing the origin, and let $x_1,\ldots,x_n$ be distinct prime ends of $\partial D$, listed in counterclockwise order. Let $U_1,\ldots,U_n$ be closed neighborhoods of $x_1,\ldots,x_n$ in $\overline D$ such that $0\notin U_j$ and $U_i\cap U_j=\emptyset$ for $i\neq j$. We consider probability measures $\PP_{(D;x_1,\ldots,x_n;0)}^{(U_1,\ldots,U_n)}$ on $n$-tuples of unparametrized continuous curves $(\eta^1,\ldots,\eta^n)$, where $\eta^j$ starts from $x_j$, stays in $U_j$ until its first exit, and exits $U_j$ almost surely. Such a family of measures, indexed by the choices of $(U_1,\ldots,U_n)$, is called compatible if, whenever $U_j\subset U'_j$ for all $j$, the measure $\PP_{(D;x_1,\ldots,x_n;0)}^{(U_1,\ldots,U_n)}$ is obtained by restricting the curves sampled from $\PP_{(D;x_1,\ldots,x_n;0)}^{(U'_1,\ldots,U'_n)}$ to their parts before they first exit $U_1,\ldots,U_n$, respectively.

Fix $\kappa>0$ and $n\ge 2$. A locally commuting $n$-radial
	$\SLE_\kappa$ is a compatible family
$\PP_{(D;x_1,\ldots,x_n;0)}^{(U_1,\ldots,U_n)}$ on $n$-tuples of continuous
non-self-crossing curves $(\eta^1,\ldots,\eta^n)$, for all $D$, $(x_1,\ldots,x_n)$,
and $(U_1,\ldots,U_n)$ as above, satisfying the following conditions.
\begin{itemize}
\item[\textup{(CI)}] Conformal invariance.
If $\varphi:D\to D'$ is a conformal map such that $\varphi(0)=0$, then
\begin{equation*}
	\varphi^*\PP_{(D';\varphi(x_1),\ldots,\varphi(x_n);0)}^{(\varphi(U_1),\ldots,\varphi(U_n))}=\PP_{(D;x_1,\ldots,x_n;0)}^{(U_1,\ldots,U_n)}.
\end{equation*}
Consequently, it suffices to describe the measures in $(\U;\ee^{\ii\theta_1},\ldots,\ee^{\ii\theta_n};0)$ for $\bs{\theta}=(\theta_1,\ldots,\theta_n)\in\LX_n^{\U}$.

\item[\textup{(DMP)}] Domain Markov property.
Let $(\eta^1,\ldots,\eta^n)\sim\PP_{(D;x_1,\ldots,x_n;0)}^{(U_1,\ldots,U_n)}$, and parametrize the curves by their own radial capacities seen from the origin. Let $\bs{t}=(t_1,\ldots,t_n)$, where $t_j$ is a stopping time for $\eta^j$ such that $\eta^j_{[0,t_j]}$ is contained in the interior of $U_j$. Let
\begin{equation*}
	\widetilde U_j=U_j\setminus\eta^j_{[0,t_j]},\qquad
	\widetilde\eta^j=\eta^j\setminus\eta^j_{[0,t_j]},\qquad 1\le j\le n,
\end{equation*}
and let $\widetilde D$ be the connected component containing $0$ of $D\setminus\bigcup_{j=1}^n\eta^j_{[0,t_j]}$. Conditionally on the initial segments $\bigcup_{j=1}^n\eta^j_{[0,t_j]}$, we have
\begin{equation*}
	(\widetilde\eta^1,\ldots,\widetilde\eta^n) \sim \PP_{(\widetilde D;\eta^1_{t_1},\ldots,\eta^n_{t_n};0)}^{(\widetilde U_1,\ldots,\widetilde U_n)}.
\end{equation*}

\item[\textup{(MARG)}] Marginal laws.
Let $\bs{\theta}=(\theta_1, \ldots, \theta_n)\in\LX_n^{\U}$ and $(\eta^1,\ldots,\eta^n)\sim \PP_{(\U;\ee^{\ii\theta_1},\ldots,\ee^{\ii\theta_n};0)}^{(U_1,\ldots,U_n)}$. For each $1\le j\le n$, assume that there is a $C^2$ function $b_j:\LX_n^{\U}\to\R$ with the following property. Parametrize $\eta^j$ by radial capacity, denote its driving function by $\xi^j$, and denote by $\phi_t^j$ the covering map of $\eta^j$-s radial Loewner chain. Then, up to the first exit of $\eta^j$ from $U_j$,
\begin{equation}\label{eqn::commutation_radial_marginals}
	\begin{cases}
		\ud\xi_t^j=\sqrt{\kappa}\,\ud B_t^j +b_j(\phi_t^j(\theta_1),\ldots,\phi_t^j(\theta_{j-1}),\xi_t^j,\phi_t^j(\theta_{j+1}),\ldots,\phi_t^j(\theta_{n}))\,\ud t,
		&\xi_0^j=\theta_j,\\
		\ud\phi_t^j(\theta_k)=
		\cot\left(\frac{\phi_t^j(\theta_k)-\xi_t^j}{2}\right)\ud t,
		& k\neq j,
	\end{cases}
\end{equation}
where $B^j$ is a one-dimensional standard Brownian motion.
\end{itemize}

The above description of the commutation relation is the natural $n$-curve extension of the two-curve definition in~\cite[Section~2.1]{KrusellWangWuCommutationRelation} (see also~\cite{zhang2025multipleradialslekappaquantum}). We do not impose the additional interchangeability condition considered there. 
The infinitesimal commutation calculation of~\cite[Section~2.2]{KrusellWangWuCommutationRelation} applies pairwise in this setting.

\begin{proposition}\label{prop::commutation_radial_classify}
Fix $\kappa>0$ and $n\ge 2$. 
The existence of locally commuting $n$-radial $\SLE_{\kappa}$ is equivalent to the existence of $\LZ: \LX_n^{\U}\to \R_{>0}$, called partition function, such that 
\begin{align*}%\label{eqn::bj_pf}
	b_j=\kappa\partial_j\log\LZ, \qquad \text{for all }1\le j\le n, 
\end{align*}
where $\LZ$ satisfies the BPZ system~\eqref{eqn::BPZ_radial} and the conformal Ward identity~\eqref{eqn::ward_radial} for some constants $\lambda,\nu\in\R$.
\end{proposition}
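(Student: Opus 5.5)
We prove the two directions separately. The necessity direction carries the content; sufficiency is a construction up to the first exit times.

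\emph{Necessity.} Assume a locally commuting $n$-radial $\SLE_\kappa$ with drifts $b_j$ is given. For $i\neq j$ we grow $\eta^i$ and $\eta^j$ infinitesimally in the two possible orders, as in~\cite[Section~2.2]{KrusellWangWuCommutationRelation}. Using (DMP) and (CI), the two orders produce the same law. Write the generators as second-order operators in $\bs{\theta}$, acting on test functions of $\bs{\theta}$ and of $\log \mathfrak g_t'(0)$. The time change between the two capacity parametrizations introduces $\phi'$-factors, which contribute the terms $\mathfrak{b}\sin^{-2}$. Expanding to second order, as in Dub\'edat's commutation computation, and applying it pairwise, we obtain two families of identities. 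The first says the $1$-form $\sum_j b_j\,\ud\theta_j/\kappa$ is closed, i.e.\ $\partial_i b_j=\partial_j b_i$. Since $\LX_n^{\U}$ is convex, this gives $b_j=\kappa\partial_j\log\LZ$ for some positive $C^2$ function $\LZ$, unique up to a multiplicative constant. The second family of identities then says that the quantities
\[
h_j:=\frac{\LD_j\LZ}{\LZ}
\]
satisfy a pairwise relation of the form $h_i-h_j=\text{const}$, and more precisely $h_i=h_j$. Consequently there is a single function $h:=h_1=\cdots=h_n$.

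Two facts remain: $h$ is a constant $\lambda/\kappa$, and $\LZ$ satisfies~\eqref{eqn::ward_radial}. Both come from (CI) applied to rotations $z\mapsto \ee^{\ii a}z$. Rotation invariance of the whole family forces each $b_j$ to be invariant under $\bs{\theta}\mapsto\bs{\theta}+a$. Hence $\partial_j\log\LZ(\bs\theta+a)=\partial_j\log\LZ(\bs\theta)$ for every $j$, so $\log\LZ(\bs\theta+a)-\log\LZ(\bs\theta)$ is independent of $\bs\theta$. This difference is additive in $a$ and continuous, hence linear: it equals $\nu a/\kappa$ for some $\nu\in\R$. Differentiating in $a$ gives $\LR\LZ=\frac{\nu}{\kappa}\LZ$.

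\emph{Main obstacle.} The hardest step is showing that $h$ is constant rather than merely a function. We first use that $h$ is rotation invariant, since the $b_j$ are. Next, $[\LD_j,\LR]=0$ by~\eqref{eqn::radialBPZ_Ward_commute}, and $\LZ$ is an eigenfunction of $\LR$. Then we bring in the commutator identity~\eqref{eqn::radialBPZoperator_commute}. Substituting $\LD_j\LZ=h\LZ$ into $[\LD_i,\LD_j]\LZ$ and using $h_i=h_j$, the identity reduces to a first-order relation. Computing $\LD_i(h\LZ)-\LD_j(h\LZ)$ with the product rule leaves
\[
\frac{\kappa}{2}\bigl(\partial_i^2h-\partial_j^2h\bigr)+\kappa\bigl(\partial_ih\,\partial_i\log\LZ-\partial_jh\,\partial_j\log\LZ\bigr)+(\text{cot-terms in }\partial h)=0.
\]
Dub\'edat's argument actually yields the stronger conclusion that $h$ is constant. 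The reason is that the generator commutation must also hold for test functions depending on $\log\mathfrak g'_t(0)$, through the factor $\mathfrak g_t'(0)^{-h}$ in the martingale of Lemma~\ref{lem::martingale_implies_BPZ}. That martingale is well defined only when the exponent is a constant. We will try first to extract $\partial_k h=0$ directly from this capacity-dependent part of the second-order expansion. If that fails, we will use the radial version of Dub\'edat's lemma, which says that $h$ depends on $\bs\theta$ only through a function annihilated by every $\partial_k$.

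\emph{Sufficiency.} Given a positive $\LZ$ satisfying~\eqref{eqn::BPZ_radial} and~\eqref{eqn::ward_radial}, set $b_j=\kappa\partial_j\log\LZ$. In $\U$, define the law of $\eta^j$ as radial $\SLE_\kappa$ weighted by the local martingale $M_t(\LZ)$ of Lemma~\ref{lem::martingale_implies_BPZ}, run up to exit from $U_j$. Transport it to a general domain $D$ by a conformal map fixing $0$. This definition is consistent for any choice of such map because~\eqref{eqn::ward_radial} makes $\LZ$ rotation covariant, and the factor $\ee^{\nu a/\kappa}$ cancels in $b_j$. The joint law of the $n$ curves is built by growing them alternately. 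Commutation holds because $\LZ$ gives a common multi-time local martingale: it is a local martingale in each curve separately by Lemma~\ref{lem::martingale_implies_BPZ}, and in the different curves jointly by the same lemma combined with conformal covariance of the $\phi'$-factors. Then (DMP) and (MARG) hold by construction.
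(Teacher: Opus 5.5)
Your necessity argument has a genuine gap, at the step you label the ``main obstacle'': you misidentify what the first-order part of the commutation relation yields.

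\textbf{What the first-order identities actually give.} Set $h_j=\LD_j\LZ/\LZ$. Extract the coefficient of $\partial_j$ in $[\mathcal{L}_i,\mathcal{L}_j]-\frac{\mathcal{L}_j-\mathcal{L}_i}{\sin^2((\theta_j-\theta_i)/2)}$ and substitute $b_\ell=\kappa\partial_\ell\log\LZ$. The resulting condition is exactly $\kappa\,\partial_j h_i=0$. Symmetrically, the $\partial_i$-coefficient gives $\partial_i h_j=0$, and the $\partial_m$-coefficients for $m\neq i,j$ vanish identically by trigonometric identities. So the commutation gives $\LD_j\LZ=F_j(\theta_j)\LZ$, with each $F_j$ a function of one variable. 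It gives no relation between $h_i$ and $h_j$; the claim $h_i=h_j$ is unjustified.

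Having started from that claim, you try to show a single function $h$ is constant from the PDE produced by $[\LD_i,\LD_j]\LZ$. As you observe, that does not close. Your fallbacks do not fill the gap. The ``radial version of Dub\'edat's lemma'' is never stated. The remark that the martingale of Lemma~\ref{lem::martingale_implies_BPZ} needs a constant exponent is circular: that lemma presupposes a constant $\lambda$ rather than deriving one from the axioms.

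\textbf{How to repair it.} With the correct output the obstacle disappears, and the two remaining steps run in the reverse order from your plan.

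Constancy comes first and is cheap. The function
\[
h_j=\tfrac12\partial_jb_j+\tfrac{1}{2\kappa}b_j^2+\tfrac1\kappa\sum_{\ell\ne j}\cot\bigl(\tfrac{\theta_\ell-\theta_j}{2}\bigr)b_\ell-\sum_{\ell\ne j}\frac{\mathfrak b}{2\sin^2((\theta_\ell-\theta_j)/2)}
\]
is rotation invariant because the $b_\ell$ are, as you already note. Hence $F_j(\theta_j+a)=F_j(\theta_j)$ for all $a$, and each $F_j$ is constant.

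Equality comes second, from the commutator identity~\eqref{eqn::radialBPZoperator_commute}. You apply that identity without addressing regularity. First upgrade $\LZ$ to a smooth function by elliptic regularity for the summed equation. Then $[\LD_i,\LD_j]\LZ=F_jF_i\LZ-F_iF_j\LZ=0$, while the identity gives $[\LD_i,\LD_j]\LZ=-\frac{(F_i-F_j)\LZ}{\sin^2((\theta_i-\theta_j)/2)}$. Positivity of $\LZ$ forces $F_i=F_j=:\lambda/\kappa$. This is the paper's argument.

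Your derivation of the Ward identity via additivity in $a$, and your sufficiency sketch, are fine and agree in substance with the paper.
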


\begin{proof}
The result and its proof has appeared in previous literature, see~\cite[Theorem~1.3 and Section~3.2]{zhang2025multipleradialslekappaquantum} (see also~\cite[Section~8.2]{DubedatCommutationSLE} and~\cite[Section~2.2 and Section~3]{KrusellWangWuCommutationRelation}). To be self-contained, we sketch the proof here.

First, suppose that the curves locally commute. The infinitesimal generators associated with~\eqref{eqn::commutation_radial_marginals} are
\begin{equation*}
	\mathcal{L}_j=\frac{\kappa}{2}\partial_j^2+b_j\partial_j+\sum_{\ell\neq j}\cot\left(\frac{\theta_\ell-\theta_j}{2}\right)\partial_\ell,\qquad j\in\{1,\ldots,n\}.
\end{equation*}
Comparing the two orders of growing short initial segments of $\eta^i$ and $\eta^j$, \textup{(DMP)} gives
\begin{equation} \label{eqn::commutation_radial_classify_aux1}
	[\mathcal{L}_i,\mathcal{L}_j]=\frac{\mathcal{L}_j-\mathcal{L}_i}{\sin^2\left(\frac{\theta_j-\theta_i}{2}\right)},\qquad i\neq j.
\end{equation}
Comparing the coefficients of $\partial_i\partial_j$, we obtain $\partial_i b_j=\partial_j b_i$. Then there exists a positive function $\LZ$ such that $b_j=\kappa\partial_j\log\LZ$ for $1\le j\le n$. Plugging $b_j=\kappa\partial_j\log\LZ$ into the first-order coefficients~\eqref{eqn::commutation_radial_classify_aux1}, we obtain
\begin{equation*}
	\partial_i\left(\frac{\LD_j\LZ}{\LZ}\right)=0,\qquad i\neq j,
\end{equation*}
where $\LD_j$ is defined in~\eqref{eqn::radialBPZ_operatordef}. Hence $\LD_j\LZ=F_j(\theta_j)\LZ$ for some functions $F_j$. Applying \textup{(CI)} to rotations of $\U$ shows that the drift functions are rotation-invariant:
\begin{equation*}%\label{eqn::bj_rotation_inv}
	b_j(\bs{\theta}+a)=b_j(\bs{\theta}),\qquad\text{for all }j\in\{1, \ldots, n\}\text{ and }a\in\R.
\end{equation*}
It follows that $\kappa\partial_j(\LR\log\LZ)=\LR b_j=0$ for all $j$, where $\LR=\sum_j\partial_j$. Therefore $\kappa\LR\log\LZ=\nu$ for some $\nu\in\R$, which gives~\eqref{eqn::ward_radial}. Moreover, $\LD_j\LZ/\LZ$ is rotation-invariant, so each $F_j$ is a constant. By elliptic regularity applied to the sum of these equations, $\LZ$ is smooth. Applying the commutator identity~\eqref{eqn::radialBPZoperator_commute} to $\LZ$ now gives
\begin{equation*}
	0=[\LD_i,\LD_j]\LZ=\frac{F_j-F_i}{\sin^2\left(\frac{\theta_j-\theta_i}{2}\right)}\LZ.
\end{equation*}
Since $\LZ>0$, all the constants $F_j$ agree. Writing their common value as $\lambda/\kappa$, we obtain~\eqref{eqn::BPZ_radial}.
\medbreak

Next, suppose that $\LZ>0$ satisfies~\eqref{eqn::BPZ_radial} and~\eqref{eqn::ward_radial}, and let $b_j=\kappa\partial_j\log\LZ$. We construct the locally commuting $\SLE$ by tilting independent radial SLEs by a multi-time martingale formed from $\LZ$ (see also~\cite[Proposition~2.4]{HuangPeltolaWuMultiradialSLE} for a direct construction). The BPZ equations cancel the drift terms in the It\^o's calculation, and Girsanov's theorem gives the marginal laws~\eqref{eqn::commutation_radial_marginals}. The martingale property gives \textup{(DMP)}. Finally, the Ward identity implies rotation invariance of the drifts, so rotation of $\U$ gives \textup{(CI)}. This yields the required locally commuting system; we refer to the above references for the remaining calculations.
\end{proof}

%% file: tex_radial/chordalBPZ.tex
\paragraph*{Chordal BPZ system.}
Fix $\kappa>0$ and $\lambda\in\R$. We consider functions $\LZ$ that are defined on the space
\[\LX_n^{\HH}=\{(x_1, \ldots, x_n)\in\R^n: x_1<\cdots<x_n\}\]
and satisfy the chordal BPZ system:
\begin{equation}\label{eqn::BPZ_chordal}
\frac{\kappa}{2}\partial_j^2\LZ+\sum_{\ell\neq j}\left(\frac{2}{x_{\ell}-x_j}\partial_{\ell}\LZ-\frac{(6-\kappa)/\kappa}{(x_{\ell}-x_j)^2}\LZ\right)=\frac{\lambda}{\kappa}\LZ ,\qquad \text{for }j\in\{1, \ldots, n\}. 
\end{equation}
In this appendix, we prove the corresponding conclusion of Theorem~\ref{thm::solutionspace_dimension} in the chordal case.  

\begin{theorem}\label{thm::chordalBPZ_dimension}
Fix $\kappa>0$ and $\lambda\in\R$ and $n\ge 2$. 
Define 
\begin{equation*} %\label{eqn::solutionspace_chordal_def}
\LSchord{n}^{(\lambda)}:=\{F\in C^{\infty}(\LX_n^{\HH}\to \R): F \text{ satisfies chordal BPZ system~\eqref{eqn::BPZ_chordal}}\}.
\end{equation*}
The dimension of the solution space $\LSchord{n}^{(\lambda)}$ is $2^n$:
\begin{equation*}
\dim\LSchord{n}^{(\lambda)}=2^n.
\end{equation*}
\end{theorem}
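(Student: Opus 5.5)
We follow the radial argument of Section~\ref{sec::PDE_analysis} verbatim, replacing $\cot\bigl(\frac{\theta_\ell-\theta_j}{2}\bigr)$ by $\frac{2}{x_\ell-x_j}$ and $\frac{1}{4\sin^2(\cdot)}$ by $\frac{1}{(x_\ell-x_j)^2}$. Denote by $\LD_j^{\HH}$ the chordal BPZ operators in~\eqref{eqn::BPZ_chordal}. The key structural input is the chordal analogue of Lemma~\ref{lem::radialBPZoperator_commute}: we verify by direct calculation that
\[
[\LD_i^{\HH},\LD_j^{\HH}]=-\frac{4\,(\LD_i^{\HH}-\LD_j^{\HH})}{(x_i-x_j)^2},\qquad i\neq j.
\]
This is the classical Dub\'edat commutation identity; it is also the $\theta\to0$ limit of~\eqref{eqn::radialBPZoperator_commute} under $\theta=\epsilon x$ after rescaling. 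What matters is only that the commutator is a function multiple of $\LD_i^{\HH}-\LD_j^{\HH}$. Consequently it annihilates every common solution of $(\LD_k^{\HH}-\lambda/\kappa)\LZ=0$, since the constant $\lambda/\kappa$ cancels in the difference.

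\textbf{Upper bound.} Summing the $n$ equations gives an elliptic operator $\frac{\kappa}{2}\Delta+\text{lower order}$ whose coefficients are analytic on $\LX_n^{\HH}$. By Morrey--Nirenberg, every solution is therefore real analytic. Fix $\bs{x}^0\in\LX_n^{\HH}$. As in the proof of Lemma~\ref{lem::dim_upperbound}, whenever $\beta_j\ge2$ we solve the $j$-th equation for $\partial_j^2\LZ$. Induction on $|\bs\beta|$ then shows that all derivatives of $\LZ$ at $\bs{x}^0$ are determined by the square-free jet $(\partial^{\bs\beta}\LZ(\bs{x}^0))_{\bs\beta\in\{0,1\}^n}$. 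Analyticity and connectedness of $\LX_n^{\HH}$ give injectivity of this jet map, so $\dim\le 2^n$.

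\textbf{Lower bound.} We repeat Lemmas~\ref{lem::radialBPZ_equi_linear_system}--\ref{lem::dim_lowerbound}. Define matrices $A_j(\bs{x})$ by the chordal analogue of the recursion~\eqref{eqn::linear_system_recursion}: we reduce $\partial_j$ of a square-free derivative back to square-free derivatives using the $j$-th BPZ equation and the Leibniz rule. A function $\LZ$ solves~\eqref{eqn::BPZ_chordal} if and only if its square-free jet $\bs u$ solves $\partial_j\bs u=A_j\bs u$. The zero-curvature condition $\partial_iA_j-\partial_jA_i+[A_j,A_i]=0$ is the step we expect to be the main obstacle. We handle it as in Lemma~\ref{lem::linear_system_compatibility}. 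The two reductions of $\partial_i\partial_j\partial^{\bs\beta}\LZ$ can differ only through the ordering of eliminations of $\partial_i^2$ and $\partial_j^2$. That discrepancy is governed by the commutator identity above and vanishes modulo the BPZ equations, and applying further derivatives only produces derivatives of the same relations.

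Since $\LX_n^{\HH}$ is convex, the homotopy argument of Lemma~\ref{lem::linear_system_sol_existence} applies verbatim. Solving along straight-line paths from $\bs{x}^0$ gives, for every $\bs v\in\R^{2^n}$, a global smooth solution $\bs u$ with $\bs u(\bs{x}^0)=\bs v$. Its component $u_{\bs0}$ lies in $\LSchord{n}^{(\lambda)}$. The map $\bs v\mapsto u_{\bs0}$ is injective because $u_{\bs\beta}=\partial^{\bs\beta}u_{\bs0}$, as in Lemma~\ref{lem::linear_system_injective}, which yields $\dim\ge2^n$ and completes the proof.
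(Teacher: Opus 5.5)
Your proposal is correct and follows the paper's proof essentially step for step. For the upper bound you use analytic elliptic regularity plus reduction to the square-free jet at $\bs{x}^0$. For the lower bound you integrate the first-order system $\partial_j\bs u=A_j\bs u$ along paths in the convex chamber $\LX_n^{\HH}$, with its zero-curvature condition coming from Dub\'edat's identity $[\LD_i^{\HH},\LD_j^{\HH}]=-4(\LD_i^{\HH}-\LD_j^{\HH})/(x_i-x_j)^2$; your aside that this identity is a small-angle limit of~\eqref{eqn::radialBPZoperator_commute} is consistent with it but not needed.
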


\begin{proof}
The proof is the similar as that of Theorem~\ref{thm::solutionspace_dimension} in Section~\ref{sec::PDE_analysis}. We sketch the proof below.
First, summing the equations in~\eqref{eqn::BPZ_chordal} gives an elliptic equation with analytic coefficients on $\LX_n^{\HH}$, thus every solution is analytic. Fix $\bs{x}^0\in\LX_n^{\HH}$. As in the proof of Lemma~\ref{lem::dim_upperbound}, repeated use of~\eqref{eqn::BPZ_chordal} determines all derivatives at $\bs{x}^0$ from the square free derivatives. By analyticity, the linear map
\begin{equation*}
	\varsigma:\LSchord{n}^{(\lambda)}\to\R^{2^n},\qquad
	\LZ\mapsto\left(\partial^{\bs{\beta}}\LZ(\bs{x}^0)\right)_{\bs{\beta}\in\{0,1\}^n}
\end{equation*}
is injective. This proves $\dim\LSchord{n}^{(\lambda)}\le 2^n$.
\medbreak

Next, we prove the lower bound. The same recursion as in Lemma~\ref{lem::radialBPZ_equi_linear_system}, using~\eqref{eqn::BPZ_chordal} to eliminate second order derivatives, gives an equivalent linear system
\begin{equation*}
	\partial_j\bs{u}(\bs{x})=A_j(\bs{x})\bs{u}(\bs{x}),\qquad j\in\{1,\ldots,n\},
\end{equation*}
where $\bs{u}=(\partial^{\bs{\beta}}\LZ)_{\bs{\beta}\in\{0,1\}^n}$ and the $2^n\times 2^n$ matrices $A_j$ have analytic coefficients on $\LX_n^{\HH}$. Denote the differential operator on the left-hand side of~\eqref{eqn::BPZ_chordal} by $\LD_j^{\HH}$:
\begin{equation*}
	\LD_j^{\HH}:=\frac{\kappa}{2}\partial_j^2
	+\sum_{\ell\neq j}\left(
	\frac{2}{x_{\ell}-x_j}\partial_{\ell}
	-\frac{(6-\kappa)/\kappa}{(x_{\ell}-x_j)^2}
	\right),\qquad j\in\{1,\ldots,n\}.
\end{equation*} 
Direct calculation gives the chordal counterpart of~\eqref{eqn::radialBPZoperator_commute} (see also~\cite[page~1807]{DubedatCommutationSLE}):
\begin{equation*}
	[\LD_i^{\HH},\LD_j^{\HH}]+\frac{4(\LD_i^{\HH}-\LD_j^{\HH})}{(x_i-x_j)^2}=0,\qquad i\neq j.
\end{equation*}
The same relation holds with $\LD_j^{\HH}$ replaced by $\LD_j^{\HH}-\lambda/\kappa$. Hence the reduction argument in Lemma~\ref{lem::linear_system_compatibility} gives
\begin{equation*}
	\partial_i A_j-\partial_j A_i+[A_j,A_i]=0,\qquad i\neq j.
\end{equation*}

Since $\LX_n^{\HH}$ is convex, the ODE construction in Lemma~\ref{lem::linear_system_sol_existence} gives a unique smooth solution $\bs{u}$ on $\LX_n^{\HH}$ for every initial value $\bs{u}(\bs{x}^0)=\bs{v}\in\R^{2^n}$. The defining recursion implies that $u_{\bs{\beta}}=\partial^{\bs{\beta}}u_{\bs{0}}$ and that $u_{\bs{0}}$ satisfies~\eqref{eqn::BPZ_chordal}. Thus, as in Lemma~\ref{lem::linear_system_injective}, the linear map $\bs{v}\mapsto u_{\bs{0}}$ is injective. Taking a basis of initial values gives $2^n$ linearly independent solutions, as in Lemma~\ref{lem::dim_lowerbound}. This proves $\dim\LSchord{n}^{(\lambda)}\ge 2^n$ and completes the proof.
\end{proof}